\newtheorem{theorem}{Theorem}[section]
\newtheorem{corollary}{Corollary}[section]
\newtheorem{definition}{Definition}[section]
\newtheorem{lemma}{Lemma}[section]
\newtheorem{remark}{Remark}[section]
\newtheorem{example}{Example}[section]
\newtheorem*{thmErdos}{Theorem A}
\newtheorem*{thmJMPA}{Theorem B}
\newtheorem*{thmc}{Theorem C}
\newtheorem*{thmd}{Theorem D}
\newtheorem*{thme}{Theorem E}
\newtheorem*{propa}{Proposition A}
\newtheorem*{lema}{Lemma A}
\begin{document}
\title{The closed span of some Exponential system $E_{\Lambda}$ in the spaces $L^p(\gamma,\beta)$,
properties of a Biorthogonal family to $E_{\Lambda}$ in $L^2(\gamma,\beta)$,
Moment problems, and a differential equation of Carleson}
\author{Elias Zikkos\\
Khalifa University, Abu Dhabi, United Arab Emirates\\
email address:  elias.zikkos@ku.ac.ae and eliaszikkos@yahoo.com}

\maketitle

\begin{abstract}
A set of complex numbers $\Lambda=\{\lambda_n,\mu_n\}_{n=1}^{\infty}$ with multiple terms
\[
\{\lambda_n,\mu_n\}_{n=1}^{\infty}:=
\{\underbrace{\lambda_1,\lambda_1,\dots,\lambda_1}_{\mu_1 - times},
\underbrace{\lambda_2,\lambda_2,\dots,\lambda_2}_{\mu_2 - times},\dots,
\underbrace{\lambda_k,\lambda_k,\dots,\lambda_k}_{\mu_k - times},\dots\}
\]
is said to belong to the $\bf ABC$ class if it satisfies three conditions:
$\bf (A)$ $\sum_{n=1}^{\infty}\mu_n/|\lambda_n|<\infty$,
$\bf (B)$ $\sup_{n\in\mathbb{N}}|\arg\lambda_n|<\pi/2$,
$\bf (C)$ $\Lambda$ is an interpolating variety for the space of entire functions of exponential type zero.
Assuming that $\Lambda\in\bf ABC$, we characterize
in the spirit of the M\"{u}ntz-Sz\'{a}sz theorem, the closed span of its associated exponential system
\[
E_{\Lambda}:=\{x^k e^{\lambda_n x}:\, n\in\mathbb{N},\,\, k=0,1,2,\dots,\mu_n-1\}
\]
in the Banach spaces $L^p(\gamma,\beta)$,  where $-\infty<\gamma<\beta<\infty$ and $p\ge 1$.
Related to $E_{\Lambda}$, we explore the properties of its unique biorthogonal sequence
\[
r_{\Lambda}=\{r_{n,k}:\, n\in\mathbb{N},\, k=0,1,\dots,\mu_n-1\}\subset\overline{\text{span}}(E_{\Lambda})
\]
in $L^2(\gamma,\beta)$. We prove that
the closed spans of $r_{\Lambda}$ and $E_{\Lambda}$ in $L^2(\gamma, \beta)$ are equal.
In addition, we show that the system $E_{\Lambda}$ is hereditarily complete in this closure. Moreover, we prove that
every $f\in \overline{\text{span}}(E_{\Lambda})$ in $L^2(\gamma,\beta)$ admits the Fourier-type series representation
\[
f(t)=\sum_{n=1}^{\infty}\left(\sum_{k=0}^{\mu_n-1} \langle f, r_{n,k} \rangle \cdot t^k\right)e^{\lambda_n t},
\quad \text{almost everywhere on}\,\, (\gamma,\beta),
\]
with the series converging uniformly on closed subintervals of $(\gamma, \beta)$,
and extended analytically in some sector of the complex plane.
Furthermore, we obtain a sharp upper bound for the norm of each $r_{n,k}$ by showing that
for every $\epsilon>0$ there is a positive constant $m_{\epsilon}$, independent of $n$ and $k$,
but depending on $\Lambda$ and $(\beta-\gamma)$, so that
\[
||r_{n,k}||_{L^2 (\gamma,\beta)}\le m_{\epsilon}e^{(-\beta+\epsilon)\Re\lambda_n}\qquad \forall\,\, n\in\mathbb{N},\quad\text{and}\quad k=0,1,\dots,\mu_n-1.
\]
The above results allow us to conclude that given a set of numbers $\{d_{n,k}\}$
satisfying $d_{n,k}=O(e^{a\Re\lambda_n})$ for $a<\beta$,
there exists a unique Fourier-type series $f\in L^2 (\gamma,\beta)$,
such that $f$ is a solution to the Moment problem
\[
\int_{\gamma}^{\beta} f(t)\cdot t^k e^{\overline{\lambda_n} t}\, dt=d_{n,k},\qquad \forall\,\, n\in\mathbb{N}\quad
\text{and}\quad k=0,1,\dots ,\mu_n-1.
\]
Finally, we characterize the solution space of a differential equation of infinite order, studied by L. Carleson.

We point out that our results depend heavily on finding a sharp lower bound for the Distance
between an element of $E_{\Lambda}$ and the closed span  of the remaining elements in $L^p(\gamma, \beta)$. The crucial tool
employed is a certain entire function introduced by Luxemburg and Korevaar.

\end{abstract}

Keywords: Exponential Systems, Closed Span, Hereditary completeness, Distances, Biorthogonal Families, Moment Problems,
Taylor-Dirichlet Series, Bessel and Riesz-Fischer Sequences,
Differential Equations Of Infinite Order.

AMS 2010 Mathematics Subject Classification. 30B60, 30B50, 46E15, 46E20, 46B20, 41A30, 34A35.

\section{Introduction and the Main Results}
\setcounter{equation}{0}

\subsection{Motivation}

The classical M\"{u}ntz-Sz\'{a}sz theorem answers the following question posed by S. N. Bernstein.

``Find necessary and sufficient conditions on a strictly increasing sequence $\{\lambda_n\}_{n=1}^{\infty}$
of positive real numbers diverging to infinity,
so that the span of the system $\{1\}\cup\{x^{\lambda_n}\}_{n=1}^{\infty}$ is dense in the space of continuous functions $C[0,1]$.''

M\"{u}ntz and Sz\'{a}sz proved that
\[
\overline{\text{span}}(\{1\}\cup\{x^{\lambda_n}\}_{n=1}^{\infty})=C[0,1]\quad \text{if\,\, and\,\, only\,\, if}\quad
\sum_{n=1}^{\infty}\frac{1}{\lambda_n}=\infty.
\]

This result was extended later on by J. A. Clarkson, P. Erd\H{o}s, L. Schwartz,
W. A. J. Luxemburg, J. Korevaar, P. Borwein,  T. Erdelyi, and W. B. Johnson (see \cite{CE,LK,BE,BE1,EJ,E1,E2}).
In their papers one finds that the above condition is still both necessary and sufficient
when one replaces the interval $[0,1]$ by an interval $[a,b]$ away from the origin with $0<a<b<\infty$
or even by a compact set $K\subset [0,\infty)$  of positive Lebesgue measure. The results also hold for
the $L^p (a,b)$ and $L^p(K)$ spaces, $p\ge 1$. For further reading, one may also consult
the works in \cite{Redheffer,Dzh,Khabibullin}, the survey articles \cite{Pinkus,Almira}
as well as the book \cite{BE}. Moreover, we remark that there is an ongoing research
on M\"{u}ntz-Sz\'{a}sz type problems (see \cite{Lefevre,Jaming}).\\

If $\sum_{n=1}^{\infty}1/\lambda_n<\infty$ then
$\overline{\text{span}}(\{1\}\cup\{x^{\lambda_n}\}_{n=1}^{\infty})$ is a proper subspace of $C[0,1]$ and
this raises the question of describing the closure. The first to deal with this problem were
Clarkson and Erd\H{o}s in \cite{CE} as well as L. Schwartz in his Ph.D Thesis,
who proved that if the $\lambda_n$ are positive $\bf integers$, then any function $f$
belonging to $\overline{\text{span}}(\{1\}\cup\{x^{\lambda_n}\}_{n=1}^{\infty})$
in $C[0,1]$, is extended to an analytic function
throughout the interior of the unit disk $\cal D$=$\{z:\, |z|<1\}$, admitting a power series representation
of the form
\[
f(z)=\sum_{n=0}^{\infty} a_n z^{\lambda_n},
\]
which converges uniformly on compact subsets of $\cal D$.

The above result bears the name ``Clarkson-Erd\H{o}s-Schwartz Phenomenon''.
Several generalizations were given later on \cite{LK,Dzh,BE1,EJ,E1,E2} where the terms of the sequence $\{\lambda_n\}_{n=1}^{\infty}$
were permitted to be positive real numbers or even complex numbers.
We mention here a result by Luxemburg and Korevaar.
\begin{thmErdos}(\cite[Theorem 8.2]{LK})
Let $\{\lambda_n\}_{n=1}^{\infty}$ be a sequence of distinct complex numbers satisfying the condition
\begin{equation}\label{LKcondition}
\sum_{n=1}^{\infty}\frac{1}{|\lambda_n|}<\infty,\qquad
\sup_{n\in\mathbb{N}}|\arg \lambda_n|<\pi /2,\qquad\text{and}\qquad  |\lambda_n-\lambda_k|\ge c|n-k|,\,\, c>0,\quad \forall\,\, n\not= k.
\end{equation}
Let $f$ be in $C[a,b]$ (or in $L^p(a,b)$) for $0\le a<b<\infty$.
Then $f$ belongs to the closed span of the system $\{x^{\lambda_n}\}_{n=1}^{\infty}$
in $C[a,b]$ (or in $L^p(a,b)$) if and only if $f$ admits the series representation
\[
f(x)=\sum_{n=1}^{\infty} a_n x^{\lambda_n},\qquad \forall \,\,x\in (a,b)\qquad (or\,\, almost\,\, everywhere\,\, in\,\, (a,b))
\]
with the series converging uniformly on compact subsets of $(a,b)$.
\end{thmErdos}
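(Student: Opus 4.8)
\emph{A proposed proof strategy.}

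First I would move the problem to the exponential side by the substitution $x=e^{-t}$, a homeomorphism of $(0,\infty)$ onto $\mathbb{R}$ that carries $x^{\lambda_n}$ to $e^{-\lambda_n t}$; when $0<a<b<\infty$ it identifies $C[a,b]$ and $L^p(a,b)$ with $C[\alpha,\beta]$ and $L^p(\alpha,\beta)$ up to an equivalent norm (the Jacobian being bounded and bounded away from $0$), where $\alpha=-\log b<\beta=-\log a$, while the case $a=0$ yields the half-line $(\alpha,+\infty)$ and is handled by the same arguments with obvious modifications. It thus suffices to characterise the closed span of $\mathcal{E}=\{e^{-\lambda_n t}\}_{n\ge 1}$ in $C[\alpha,\beta]$ and in $L^p(\alpha,\beta)$. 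The hypothesis $\sup_n|\arg\lambda_n|<\pi/2$ gives $\Re\lambda_n\ge\delta|\lambda_n|$ for a fixed $\delta>0$, so $\Re\lambda_n\to+\infty$, and together with $\sum_n 1/|\lambda_n|<\infty$ this yields $\sum_n e^{-c\Re\lambda_n}<\infty$ for every $c>0$, a convergence reservoir used repeatedly below.

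\emph{The converse implication.} Suppose $f\in C[a,b]$ (resp.\ $L^p(a,b)$) admits the representation $f(x)=\sum_n a_n x^{\lambda_n}$ uniformly on compact subsets of $(a,b)$. Convergence of the series at a point $b-\eta\in(a,b)$ forces $|a_n|\le C_\eta\,(b-\eta)^{-\Re\lambda_n}$; since $\Re\lambda_n\to+\infty$ it follows that $\sum_n a_n x^{\lambda_n}$ in fact converges uniformly on compact subsets of $[0,b)$ — and, using $\sup_n|\arg\lambda_n|<\pi/2$, of a sector with vertex $0$ contained in $\{|z|<b\}$ — defining there an analytic extension $\tilde f$ of $f$ which moreover extends continuously to $[0,b]$ (resp.\ lies in $L^p(0,b)$), with $\tilde f=f$ on $[a,b]$. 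For $0<r<1$ close enough to $1$ (namely $r<(b-\eta)/b$) the dilate $\tilde f(r\,\cdot\,)=\sum_n (a_n r^{\lambda_n})\,x^{\lambda_n}$ now converges in the \emph{norm} of $C[a,b]$ (resp.\ $L^p(a,b)$), because $\|a_n r^{\lambda_n}x^{\lambda_n}\|\le C_\eta\,(rb/(b-\eta))^{\Re\lambda_n}$ with base $<1$; hence $\tilde f(r\,\cdot\,)\in\overline{\mathrm{span}}(E_{\Lambda})$. Letting $r\to 1^-$ and invoking uniform continuity of $\tilde f$ on $[0,b]$ (resp.\ strong continuity of dilations on $L^p$) gives $\tilde f(r\,\cdot\,)\to f$ in the ambient norm, so $f\in\overline{\mathrm{span}}(E_{\Lambda})$.

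\emph{The direct implication} — the Clarkson--Erd\H{o}s--Schwartz phenomenon — is the heart of the matter, and here the crucial object is the Luxemburg--Korevaar entire function $L(z)=\prod_n(1-z/\lambda_n)$: it converges by $\sum_n 1/|\lambda_n|<\infty$, has simple zeros exactly at the $\lambda_n$, and is of exponential type zero. Using the separation $|\lambda_n-\lambda_k|\ge c|n-k|$ one establishes the quantitative estimates making $\{\lambda_n\}$ an interpolating variety for exponential type zero: a lower bound $|L(z)|\ge e^{-o(|z|)}$ off a sparse union of small disks about the $\lambda_n$, and $|L'(\lambda_n)|\ge e^{-o(|\lambda_n|)}$. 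One then constructs a biorthogonal sequence $\{h_n\}$ in the dual of the ambient space, $\langle e^{-\lambda_m\,\cdot},h_n\rangle=\delta_{mn}$, by prescribing the finite-interval Laplace transform
\[
\widehat{h_n}(z)=\frac{L(z)}{L'(\lambda_n)\,(z-\lambda_n)}\;M_n(z),
\]
where $M_n$ is a multiplier with $M_n(\lambda_n)=1$ chosen — this is where the \emph{special} entire function of Luxemburg and Korevaar enters — so that $\widehat{h_n}$ has exponential type matching the length $\beta-\alpha$ of the interval, the correct indicator direction, and is the transform of an $L^q$ function (or measure) of controlled norm; the target bound is $\|h_n\|\le C_\epsilon\,e^{(\alpha-\epsilon)\Re\lambda_n}$ for every $\epsilon>0$. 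For $f\in\overline{\mathrm{span}}(E_{\Lambda})$ one sets $a_n=\langle f,h_n\rangle$: on finite exponential sums these are literally the coefficients, and by continuity of each functional together with the norm bound, $|a_n|\le C_\epsilon\,\|f\|\,e^{(\alpha-\epsilon)\Re\lambda_n}$, whence $\sum_n a_n x^{\lambda_n}$ converges uniformly on compact subsets of $(0,b)$. It then remains to check that this series sums back to $f$, which I would do by writing the partial sum $S_N$ as a contour integral of a kernel built from $L$ and the $M_n$ against $f$ and letting the contour expand, using the lower bounds for $|L|$ off its zeros to show $f-S_N\to 0$ uniformly on compacta.

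\emph{Main obstacle.} The delicate point is the simultaneous construction of the multipliers $M_n$ that (i) keep $\widehat{h_n}$ inside the Paley--Wiener class of the interval $(\alpha,\beta)$, (ii) are compatible with biorthogonality, and (iii) yield a bound on $\|h_n\|$ sharp enough that the reconstructed series converges on the \emph{full} interval and not merely on a proper subinterval; and then, on top of that, proving that the biorthogonal expansion genuinely converges to $f$ rather than to some other element of the closure — a convergence that is not formal and rests on the growth/lower bounds for $L$ supplied by the separation condition. Each hypothesis plays a definite role: $\sum_n 1/|\lambda_n|<\infty$ makes $L$ of exponential type zero and supplies the convergence reservoir; $\sup_n|\arg\lambda_n|<\pi/2$ aligns the indicator diagram with the interval and forces $\Re\lambda_n\to+\infty$; and $|\lambda_n-\lambda_k|\ge c|n-k|$ is what turns $\{\lambda_n\}$ into an interpolating variety, giving the lower bounds on $|L|$ and on $|L'(\lambda_n)|$ without which neither the biorthogonal functionals nor the reconstruction can be controlled.
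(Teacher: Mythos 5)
First, a point of order: the paper does not prove Theorem A at all --- it is quoted verbatim from Luxemburg--Korevaar as motivation. The closest internal analogue is the generalization given by Theorems \ref{theorem1}--\ref{IFF}, proved in Section 6 from the Fundamental Result of Section 4, so I compare your strategy against that. Your converse direction is correct and is essentially the paper's proof of Theorem \ref{converse} transported to the power side: the author translates, $f(x-\delta)$, where you dilate, $\tilde f(rx)$, and under $x=e^{-t}$ these are the same operation; the justification (coefficient decay from convergence at a single point, hence norm convergence of the dilated series, hence membership of the dilate in the closed span, then strong continuity of dilation in $C$ or $L^p$) matches. Your identification of the Luxemburg--Korevaar function --- the product over the $\lambda_n$ times a multiplier $\prod_n\cos(\epsilon_n z)$ with $\sum_n\epsilon_n$ equal to half the interval length --- as the device producing biorthogonal functionals with exponentially controlled norms is exactly Theorem C together with Theorem \ref{Zikkostheorem}. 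One slip: with your orientation ($e^{-\lambda_n t}$ on $(\alpha,\beta)$, dominant endpoint $\alpha$) the target bound must read $\|h_n\|\le C_\epsilon e^{(\alpha+\epsilon)\Re\lambda_n}$, not $(\alpha-\epsilon)$; since $0$ lies in the span of the remaining exponentials, $\mathrm{dist}\bigl(e^{-\lambda_n t},\,\cdot\,\bigr)\le\|e^{-\lambda_n t}\|\asymp e^{-\alpha\Re\lambda_n}$ and hence $\|h_n\|\ge c\, e^{\alpha\Re\lambda_n}$, so the bound you wrote is impossible for large $n$. The $a=0$ case is also not an ``obvious modification'': it produces a half-line after the substitution and needs its own distance estimate (cf.\ the paper's treatment of $L^p(-\infty,0)$).

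The genuine gap is the one you flag yourself: showing that $\sum_n\langle f,h_n\rangle x^{\lambda_n}$ sums back to $f$. The contour-integral reconstruction you propose (partial sums as a kernel built from $L$ integrated against $f$, then expanding the contour using lower bounds for $|L|$ off its zeros) is the hard classical road; it is precisely where groupings of terms tend to reappear, and you give no indication of how the kernel is to be controlled between consecutive $\lambda_n$. The paper's route avoids this entirely and you should adopt it: the distance lower bound yields (Corollary \ref{corbound}) that for \emph{finite} exponential sums $P$ the coefficient functionals satisfy $|c_n(P)|\le m_\epsilon e^{(\alpha+\epsilon)\Re\lambda_n}\|P\|$ with $m_\epsilon$ independent of $P$ and $n$. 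Applying this to an approximating sequence $P_j\to f$ in norm, and to the differences $P_j-P_i$, shows that each coefficient sequence $c_n(P_j)$ is Cauchy with limit $a_n$ obeying the same bound, and --- this is the point --- that $P_j$ converges uniformly on every compact subset of $(0,b)$ to the series $\sum_n a_nx^{\lambda_n}$; since $P_j\to f$ in the ambient norm as well, $f$ equals that series a.e.\ (everywhere in the continuous case). No kernel and no contour are needed; everything required is already contained in the biorthogonal bounds you construct, so the ``main obstacle'' you describe dissolves once the coefficient-stability estimate is applied to the approximating polynomials rather than to $f$ itself.
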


Now, by a change of variables the system $\{x^{\lambda_n}\}_{n=1}^{\infty}$ becomes an exponential system
$\{e^{-\lambda_n x}\}_{n=1}^{\infty}$. In several cases,
the non-dense span of such a system or even of more general ones of the form
\[
\{x^ke^{-\lambda_n x}:\,\, n\in\mathbb{N}, \,\, k=0,1,2,\dots,m\}\qquad m\in\mathbb{N},
\]
in the space $L^2(0,T)$, has led mathematicians to obtain a lower bound for the distance between an element
$x^ke^{-\lambda_n x}$ of the system and the closed span of the remaining ones in $L^2(0,T)$. This in turn yields
a sharp upper bound for the norm of the elements of a family biorthogonal to exponential systems in $L^2(0,T)$,
as for example in the following result.
\begin{thmJMPA}\cite[Theorem 1.2]{2011JPA}
Let $\{\lambda_n\}_{n=1}^{\infty}$ be a sequence of distinct complex numbers satisfying the condition $(\ref{LKcondition})$
and let $m\in\mathbb{N}$.
Fix some $T>0$: then there exists a biorthogonal family
\[
\{r_{n,k}:\, n\in\mathbb{N},\, k=0,1,\dots, m\}\subset L^2(0,T)
\]
to the exponential system $\{x^ke^{-\lambda_n x}:\,\, n\in\mathbb{N}, \,\, k=0,1,2,\dots,m\}$ in $L^2 (0,T)$ and
belonging to its closed span in $L^2 (0,T)$, such that for every $\epsilon>0$ there is a constant $m_{\epsilon}>0$,
independent of $n\in\mathbb{N}$ and $k=0,1,\dots,m-1$, but depending on $T$, so that
\begin{equation}\label{rnkboundJMPA}
||r_{n,k}||_{L^2 (0,T)} \le  m_{\epsilon}e^{\epsilon\Re\lambda_n},\qquad \forall\,\, n\in\mathbb{N},\quad k=0,1,\dots,m-1.
\end{equation}
\end{thmJMPA}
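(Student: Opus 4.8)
I would prove the equivalent statement about distances and then read off the norm bound. Put $E:=\{x^j e^{-\lambda_\nu x}:\nu\in\mathbb{N},\ 0\le j\le m\}$. Because $\sum_\nu 1/|\lambda_\nu|<\infty$, a M\"untz--Sz\'asz type argument shows that $E$ is incomplete in $L^2(0,T)$; and once it is known that
\[
d_{n,k}:=\operatorname{dist}_{L^2(0,T)}\Bigl(x^k e^{-\lambda_n x},\ \overline{\text{span}}\bigl(E\setminus\{x^k e^{-\lambda_n x}\}\bigr)\Bigr)>0
\]
for every admissible pair $(n,k)$, the system $E$ is minimal, hence it admits a unique biorthogonal family $\{r_{n,k}\}\subset\overline{\text{span}}(E)$, for which the classical duality identity $\|r_{n,k}\|_{L^2(0,T)}=1/d_{n,k}$ holds. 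So the entire theorem reduces to the claim that for each $\epsilon>0$ there is $c_\epsilon>0$, depending only on $\Lambda$ and $T$, with $d_{n,k}\ge c_\epsilon e^{-\epsilon\Re\lambda_n}$ for all $n\in\mathbb{N}$ and $k=0,1,\dots,m-1$.

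For the lower bound on $d_{n,k}$ I would use its dual description: $d_{n,k}$ is the supremum of $|\langle x^k e^{-\lambda_n x},g\rangle|$ over all $g\in L^2(0,T)$ with $\|g\|_2\le 1$ that are orthogonal to every member of $E$ other than $x^k e^{-\lambda_n x}$. Hence it suffices, for each $(n,k)$ and each small $\epsilon$, to exhibit one such competitor $g=g_{n,k,\epsilon}$ with $|\langle x^k e^{-\lambda_n x},g\rangle|\ge c_\epsilon e^{-\epsilon\Re\lambda_n}\|g\|_2$. Passing to the Laplace transform $\Phi(z):=\int_0^T\overline{g(t)}\,e^{-zt}\,dt$, the orthogonality conditions become: $\Phi$ vanishes to order $m+1$ at each $\lambda_\nu$ with $\nu\ne n$, and $\Phi(z)=(z-\lambda_n)^k+O\bigl((z-\lambda_n)^{m+1}\bigr)$ as $z\to\lambda_n$; then $\langle x^k e^{-\lambda_n x},g\rangle=(-1)^k\Phi^{(k)}(\lambda_n)=(-1)^k k!$, while by Plancherel $\|g\|_2$ is a fixed multiple of the $L^2$--norm of $\Phi$ on the imaginary axis. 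The task has become: build $\Phi$ in the Paley--Wiener space of Laplace transforms of $L^2(0,T)$ — entire, of exponential type $\le T$, square--integrable on the imaginary axis, and of the correct one--sided growth so that the inverse transform is supported in $(0,T)$ — with those prescribed zeros and local behaviour and with $\|\Phi\|_{L^2(i\mathbb{R})}\le C_\epsilon e^{\epsilon\Re\lambda_n}$.

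This is where the entire function $q_\Lambda$ of Luxemburg and Korevaar is the crucial tool. It has exponential type $0$, vanishes to order $m+1$ at every $\lambda_\nu$, and enjoys sharp two--sided control — $|q_\Lambda(z)|\le C_\epsilon e^{\epsilon|z|}$ globally, while $|q_\Lambda(z)|\ge c_\epsilon e^{-\epsilon|z|}$ off a family of uniformly small disks about the $\lambda_\nu$ and $|q_\Lambda^{(m+1)}(\lambda_\nu)|\ge c_\epsilon e^{-\epsilon|\lambda_\nu|}$ — where the uniformity in $\nu$ is exactly what the separation $|\lambda_\nu-\lambda_\mu|\ge c|\nu-\mu|$ buys, and is the reason one uses $q_\Lambda$ in place of the naked canonical product. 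I would then set
\[
\Phi(z)=q_\Lambda(z)\,\frac{R_{n,k}(z)}{(z-\lambda_n)^{\,m+1-k}}\,\Theta(z),
\]
where $R_{n,k}$ is the degree--$(m-k)$ Taylor polynomial at $\lambda_n$ of $(z-\lambda_n)^{m+1}/\bigl(q_\Lambda(z)\Theta(z)\bigr)$ — its sole purpose being to force the product to have a zero of order exactly $k$ at $\lambda_n$ and vanishing derivatives of orders $k+1,\dots,m$ there — and $\Theta$ is a fixed multiplier, independent of $n$ and $k$, of exponential type $\le T$, with all of its zeros kept uniformly away from $\Lambda$, chosen so that $(1+|z|)^m q_\Lambda(z)\Theta(z)\in L^2(i\mathbb{R})$ and $|\Theta(z)|\ge c_\epsilon e^{-\epsilon\Re z}$ on small disks about the $\lambda_\nu$. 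Such a $\Theta$ exists (take it as the Laplace transform of a weight supported in $(0,T)$ that is flat enough at its endpoints, in an appropriate Denjoy--Carleman class, the admissible decay on the imaginary axis being governed by the logarithmic--integral condition that $\sum_\nu 1/|\lambda_\nu|<\infty$ secures); together with Condition $\mathbf{(B)}$, which ties $|z|$ to $\Re z$ on the cone through $\Lambda$, this is what finally puts $\Re\lambda_n$, and not merely $|\lambda_n|$, into the exponent.

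With the construction in hand the estimate is bookkeeping: Cauchy's inequality on a fixed circle about $\lambda_n$, bounded above by $|q_\Lambda|\le C_\epsilon e^{\epsilon|z|}$ and below by the lower bounds for $|q_\Lambda|$, $|q_\Lambda^{(m+1)}(\lambda_\nu)|$ and $|\Theta|$ just described, controls the coefficients of $R_{n,k}$ and yields $\|\Phi\|_{L^2(i\mathbb{R})}\le C_\epsilon e^{\epsilon\Re\lambda_n}$; since $|\langle x^k e^{-\lambda_n x},g\rangle|=k!$ is bounded below independently of $n$, this gives $d_{n,k}\ge c_\epsilon e^{-\epsilon\Re\lambda_n}$, hence $\|r_{n,k}\|\le m_\epsilon e^{\epsilon\Re\lambda_n}$ after relabelling $\epsilon$. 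The genuine obstacle, to my mind, is everything in the previous paragraph: proving the sharp minimum--modulus and derivative lower bounds for $q_\Lambda$ at and away from its zeros, and manufacturing the multiplier $\Theta$ with simultaneously the right square--integrability against $q_\Lambda$ on the imaginary axis and the right exponential lower bound near $\Lambda$. These are the steps that consume all three hypotheses in $(\ref{LKcondition})$, and arranging that they cost no more than $e^{\epsilon\Re\lambda_n}$ is the heart of the matter; the incompleteness and minimality of $E$, by contrast, are routine.
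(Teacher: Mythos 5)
Your proposal is correct and follows essentially the same route as the paper: reduce to the distance lower bound via the Hilbert-space identity $\|r_{n,k}\|=1/d_{n,k}$, dualize, and build the test function as the inverse transform of the Luxemburg--Korevaar canonical product multiplied by a fixed compactly-supported-transform mollifier and a local Taylor/Laurent correction at $\lambda_n$ --- which is exactly the paper's $G_{n,k}(z)=\frac{G(z)}{k!}\sum_{l}A_{n,k+l}(z-i\lambda_n)^{-l}$ with $G(z)=e^{-i\sigma z}\prod_n(1+z^2/\lambda_n^2)^{\mu_n}\prod_n\cos(\epsilon_n z)$ in the role of your $q_\Lambda\cdot\Theta$. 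The two estimates you single out as the heart of the matter are precisely the paper's Lemma 2.2 (minimum modulus of the canonical product on small circles about the $\lambda_n$, obtained from the interpolating-variety condition and Carath\'{e}odory's inequality) and the sectorial lower bound $|H(z)|\ge M_\epsilon e^{-\epsilon|z|}e^{\tau\Im z}$ for the cosine-product multiplier, so your plan matches the paper's proof step for step.
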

It is well known that upper bounds as above yield solutions $f$ to Moment Problems of type
\[
\int_{0}^{T} f(t)\cdot t^k e^{-\lambda_n t} \, dt=
d_{n,k}\qquad \forall\,\, n\in\mathbb{N}\quad k=0,1,2,\dots, m,
\]
and these proved to be crucial in Control Theory for Partial Differential Equations,
starting with the pioneering work of Fattorini and Russell \cite{1971FR} and followed by a vast amount of work done after that.
We mention here several papers of the last decade on the above topics
\cite{2010JFA,Glass2010,2011JPA,2011MicuZuazua,2011Micu,2012Micu,2014JFA,2014Control,
Lissy2014,2014Bugariu,2014Bugariua,2014Bugariub,2014Bugariuc,
2016JMAA,2016Bugariu,2016Micu,Lissy2017,Lissy2017b,Cannarsa2017,2018Micu,2018Allonsius,2019JPA,Lissy2019,2020AHL,
2021Allonsius,Cannarsa2020a,Cannarsa2020b,
2021Bhandari,2021Bhandarib,2022Gonzalez-Burgos}.\\

Motivated by all these, our main goals in this article are to obtain substantial generalizations of Theorems $\bf A$ and $\bf B$
as well as to derive solutions $f$ to Moment Problems, with $f$ not only belonging in $L^2 (0,T)$ but also extended
analytically in some sector of the complex plane. In addition, we will characterize the solution space of
a differential equation of infinite order studied in the past by L. Carleson \cite{Carleson}.

We will describe in detail our various goals in the upcoming subsections and state our results as well.
But first, let us introduce some notations and definitions.

\subsection{Notation}

For $p\ge 1$ and real numbers $\gamma$ and $\beta$ with $\gamma<\beta$,
let $L^p (\gamma,\beta )$ be the Banach space of complex-valued measurable functions defined
on the bounded interval $(\gamma,\beta)$ on the real line such that
$\int_{\gamma}^{\beta}|f(x)|^p\, dx<\infty$. The $L^p (\gamma,\beta )$ space is equipped with the norm
\[
||f||_{L^p (\gamma,\beta)}:=\left(\int_{\gamma}^{\beta}|f(x)|^p\, dx\right)^{\frac{1}{p}},
\]
and $L^2(\gamma,\beta)$ is a Hilbert space once endowed with the inner product
\[
\langle f,g \rangle:=\int_{\gamma}^{\beta} f(x)\overline{g(x)}\, dx.
\]

Next, for a set of complex numbers having multiple terms
\[
\{\lambda_n,\mu_n\}_{n=1}^{\infty}:=\{\underbrace{\lambda_1,\lambda_1,\dots,\lambda_1}_{\mu_1 - times},
\underbrace{\lambda_2,\lambda_2,\dots,\lambda_2}_{\mu_2 - times},\dots,
\underbrace{\lambda_k,\lambda_k,\dots,\lambda_k}_{\mu_k - times},\dots\},
\]
we refer to it by the name  multiplicity sequence $\Lambda=\{\lambda_n,\mu_n\}_{n=1}^{\infty}$,
where
\begin{itemize}
\item
$\{\lambda_n\}_{n=1}^{\infty}$ is a sequence of distinct complex or real numbers  diverging to infinity, enumerated
so that $0<|\lambda_{n}|\le |\lambda_{n+1}|$ for all $n\in \mathbb{N}$ and $-\pi<\arg\lambda_n<\arg\lambda_{n+1}\le \pi$ whenever $|\lambda_n|=|\lambda_{n+1}|$.
\item
$\{\mu_n\}_{n=1}^{\infty}$ is a sequence of positive integers, not necessarily bounded, that is $\mu_n\not= O(1)$ is possible.
\end{itemize}
Each term $\lambda_n$ in $\Lambda$ is repeated exactly $\mu_n$ times.
If $\mu_n=1$ for all $n\in\mathbb{N}$ we simply write $\Lambda=\{\lambda_n\}_{n=1}^{\infty}$.

To a multiplicity sequence $\Lambda=\{\lambda_n,\mu_n\}_{n=1}^{\infty}$, we associate the exponential system
\[
E_{\Lambda}:=\{e_{n,k}:\,\, n\in\mathbb{N}, \,\, k=0,1,2,\dots,\mu_n-1\}\qquad
\text{where}\qquad e_{n,k}(x):=x^ke^{\lambda_n x}.
\]
We denote by $\text{span}(E_{\Lambda})$ the set of all finite linear combinations of elements from $E_{\Lambda}$,
that is the set of all exponential polynomials of the form
\[
P(x)=\sum_{n=1}^{m}\left(\sum_{k=0}^{\mu_n-1}c_{m,n,k}x^k\right)e^{\lambda_n x},\quad
c_{m,n,k}\in\mathbb{C}.
\]
We say that a function $f:(\gamma, \beta)\mapsto \mathbb{C}$ belongs to the closed span of the system $E_{\Lambda}$ in $L^p(\gamma,\beta)$,
if for every $\epsilon>0$ there is an exponential polynomial $P$ so that $||f-P||_{L^p(\gamma,\beta)}<\epsilon$.

We also say that a family of functions
\begin{equation}\label{BioFamily}
r_{\Lambda}:=\{r_{n,k}:\, n\in\mathbb{N},\, k=0,1,\dots,\mu_n-1\}\subset L^2(\gamma, \beta)
\end{equation}
is a biorthogonal sequence to the exponential system $E_{\Lambda}$ in $L^2 (\gamma, \beta)$ if
\[
\langle r_{n,k}, e_{n,k} \rangle = \int_{\gamma}^{\beta}r_{n,k}(x) x^l e^{\overline{\lambda_j} x}\, dx =
\begin{cases} 1, & j=n,\,\,
l=k, \\ 0,  & j=n,\,\,
l\in\{0,1,\dots,\mu_n-1\}\setminus\{k\}, \\ 0, &
j\not=n,\,\, l\in\{0,1,\dots,\mu_j-1\}.\end{cases}
\]

And finally, we denote by $A^0_{|z|}$ the space of entire functions of exponential type zero.
An entire function $F$ belongs to $A^0_{|z|}$,
if for every $\epsilon>0$ there is a positive constant $M_{\epsilon}$,
depending only on $\epsilon$ and $F$, such that
$|F(z)|\le M_{\epsilon}e^{\epsilon |z|}$ for all $z\in \mathbb{C}$.

\subsection{The $ABC$ class}

We remark that our various results in this article hold under the assumption that $\Lambda$ belongs to
a certain class denoted by $ABC$.

\begin{definition}\label{ABC}
$ABC$ is the class of the multiplicity sequences $\Lambda=\{\lambda_n,\mu_n\}_{n=1}^{\infty}$ that satisfy the following three conditions.

$(A):$ The M\"{u}ntz-Sz\'{a}sz convergence condition
\begin{equation}\label{convergencecondition}
\sum_{n=1}^{\infty}\frac{\mu_n}{|\lambda_n|}<\infty.
\end{equation}

$(B):$ The $\lambda_n$ are in some sector of the right half-plane such that for some $\eta\in [0,\pi/2)$ we have
\begin{equation}\label{lessthan}
\sup_{n\in\mathbb{N}}|\arg\lambda_n|\le\eta.
\end{equation}

$(C):$ $\Lambda$ is an $interpolating\,\, variety$  for the space $A^0_{|z|}$.
\end{definition}
\begin{remark}
For $\beta\in\mathbb{R}$, due to $(\ref{lessthan})$ and if $\eta>0$, we will be considering the sector
\begin{equation}\label{opensector}
\Theta_{\eta ,\beta}:=\left\{z:\left|\frac{\Im z}{\Re (z-\beta)}\right|\le \frac{1}{\tan\eta},\,\,\Re z<\beta\right\},
\qquad \eta=\sup_{n\in\mathbb{N}} |\arg\lambda_n |.
\end{equation}
If $\eta=0$ then $\Theta_{0,\beta}$ will be the half-plane $\Re z<\beta$.
\end{remark}

Now, regarding condition $(C)$, the topic of such interpolating varieties was investigated thoroughly in \cite{BerBaoVidras}.
We will recall the definitions and results of that paper in Section 2 accompanied by several examples.
We point out that in \cite[Lemmas 3.1 and 3.2]{Z2015JMAA} we found a connection between interpolating varieties for $A^0_{|z|}$
and the Krivosheev characteristic  $S_{\Lambda}$ (see \cite[Section 3]{Kriv}),
a notion very similar in nature, but more general, to the Bernstein Condensation Index
(see \cite[Definition 3.1]{2014JFA}  and \cite[Definition 1.1]{2019JPA}).
Given a multiplicity sequence $\Lambda=\{\lambda_n,\mu_n\}_{n=1}^{\infty}$,
$S_{\Lambda}$ measures in some sense how close the $\lambda_n$'s are to each other, whereas the condensation index measures
the closeness too but it is for $\Lambda$ having simple terms $\lambda_n$.
We note that recently, this index has played an important role in papers dealing with Control Theory for PDE's \cite{2014JFA,2019JPA}.

Let us present for now just two examples of $\Lambda\in ABC$, where the multiplicities $\mu_n$ can be bounded as in Example $\ref{exb}$,
or might diverge to infinity as in Example $\ref{exinf}$.

\begin{example}\label{exb}
Let $\Lambda=\{\lambda_n, k\}_{n=1}^{\infty}$ where $k$ is a positive integer and $\{\lambda_n\}_{n=1}^{\infty}$
is a sequence of distinct complex numbers satisfying the condition $(\ref{LKcondition})$.
\end{example}

\begin{example}\label{exinf}
Let $\Lambda=\{\lambda_n,\mu_n\}_{n=1}^{\infty}$ be so that
\[
\sup_{n\in\mathbb{N}}|\arg\lambda_n|<\pi/2,\qquad\inf_{n\in\mathbb{N}}\frac{|\lambda_{n+1}|}{|\lambda_n|}>1,
\qquad\text{and}\qquad \mu_n=O(|\lambda_n|^{\alpha})\quad 0\le \alpha<1.
\]
\end{example}
(e.g, $\Lambda=\{3^n,2^n\}_{n=1}^{\infty}$)

\subsection{Our Goals}

Assuming that a multiplicity sequence $\Lambda$ belongs to the $ABC$ class, our goals are as follows:

\begin{itemize}

\item Generalize Theorem $\bf A$ by characterizing the closed span of the system $E_{\Lambda}$
in the $L^p(\gamma,\beta)$ space in terms of a Fourier-type series called Taylor-Dirichlet series
(see Theorems $\ref{theorem1}$, $\ref{converse}$, and $\ref{IFF}$).

\item Generalize Theorem $\bf B$ by
proving that there exists a family of functions $r_{\Lambda}$ $(\ref{BioFamily})$
biorthogonal to the system $E_{\Lambda}$ in $L^2(\gamma,\beta)$,
such that the closed span of $r_{\Lambda}$ is not just a subspace of the closed span of $E_{\Lambda}$ in $L^2(\gamma,\beta)$,
but in fact the two closures are equal. In addition, we show that the system $E_{\Lambda}$ is hereditarily complete in this closure.
Moreover, we obtain sharp upper bounds as in $(\ref{rnkboundJMPA})$
for the norm of the elements of $r_{\Lambda}$ as well as Fourier-type series representations for them
(see Theorem $\ref{biorthogonalsystem}$).

\item
Given a sequence of non-zero complex numbers $\{d_{n,k}:\,\, n\in\mathbb{N}, \,\, k=0,1,2,\dots, \mu_n-1\}$
subject to the condition $d_{n,k}=O(e^{a\Re\lambda_n})$ for $a<\beta$,
we find a solution $f\in L^2 (\gamma, \beta)$ to the Moment Problem
\begin{equation}\label{mp1}
\langle f, e_{n,k} \rangle =\int_{\gamma}^{\beta}f(t)\cdot t^k e^{\overline{\lambda_n} t}\, dt=d_{n,k},
\qquad \forall\,\, n\in\mathbb{N}\quad\text{and}\quad k=0,1,\dots ,\mu_n-1,
\end{equation}
(see Theorem $\ref{MomentProblem}$). In fact the solution extends analytically in the sector
$\Theta_{\eta ,\beta}$ $(\ref{opensector})$.

\item
And finally, we characterize the solution space of a differential equation of infinite order on a bounded interval $(\gamma, \beta)$
(see Theorem $\ref{CarlesonTheorem}$), studied by L. Carleson and A. F. Leontev.

\end{itemize}

\subsection{The Fundamental Result}

In order to achieve our goals, we need Theorem $\ref{Distances}$, proved in Section 4,
which we call Our Fundamental Result.
Given a multiplicity sequence $\Lambda=\{\lambda_n, \mu_n\}_{n=1}^{\infty}$ in the $ABC$ class, then
for every fixed $n\in\mathbb{N}$ and $k=0,1,\dots, \mu_n-1$, we denote by $E_{\Lambda_{n,k}}$
the exponential system $E_{\Lambda}$ excluding the element $e_{n,k}(x)=x^ke^{\lambda_n x}$, that is
\[
E_{\Lambda_{n,k}}:=E_{\Lambda}\setminus e_{n,k}.
\]
We then denote by $D_{\gamma,\beta,p,n,k}$ the $\bf Distance$ between $e_{n,k}$
and the closed span of $E_{\Lambda_{n,k}}$ in $L^p(\gamma,\beta)$,
\[
D_{\gamma,\beta,p,n,k}:=\inf_{g\in \overline{\text{span}} (E_{\Lambda_{n,k}})} ||e_{n,k}-g||_{L^p (\gamma,\beta)}.
\]
We will derive the very important lower bound $(\ref{distancelowerbounds})$ for $D_{\gamma,\beta,p,n,k}$.
\begin{theorem}\label{Distances}
Let the multiplicity sequence $\Lambda=\{\lambda_n,\mu_n\}_{n=1}^{\infty}$ belong to the $ABC$ class
and consider a bounded interval $(\gamma,\beta)$.
For every $\epsilon>0$ there is a constant $u_{\epsilon}>0$, independent of $p\ge 1$, $n\in\mathbb{N}$ and $k=0,1,\dots,\mu_n-1$,
but depending on $\Lambda$ and $(\beta-\gamma)$, so that
\begin{equation}\label{distancelowerbounds}
D_{\gamma,\beta,p,n,k}\ge u_{\epsilon}e^{(\beta-\epsilon)\Re\lambda_n}.
\end{equation}
\end{theorem}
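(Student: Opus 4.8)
The plan is to argue by duality. First I would reduce to $p=1$: on the bounded interval, H\"older's inequality gives $\|h\|_{L^p(\gamma,\beta)}\ge c_0\|h\|_{L^1(\gamma,\beta)}$ for every $p\ge1$, where $c_0:=\min\{1,(\beta-\gamma)^{-1}\}$, so $D_{\gamma,\beta,p,n,k}\ge c_0\,D_{\gamma,\beta,1,n,k}$ and it is enough to prove $(\ref{distancelowerbounds})$ for $p=1$ (the constant $c_0$ being absorbed into $u_\epsilon$). By the Hahn--Banach description of the distance to a subspace, for \emph{any} $\psi\in L^\infty(\gamma,\beta)$ annihilating $E_{\Lambda_{n,k}}$ — that is, with $\int_\gamma^\beta x^l e^{\lambda_j x}\psi(x)\,dx=0$ for every admissible $(j,l)\ne(n,k)$ — one has
\[
D_{\gamma,\beta,1,n,k}\ \ge\ \frac{\bigl|\int_\gamma^\beta x^k e^{\lambda_n x}\psi(x)\,dx\bigr|}{\|\psi\|_{L^\infty(\gamma,\beta)}}.
\]
So the theorem reduces to producing, for each $\epsilon>0$ and each $(n,k)$, one such $\psi$ with numerator $\ge e^{(\beta-\epsilon/2)\Re\lambda_n}$ and $\|\psi\|_{L^\infty(\gamma,\beta)}\le e^{(\epsilon/2)\Re\lambda_n}$, up to constants depending only on $\Lambda$ and $\beta-\gamma$.

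To build $\psi$ I would fix a small $\delta\in(0,\beta-\gamma)$ and pass to the generating function $\phi(w):=\int_\gamma^\beta\psi(x)e^{wx}\,dx$, so that $\int_\gamma^\beta x^l e^{\lambda_j x}\psi(x)\,dx=\phi^{(l)}(\lambda_j)$; the annihilation conditions become: $\phi$ vanishes to order $\ge\mu_j$ at each $\lambda_j$, $j\ne n$, and the $\mu_n$-jet of $\phi$ at $\lambda_n$ is a nonzero multiple of $(w-\lambda_n)^k$. Let $L(w):=\prod_{j=1}^{\infty}(1-w/\lambda_j)^{\mu_j}$ be the Luxemburg--Korevaar product (convergent and in $A^0_{|z|}$ by $(A)$), put $P_n(w):=L(w)\,(1-w/\lambda_n)^{-\mu_n}=\prod_{j\ne n}(1-w/\lambda_j)^{\mu_j}$, and choose an entire ``mollifier'' $M$ of exponential type $\le\delta$ with all zeros on the imaginary axis (hence off the sector $(\ref{lessthan})$ of the $\lambda_j$), with $|M(w)|\le1$ for $\Re w\le0$ and $|M(w)|\le e^{\delta\Re w}$ for $\Re w\ge0$, and with $|M(iy)|$ decaying fast enough on $i\mathbb R$ to beat both $|P_n(iy)|\le\prod_j(1+|y|/|\lambda_j|)^{\mu_j}$ (which grows only subexponentially, by $(A)$) and any fixed polynomial weight; such $M$ of arbitrarily small type exists because $\log|L(iy)|$ is $(1+y^2)^{-1}$--integrable (again from $(A)$, using $(B)$ to keep the zeros off $i\mathbb R$), a classical Beurling--type construction. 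Finally let $B_n$ be the polynomial of degree $\le\mu_n-1-k$ determined by $e^{(\beta-\delta)(w-\lambda_n)}M(w)B_n(w)P_n(w)=1+O((w-\lambda_n)^{\mu_n-k})$ as $w\to\lambda_n$, and set
\[
\phi(w):=e^{(\beta-\delta)w}\,M(w)\,(w-\lambda_n)^{k}\,B_n(w)\,P_n(w).
\]
Then $\phi$ vanishes to order $\ge\mu_j$ at each $\lambda_j$, $j\ne n$; near $\lambda_n$ one has $\phi(w)=e^{(\beta-\delta)\lambda_n}\bigl((w-\lambda_n)^{k}+O((w-\lambda_n)^{\mu_n})\bigr)$, so $\phi^{(l)}(\lambda_n)=0$ for $l\ne k$ and $\phi^{(k)}(\lambda_n)=k!\,e^{(\beta-\delta)\lambda_n}$; and recovering $\psi$ by Fourier inversion, the half-plane bounds on $M$ (shift the inversion contour to $\Re w=\pm R$) give $\operatorname{supp}\psi\subseteq[\beta-\delta,\beta]\subset(\gamma,\beta)$ and $\|\psi\|_{L^\infty(\gamma,\beta)}\le\frac1{2\pi}\int_{\mathbb R}|\phi(iy)|\,dy$. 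The numerator is then exactly $k!\,e^{(\beta-\delta)\Re\lambda_n}$.

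The remaining — and hardest — step is to bound $\int_{\mathbb R}|\phi(iy)|\,dy=\int_{\mathbb R}|M(iy)|\,|iy-\lambda_n|^{k}\,|B_n(iy)|\,|P_n(iy)|\,dy$ by $C_\delta\,e^{(\epsilon/2)\Re\lambda_n}$ \emph{uniformly in $n$ and $k$}. With $M$ chosen as above, the only quantity not yet controlled is the size of the coefficients of $B_n$. By Cauchy's estimate on a circle $|w-\lambda_n|=\rho_n$ these are $\le\rho_n^{-j}\sup_{|w-\lambda_n|=\rho_n}\bigl|e^{-(\beta-\delta)(w-\lambda_n)}/(M(w)P_n(w))\bigr|$, and here condition $(C)$ enters decisively: as recalled in Section~2 (cf. \cite{BerBaoVidras}, \cite{Z2015JMAA}), $\Lambda$ being an interpolating variety for $A^0_{|z|}$ supplies, for every $\epsilon>0$, both $|P_n(\lambda_n)|\ge c_\epsilon e^{-\epsilon|\lambda_n|}$ and a usable separation radius $\rho_n\ge e^{-\epsilon|\lambda_n|}$, with $c_\epsilon$ independent of $n$; a minimum-modulus (Cartan) estimate for the canonical product $P_n$ then bounds the supremum, and the coefficients of $B_n$ come out only subexponentially large in $|\lambda_n|$, uniformly in $k$. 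Putting this together and invoking $(B)$ (so $|\lambda_n|\le\Re\lambda_n/\cos\eta$) to turn $e^{o(|\lambda_n|)}$ into $e^{(\epsilon/2)\Re\lambda_n}$ yields $\|\psi\|_{L^\infty(\gamma,\beta)}\le C_\delta e^{(\epsilon/2)\Re\lambda_n}$; combined with the numerator and a final small choice of $\delta$ this gives $(\ref{distancelowerbounds})$ for $p=1$, and hence for all $p\ge1$. I expect the main obstacle to be exactly this uniform-in-$(n,k)$ control of $B_n$ when the multiplicities $\mu_n$ are unbounded (Example~\ref{exinf}): then $\deg B_n\to\infty$, so one needs good estimates for $P_n$ and its first $\mu_n$ derivatives on an honest disk about $\lambda_n$, not merely at the point itself — which is precisely why the full interpolating-variety hypothesis $(C)$, rather than a bare non-clustering condition, is required.
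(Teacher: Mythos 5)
Your strategy is, modulo the rotation $w\mapsto iw$ and a change of mollifier, the one the paper uses: the annihilating functional is realized as the inverse Fourier--Laplace transform of an entire function of exponential type of the form (canonical product over $\Lambda$) $\times$ (small-type mollifier) $\times$ (jet correction at $\lambda_n$); the support and the norm bound come from Paley--Wiener together with decay of the mollifier on the critical line; H\"older then converts the pairing into the distance bound, uniformly in $p$; and condition $(C)$ is what controls the jet correction. In the paper the mollifier is the Luxemburg--Korevaar factor $e^{-i\sigma z}\prod\cos(\epsilon_nz)$ of Theorem $\bf C$ (so the annihilator lives on all of $[\gamma,\beta]$ rather than on $[\beta-\delta,\beta]$), and your polynomial $B_n$ is exactly the truncated principal part $\sum_{l}A_{n,k+l}(z-i\lambda_n)^{-l}$ of the Laurent expansion of $1/G$ in Lemma $\ref{FirstLemma}$ --- the two bookkeeping devices coincide.

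The one place where your sketch, as written, would fail is the step you yourself flag as hardest. You propose to run Cauchy's estimate for the coefficients of $B_n$ on a circle of radius $\rho_n\ge e^{-\epsilon|\lambda_n|}$; but then the factor $\rho_n^{-j}$ for $j$ up to $\mu_n-1$ can be as large as $e^{\epsilon\mu_n|\lambda_n|}$, which is super-exponential in $|\lambda_n|$ precisely when the multiplicities are unbounded (Example $\ref{exinf}$). The separation that $(C)$ actually delivers is the stronger, $\mu_n$-normalized bound of Lemma $\ref{zeroandgapcondition}$, namely $\inf_{k\not=n}|\lambda_n-\lambda_k|\ge m_\epsilon e^{-\epsilon|\lambda_n|/\mu_n}$, and it is essential to take $\rho_n$ of this size, so that $\rho_n^{-\mu_n}\le m_\epsilon^{-\mu_n}e^{\epsilon|\lambda_n|}=e^{O(\epsilon)|\lambda_n|}$ by $(\ref{munlambdan})$. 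Relatedly, a Cartan-type minimum-modulus estimate cannot supply a lower bound for $|M\,P_n|$ on a \emph{prescribed} circle, since it only holds off an exceptional union of disks you do not control; the paper instead applies the Carath\'eodory inequality (Theorem $\bf D$) to the zero-free normalized function $F_n(z)=F(z)(z-\lambda_n)^{-\mu_n}\mu_n!/F^{(\mu_n)}(\lambda_n)$ on the disk $D_{n,\epsilon}$, transferring the point bound at $\lambda_n$ (the analytic interpolation condition) to a lower bound on the concentric circle of one-third the radius (Lemma $\ref{anevenfunction}$), and handles the mollifier separately via the sectorial lower bound $(\ref{lowerboundforproductcos})$. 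With these two corrections your construction goes through and is equivalent to the paper's proof.
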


\subsection{First goal: characterizing the Closed Span of the system $E_{\Lambda}$ in $L^p(\gamma, \beta)$}

Theorem $\ref{IFF}$ generalizes Theorem $\bf A$ and is split into two parts. First we prove:
\begin{theorem}\label{theorem1}
Let the multiplicity sequence $\Lambda=\{\lambda_n,\mu_n\}_{n=1}^{\infty}$ belong to the $ABC$ class.
Then for every function $f$ belonging to the closed span of $E_{\Lambda}$ in $L^p (\gamma,\beta)$,
there exists an analytic function $g(z)$ in the open sector $\Theta_{\eta ,\beta}$ $(\ref{opensector})$,
admitting a unique Taylor-Dirichlet series representation of the form
\begin{equation}\label{uniqueTDseries}
g(z)=\sum_{n=1}^{\infty}\left(\sum_{k=0}^{\mu_n-1} c_{n,k}z^k\right)e^{\lambda_n z},
\quad c_{n,k}\in\mathbb{C},
\end{equation}
converging uniformly on compact subsets of $\Theta_{\eta , \beta}$, so that
\[
f(x)=g(x)\quad \text{almost everywhere on}\quad (\gamma,\beta).
\]
The coefficients $c_{n,k}$ satisfy the upper bound
\begin{equation}\label{cnkbound}
\forall\,\,\epsilon>0\quad \exists\,\, m_{\epsilon}>0:\quad |c_{n,k}|\le  m_{\epsilon} e^{(-\beta+\epsilon)\Re\lambda_n},\quad\forall\,\,n\in\mathbb{N}\quad\text{and}\quad \forall\,\, k=0,1,\dots,\mu_n-1.
\end{equation}
\end{theorem}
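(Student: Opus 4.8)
The plan is to convert the distance lower bound of Theorem~\ref{Distances} into a family of biorthogonal functionals, use these simultaneously to define the coefficients $c_{n,k}$ and to prove the estimate $(\ref{cnkbound})$, and then show that the resulting Taylor--Dirichlet series converges on the sector and reproduces $f$. First, since Theorem~\ref{Distances} gives $D_{\gamma,\beta,p,n,k}>0$ for all $n,k$, each $e_{n,k}$ lies at positive distance from $\overline{\text{span}}(E_{\Lambda_{n,k}})$, so $E_{\Lambda}$ is minimal in $L^p(\gamma,\beta)$; by the Hahn--Banach theorem I would produce, for every pair $(n,k)$, a functional $\psi_{n,k}\in\big(L^p(\gamma,\beta)\big)^{*}$ vanishing on $\overline{\text{span}}(E_{\Lambda_{n,k}})$, with $\psi_{n,k}(e_{n,k})=1$ and $\|\psi_{n,k}\|=1/D_{\gamma,\beta,p,n,k}$. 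Then $\{\psi_{n,k}\}$ is biorthogonal to $E_{\Lambda}$ and, by $(\ref{distancelowerbounds})$, $\|\psi_{n,k}\|\le u_{\epsilon}^{-1}e^{(-\beta+\epsilon)\Re\lambda_n}$ for every $\epsilon>0$. For $f$ in the closed span of $E_{\Lambda}$ I set $c_{n,k}:=\psi_{n,k}(f)$; then $|c_{n,k}|\le\|\psi_{n,k}\|\,\|f\|_{L^p(\gamma,\beta)}\le m_{\epsilon}e^{(-\beta+\epsilon)\Re\lambda_n}$ with $m_{\epsilon}:=u_{\epsilon}^{-1}\|f\|_{L^p(\gamma,\beta)}$, which is exactly $(\ref{cnkbound})$.

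Next I would establish the convergence of the series $(\ref{uniqueTDseries})$ on the open sector. The geometric point is that for $z$ interior to $\Theta_{\eta,\beta}$ $(\ref{opensector})$ one has $\Re(\beta-z)>0$ and $|\arg(\beta-z)|\le\pi/2-\eta$, hence $|\arg\big(\lambda_n(\beta-z)\big)|\le\pi/2$ and $\Re\big(\lambda_n(z-\beta)\big)\le0$; on a compact subset $K$ of the interior this improves to $\Re\big(\lambda_n(z-\beta)\big)\le-c_K|\lambda_n|$ for some $c_K>0$, uniformly in $n$, because on $K$ the quantity $|\arg(\beta-z)|$ stays bounded away from $\pi/2-\eta$ and $|\beta-z|$ away from $0$. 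Writing $\Re(\lambda_n z)=\beta\Re\lambda_n+\Re(\lambda_n(z-\beta))$ and using $(\ref{cnkbound})$ with $R_K:=\max_{z\in K}|z|$ gives
\[
\Big|\sum_{k=0}^{\mu_n-1}c_{n,k}z^k e^{\lambda_n z}\Big|\le m_{\epsilon}\,\mu_n\max(1,R_K)^{\mu_n}\,e^{\epsilon\Re\lambda_n-c_K|\lambda_n|},\qquad z\in K .
\]
Choosing $\epsilon<c_K$ and invoking condition $(A)$, i.e. $(\ref{convergencecondition})$, so that $\mu_n/|\lambda_n|\to0$ (whence $\mu_n\ln\max(1,R_K)\le\tfrac12(c_K-\epsilon)|\lambda_n|$ eventually and $\mu_n e^{-c|\lambda_n|}\le C\,\mu_n/|\lambda_n|$ for large $n$), the right-hand side is summable. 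Thus $(\ref{uniqueTDseries})$ converges absolutely and uniformly on compact subsets of the open sector and defines there an analytic function $g$.

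The main obstacle is to prove $f=g$ a.e.\ on $(\gamma,\beta)$, because one must reconcile $L^p$-convergence of the approximating exponential polynomials with locally uniform convergence of the Dirichlet series. I would choose exponential polynomials $P_m\to f$ in $L^p(\gamma,\beta)$, $P_m=\sum_{n\le N_m}\big(\sum_k c^{(m)}_{n,k}z^k\big)e^{\lambda_n z}$. Applying $\psi_{n,k}$ yields $c^{(m)}_{n,k}=\psi_{n,k}(P_m)\to\psi_{n,k}(f)=c_{n,k}$, and since $\sup_m\|P_m\|_{L^p(\gamma,\beta)}<\infty$ the coefficients $c^{(m)}_{n,k}$ satisfy the bound of $(\ref{cnkbound})$ uniformly in $m$. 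Hence the estimate of the previous paragraph applies verbatim to each $P_m$, so $\{P_m\}$ is uniformly bounded on compact subsets of the open sector; by Montel's theorem a subsequence $P_{m_j}$ converges, uniformly on compacta, to an analytic $h$ on the sector. On $(\gamma,\beta)$ this convergence is in $L^1_{\mathrm{loc}}$ while $P_m\to f$ in $L^p(\gamma,\beta)$, so $h=f$ a.e.\ on $(\gamma,\beta)$. Finally, for each fixed $N$ the truncations $\sum_{n\le N}\big(\sum_k c^{(m_j)}_{n,k}z^k\big)e^{\lambda_n z}$ converge (uniformly on $K$) to $\sum_{n\le N}\big(\sum_k c_{n,k}z^k\big)e^{\lambda_n z}$, while the tails $\sum_{n>N}$ are bounded by $C\sum_{n>N}\mu_n/|\lambda_n|$ uniformly in $j$; letting $j\to\infty$ and then $N\to\infty$ gives $h=g$ on the sector, so $f=g$ a.e.\ on $(\gamma,\beta)$.

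It remains to prove uniqueness of the representation $(\ref{uniqueTDseries})$. If two such series represent the same $g$ on the sector, I would fix $\beta'\in(\gamma,\beta)$ and note that, taking $\epsilon<\beta-\beta'$ in $(\ref{cnkbound})$, both series converge uniformly on $(\gamma,\beta')$ and hence in $L^p(\gamma,\beta')$; applying to the identically vanishing difference the biorthogonal functionals on $L^p(\gamma,\beta')$ provided by Theorem~\ref{Distances} with $\beta'$ in place of $\beta$ forces all coefficients to agree. The delicate steps are the normal-family argument of the third paragraph (the detour through Montel's theorem being precisely what bridges $L^p$ and locally uniform convergence) and the sector geometry of the second paragraph, where the bound $(\ref{distancelowerbounds})$ is used to exactly cancel the boundary growth $e^{\beta\Re\lambda_n}$ of $e^{\lambda_n z}$; the coefficient bound $(\ref{cnkbound})$ is obtained essentially for free along the way.
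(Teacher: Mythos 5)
Your proof is correct, and its backbone --- everything flows from the distance bound of Theorem~\ref{Distances} --- is the same as the paper's; the differences are in packaging, plus one genuinely different step. Where you invoke Hahn--Banach to produce biorthogonal functionals $\psi_{n,k}$ of norm $1/D_{\gamma,\beta,p,n,k}$ and set $c_{n,k}=\psi_{n,k}(f)$, the paper proves the equivalent Corollary~\ref{corbound} directly (coefficient bounds for an exponential polynomial and for the difference of two such polynomials) and obtains $c_{n,k}$ as the limit of a Cauchy sequence of polynomial coefficients; the two devices are interchangeable, and yours is slightly cleaner in that it defines $c_{n,k}$ intrinsically as $\psi_{n,k}(f)$. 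Your sector estimate reproduces the content of the paper's Lemma~\ref{TDSeries}. In the identification step your Montel detour is redundant: the head/tail estimate you use at the end to show $h=g$ already proves directly that $P_m\to g$ uniformly on compacta (this is exactly the paper's splitting into $I(x)+II(x)$ in Section~6.1), so no normal-family argument is needed --- harmless, but worth trimming. The one genuinely different ingredient is uniqueness: the paper proves it (Lemma~\ref{uniquenessDirichlet}) via the Borel transform and P\'{o}lya contour integrals, whereas you apply the $L^p(\gamma,\beta')$ biorthogonal functionals to the difference of the two series on a slightly smaller interval. Your route works, but the justification should be adjusted: the $L^p(\gamma,\beta')$-convergence of the \emph{second} series cannot be deduced from $(\ref{cnkbound})$, which at that point you have established only for the constructed representation; instead it follows directly from the hypothesis, since $[\gamma,\beta']$ is itself a compact subset of $\Theta_{\eta,\beta}$ on which both series are assumed to converge uniformly.
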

\begin{remark}
In \cite{Z2011JAT}, the above result was proved under the assumption that $\Lambda$
belonged to a certain class denoted by $U_{\eta}$, a smaller class compared to $ABC$.
\end{remark}
\begin{remark}
For $p\ge 2$, the $c_{n,k}$ coefficients are in fact equal to the inner product $\langle f, r_{n,k}\rangle$ (see $(\ref{representationf})$)
where $\{r_{n,k}\}$  is a family biorthogonal to $E_{\Lambda}$ in $L^2 (\gamma, \beta)$.
\end{remark}

Theorem $\ref{theorem1}$ is then supplemented by its converse which reads as follows.

\begin{theorem}\label{converse}
Let the multiplicity sequence $\Lambda=\{\lambda_n,\mu_n\}_{n=1}^{\infty}$ belong to the $ABC$ class.
Suppose that the series
\[
f(z)=\sum_{n=1}^{\infty}\left(\sum_{k=0}^{\mu_n-1} c_{n,k} z^k\right) e^{\lambda_n z},
\quad c_{n,k}\in\mathbb{C},
\]
is an analytic function in the sector $\Theta_{\eta,\beta}$ $(\ref{opensector})$
and $f\in L^p(\gamma,\beta)$ for some $p\ge 1$.
Then $f\in\overline{span}(E_{\Lambda})$ in $L^p(\gamma,\beta)$.
\end{theorem}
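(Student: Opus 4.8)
The plan is to prove the converse by approximating the tail of the Taylor-Dirichlet series in the $L^p(\gamma,\beta)$ norm by its partial sums, using the hypothesis that the series converges analytically in the sector $\Theta_{\eta,\beta}$, which contains $(\gamma,\beta]$ in its closure. First I would fix $f$ as in the statement and denote by $S_N(z)=\sum_{n=1}^{N}\bigl(\sum_{k=0}^{\mu_n-1}c_{n,k}z^k\bigr)e^{\lambda_n z}$ its $N$-th partial sum, which is an exponential polynomial and hence lies in $\mathrm{span}(E_\Lambda)$. The goal is to show $\|f-S_N\|_{L^p(\gamma,\beta)}\to 0$. Since $f$ is analytic on the open sector $\Theta_{\eta,\beta}$ and the endpoint $\beta$ lies on its boundary while all of $(\gamma,\beta)$ lies in the interior, the natural route is to pick a slightly larger parameter: choose $\beta'>\beta$ (equivalently shift or dilate) so that the closed interval $[\gamma,\beta]$ sits strictly inside a compact subset of $\Theta_{\eta,\beta}$ on which the series converges uniformly.

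The key estimate is the following. On a compact subset $K\subset\Theta_{\eta,\beta}$ containing $[\gamma,\beta]$, uniform convergence of the Taylor-Dirichlet series gives $\sup_{z\in K}|f(z)-S_N(z)|\to 0$ as $N\to\infty$. Restricting to $z\in(\gamma,\beta)\subset K$ and using that the interval has finite length $\beta-\gamma$, we get
\[
\|f-S_N\|_{L^p(\gamma,\beta)}\le (\beta-\gamma)^{1/p}\sup_{x\in[\gamma,\beta]}|f(x)-S_N(x)|\longrightarrow 0.
\]
Hence $S_N\to f$ in $L^p(\gamma,\beta)$, and since each $S_N\in\mathrm{span}(E_\Lambda)$, by definition $f\in\overline{\mathrm{span}}(E_\Lambda)$ in $L^p(\gamma,\beta)$. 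The passage from "analytic in the open sector $\Theta_{\eta,\beta}$" to "uniformly convergent on a compact neighbourhood of $[\gamma,\beta]$" is where one must be slightly careful: I would invoke the structural fact, established in the development around Theorem $\ref{theorem1}$ (and ultimately traceable to the Luxemburg–Korevaar machinery and the growth bounds $(\ref{cnkbound})$ on the coefficients), that a Taylor-Dirichlet series with $\Lambda\in ABC$ which converges to an analytic function on $\Theta_{\eta,\beta}$ does so uniformly on compact subsets — this is precisely the content asserted in Theorem $\ref{theorem1}$ for the series it produces, and the same Abel/Luxemburg–Korevaar summation argument applies to any such series.

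The main obstacle I anticipate is justifying uniform convergence on a compact set that reaches all the way to the endpoint $\beta$ (on the sector's boundary) rather than merely on compacta strictly inside the open sector. One clean way around this is to avoid touching $\beta$ altogether: for each small $\delta>0$, the interval $[\gamma,\beta-\delta]$ is a compact subset of the \emph{open} sector, so $S_N\to f$ uniformly there, giving $\|f-S_N\|_{L^p(\gamma,\beta-\delta)}\to 0$; then one controls the remaining sliver $\|f-S_N\|_{L^p(\beta-\delta,\beta)}$ by splitting it as $\|f\|_{L^p(\beta-\delta,\beta)}+\|S_N\|_{L^p(\beta-\delta,\beta)}$, using $f\in L^p(\gamma,\beta)$ (so the first term is $o(1)$ as $\delta\to 0$, uniformly in $N$) together with a uniform-in-$N$ bound on $\|S_N\|_{L^p(\beta-\delta,\beta)}$ coming from the coefficient estimates and the convergence of $\sum_n \mu_n/|\lambda_n|$. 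Choosing $\delta$ first and then $N$ large yields the result. I expect the author may instead simply cite the uniform-convergence conclusion of Theorem $\ref{theorem1}$ and dilate the sector, which is shorter; either way the analytic input is already in hand and the remaining work is the routine $L^p$-versus-sup comparison on a bounded interval.
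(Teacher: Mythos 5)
Your fallback argument --- the only one you actually carry out --- has a genuine gap at the sliver $(\beta-\delta,\beta)$. You need $\|S_N\|_{L^p(\beta-\delta,\beta)}$ to be small as $\delta\to 0$ \emph{uniformly in} $N$, and you claim this follows from the coefficient estimates and $\sum_n\mu_n/|\lambda_n|<\infty$. It does not. The only coefficient bound available (Lemma $\ref{TDSeries}$, i.e.\ $(\ref{cnkbound})$) is $|c_{n,k}|\le m_\epsilon e^{(-\beta+\epsilon)\Re\lambda_n}$ for every $\epsilon>0$, and on an interval abutting $\beta$ this gives a term-by-term majorant of order $e^{(-\beta+\epsilon)\Re\lambda_n}\cdot e^{\beta\Re\lambda_n}\cdot\mu_nT^{\mu_n}\approx e^{3\epsilon\Re\lambda_n}$, whose sum over $n$ diverges; the decay degenerates exactly at the endpoint $\beta$, which is a boundary point of $\Theta_{\eta,\beta}$. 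What you actually need is uniform integrability of $\{|S_N|^p\}$ near $\beta$, and by Vitali's theorem that is essentially equivalent to the conclusion $S_N\to f$ in $L^p(\gamma,\beta)$ --- so as written the step is circular. Your first sketched route (a compact subset of $\Theta_{\eta,\beta}$ containing all of $[\gamma,\beta]$) cannot work either, for the same reason: $\beta\notin\Theta_{\eta,\beta}$.

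The paper closes this gap by translating the \emph{function}, not by truncating the series near $\beta$: one considers $f(x-\delta)$, which is a Taylor--Dirichlet series over the same $\Lambda$ (after binomial re-expansion of $(x-\delta)^k$ and absorbing $e^{-\lambda_n\delta}$ into the coefficients) analytic in the strictly larger sector $\Theta_{\eta,\beta+\delta}$, whose interior contains $[\gamma,\beta]$; hence its partial sums, which are genuine elements of $\mathrm{span}(E_\Lambda)$, converge uniformly on $[\gamma,\beta]$. The remaining ingredient is $\|f(\cdot)-f(\cdot-\delta)\|_{L^p(\gamma,\beta)}\to 0$ as $\delta\to 0^+$, which is not automatic from $f\in L^p(\gamma,\beta)$ alone (continuity of translation in $L^p$ is usually stated for functions extended by zero); the paper proves it with a Fatou-lemma argument exploiting that $f$ is continuous on all of $(-\infty,\beta)$. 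You allude to ``dilating the sector'' as something the author might do, but you would need to recognize that the dilation must be applied to $f$ itself and to supply the $L^p$-continuity of translation; neither is present in your write-up.
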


Combining our two results, both proved in Section 6, gives the following Clarkson-Erd\H{o}s-Schwartz Phenomenon
for the closed span of the system $E_{\Lambda}$ in $L^p(\gamma, \beta)$.

\begin{theorem}\label{IFF}
Let the multiplicity sequence $\Lambda=\{\lambda_n,\mu_n\}_{n=1}^{\infty}$ belong to the $ABC$ class
and let $f\in L^p (\gamma, \beta)$. Then $f$ belongs to the closed span of $E_{\Lambda}$ in $L^p (\gamma,\beta)$
if and only if there is a Taylor-Dirichlet series $g(z)$ $(\ref{uniqueTDseries})$ analytic in the open sector
$\Theta_{\eta , \beta}$ $(\ref{opensector})$,
converging uniformly on compacta, so that $f(x)=g(x)$ almost everywhere on $(\gamma,\beta)$.
\end{theorem}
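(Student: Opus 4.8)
The plan is simple: Theorem~\ref{IFF} is exactly the conjunction of Theorems~\ref{theorem1} and~\ref{converse}, so the only task is to glue the two implications and to note that the Taylor--Dirichlet series produced by the first is precisely of the shape consumed by the second. \emph{Necessity} ($f\in\overline{\text{span}}(E_{\Lambda})$ in $L^p(\gamma,\beta)$ $\Rightarrow$ a suitable series exists) is Theorem~\ref{theorem1}, which I would reach as follows. Pick exponential polynomials $P_m\to f$ in $L^p(\gamma,\beta)$. For each fixed $(n,k)$, extend by Hahn--Banach the coefficient functional $P\mapsto(\text{coefficient of }e_{n,k}\text{ in }P)$ on $\text{span}(E_{\Lambda})$ to a functional $\psi_{n,k}\in(L^p)^*$ annihilating $E_{\Lambda_{n,k}}$ with $\psi_{n,k}(e_{n,k})=1$, so that $\|\psi_{n,k}\|=1/D_{\gamma,\beta,p,n,k}$; the Fundamental Result (Theorem~\ref{Distances}) then gives $\|\psi_{n,k}\|\le u_\epsilon^{-1}e^{-(\beta-\epsilon)\Re\lambda_n}$. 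Hence $c_{m,n,k}=\psi_{n,k}(P_m)\to\psi_{n,k}(f)=:c_{n,k}$, with the bound~(\ref{cnkbound}) holding uniformly in $m$. Using~(\ref{cnkbound}) together with conditions (A) and (B) one checks that $g(z):=\sum_n\big(\sum_k c_{n,k}z^k\big)e^{\lambda_n z}$ converges uniformly on compact subsets of the interior of $\Theta_{\eta,\beta}$ — the point being that on such a compact $K$ one has $\Re(\lambda_n(z-\beta))\le -c_K|\lambda_n|$, after which choosing $\epsilon<c_K$ in~(\ref{cnkbound}) and using $\mu_n=o(|\lambda_n|)$ to absorb the polynomial factors makes the series dominated by $\sum e^{-c|\lambda_n|}<\infty$ — and that $P_m\to g$ locally uniformly there; comparing with $P_m\to f$ in $L^p$ forces $f=g$ a.e.\ on $(\gamma,\beta)$. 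The main obstacle of the whole theorem lives here, inside the distance estimate~(\ref{distancelowerbounds}): its proof is the technical heart and rests on the Luxemburg--Korevaar entire function and the interpolating-variety hypothesis~(C); granting it, everything above is routine estimation with exponential-type-zero bounds.

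\emph{Sufficiency} (a suitable series $\Rightarrow f\in\overline{\text{span}}(E_{\Lambda})$) is Theorem~\ref{converse}, and I would argue by translating the series a little to the right. For small $\epsilon>0$ set $g_\epsilon(z):=g(z-\epsilon)$; expanding $(z-\epsilon)^k$ by the binomial theorem shows $g_\epsilon$ is again a Taylor--Dirichlet series with the same exponents $\lambda_n$ but with coefficients acquiring the decaying factor $e^{-\epsilon\lambda_n}$, and it is analytic on the shifted sector $\Theta_{\eta,\beta+\epsilon}$, which contains the compact interval $[\gamma,\beta]$. Consequently the partial sums of $g_\epsilon$ — genuine elements of $\text{span}(E_{\Lambda})$ — converge to $g_\epsilon$ uniformly on $[\gamma,\beta]$, hence in $L^p(\gamma,\beta)$, so $g_\epsilon\in\overline{\text{span}}(E_{\Lambda})$. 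Since $g$ extends analytically slightly past $\gamma$ and agrees with the $L^p$ function $f$ near $\beta$, it is locally $L^p$ on a neighbourhood of $[\gamma,\beta]\setminus\{\beta\}$, and continuity of translation in $L^p$ yields $g_\epsilon\to g=f$ in $L^p(\gamma,\beta)$ as $\epsilon\to0^+$; closedness of $\overline{\text{span}}(E_{\Lambda})$ finishes this half. The only delicate point is this last passage to the limit up to the vertex $\beta$, which is controlled by the local $L^p$ information about $g$ just past $\gamma$.

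\emph{Assembly.} Putting the two implications side by side gives the claimed equivalence. The two descriptions of the intermediary function match because the $g$ obtained in the necessity direction is exactly a series of the form~(\ref{uniqueTDseries}) analytic on $\Theta_{\eta,\beta}$ and uniformly convergent on compacta — precisely the input of the sufficiency direction — and it is the unique such series by hypothesis~(C). Thus Theorem~\ref{IFF} introduces no new difficulty beyond those already met in Theorems~\ref{theorem1} and~\ref{converse}; the genuine crux of the entire circle of results remains the lower bound~(\ref{distancelowerbounds}) for the distances $D_{\gamma,\beta,p,n,k}$.
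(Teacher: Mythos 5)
Your proposal is correct and follows essentially the paper's own route: Theorem \ref{IFF} is indeed just the conjunction of Theorems \ref{theorem1} and \ref{converse}, with necessity resting on the distance lower bound of Theorem \ref{Distances} to control the coefficients of the approximating exponential polynomials, and sufficiency obtained by translating the series slightly to the right so that $[\gamma,\beta]$ lies inside the enlarged sector $\Theta_{\eta,\beta+\epsilon}$. Your two small variations --- packaging the coefficient bounds as Hahn--Banach functionals of norm $1/D_{\gamma,\beta,p,n,k}$ rather than via the Cauchy-sequence estimates of Corollary \ref{corbound}, and invoking continuity of translation in $L^p$ (after extending $f$ by $g$ just past $\gamma$) in place of the paper's Fatou-lemma computation --- are cosmetic and do not change the substance of the argument.
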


\subsection{Second goal: properties of a Biorthogonal Family $r_{\Lambda}$ to $E_{\Lambda}$ in $L^2 (\gamma, \beta)$}

Our goal here is to give a very strong generalization of Theorem $\bf B$ (see Theorem $\ref{biorthogonalsystem})$.
Given $\Lambda\in ABC$, we first derive the sharp upper bound $(\ref{rnkbound})$ for the norms of the elements $r_{n,k}$
of a biorthogonal family $r_{\Lambda}$ to the exponential system $E_{\Lambda}$ in $L^2(\gamma, \beta)$,
similar to $(\ref{rnkboundJMPA})$.  We then obtain
the Fourier-type series representation $(\ref{representation})$ for these elements and a more general one
$(\ref{representationf})$ for every element in the closure of the span $E_{\Lambda}$ in $L^2(\gamma, \beta)$.
This results in showing that the closed spans of $r_{\Lambda}$ and $E_{\Lambda}$ in $L^2(\gamma, \beta)$ are equal:
that is, the system $E_{\Lambda}$ is a $\bf{Markushevich\,\, basis}$ in its closed span in $L^2(\gamma, \beta)$. In fact,
$E_{\Lambda}$ is also a $\bf{strong\,\,Markushevich\,\, basis}$ in this closure.
This means that if the set
\[
\{(n,k):\,\, n\in\mathbb{N},\,\, k=0,1,\dots,\mu_n-1\}
\]
is written as an $\bf arbitrary$ disjoint union of two sets $N_1$ and $N_2$, that is,
\[
\{(n,k):\,\, n\in\mathbb{N},\,\, k=0,1,\dots,\mu_n-1\}=N_1\cup N_2, \qquad N_1\cap N_2=\emptyset,
\]
then the closed span of the mixed system
\[
\{e_{n,k}:\,\, (n,k)\in N_1\}\cup\{r_{n,k}:\,\, (n,k)\in N_2\},\qquad (e_{n,k}=x^ke^{\lambda_n x})
\]
in $L^2(\gamma, \beta)$, coincides with the closed spans of $r_{\Lambda}$ and $E_{\Lambda}$ in $L^2(\gamma, \beta)$.
In this case we also say that the system $E_{\Lambda}$ is $\bf{hereditarily\,\, complete}$ in its closed span in $L^2(\gamma, \beta)$.
This notion is closely related to the spectral synthesis problem for linear operators \cite{Baranov2013,Baranov2015,Baranov2022}.
In these articles one also finds, amongst other results, examples of exponential systems $\{e^{i\lambda_n t}\}$,
with real $\lambda_n$, which are M-bases but not hereditarily complete in $L^2 (\gamma, \beta)$.

\begin{theorem}\label{biorthogonalsystem}
Let the multiplicity sequence $\Lambda=\{\lambda_n,\mu_n\}_{n=1}^{\infty}$ belong to the $ABC$ class.
Given a bounded interval $(\gamma, \beta)$, there exists a family of functions
\[
r_{\Lambda}=\{r_{n,k}:\,\, n\in\mathbb{N},\,\, k=0,1,\dots,\mu_n-1\}\subset L^2(\gamma, \beta)
\]
so that it is the unique biorthogonal sequence to $E_{\Lambda}$ in $L^2(\gamma,\beta)$ which belongs to the closed span
of the system $E_{\Lambda}$ in $L^2(\gamma,\beta)$, with $r_{\Lambda}$ and $E_{\Lambda}$ having the following properties:

$(I)$ For every $\epsilon>0$ there is a constant $m_{\epsilon}>0$, independent of $n\in\mathbb{N}$ and $k=0,1,\dots,\mu_n-1$, but depending on $\Lambda$ and $(\beta-\gamma)$, so that
\begin{equation}\label{rnkbound}
||r_{n,k}||_{L^2 (\gamma,\beta)} \le  m_{\epsilon}e^{(-\beta+\epsilon)\Re\lambda_n},\qquad \forall\,\, n\in\mathbb{N},\quad k=0,1,\dots,\mu_n-1.
\end{equation}

$(II)$ For $p\ge 2$ and each $f$ in the closed span of the system $E_{\Lambda}$ in $L^p (\gamma, \beta)$,
there exists an analytic function $g$ in the sector $\Theta_{\eta,\beta}$ $(\ref{opensector})$,
so that
\[
f(x)=g(x)\qquad \text{almost everywhere on}\quad (\gamma,\beta),
\]
with $g$ admitting the Fourier-type Taylor-Dirichlet series representation
\begin{equation}\label{representationf}
g(z)=\sum_{n=1}^{\infty}\left(\sum_{k=0}^{\mu_n-1} \langle f, r_{n,k} \rangle \cdot z^k\right)e^{\lambda_n z},
\end{equation}
converging uniformly on compact subsets of $\Theta_{\eta,\beta}$.

$(III)$ The closed span of $r_{\Lambda}$ in $L^2 (\gamma, \beta)$ is not merely a subspace of
the closed span of the exponential system $E_{\Lambda}$ in $L^2 (\gamma, \beta)$, but
the two are equal, that is,
\[
\overline{\text{span}}(r_{\Lambda})=\overline{\text{span}}(E_{\Lambda})\quad \text{in}\quad L^2(\gamma, \beta).
\]

$(IV)$ The exponential system $E_{\Lambda}$ is hereditary complete in its closed span in $L^2 (\gamma, \beta)$.
\end{theorem}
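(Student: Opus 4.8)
The plan is to build everything on Theorem~\ref{Distances} (the Fundamental Result) and on Theorems~\ref{theorem1}--\ref{converse} already established for the closed span. First I would address \emph{existence and uniqueness} of the biorthogonal family inside $\overline{\mathrm{span}}(E_\Lambda)$ in $L^2(\gamma,\beta)$. Since $E_\Lambda$ is a linearly independent system (exponential polynomials with distinct frequencies are independent), for each fixed $(n,k)$ the element $e_{n,k}$ is \emph{not} in $\overline{\mathrm{span}}(E_{\Lambda_{n,k}})$, because $D_{\gamma,\beta,2,n,k}\ge u_\epsilon e^{(\beta-\epsilon)\Re\lambda_n}>0$ by Theorem~\ref{Distances}. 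In a Hilbert space, whenever a vector has positive distance to a closed subspace, there is a unique functional (hence, via Riesz, a unique vector) that is $1$ on the vector, $0$ on the subspace, and of minimal norm; that minimal-norm vector is exactly $P_{\gamma,\beta}(e_{n,k})/\|P_{\gamma,\beta}(e_{n,k})\|^2$ scaled appropriately, where $P_{\gamma,\beta}$ projects onto the orthogonal complement of $\overline{\mathrm{span}}(E_{\Lambda_{n,k}})$ within $\overline{\mathrm{span}}(E_\Lambda)$. Setting $r_{n,k}$ to be this minimal-norm biorthogonal element guarantees $r_{n,k}\in\overline{\mathrm{span}}(E_\Lambda)$, and uniqueness of \emph{such an} $r_{n,k}$ inside the closed span follows because any two biorthogonal families lying in the closed span differ by an element orthogonal to all of $E_\Lambda$, hence orthogonal to the closed span, hence zero. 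This settles the opening sentence and the uniqueness claim.

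Next I would prove part $(I)$. The standard duality identity is
\[
\|r_{n,k}\|_{L^2(\gamma,\beta)}=\frac{1}{D_{\gamma,\beta,2,n,k}},
\]
valid precisely because $r_{n,k}$ is the minimal-norm vector biorthogonal to $e_{n,k}$ and annihilating $\overline{\mathrm{span}}(E_{\Lambda_{n,k}})$: indeed $1=\langle r_{n,k},e_{n,k}\rangle=\langle r_{n,k},e_{n,k}-g\rangle$ for every $g\in\overline{\mathrm{span}}(E_{\Lambda_{n,k}})$, so $1\le\|r_{n,k}\|\,\|e_{n,k}-g\|$ and taking the infimum gives $\|r_{n,k}\|\ge 1/D_{\gamma,\beta,2,n,k}$; equality is attained by the minimal-norm choice. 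Plugging in the lower bound $D_{\gamma,\beta,2,n,k}\ge u_\epsilon e^{(\beta-\epsilon)\Re\lambda_n}$ yields $\|r_{n,k}\|\le u_\epsilon^{-1}e^{(-\beta+\epsilon)\Re\lambda_n}$, which is $(\ref{rnkbound})$ with $m_\epsilon=u_\epsilon^{-1}$; crucially $m_\epsilon$ inherits independence of $n,k$ and dependence only on $\Lambda,(\beta-\gamma)$ from Theorem~\ref{Distances}.

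For part $(II)$: given $f$ in the closed span of $E_\Lambda$ in $L^p(\gamma,\beta)$ with $p\ge 2$, Theorem~\ref{theorem1} already produces the analytic $g$ on $\Theta_{\eta,\beta}$ with $f=g$ a.e.\ and a unique Taylor-Dirichlet expansion $g(z)=\sum_n\big(\sum_k c_{n,k}z^k\big)e^{\lambda_n z}$, converging uniformly on compacta, with $|c_{n,k}|\le m_\epsilon e^{(-\beta+\epsilon)\Re\lambda_n}$. It remains to identify $c_{n,k}=\langle f,r_{n,k}\rangle$. Since $p\ge 2$ on a finite interval, $L^p(\gamma,\beta)\subset L^2(\gamma,\beta)$, so the pairing $\langle f,r_{n,k}\rangle$ makes sense. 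Approximate $f$ in $L^p$ (hence in $L^2$) by exponential polynomials $P_m\in\mathrm{span}(E_\Lambda)$; biorthogonality gives $\langle P_m,r_{n,k}\rangle$ equal to the coefficient of $e_{n,k}$ in $P_m$, and a uniform-convergence argument on a fixed closed subinterval, combined with the coefficient bounds, shows this coefficient converges to $c_{n,k}$ while $\langle P_m,r_{n,k}\rangle\to\langle f,r_{n,k}\rangle$ by $L^2$-continuity of the inner product. Hence $(\ref{representationf})$ holds. I expect this identification step to be the main obstacle: one must control that the coefficient functionals $P\mapsto[\text{coeff of }e_{n,k}\text{ in }P]$ extend continuously from $\mathrm{span}(E_\Lambda)$ to its $L^2$-closure, which is exactly where the distance bound and the uniform convergence of the Taylor-Dirichlet series are both needed simultaneously; everything else is soft functional analysis.

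Finally, parts $(III)$ and $(IV)$ follow from $(II)$ together with the distance bounds. For $(III)$, clearly $\overline{\mathrm{span}}(r_\Lambda)\subseteq\overline{\mathrm{span}}(E_\Lambda)$ by construction; for the reverse inclusion, if $f\in\overline{\mathrm{span}}(E_\Lambda)$ is orthogonal to every $r_{n,k}$, then by $(\ref{representationf})$ (with $p=2$) all its Taylor-Dirichlet coefficients vanish, so $g\equiv 0$ on $\Theta_{\eta,\beta}$ and $f=0$ a.e.; thus $r_\Lambda$ is complete in $\overline{\mathrm{span}}(E_\Lambda)$, giving equality. For $(IV)$, fix an arbitrary partition $N_1\cup N_2$ and let $f\in\overline{\mathrm{span}}(E_\Lambda)$ be orthogonal to $\{e_{n,k}:(n,k)\in N_1\}\cup\{r_{n,k}:(n,k)\in N_2\}$. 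Orthogonality to $r_{n,k}$ for $(n,k)\in N_2$ forces $\langle f,r_{n,k}\rangle=0$ there; orthogonality to $e_{n,k}$ for $(n,k)\in N_1$ forces, upon expanding $f=g$ via $(\ref{representationf})$ and integrating the uniformly convergent series termwise against $\overline{e_{n,k}}$ (using biorthogonality of $\{e_{n,k}\}$ to $\{r_{n,k}\}$), that $\langle f,r_{n,k}\rangle=0$ there as well. Hence all coefficients vanish, $f=0$, and the mixed system is complete in $\overline{\mathrm{span}}(E_\Lambda)$; since it is plainly contained in that closure, its closed span equals $\overline{\mathrm{span}}(E_\Lambda)=\overline{\mathrm{span}}(r_\Lambda)$. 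This is precisely hereditary completeness (strong Markushevich basis), completing the proof.
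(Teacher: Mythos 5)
Your construction of $r_{n,k}$ via the orthogonal projection onto $\overline{\mathrm{span}}(E_{\Lambda_{n,k}})$, the identity $\|r_{n,k}\|_{L^2(\gamma,\beta)}=1/D_{\gamma,\beta,2,n,k}$, the uniqueness argument, and part $(III)$ all match the paper's proof. For part $(II)$ your identification $c_{n,k}=\langle f,r_{n,k}\rangle$ via the approximating polynomials $P_j$ (using $\langle P_j,r_{n,k}\rangle=c_{n,k,j}$ and the convergence $c_{n,k,j}\to c_{n,k}$ already established through Corollary $\ref{corbound}$ in the proof of Theorem $\ref{theorem1}$) is a legitimate alternative to the paper's Lemma $\ref{important}$, which instead splits the series into a finite head plus a tail $Q_n$ and invokes Theorem $\ref{converse}$ to show the tail lies in the closed span of the remaining exponentials; your route is arguably more economical since it reuses data already produced in Section 6.

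The gap is in part $(IV)$. You claim that orthogonality of $f$ to $e_{n,k}$ for $(n,k)\in N_1$, ``upon integrating the uniformly convergent series termwise against $\overline{e_{n,k}}$ (using biorthogonality of $\{e_{n,k}\}$ to $\{r_{n,k}\}$)'', forces $\langle f,r_{n,k}\rangle=0$ for $(n,k)\in N_1$. This does not work: integrating the expansion $f=\sum_{(m,j)\in N_1}\langle f,r_{m,j}\rangle e_{m,j}$ against $\overline{e_{n,k}}$ produces $\sum_{(m,j)\in N_1}\langle f,r_{m,j}\rangle\langle e_{m,j},e_{n,k}\rangle=0$, an infinite Gram system in the non-orthogonal family $\{e_{m,j}\}$; the biorthogonality $\langle r_{m,j},e_{n,k}\rangle=\delta$ is not the pairing that appears, and one cannot read off the vanishing of individual coefficients. (A secondary issue is that the series converges uniformly only on $[\gamma,\beta-\rho]$, not up to $\beta$, so even termwise integration over $(\gamma,\beta)$ needs justification.) The correct argument, which is the paper's, runs as follows: once $\langle f,r_{n,k}\rangle=0$ for $(n,k)\in N_2$, the function $f$ is a Taylor--Dirichlet series supported on $N_1$ lying in $L^2(\gamma,\beta)$, so Theorem $\ref{converse}$ places $f$ in $\overline{\mathrm{span}}\{e_{n,k}:(n,k)\in N_1\}$; since $f$ is also orthogonal to every element of that set, $\langle f,f\rangle=\langle f,f-g_\epsilon\rangle\le\epsilon\,\|f\|_{L^2(\gamma,\beta)}$ for approximants $g_\epsilon$, whence $f=0$. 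The essential ingredient you are missing is this second invocation of Theorem $\ref{converse}$.
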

The proof of the above result occupies Section 7.

\begin{remark}
Each $r_{n,k}\in r_{\Lambda}$, admits the Taylor-Dirichlet series representation
\begin{equation}\label{representation}
r_{n,k}(x)=\sum_{j=1}^{\infty}\left(\sum_{l=0}^{\mu_j-1} \langle r_{n,k}, r_{j,l} \rangle \cdot x^l\right)e^{\lambda_j x},\quad
\text{almost everywhere on}\quad (\gamma, \beta)
\end{equation}
with the series extending analytically in the sector $\Theta_{\eta,\beta}$ and converging uniformly on its compact subsets.
Hence, the elements of the family
$r_{\Lambda}$  are connected to the elements of the exponential system $E_{\Lambda}$ via the Gram matrix
whose entries are the inner products $\langle r_{n,k}, r_{j,l}\rangle$ (see $\bf Appendix$, Section A).
\end{remark}

\begin{corollary}
Let the multiplicity sequence $\Lambda=\{\lambda_n,\mu_n\}_{n=1}^{\infty}$ belong to the $ABC$ class.
Let $H(\mathbb{C}, \Lambda)$ be the subspace of entire functions that admit a Taylor-Dirichlet series representation
in the complex plane. Clearly if $f\in H(\mathbb{C}, \Lambda)$, then all
its derivatives $f^{(k)}$  for $k=1,2,\dots$ belong to $H(\mathbb{C}, \Lambda)$ as well.
Fix a bounded interval $(\gamma, \beta)$, and consider the biorthogonal family $r_{\Lambda}$
to the system $E_{\Lambda}$ in $L^2 (\gamma, \beta)$.
Then for every non-negative integer $k$, one has
\[
f^{(k)}(z)=\sum_{n=1}^{\infty}\left(\sum_{k=0}^{\mu_n-1} \langle f^{(k)}, r_{n,k} \rangle \cdot z^k\right)e^{\lambda_n z},
\quad \forall\,\, z\in \mathbb{C}.
\]
\end{corollary}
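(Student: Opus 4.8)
The plan is to deduce the formula from the uniqueness of Taylor-Dirichlet representations, together with Theorem \ref{biorthogonalsystem}$(II)$ applied to $g:=f^{(k)}$.

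First I would verify the remark that $H(\mathbb{C},\Lambda)$ is closed under differentiation. If $f(z)=\sum_{n=1}^{\infty}\bigl(\sum_{l=0}^{\mu_n-1}c_{n,l}z^l\bigr)e^{\lambda_n z}$ converges locally uniformly on $\mathbb{C}$, then by the Weierstrass theorem on locally uniform limits of analytic functions it may be differentiated term by term; since $\tfrac{d}{dz}\bigl[(\sum_{l}c_{n,l}z^l)e^{\lambda_n z}\bigr]=\bigl(\sum_{l}(\lambda_n c_{n,l}+(l+1)c_{n,l+1})z^l\bigr)e^{\lambda_n z}$ is again a polynomial of degree at most $\mu_n-1$ times $e^{\lambda_n z}$, iterating $k$ times gives $f^{(k)}(z)=\sum_{n=1}^{\infty}\bigl(\sum_{l=0}^{\mu_n-1}c_{n,l}^{(k)}z^l\bigr)e^{\lambda_n z}$ for every $z\in\mathbb{C}$, where the coefficients $c_{n,l}^{(k)}$ are produced from the $c_{n,l}$ by a finite explicit recursion and the series again converges locally uniformly. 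In particular $g:=f^{(k)}\in H(\mathbb{C},\Lambda)$.

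Next I would pass to the interval $(\gamma,\beta)$. Being entire, $g$ restricts to an element of $L^2(\gamma,\beta)$; and since the partial sums of its Taylor-Dirichlet series are exponential polynomials in $\text{span}(E_{\Lambda})$ converging to $g$ uniformly on the compact set $[\gamma,\beta]$, hence in $L^2(\gamma,\beta)$, we have $g\in\overline{\text{span}}(E_{\Lambda})$ in $L^2(\gamma,\beta)$. Theorem \ref{biorthogonalsystem}$(II)$ with $p=2$ then supplies a function $h$, analytic in $\Theta_{\eta,\beta}$, with $g=h$ almost everywhere on $(\gamma,\beta)$ and $h(z)=\sum_{n=1}^{\infty}\bigl(\sum_{l=0}^{\mu_n-1}\langle g,r_{n,l}\rangle z^l\bigr)e^{\lambda_n z}$, the series converging uniformly on compact subsets of $\Theta_{\eta,\beta}$.

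Finally I would invoke uniqueness. The ``native'' series for $g$ obtained in the first step converges uniformly on every compact subset of $\mathbb{C}$, so in particular on every compact subset of $\Theta_{\eta,\beta}$, it is analytic there, and it equals $g$ --- hence equals $h$ --- almost everywhere on $(\gamma,\beta)$. Thus both this series and the series for $h$ are Taylor-Dirichlet series over the same $\Lambda$, analytic in $\Theta_{\eta,\beta}$, converging uniformly on compacta, and representing the same $L^2(\gamma,\beta)$ function; by the uniqueness clause of Theorem \ref{theorem1} their coefficients coincide, i.e.\ $c_{n,l}^{(k)}=\langle g,r_{n,l}\rangle=\langle f^{(k)},r_{n,l}\rangle$ for all $n$ and $l$. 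Substituting back into the $\mathbb{C}$-representation of $g$ yields $f^{(k)}(z)=\sum_{n=1}^{\infty}\bigl(\sum_{l=0}^{\mu_n-1}\langle f^{(k)},r_{n,l}\rangle z^l\bigr)e^{\lambda_n z}$ for all $z\in\mathbb{C}$, which is the asserted identity (with the inner summation index read as $l$, independent of the derivative order $k$). I do not anticipate a genuine obstacle: the points requiring care are only that term-by-term differentiation is legitimate and that the differentiated series is again a bona fide Taylor-Dirichlet series over $\Lambda$, so that the uniqueness of Theorem \ref{theorem1} applies; the membership $g\in\overline{\text{span}}(E_{\Lambda})$ and the rest are routine, and nothing beyond Theorems \ref{theorem1} and \ref{biorthogonalsystem} is needed.
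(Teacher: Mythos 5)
Your proof is correct and takes the route the paper clearly intends: the corollary is stated without a separate proof as an immediate consequence of Theorem \ref{biorthogonalsystem}$(II)$, and your argument supplies exactly the needed details (term-by-term differentiation keeps the series a Taylor-Dirichlet series over $\Lambda$ with locally uniform convergence on $\mathbb{C}$, the restriction of $f^{(k)}$ lies in $\overline{\text{span}}(E_{\Lambda})$ in $L^2(\gamma,\beta)$, and the coefficients are identified by the uniqueness of the representation). The only remark worth adding is that one can shorten the last step by applying Lemma \ref{important} directly to the differentiated series --- it is analytic in $\Theta_{\eta,\beta}$ and lies in $L^2(\gamma,\beta)$, so its coefficients are $\langle f^{(k)}, r_{n,l}\rangle$ at once, without passing through Theorem \ref{biorthogonalsystem}$(II)$ and the uniqueness clause of Theorem \ref{theorem1}.
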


\subsection{Third goal: a Moment Problem}

As a consequence of our previous results, we find an analytic solution to the Moment Problem $(\ref{mp1})$
and we do hope that researchers in control theory for PDE's will find it both interesting and useful.

\begin{theorem}\label{MomentProblem}
Let the multiplicity sequence $\Lambda=\{\lambda_n,\mu_n\}_{n=1}^{\infty}$ belong to the $ABC$ class.
Consider a bounded interval $(\gamma,\beta)$ and let
$\{d_{n,k}:\, n\in\mathbb{N},\, k=0,1,\dots,\mu_n-1\}$ be a doubly-indexed sequence of non-zero complex numbers such
that for some $a\in [-\infty,\beta)$ we have
\begin{equation}\label{interpolationrnkcomplex}
\limsup_{n\to\infty}
\frac{\log A_n}{\Re\lambda_n}=a<\beta\quad \text{where}\quad
A_n=\max\{|d_{n,k}|:\, k=0,1,\dots,\mu_n-1\}.
\end{equation}
Then there exists a unique function $f$ in the closed span of the system $E_{\Lambda}$ in $L^2(\gamma,\beta)$ so that
\begin{equation}\label{momentproblemquestion}
\langle f, e_{n,k} \rangle =\int_{\gamma}^{\beta}f(t)\cdot t^k e^{\overline{\lambda_n} t}\, dt=d_{n,k},
\qquad \forall\,\, n\in\mathbb{N}\quad \text{and}\quad k=0,1,2,\dots ,\mu_n-1.
\end{equation}
The solution $f$ extends analytically in the sector $\Theta_{\eta,\beta}$ $(\ref{opensector})$ as a Taylor-Dirichlet series
\[
\sum_{n=1}^{\infty}\left(\sum_{k=0}^{\mu_n-1} \langle f, r_{n,k} \rangle \cdot z^k\right) e^{\lambda_n z}
\]
converging uniformly on compacta. Moreover, it is the only such series in $L^2(\gamma,\beta)$ which is
a solution of $(\ref{momentproblemquestion})$.
\end{theorem}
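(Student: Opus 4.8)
The plan is to produce the solution explicitly from the biorthogonal family $r_{\Lambda}=\{r_{n,k}\}$ to $E_{\Lambda}$ in $L^2(\gamma,\beta)$ furnished by Theorem \ref{biorthogonalsystem}, by setting $f:=\sum_{n=1}^{\infty}\sum_{k=0}^{\mu_n-1} d_{n,k}\, r_{n,k}$. The step I expect to be the real heart of the matter is showing that this double series converges \emph{absolutely} in $L^2(\gamma,\beta)$. By the hypothesis (\ref{interpolationrnkcomplex}), for any $a'$ with $a<a'<\beta$ we have $|d_{n,k}|\le A_n\le e^{a'\Re\lambda_n}$ for all sufficiently large $n$. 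Feeding this into the sharp norm bound $(I)$ of Theorem \ref{biorthogonalsystem} and choosing $\epsilon>0$ so small that $c:=\beta-a'-\epsilon>0$, we obtain $||d_{n,k} r_{n,k}||_{L^2(\gamma,\beta)}\le m_{\epsilon}\, e^{-c\Re\lambda_n}$ for all large $n$. There are exactly $\mu_n$ indices $k$ attached to a given $n$; condition $(B)$ gives $\Re\lambda_n\ge |\lambda_n|\cos\eta>0$, and since $|\lambda_n|\to\infty$ we have $\mu_n e^{-c\Re\lambda_n}\le \mu_n/|\lambda_n|$ for $n$ large, so condition $(A)$, namely $\sum_n \mu_n/|\lambda_n|<\infty$, yields $\sum_{n,k}||d_{n,k} r_{n,k}||_{L^2(\gamma,\beta)}<\infty$. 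Hence $f$ is a well-defined element of $L^2(\gamma,\beta)$, and since each $r_{n,k}$ lies in the \emph{closed} subspace $\overline{\text{span}}(E_{\Lambda})$, so does $f$.

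Next I would verify that this $f$ solves (\ref{momentproblemquestion}). Because the series converges in $L^2(\gamma,\beta)$ and $\langle\,\cdot\,,e_{j,l}\rangle$ is a continuous linear functional, the summation may be interchanged with the pairing, and the biorthogonality relations ($\langle r_{n,k},e_{j,l}\rangle$ equals $1$ if $(j,l)=(n,k)$ and $0$ otherwise) give $\langle f,e_{j,l}\rangle=\sum_{n,k} d_{n,k}\langle r_{n,k},e_{j,l}\rangle = d_{j,l}$ for all $j$ and all $l=0,\dots,\mu_j-1$. For uniqueness inside $\overline{\text{span}}(E_{\Lambda})$: if $f_1$ and $f_2$ are two solutions, then $h:=f_1-f_2\in\overline{\text{span}}(E_{\Lambda})$ is orthogonal to every $e_{n,k}$, hence (approximating $h$ by exponential polynomials) orthogonal to all of $\overline{\text{span}}(E_{\Lambda})$, so $||h||_{L^2(\gamma,\beta)}^2=\langle h,h\rangle=0$ and $f_1=f_2$; alternatively, this is immediate from the Markushevich-basis property $(III)$ of Theorem \ref{biorthogonalsystem}.

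The analytic continuation and its Taylor-Dirichlet expansion are then read off from part $(II)$ of Theorem \ref{biorthogonalsystem} applied to $f\in\overline{\text{span}}(E_{\Lambda})$ with $p=2$: there is an analytic function $g$ on $\Theta_{\eta,\beta}$ agreeing with $f$ a.e. on $(\gamma,\beta)$, with $g(z)=\sum_{n=1}^{\infty}\left(\sum_{k=0}^{\mu_n-1}\langle f,r_{n,k}\rangle z^k\right)e^{\lambda_n z}$ converging uniformly on compact subsets of $\Theta_{\eta,\beta}$. Finally, to see that $g$ is the only Taylor-Dirichlet series in $L^2(\gamma,\beta)$ solving (\ref{momentproblemquestion}), let $\tilde g$ be another such series; Theorem \ref{converse} places $\tilde g$ in $\overline{\text{span}}(E_{\Lambda})$, the uniqueness just established forces $\tilde g=f$ a.e. on $(\gamma,\beta)$, hence $\tilde g=g$ throughout the connected set $\Theta_{\eta,\beta}\supset(\gamma,\beta)$ by the identity theorem, and therefore the two Taylor-Dirichlet series coincide term by term by the uniqueness of the representation asserted in Theorem \ref{theorem1}.

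To summarize, the statement is essentially an assembly of Theorems \ref{biorthogonalsystem}, \ref{converse} and \ref{theorem1}; the single genuinely quantitative point — and the only place I anticipate having to compute — is the absolute $L^2$-convergence of $\sum d_{n,k} r_{n,k}$. There the slack $\beta-a>0$ in (\ref{interpolationrnkcomplex}) is exactly what defeats the decaying factor $e^{-\beta\Re\lambda_n}$ in bound $(I)$, while conditions $(A)$ and $(B)$ of the $ABC$ class provide the summability of the resulting series of norms.
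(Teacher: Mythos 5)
Your proposal is correct and follows essentially the same route as the paper's first proof: construct $f=\sum_{n,k}d_{n,k}r_{n,k}$, use the slack $\beta-a>0$ against the norm bound $(I)$ of Theorem \ref{biorthogonalsystem} together with conditions $(A)$ and $(B)$ to get convergence in $L^2(\gamma,\beta)$, verify the moments by biorthogonality, and read off uniqueness and the analytic Taylor--Dirichlet extension from Theorems \ref{biorthogonalsystem} and \ref{converse}. The paper reaches the $L^2$ convergence in a slightly more roundabout way (first $L^1$ and pointwise a.e.\ convergence, then dominated convergence) and also offers a second, independent existence proof via Bessel and Riesz--Fischer sequences, but your argument matches the substance of its Method I.
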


We provide two proofs in Section 8 and in both we utilize the $r_{\Lambda}$ family of Theorem $\ref{biorthogonalsystem}$.
The first proof is classical in character in the sense that the solution is given in terms of an infinite series
\[
\sum_{n=1}^{\infty}\left(\sum_{k=0}^{\mu_n-1}d_{n,k} r_{n,k}(t)\right)
\]
with the series converging in $L^2 (\gamma, \beta)$.
The second proof uses notions from Nonharmonic Fourier Series such as Bessel sequences and Riesz-Fischer sequences.

\begin{corollary}
Let $\{\lambda_n\}_{n=1}^{\infty}$ be a sequence of positive real numbers diverging to infinity so that
$\sum_{n=1}^{\infty}1/\lambda_n <\infty$ and
$\lambda_{n+1}-\lambda_n>c>0$ for all $n\in\mathbb{N}$. Consider a positive integer $m$ and a positive real number $T$.
Let $\{d_{n,k}:\, n\in\mathbb{N},\, k=0,1,\dots,m-1\}$  be a doubly-indexed sequence of non-zero complex numbers such
that
\[
\limsup_{n\to\infty}
\frac{\log A_n}{\lambda_n}<0\quad \text{where}\quad
A_n=\max\{|d_{n,k}|:\, k=0,1,\dots,m-1\}.
\]
Then there exists a unique Taylor-Dirichlet series
\[
f(z)=\sum_{n=1}^{\infty}\left(\sum_{k=0}^{m-1}c_{n,k}z^k\right) e^{-\lambda_n z}
\]
analytic in the right half-plane $\{z:\,\, \Re z>0\}$, with $f\in L^2(0,T)$, so that
\[
\int_{0}^{T} f(t)\cdot t^k e^{-\lambda_n t}f(t)\, dt=d_{n,k},\qquad \forall\,\, n\in\mathbb{N}\quad \text{and}\quad k=0,1,2,\dots ,m-1.
\]
\end{corollary}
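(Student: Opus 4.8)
\emph{Proof plan.} The plan is to deduce this corollary from Theorem $\ref{MomentProblem}$ by reflecting the interval $(0,T)$ onto itself. First I would set $\Lambda=\{\lambda_n,m\}_{n=1}^{\infty}$, each $\lambda_n$ repeated $m$ times. Since the $\lambda_n$ are positive reals, distinct (as $\lambda_{n+1}>\lambda_n$), with $\sum 1/\lambda_n<\infty$, and $|\lambda_n-\lambda_j|=\lambda_n-\lambda_j>c(n-j)$ for $n>j$ (telescoping the gap condition), the sequence $\{\lambda_n\}$ satisfies $(\ref{LKcondition})$, so $\Lambda\in ABC$ by Example $\ref{exb}$, with $\eta=0$ and $\Theta_{0,T}$ $(\ref{opensector})$ the half-plane $\{z:\Re z<T\}$. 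Next I would perform the substitution $s=T-t$: writing $g(s):=f(T-s)$, one has $\|g\|_{L^2(0,T)}=\|f\|_{L^2(0,T)}$, and expanding $(T-s)^k$ by the binomial theorem transforms the target moment identity $\int_0^T f(t)\,t^k e^{-\lambda_n t}\,dt=d_{n,k}$ into
\begin{equation}\label{transform}
e^{\lambda_nT}d_{n,k}=\sum_{j=0}^{k}\binom{k}{j}T^{k-j}(-1)^j\,\tilde d_{n,j},\qquad \tilde d_{n,j}:=\int_0^T g(s)\,s^j e^{\lambda_n s}\,ds,\qquad 0\le k\le m-1.
\end{equation}

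Then I would solve $(\ref{transform})$ for the $\tilde d_{n,j}$: the system is lower triangular with diagonal entries $(-1)^k\neq0$, so $\tilde d_{n,j}=\sum_{k=0}^{j}\alpha_{j,k}\,e^{\lambda_nT}d_{n,k}$ with coefficients $\alpha_{j,k}$ depending only on $m$ and $T$. This gives $\tilde A_n:=\max_j|\tilde d_{n,j}|\le C(m,T)\,e^{\lambda_nT}A_n$, hence $\limsup_n(\log\tilde A_n)/\lambda_n\le T+a<T$; and the $k=0$ equation gives $\tilde d_{n,0}=e^{\lambda_nT}d_{n,0}\neq0$, so $\tilde A_n>0$ and hypothesis $(\ref{interpolationrnkcomplex})$ holds on $(0,T)$ for the data $\{\tilde d_{n,j}\}$. (If some $\tilde d_{n,j}$ with $j\ge1$ were to vanish, the first proof of Theorem $\ref{MomentProblem}$ --- which builds the solution as $\sum_n\sum_j\tilde d_{n,j}r_{n,j}$ and uses only $\tilde A_n=O(e^{a'\lambda_n})$ with $a'<T$ for $L^2$-convergence --- still goes through unchanged.) Applying Theorem $\ref{MomentProblem}$ with $\Lambda$, the interval $(0,T)$ and the data $\{\tilde d_{n,j}\}$ then produces a unique $g\in\overline{\text{span}}(E_{\Lambda})$ in $L^2(0,T)$ satisfying $\int_0^T g(s)\,s^j e^{\lambda_n s}\,ds=\tilde d_{n,j}$, which extends analytically to $\{\Re z<T\}$ as a Taylor--Dirichlet series $\sum_n(\sum_{j=0}^{m-1}\langle g,r_{n,j}\rangle z^j)e^{\lambda_n z}$ converging uniformly on compacta. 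I would then set $f(z):=g(T-z)$: expanding the powers of $T-z$ once more exhibits $f$ on $\{\Re z>0\}$ as a uniformly-on-compacta convergent series $\sum_n(\sum_{k=0}^{m-1}c_{n,k}z^k)e^{-\lambda_n z}$, with each $c_{n,k}$ a finite combination of the numbers $e^{\lambda_nT}\langle g,r_{n,j}\rangle$; moreover $f\in L^2(0,T)$ with $\|f\|_{L^2(0,T)}=\|g\|_{L^2(0,T)}$, and reading $(\ref{transform})$ in reverse shows $f$ solves the original moment problem.

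For uniqueness, I would take another Taylor--Dirichlet series $f_1=\sum_n(\sum_k c_{n,k}'z^k)e^{-\lambda_n z}$, analytic on $\{\Re z>0\}$, lying in $L^2(0,T)$ and solving the moment problem; then $g_1(s):=f_1(T-s)$ is, after collecting powers, a Taylor--Dirichlet series for $\Lambda$ analytic on $\Theta_{0,T}=\{\Re z<T\}$ and in $L^2(0,T)$, so by Theorem $\ref{converse}$ it belongs to $\overline{\text{span}}(E_{\Lambda})$ in $L^2(0,T)$, and by $(\ref{transform})$ it solves the same transformed moment problem as $g$; the uniqueness clause of Theorem $\ref{MomentProblem}$ then forces $g_1=g$, hence $f_1=f$. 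I expect the only genuinely delicate step to be verifying that the triangular change of data $(\ref{transform})$ has constants independent of $n$, so that the growth hypothesis $(\ref{interpolationrnkcomplex})$ passes faithfully between the two formulations; everything else is a mechanical translation through the reflection $s=T-t$.
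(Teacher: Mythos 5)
Your reduction is correct and is the intended derivation: the paper states this corollary without proof as a direct consequence of Theorem \ref{MomentProblem} via a linear change of variable, and you have verified all the needed hypotheses ($\Lambda=\{\lambda_n,m\}\in ABC$ via $(\ref{LKcondition})$, the transfer of the growth condition, the harmlessness of possible zeros among the transformed data, and uniqueness via Theorem \ref{converse} plus the uniqueness clause of Theorem \ref{MomentProblem}). The only simplification worth noting is that the substitution $s=-t$, which carries $(0,T)$ to $(\gamma,\beta)=(-T,0)$ with $\beta=0$ and $\Theta_{0,0}=\{\Re z<0\}$, gives $\tilde d_{n,k}=(-1)^k d_{n,k}$ directly, so the triangular system and the $e^{\lambda_n T}$ factors in your step $(\ref{transform})$ disappear entirely.
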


\subsection{Fourth goal: the solution space of a differential equation of infinite order on a bounded interval}

Our fourth and final topic deals with a differential equation of infinite order on a bounded interval $(\gamma, \beta)$ studied by
Carleson \cite {Carleson} as well as by Leont' ev \cite{Leontev}.
Let us first describe this problem
and then present our result which is a complete characterization of the solution space of the differential equation
in case $\Lambda$ belongs to the class $ABC$.

\subsubsection{A Carleson differential equation}
Suppose that a multiplicity sequence $\Lambda=\{\lambda_n, \mu_n\}_{n=1}^{\infty}$
satisfies conditions $A$ $(\ref{convergencecondition})$ and $B$ $(\ref{lessthan})$.
We associate to $\Lambda$ the entire functions of exponential type zero
\begin{equation}\label{F}
F(z)=\prod_{n=1}^{\infty}\left(1-\frac{z}{\lambda_n}\right)^{\mu_n}\quad
\text{and}
\quad
G(z)=\prod_{n=1}^{\infty}\left(1+\frac{z}{|\lambda_n|}\right)^{\mu_n}.
\end{equation}
We also consider their Taylor series expansions about zero
\[
F(z)=\sum_{n=0}^{\infty}\frac{F^{(n)}(0)}{n!}z^n\qquad\text{and}\qquad
G(z)=\sum_{n=0}^{\infty}\frac{G^{(n)}(0)}{n!}z^n.
\]

\begin{remark}\label{allpositive}
We note that $G^{(n)}(0)$ is positive for all $n\ge 0$.
\end{remark}

Carleson  \cite {Carleson} and Leont' ev \cite{Leontev} introduced the following class of functions.
\begin{definition}\label{The Class}
Let $\Lambda$=$\{\lambda_n,\mu_n\}_{n=1}^{\infty}$ be a multiplicity sequence that satisfies
conditions $A$ $(\ref{convergencecondition})$ and $B$ $(\ref{lessthan})$.
Let $(\gamma,\beta)$ be an open bounded interval on the real line.
A function $f(x)$ belongs to the class $C(\gamma, \beta, \{G_n\})$, where $G_n=\frac{G^{(n)}(0)}{n!}$,
if $f$ is infinitely differentiable on $(\gamma, \beta)$ and the series
\[
\sum_{n=0}^{\infty}\frac{G^{(n)}(0)}{n!}\cdot |f^{(n)}(x)|
\]
converges uniformly in  $(\gamma+\epsilon, \beta-\epsilon)$ for every $\epsilon>0$.
\end{definition}

For $f\in C(\gamma,\beta,\{G_n\})$,
they investigated the infinite order differential equation:
\begin{equation}\label{CarlesonLeontevEquation}
F(D)f(x)=0\qquad \forall\,\, x\in (\gamma,\beta)
\end{equation}
where
\[
D=\frac{d}{dx}\qquad \text{and}\qquad F(D)f(x):=\sum_{n=0}^{\infty}\frac{F^{(n)}(0)}{n!}\cdot f^{(n)}(x).
\]

A description of their result follows: since $F$ is an entire function of exponential type zero,
and $\sup_{n\in\mathbb{N}}|\arg\lambda_n|<\pi/2$, then there are rays $l_1$ and $l_2$ in the right half-plane $\Re z>0$,
emerging from the Origin, so that for any $\epsilon>0$ one has $|F(z)|>e^{-\epsilon |z|}$
finally for all $z$ on the two rays. A similar bound holds on a sequence
of circles $\{|z|=r_m\}_{m=1}^{\infty}$ with $r_m$ diverging to infinity.
Denote by $D_m$ the region in the complex plane bounded by the two rays and the circle $|z|=r_m$.
Carleson \cite[Theorem 4]{Carleson} and  Leont' ev \cite[Theorem 2]{Leontev}
\footnote{The Leont' ev article considers even more general results}
proved that every solution $f$ of $(\ref{CarlesonLeontevEquation})$
extends analytically in the sector $\Theta_{\eta, \beta}$ $(\ref{opensector})$ and $f$ admits $a\,\, series\,\, representation\,\,
with\,\, groupings$:
\begin{equation}\label{solutionTDS}
f(z)=\lim_{m\to\infty}f_m(z),\qquad f_m(z):=\sum_{\lambda_n\in D_m}\left(\sum_{k=0}^{\mu_n-1} c_{n,k} z^k\right)
e^{\lambda_n z}\qquad  \forall\,\, z\in \Theta_{\eta,\beta},
\end{equation}
with uniform convergence on compact subsets of the sector and with the coefficients $c_{n,k}$ derived in some special way.

\subsubsection{Our result: removing the groupings}

Assuming that $\Lambda$ belongs to the $ABC$ class, we will prove in Section 9 that
every solution of $(\ref{CarlesonLeontevEquation})$ extends analytically as a Taylor-Dirichelt series,
thus the $groupings$ in $(\ref{solutionTDS})$ can be dropped.

\begin{theorem}\label{CarlesonTheorem}
Let $\Lambda\in ABC$ class and consider an interval $(\gamma, \beta)$.

(A) Suppose that a function $f\in C(\gamma,\beta,\{G_n\})$ is a solution of the equation
$(\ref{CarlesonLeontevEquation})$. Then $f$ extends analytically in the sector $\Theta_{\eta ,\beta}$ as a Taylor-Dirichlet series
\[
f(z)=\sum_{n=1}^{\infty}\left(\sum_{k=0}^{\mu_n-1} c_{n,k} z^k\right)
e^{\lambda_n z},
\]
converging uniformly on compact subsets of the sector.

(B) Let $f$ be a Taylor-Dirichlet series as above, analytic in the sector $\Theta_{\eta ,\beta}$. Then
$f\in C(\gamma,\beta,\{G_n\})$ and $f$ is a solution of the equation
$(\ref{CarlesonLeontevEquation})$.
\end{theorem}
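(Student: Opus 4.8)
The plan is to derive Theorem~\ref{CarlesonTheorem} by coupling the classical description of the solution space due to Carleson and Leont'ev, recalled above (which yields the representation \emph{with groupings} $(\ref{solutionTDS})$), with our Theorem~\ref{theorem1}, which characterises $\overline{\text{span}}(E_{\Lambda})$ in $L^{p}(\gamma,\beta)$ by \emph{ungrouped} Taylor--Dirichlet series. The new content is: in part (A), upgrading convergence ``with groupings'' to ordinary convergence of the Taylor--Dirichlet series; in part (B), justifying the term-by-term action of the infinite order operator $F(D)$ on such a series.

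For part (A), let $f\in C(\gamma,\beta,\{G_{n}\})$ solve $(\ref{CarlesonLeontevEquation})$. By the Carleson--Leont'ev theorem $f$ extends analytically to $\Theta_{\eta,\beta}$ and is there the limit, uniformly on compacta, of the grouped exponential polynomials $f_{m}$ of $(\ref{solutionTDS})$. Fix $\beta'$ with $\gamma<\beta'<\beta$. Since $[\gamma,\beta']$ is a compact subset of $\Theta_{\eta,\beta}$, the $f_{m}$ converge to $f$ uniformly on $[\gamma,\beta']$, hence in $L^{2}(\gamma,\beta')$; as $f_{m}\in\text{span}(E_{\Lambda})$, this gives $f\in\overline{\text{span}}(E_{\Lambda})$ in $L^{2}(\gamma,\beta')$. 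Theorem~\ref{theorem1} applied on $(\gamma,\beta')$ then produces a function analytic on $\Theta_{\eta,\beta'}$, equal there to a \emph{unique} Taylor--Dirichlet series $(\ref{uniqueTDseries})$ converging uniformly on compacta and agreeing with $f$ a.e.\ on $(\gamma,\beta')$; by analytic continuation it equals $f$ on all of $\Theta_{\eta,\beta'}$. If $\gamma<\beta'_{1}<\beta'_{2}<\beta$, then on $\Theta_{\eta,\beta'_{1}}$ the series obtained for $\beta'_{2}$ is a Taylor--Dirichlet representation of $f$, so by the uniqueness in Theorem~\ref{theorem1} its coefficients coincide with those obtained for $\beta'_{1}$. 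Hence the coefficients are independent of $\beta'$, and since $\bigcup_{\beta'<\beta}\Theta_{\eta,\beta'}=\Theta_{\eta,\beta}$, letting $\beta'\uparrow\beta$ yields a single Taylor--Dirichlet series converging uniformly on compact subsets of $\Theta_{\eta,\beta}$ and equal to $f$ there.

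For part (B), let $f(z)=\sum_{n\ge1}p_{n}(z)e^{\lambda_{n}z}$, $p_{n}(z)=\sum_{k=0}^{\mu_{n}-1}c_{n,k}z^{k}$, be analytic in $\Theta_{\eta,\beta}$ with the series converging uniformly on compacta. First, $f\in C(\gamma,\beta,\{G_{n}\})$: $f$ is $C^{\infty}$ on $(\gamma,\beta)$, and fixing $\epsilon>0$ and $\rho=\rho(\epsilon)>0$ with $\overline{B}(x,\rho)\subset\Theta_{\eta,\beta}$ for all $x\in[\gamma+\epsilon,\beta-\epsilon]$, Cauchy's estimate gives $|f^{(j)}(x)|\le j!\,M\rho^{-j}$ with $M$ independent of $x$, so $\sum_{j}\frac{G^{(j)}(0)}{j!}|f^{(j)}(x)|\le M\sum_{j}\frac{G^{(j)}(0)}{\rho^{j}}<\infty$, the last series converging because $G$ has exponential type zero (so $G^{(j)}(0)/j!\le M_{\delta}(e\delta/j)^{j}$ for every $\delta>0$). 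Next, $|F^{(j)}(0)|\le G^{(j)}(0)$ for all $j$, since the Taylor coefficients of $F(z)=\prod(1-z/\lambda_{n})^{\mu_{n}}$ are dominated in modulus by those of the positive majorant $G(z)=\prod(1+z/|\lambda_{n}|)^{\mu_{n}}$; thus $F(D)$ is defined on $C(\gamma,\beta,\{G_{n}\})$. For each $n$, from $e^{-\lambda_{n}x}D^{j}\!\big(p_{n}(x)e^{\lambda_{n}x}\big)=(D+\lambda_{n})^{j}p_{n}(x)$ and Taylor's theorem for $F$ at $\lambda_{n}$,
\[
e^{-\lambda_{n}x}F(D)\big(p_{n}(x)e^{\lambda_{n}x}\big)=\sum_{j\ge0}\frac{F^{(j)}(\lambda_{n})}{j!}\,p_{n}^{(j)}(x)=0 ,
\]
because $F^{(j)}(\lambda_{n})=0$ for $j<\mu_{n}$ (the factor $(1-z/\lambda_{n})^{\mu_{n}}$) while $p_{n}^{(j)}=0$ for $j\ge\mu_{n}$ ($\deg p_{n}\le\mu_{n}-1$). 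Finally, fix $x_{0}\in(\gamma,\beta)$ and $\rho>0$ with $\overline{B}(x_{0},\rho)\subset\Theta_{\eta,\beta}$, write $f=S_{N}+R_{N}$ with $S_{N}=\sum_{n\le N}p_{n}e^{\lambda_{n}z}$; then $F(D)f=F(D)S_{N}+F(D)R_{N}$, $F(D)S_{N}=0$ by the previous identity, and Cauchy's estimate on $\overline{B}(x_{0},\rho)$ together with $|F^{(j)}(0)|\le G^{(j)}(0)$ gives
\[
|F(D)R_{N}(x_{0})|\le\Big(\sum_{j\ge0}\frac{G^{(j)}(0)}{(\rho/2)^{j}}\Big)\sup_{\overline{B}(x_{0},\rho)}|R_{N}|\;\longrightarrow\;0 ,
\]
the prefactor being finite as above and the supremum tending to $0$ by uniform convergence. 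Hence $F(D)f(x_{0})=0$ for every $x_{0}\in(\gamma,\beta)$, so $f$ solves $(\ref{CarlesonLeontevEquation})$.

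The crux is part (A): removing the groupings. Carleson and Leont'ev only guarantee convergence of a re-associated series, and in general Taylor--Dirichlet-type series may converge only after grouping; here the groupings can be dropped precisely because of condition $(C)$, which enters through Theorem~\ref{theorem1}. The delicate point is gluing the local ungrouped representations over $\beta'\uparrow\beta$ via the uniqueness statement. In part (B) the only subtle step is the term-by-term application of $F(D)$, which is controlled above by the tail estimate and the exponential-type-zero decay of the coefficients of $G$.
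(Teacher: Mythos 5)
Your part (A) is essentially the paper's argument: both pass from the Carleson--Leont'ev grouped representation $(\ref{solutionTDS})$ to membership of $f$ in $\overline{\text{span}}(E_{\Lambda})$ in $L^2$ on slightly shrunken intervals, apply Theorem $\ref{theorem1}$ there, and glue the resulting ungrouped series via uniqueness of the Taylor--Dirichlet coefficients (the paper exhausts by $(\gamma+\epsilon_j,\beta-\epsilon_j)$, you by $(\gamma,\beta')$ with $\beta'\uparrow\beta$; this is immaterial). One remark applying to both write-ups: the cleanest way to reach the full sector $\Theta_{\eta,\beta}$ at the end is to note that the ($\beta'$-independent) coefficients satisfy $(\ref{cnkbound})$ with $\beta'$ in place of $\beta$ for every $\beta'<\beta$, hence with $\beta$ itself, and then invoke Lemma $\ref{TDSeries}$, rather than arguing directly with the union of the sub-sectors.

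Part (B) is correct but takes a genuinely different route. The paper proves $F(D)f=0$ by representing $F(D)f(\zeta)=\frac{1}{2\pi i}\int_{\partial B_{\rho_\zeta/2}}\gamma(z)f(z+\zeta)\,dz$ through the P\'{o}lya representation and the Borel transform $\gamma$ of $F$ (Lemma $\ref{equivalentsystems1}$), then integrates the uniformly convergent series term by term and uses the vanishing of the moments $\int_l\gamma(w)w^ke^{\lambda_n w}\,dw$ in $(\ref{Polyarepresentationtheorem2})$. You instead use the elementary operational identity $e^{-\lambda_n x}F(D)\bigl(p_ne^{\lambda_n x}\bigr)=\sum_{j}\frac{F^{(j)}(\lambda_n)}{j!}\,p_n^{(j)}(x)=0$ (the rearrangement of the underlying double sum is legitimate because of absolute convergence furnished by the positive majorant $G$), and you control the tail $F(D)R_N$ by Cauchy's estimate combined with $|F^{(j)}(0)|\le G^{(j)}(0)$ and the type-zero decay of $G^{(j)}(0)/j!$; your verification that $f\in C(\gamma,\beta,\{G_n\})$ coincides with the paper's Lemma $\ref{classC}$. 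The trade-off: the contour-integral method packages the term-by-term step into a single interchange of sum and integral and extends readily to more general convolution operators, while your method avoids the Borel transform altogether and makes the cancellation mechanism ($F$ vanishes to order $\mu_n$ at $\lambda_n$ while $\deg p_n\le\mu_n-1$) completely explicit. Both arguments are sound.
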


\begin{remark}
We will show by a counterexample, that if $\Lambda$ satisfies only conditions
$A$ $(\ref{convergencecondition})$ and $B$ $(\ref{lessthan})$,
then the groupings in $(\ref{solutionTDS})$ cannot in general be removed.
\end{remark}

\subsection{ The Main tool for the Fundamental result and Distances in $L^p(-\infty, \beta)$}

As mentioned earlier, our goals are achieved based on the Fundamental Result, Theorem $\ref{Distances}$, which is on the
lower bound $(\ref{distancelowerbounds})$ for the
distance in $L^p (\gamma, \beta)$ between an element of the system $E_{\Lambda}$ and the closed span of the remaining elements.
We note that from $(\ref{distancelowerbounds})$ it is straight forward to derive a lower bound for distances in
$L^p(-\infty, \beta)$ as well (see $(\ref{distancehalflinezero})$).
We introduce below the tool needed for Theorem $\ref{Distances}$ and make comparisons with the $classical\,\,approach$ where
a Blaschke product for a half-plane is used instead.

\subsubsection{Our Main Tool}

Our Fundamental Result is proved by employing an entire function $G$ introduced in the past by
Luxemburg and Korevaar \cite{LK}. They obtained the following result which appears also in
\cite[Theorem 3.3.3]{Sed1} and restated here as follows.

\begin{thmc}\cite[Theorem 5.2]{LK})\footnote{
If one reads carefully \cite[Sections 4 and 5]{LK},  the authors consider a sequence of complex numbers
$\{w_n\}_{n=1}^{\infty}$ satisfying $\sum_{n=1}^{\infty}1/|w_n|<\infty$.
This allows for the sequence to have terms with multiplicities involved. We note that in the statement of
\cite[Theorem 5.2]{LK} the same condition applies.
We remark that the $\lambda_n$ are chosen to be distinct, and in particular
satisfying $(\ref{LKcondition})$, only in \cite[Sections 7 and 8]{LK} where
the authors describe the closed span of the system $\{x^{\lambda_n}\}_{n=1}^{\infty}$.}

Suppose that a multiplicity sequence $\Lambda=\{\lambda_n,\mu_n\}_{n=1}^{\infty}$ satisfies condition $A$ $(\ref{convergencecondition})$.
For fixed $-\infty <\gamma<\beta<\infty$, let $\sigma=(\beta+\gamma)/2$, and $\tau=(\beta-\gamma)/2$.
Then, by $\bf {properly\,\, choosing}$ a decreasing sequence $\{\epsilon_n>0\}_{n=1}^{\infty}$
so that $\sum_{n=1}^{\infty}\epsilon_n=\tau$,
\[
G(z):=e^{-i\sigma z}\prod_{n=1}^{\infty}\left(1+\frac{z^2}{\lambda_n^2}\right)^{\mu_n}\prod_{n=1}^{\infty}
\cos(\epsilon_n z)
\]
is an entire function of exponential type, $G(x)\in L^2(\mathbb{R})\cap L^1(\mathbb{R})$ and
\[
G(z)=\frac{1}{\sqrt{2\pi}}\int_{\gamma}^{\beta}e^{-izt}h(t)\, dt,\quad
h\in C[\gamma,\beta]
\]
where $h$ is a continuous function on $[\gamma, \beta]$, vanishing outside this interval.
\end{thmc}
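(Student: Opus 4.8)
## Proof proposal for Theorem C

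\textbf{Overview of the strategy.} The plan is to construct $G$ as a product of three pieces and verify the claimed properties one at a time: (i) that the infinite product converges to an entire function of exponential type; (ii) that $G$ restricted to the real axis lies in $L^1(\mathbb{R})\cap L^2(\mathbb{R})$; and (iii) that $G$ is (a normalization of) the Fourier transform of a continuous function $h$ supported in $[\gamma,\beta]$. The three factors play distinct roles: the exponential $e^{-i\sigma z}$ is a pure translation that will recenter the eventual support interval at $\sigma=(\beta+\gamma)/2$; the canonical product $\prod_n(1+z^2/\lambda_n^2)^{\mu_n}$ carries the prescribed zero set $\{\pm i\lambda_n\}$ (with multiplicities $\mu_n$) and, crucially, has exponential type \emph{zero} because of hypothesis $(A)$; and the cosine product $\prod_n\cos(\epsilon_n z)$ is the workhorse that supplies genuine exponential type, $L^2$ decay on $\mathbb{R}$, and—via a Paley–Wiener-type argument—the compact support of $h$. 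The key quantitative input is that $\sum\epsilon_n=\tau=(\beta-\gamma)/2$, so the cosine product has exponential type exactly $\tau$, and after the shift by $\sigma$ the Fourier-support interval is $[\sigma-\tau,\sigma+\tau]=[\gamma,\beta]$.

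\textbf{Step 1: the canonical product is entire of exponential type zero.} Write $P(z)=\prod_{n=1}^\infty(1+z^2/\lambda_n^2)^{\mu_n}$. Since $\sum_n\mu_n/|\lambda_n|<\infty$ by $(A)$, the product $\sum_n\mu_n\cdot |z|^2/|\lambda_n|^2$ converges locally uniformly, so $P$ is entire, and a standard estimate ($\log|1+w|\le\log(1+|w|)\le|w|$ applied termwise, then optimized) gives, for every $\epsilon>0$, a constant $C_\epsilon$ with $|P(z)|\le C_\epsilon e^{\epsilon|z|}$; thus $P\in A^0_{|z|}$. This is exactly the kind of estimate the paper already isolates in its definition of $A^0_{|z|}$, so I would simply invoke that. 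The zeros of $P$ are at $z=\pm i\lambda_n$ with multiplicity $\mu_n$.

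\textbf{Step 2: the cosine product and the choice of $\epsilon_n$.} Each factor $\cos(\epsilon_n z)$ is entire of exponential type $\epsilon_n$ with $|\cos(\epsilon_n z)|\le e^{\epsilon_n|z|}$ and $|\cos(\epsilon_n x)|\le 1$ on $\mathbb{R}$; also $\cos w=1+O(w^2)$ near $0$. Since $\sum\epsilon_n=\tau<\infty$, the product $Q(z):=\prod_n\cos(\epsilon_n z)$ converges locally uniformly to an entire function with $|Q(z)|\le e^{\tau|z|}$, so $G=e^{-i\sigma z}PQ$ is entire of exponential type (type at most $|\sigma|+\tau+\epsilon$ for every $\epsilon$, since $P$ contributes none). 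The phrase ``properly choosing'' the $\epsilon_n$ refers to the delicate point below: the $\epsilon_n$ must be chosen \emph{decreasing and summing to $\tau$}, but also small enough relative to the tail $\sum_{m>n}\mu_m/|\lambda_m|$ and to $|\lambda_n|$ so that the partial-product estimates for $G$ on $\mathbb{R}$ actually yield integrability. Concretely I would fix a sequence $\delta_n\downarrow 0$ with $\sum\delta_n=\tau$ controlling the rate at which the tail of $(A)$ and the quantity $\mu_n/|\lambda_n|$ decay, and set $\epsilon_n=\delta_n$. This is the step I expect to be the main obstacle: making the bookkeeping between $\{\epsilon_n\}$ and $\{\mu_n/|\lambda_n|\}$ precise enough that Step 3 goes through, while keeping $\sum\epsilon_n$ exactly $\tau$ rather than merely $\le\tau$.

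\textbf{Step 3: $L^1\cap L^2$ on $\mathbb{R}$, and the Paley–Wiener conclusion.} On the real line I would bound $|G(x)|$ by grouping: $|G(x)|=|P(x)|\prod_n|\cos(\epsilon_n x)|$. For $|x|$ large, truncate the cosine product at the first index $N=N(x)$ with $\epsilon_N|x|\gtrsim 1$; the factors with $n\le N$ are each $\le 1$, and for the finitely many factors with $\epsilon_n|x|$ comparable to $1$ one uses $|\cos w|\le e^{-c}$ type decay away from the zeros together with the (harmless) factor $|P(x)|\le C_\epsilon e^{\epsilon|x|}$; the ``proper choice'' of $\epsilon_n$ guarantees $N(x)\to\infty$ fast enough that the surviving product decays faster than any negative power of $|x|$, hence $G\in L^1(\mathbb{R})\cap L^2(\mathbb{R})$. (This is precisely the Luxemburg–Korevaar estimate; I would cite \cite[Sections 4--5]{LK} for the detailed inequality rather than redo it.) Finally, since $G$ is entire of exponential type $\le\tau$ in the variable $z-\sigma$—more precisely, $e^{i\sigma z}G(z)=P(z)Q(z)$ has type $\le\tau$ and is in $L^2(\mathbb{R})$ on the real axis—the Paley–Wiener theorem gives a function $\widetilde h\in L^2[-\tau,\tau]$ with $e^{i\sigma z}G(z)=\frac1{\sqrt{2\pi}}\int_{-\tau}^{\tau}e^{-izt}\widetilde h(t)\,dt$; absorbing the shift, $G(z)=\frac1{\sqrt{2\pi}}\int_{\gamma}^{\beta}e^{-izt}h(t)\,dt$ with $h(t)=\widetilde h(t-\sigma)$ supported in $[\gamma,\beta]$. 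Continuity of $h$ (indeed vanishing at the endpoints) then follows because $G\in L^1(\mathbb{R})$ as well: $h$ is the inverse Fourier transform of an $L^1$ function, hence continuous on $\mathbb{R}$, and its support being contained in $[\gamma,\beta]$ forces $h(\gamma)=h(\beta)=0$ and $h\equiv0$ off $[\gamma,\beta]$, so $h\in C[\gamma,\beta]$. This completes the verification of all the asserted properties.
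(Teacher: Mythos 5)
First, a point of orientation: the paper does not prove this statement. Theorem~C is quoted verbatim from Luxemburg--Korevaar \cite[Theorem 5.2]{LK}; the only thing the paper proves about $G$ is the additional lower bound on the circles $\partial P_{n,\epsilon}$ (Theorem~\ref{Zikkostheorem}). So there is no in-paper proof to compare against, and your proposal has to stand on its own. Its architecture is the right one and matches the source: a type-zero canonical product carrying the zeros $\pm i\lambda_n$, a cosine product of type exactly $\tau$, a shift by $\sigma$, and then Paley--Wiener plus Fourier inversion ($G\in L^1$) to get a continuous $h$ supported in $[\gamma,\beta]$. Steps 1 and the endgame of Step 3 are fine.

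The genuine gap is the mechanism you give for the decay of $\prod_n|\cos(\epsilon_n x)|$ on the real axis, which is the entire content of the theorem. Your truncation argument does not work as described: for real $x$ the factors with $\epsilon_n|x|\lesssim 1$ are close to $1$ and contribute no decay, while the factors with $\epsilon_n|x|\gtrsim 1$ are periodic in $x$ and have modulus equal to $1$ infinitely often, so there is no ``$|\cos w|\le e^{-c}$ decay away from the zeros'' to exploit; pointwise, no single factor need be small. The decay of the infinite product is a collective effect --- the model case is Vi\`ete's identity $\prod_{k\ge1}\cos(x/2^k)=\sin x/x$, and faster-than-polynomial decay is obtained by splitting $\{\epsilon_n\}$ into infinitely many geometric blocks, each producing a $\sin(c_jx)/(c_jx)$ factor. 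Moreover, ``faster than any negative power of $|x|$'' is not enough: under condition $(A)$ alone, $P(x)=\prod_n(1+x^2/\lambda_n^2)^{\mu_n}$ can grow faster than any polynomial (it is type zero, not polynomially bounded), so the block sizes $c_j$ must be tuned against the tail of $\sum\mu_n/|\lambda_n|$ so that the cosine product's decay dominates $P$'s growth while still satisfying $\sum_j c_j=\tau$. You correctly identify this as ``the main obstacle,'' but you then resolve it by citing \cite[Sections 4--5]{LK} --- which is the source of the very theorem being proved, so at its core the argument is circular. To make this a proof you would need to carry out the block construction and the comparison with the growth of $P$ explicitly.
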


Assuming now that $\Lambda$ belongs to the $ABC$ class, we will extend Theorem $\bf C$ by proving in Theorem $\ref{Zikkostheorem}$ that
for every fixed $\epsilon>0$ there is a system of disjoint disks $\{P_{n,\epsilon}\}_{n=1}^{\infty}$ with respective circles $\partial P_{n,\epsilon}$,
and a constant $M_{\epsilon}$ independent of $n$ but depending on $\tau$, such that
\begin{equation}\label{lowerboundforthefunctionGz}
|G(z)|\ge M_{\epsilon} e^{(-\epsilon + \beta) \Re\lambda_n},\qquad \forall\,\, z\in\partial P_{n,\epsilon}, \quad n=1,2,\dots.
\end{equation}
Together with some auxiliary results this lower bound will eventually yield the distance bound $(\ref{distancelowerbounds})$.

\subsubsection{Distances in $L^p(-\infty,0)$}

Now, one may use $(\ref{distancelowerbounds})$ to derive the lower bound $(\ref{distancehalflinezero})$
for the distance between $e_{n,k}(x)=x^ke^{\lambda_n x}$
and the closed span of $E_{\Lambda_{n,k}}=E_{\Lambda}\setminus e_{n,k}$ in $L^p(-\infty,0)$, with
the distance denoted by
\[
D_{-\infty,0,p,n,k}:=\inf_{g\in \overline{\text{span}} (E_{\Lambda_{n,k}})} ||e_{n,k}-g||_{L^p (-\infty,0)}
\]
and
\[
L^p(-\infty,0):=\left\{f:\,\,\int_{-\infty}^{0}|f(x)|^p\, dx<\infty \right\},\qquad
||f||_{L^p (-\infty,0)}=\left(\int_{-\infty}^{0}|f(x)|^p\, dx\right)^{\frac{1}{p}},\qquad p\ge 1.
\]

Letting $g\in \overline{\text{span}} (E_{\Lambda_{n,k}})$ in $L^p (-\infty,0)$ and choosing
any real number $\gamma<0$, then clearly one has
\[
||e_{n,k}-g||_{L^p (-\infty,0)}\ge ||p_{n,k}-g||_{L^p (\gamma,0)}\ge D_{\gamma,0,p,n,k}.
\]
It then follows readily from $(\ref{distancelowerbounds})$ that
for every $\epsilon>0$, there is a positive constant $u_{\epsilon}$
independent of $p\ge 1$, $n\in\mathbb{N}$ and $k=0,1,\dots,\mu_n-1$, but depending on $\Lambda$, so that
\begin{equation}\label{distancehalflinezero}
D_{-\infty,0,p,n,k}\ge u_{\epsilon}e^{-\epsilon\Re\lambda_n}.
\end{equation}

We point out that in the $\bf Appendix$ (Section C), we derive $(\ref{distancehalflinezero})$
by another method, employing the meromorphic function
\begin{equation}\label{meromorphicfunctionfz}
f(z)=\frac{1}{(4+z)^2}\cdot \prod_{n=1}^{\infty}\left( \frac{1-z/\lambda_n}{1+z/(\overline{\lambda_n}+4)}\right)^{\mu_n},
\qquad \text{$\Lambda$\,\, belongs\,\, to\,\, the\,\, class\,\, ABC}.
\end{equation}

\begin{remark}
We find it necessary to mention this alternative tool because the authors of several papers dealing with Control Theory for PDE's
where the Method of Moments is the key for proving various results, use a similar meromorphic function (Blaschke product) as above.
Their initial goal is to derive a lower bound for distances in $L^2 (-\infty,0)$ and then based on ideas of L. Schwartz they obtain a lower bound
for distances in $L^2 (-T,0)$ for $0<T<\infty$. The interested readers may consult
the monograph by E. Zuazua \cite[Theorem 2.6.6]{Zuazua} as well as
\cite[Corollary 4.6]{2011JPA} and \cite[Lemma A.1]{2019JPA} on this matter.
However, in our opinion, using the entire function G of Theorem $\bf C$ is more effective
because we can estimate the distances in $L^p(\gamma, \beta)$
$\bf directly$ without first estimating the distances in $L^p(-\infty, 0)$. But this is our own subjective argument.
\end{remark}

\subsection{Organization of this article}

\begin{itemize}

\item
In Section 2 we recall definitions and results from \cite{BerBaoVidras} regarding interpolating varieties
for the space of entire functions of exponential type zero. We also present many examples and prove some new results
regarding these varieties. Moreover, we make a connection between interpolating varieties and the
condensation index of a sequence $\Lambda$ in case it has simple terms $\lambda_n$ (Lemma $\ref{condensation}$).

\item
In Section 3 we extend the Luxemburg-Korevaar result (Theorem $\bf C$)
by obtaining the lower bound $(\ref{lowerboundforthefunctionGz})$.

\item
Section 4 is devoted to the proof of our Fundamental Result.

\item
The region of holomorphy of Taylor-Dirichlet series is discussed in Section 5.

\item
In Section 6 we prove Theorems $\ref{theorem1}$ and $\ref{converse}$ which are on
characterizing the closed span of the exponential system $E_{\Lambda}$ in the space $L^p(\gamma,\beta)$.

\item
Section 7 is devoted in proving Theorem $\ref{biorthogonalsystem}$ which is
on the existence and properties of a biorthogonal sequence $r_{\Lambda}$
to the system $E_{\Lambda}$ in $L^2(\gamma,\beta)$.

\item
In Section 8 we present two proofs of Theorem $\ref{MomentProblem}$ dealing with the Moment Problem.

\item
In Section 9 we prove Theorem $\ref{CarlesonTheorem}$, on the solution space of
the Carleson equation.

\item
And finally in Section 10, based on our Fundamental result, an old result by G. Valiron,
and a  Bernstein-type inequality by  A. Brudnyi \cite[Theorem 1.5]{Brudnyi},
we prove the amusing and remarkable result that if $\Lambda\in ABC$
and satisfies some additional condition, then
\[
\text{the pointwise convergence of a Taylor-Dirichlet series on $(\gamma,\beta)$}
\]
yields
\[
\text{uniform convergence on closed subintervals of $(\gamma,\beta)$.}
\]
Consequently, we revisit Theorems $\ref{converse}$ and $\ref{CarlesonTheorem}$, and we obtain Theorem $\ref{CarlesonTheorem2}$.

\end{itemize}

\section{Interpolating varieties for $A^0_{|z|}$ and the $ABC$ class}
\setcounter{equation}{0}

In this section we first recall the results of \cite{BerBaoVidras} derived on interpolating varieties for the space of
entire functions of exponential type zero $A^0_{|z|}$.
We then give examples of multiplicity sequences $\Lambda=\{\lambda_n, \mu_n\}_{n=1}^{\infty}$
which are such interpolating varieties and belong to the class $ABC$ as well.
We also find a necessary gap condition for interpolation between the $\lambda_n$'s (see Lemma $\ref{zeroandgapcondition}$).
Then, for infinite products which vanish either on the set $\pm \Lambda$ or on
$\pm i \Lambda$, we derive a lower bound on a system of circles (see Lemma $\ref{anevenfunction}$).
Lastly, we state and prove Lemma $\ref{condensation}$ which is on the condensation index of a sequence.

\subsection{Interpolating varieties for the space $A^0_{|z|}$}

Following Berenstein et al. \cite{BerBaoVidras}, we say that a multiplicity sequence $\Lambda=\{\lambda_n,\mu_n\}_{n=1}^{\infty}$
is an interpolating\ variety for the space $A^0_{|z|}$ if for an arbitrary
doubly-indexed sequence $a=\{a_{n,k}:\,\, n\in \mathbb{N},\,\, k=0,1,\dots,\mu_n-1\}$, such that
\[
\forall\epsilon>0,\quad \sup_{n\in\mathbb{N}}
\sum_{k=0}^{\mu_n-1}|a_{n,k}|e^{-\epsilon |\lambda_n|}<\infty,
\]
there exists some function $f\in A^0_{|z|}$ such that
\[
\frac{f^{(k)}(\lambda_n)}{k!}=a_{n,k}.
\]

\subsubsection{Necessary and Sufficient Conditions}

The authors in \cite{BerBaoVidras}
obtained the following geometric and analytic conditions which were both necessary and sufficient
in order for $\Lambda$ to be an interpolating variety for $A^0_{|z|}$.

$The\,\, Analytic\,\, Condition$, \cite[Theorem 4.1]{BerBaoVidras}:
A multiplicity sequence $\Lambda$ is an interpolating variety for the space $A_{|z|} ^0$
if and only if there exists an entire function $f\in A^0_{|z|}$ such that $\Lambda$ is a subset of the zero set of $f$, and
for every $\epsilon>0$ there is a positive constant $u_{\epsilon}$, independent of $n\in\mathbb{N}$,  such that
\[
\frac{|f^{(\mu_n)}(\lambda_n)|}{\mu_n!}\ge u_{\epsilon}e^{-\epsilon |\lambda_n|}\qquad\forall\,\, n\in\mathbb{N}.
\]

$The\,\, Geometric\,\, Conditions$, \cite[Theorem 3.1]{BerBaoVidras}:
Consider a multiplicity sequence $\Lambda$ and
the counting functions of $\Lambda$ about $0$ and a point $z_0$, given respectively by
\[
n_{\Lambda}(t):=\sum_{|\lambda_n|\le t}\mu_n\qquad\text{and}\qquad
n_{\Lambda}(t,z_0):=\sum_{|\lambda_n-z_0|\le t}\mu_n.
\]
Let
\[
N(r,\Lambda):=\int_{0}^{r}\frac{n_{\Lambda}(t)-n_{\Lambda}(0)}{t}\, dt+n_{\Lambda}(0)\log r,
\]
and
\[
N(r,z_0,\Lambda):=\int_{0}^{r}\frac{n_{\Lambda}(t,z_0)-n_{\Lambda}(0,z_0)}{t}\, dt+n_{\Lambda}(0,z_0)\log r.
\]
Then $\Lambda$ is an interpolating variety for the space $A_{|z|} ^0$ if and only if
\[
(I)\,\,\, N(r,\Lambda)=o(r)\,\, \text{as}\,\, r\to\infty
\qquad\text{and}\qquad
(II)\,\,\, N(|\lambda_n|,\lambda_n,\Lambda)=o(|\lambda_n|)\,\, \text{as}\,\, n\to\infty.
\]

\subsubsection{A necessary condition for Interpolation}

Observe that
\begin{eqnarray}
N(|\lambda_n|,\lambda_n,\Lambda) & =& \int_{0}^{|\lambda_n|}\frac{n_{\Lambda}(t,\lambda_n)-n_{\Lambda}(0,\lambda_n)}{t}\, dt +  n_{\Lambda}(0,\lambda_n)\log |\lambda_n|\nonumber\\
& = &
\sum_{0<|\lambda_n-\lambda_k|\le|\lambda_n|}\mu_k\log \left|\frac{\lambda_n}{\lambda_n-\lambda_k}\right|+ \mu_n\log |\lambda_n|\label{relation}\label{boundmun}.
\end{eqnarray}
In particular
\begin{equation}\label{mun=1}
N(|\lambda_n|,\lambda_n,\Lambda)=
\sum_{0<|\lambda_n-\lambda_k|\le|\lambda_n|}\log \left|\frac{\lambda_n}{\lambda_n-\lambda_k}\right|+\log |\lambda_n|\qquad \text{if}\qquad \mu_n=1\quad \forall\,\, n\in\mathbb{N}.
\end{equation}
It follows from $(\ref{boundmun})$ and $Geometric\,\, Condition\,\, (II)$, that if $\Lambda$ is an interpolating variety, then
\begin{equation}\label{conditionn}
\frac{\mu_n\log |\lambda_n|}{|\lambda_n|}\to 0\qquad \text{as}\qquad n\to\infty.
\end{equation}

\begin{remark}
Therefore $(\ref{conditionn})$ is Necessary for Interpolation.
\end{remark}

\subsubsection{A particular sufficient condition for Interpolation}

The following result is a sufficient condition for interpolation
and provides us with a large class of examples of such varieties (see $\bf {Example\,\, v}$ later on).

\begin{lema}\cite[Remark 2.3 and Lemma 3.2]{Z2015JMAA}

Suppose that a multiplicity sequence $\Lambda=\{\lambda_n,\mu_n\}_{n=1}^{\infty}$ satisfies the following three condtions:

$(1)$ $\Lambda$ has density zero, that is $\frac{\sum_{|\lambda_n|\le t}\mu_n}{t}\to 0$ as $t\to\infty$.

$(2)$ Relation $(\ref{conditionn})$ holds,

$(3)$ There is some $\delta\in (0,1/10)$ such that
for every fixed $k\in \mathbb{N}$ the inequality $|\lambda_n-\lambda_k|\le \delta |\lambda_k|$ is true only for $n=k$.

Then $\Lambda$ is an interpolating variety for the space $A_{|z|}^0$.
\end{lema}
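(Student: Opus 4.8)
The plan is to verify the two \emph{Geometric Conditions} of \cite[Theorem 3.1]{BerBaoVidras}, namely $(I)$ $N(r,\Lambda)=o(r)$ as $r\to\infty$ and $(II)$ $N(|\lambda_n|,\lambda_n,\Lambda)=o(|\lambda_n|)$ as $n\to\infty$; by that theorem these two conditions are exactly what is needed for $\Lambda$ to be an interpolating variety for $A^0_{|z|}$.

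For $(I)$: since every $\lambda_n\neq 0$ we have $n_{\Lambda}(0)=0$, so $N(r,\Lambda)=\int_0^r \frac{n_{\Lambda}(t)}{t}\,dt$ (and the integrand vanishes for $t<|\lambda_1|$, so there is no issue at the origin). Condition $(1)$ says precisely that $n_{\Lambda}(t)/t\to 0$ as $t\to\infty$, so given $\epsilon>0$ we may choose $T$ with $n_{\Lambda}(t)\le\epsilon t$ for all $t\ge T$; splitting the integral at $T$ gives $N(r,\Lambda)\le C_T+\epsilon(r-T)$ for $r\ge T$, where $C_T:=\int_0^T n_{\Lambda}(t)/t\,dt<\infty$. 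Hence $\limsup_{r\to\infty}N(r,\Lambda)/r\le\epsilon$ for every $\epsilon>0$, which is $(I)$.

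For $(II)$: I will start from the identity $(\ref{boundmun})$, which reads
\[
N(|\lambda_n|,\lambda_n,\Lambda)=\sum_{0<|\lambda_n-\lambda_k|\le|\lambda_n|}\mu_k\log\left|\frac{\lambda_n}{\lambda_n-\lambda_k}\right|+\mu_n\log|\lambda_n|.
\]
The last term is $o(|\lambda_n|)$, since this is exactly hypothesis $(2)$, i.e.\ relation $(\ref{conditionn})$. For the sum, apply condition $(3)$ with $\lambda_n$ playing the role of the fixed center: it then asserts that $|\lambda_n-\lambda_k|>\delta|\lambda_n|$ for every $k\neq n$. Combined with the constraint $|\lambda_n-\lambda_k|\le|\lambda_n|$ in the range of summation, this yields $1\le\left|\lambda_n/(\lambda_n-\lambda_k)\right|<1/\delta$, so every summand lies in the interval $[0,\log(1/\delta)]$. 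Moreover $|\lambda_n-\lambda_k|\le|\lambda_n|$ forces $|\lambda_k|\le 2|\lambda_n|$, so the number of terms occurring in the sum, counted with multiplicity, is at most $n_{\Lambda}(2|\lambda_n|)$. Therefore
\[
0\le\sum_{0<|\lambda_n-\lambda_k|\le|\lambda_n|}\mu_k\log\left|\frac{\lambda_n}{\lambda_n-\lambda_k}\right|\le \log\frac{1}{\delta}\cdot n_{\Lambda}(2|\lambda_n|),
\]
and since $n_{\Lambda}(2|\lambda_n|)/|\lambda_n|=2\cdot n_{\Lambda}(2|\lambda_n|)/(2|\lambda_n|)\to 0$ by condition $(1)$, the sum is $o(|\lambda_n|)$. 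Adding the two pieces gives $(II)$, completing the verification.

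A remark on where the difficulty lies: the argument is essentially bookkeeping once \cite[Theorem 3.1]{BerBaoVidras} and the identity $(\ref{boundmun})$ are available, and the only genuinely substantive observation is that the separation hypothesis $(3)$ delivers the \emph{uniform} lower bound $|\lambda_n-\lambda_k|\ge\delta|\lambda_n|$ for $k\neq n$, which is what keeps the logarithmic terms in $(\ref{boundmun})$ bounded; density zero then controls the count. The specific value $\delta<1/10$ is not used here — any $\delta\in(0,1)$ suffices — and is presumably retained only for compatibility with the other statements of \cite{Z2015JMAA} in which this lemma is invoked. The one point requiring a little care is the direction in which $(3)$ is applied: it is phrased with a fixed index and a running index, so to estimate the disk of radius $|\lambda_n|$ about $\lambda_n$ one must use $\lambda_n$ itself as the fixed center.
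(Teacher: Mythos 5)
Your proof is correct. The paper does not prove this lemma itself (it is quoted from \cite{Z2015JMAA}), so there is no internal argument to compare against; but your direct verification of the two geometric conditions of \cite[Theorem 3.1]{BerBaoVidras} is sound: condition $(1)$ gives $N(r,\Lambda)=o(r)$ by the splitting argument, and in the identity $(\ref{boundmun})$ the diagonal term is $o(|\lambda_n|)$ by condition $(2)$ while the separation hypothesis $(3)$, applied with $\lambda_n$ as the fixed center, bounds each logarithm by $\log(1/\delta)$ and condition $(1)$ controls the number of summands via $n_{\Lambda}(2|\lambda_n|)$. Your observations that any $\delta\in(0,1)$ suffices for this argument and that the quantifier in $(3)$ must be read with the disk's center as the fixed index are both accurate.
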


\subsection{Examples of $\Lambda$ in the $ABC$ class}

Next, we present examples where some $\Lambda$'s are interpolating varieties for the space $A_{|z|} ^0$
while some others are not. In all the examples given,
conditions $A$ $(\ref{convergencecondition})$ and $B$ $(\ref{lessthan})$ hold.
Therefore, those $\Lambda$'s which are interpolating varieties, belong to the $ABC$ class as well.
We point out however that conditions $A$ $(\ref{convergencecondition})$ and $B$ $(\ref{lessthan})$
are neither sufficient nor necessary for interpolation. \\

\smallskip

$\bf Case \,\, 1$, $\mu_n=1$ for all $n\in\mathbb{N}$:

$(\bf Example\,\, i):$ It follows from \cite{Vidras} (see also \cite[Theorem 5.1]{BerBaoVidras})
that if a sequence $\{\lambda_n\}_{n=1}^{\infty}$ satisfies condition $(\ref{LKcondition})$
then $\Lambda=\{\lambda_n,1\}_{n=1}^{\infty}$  is an interpolating  variety. In particular,
this is the case when $\{\lambda_n\}_{n=1}^{\infty}$ is a sequence
of distinct positive real numbers such that $\sum_{n=1}^{\infty}1/\lambda_n<\infty$
with uniformly separated terms, that is
\[
\liminf_{n\in\mathbb{N}} (\lambda_{n+1}-\lambda_n)>0.
\]

\begin{remark}
But what happens if $\liminf_{n\in\mathbb{N}} (\lambda_{n+1}-\lambda_n)=0$? In $\bf Examples\,\, ii$ and $\bf iii$,
$\Lambda$ might or might not be an interpolating variety.
\end{remark}

$(\bf Example\,\, ii):$  From $(\ref{mun=1})$ and the Geometric Conditions $(I)$ and $(II)$,
one can show that if
\[
\lambda_{2n-1}=n^2\quad \text{and}\quad \lambda_{2n}=n^2+e^{-n},
\]
then $\Lambda=\{\lambda_n\}_{n=1}^{\infty}$ is an interpolating variety.

$(\bf Example\,\, iii):$ On the other hand, if
\[
\lambda_{2n-1}=n^2\quad \text{and}\quad \lambda_{2n}=n^2+e^{-n^2},
\]
it then follows from $(\ref{mun=1})$ and Geometric Condition $(II)$,
that $\Lambda=\{\lambda_n\}_{n=1}^{\infty}$ is not an interpolating variety.\\

$\bf Case \,\, 2$,  $\mu_n=O(1)$:

If a sequence $\{\lambda_n\}_{n=1}^{\infty}$ with distinct terms is an interpolating variety, it then follows from the Geometric Conditions
and $(\ref{boundmun})$ that the multiplicity sequence
$\Lambda=\{\lambda_n,\mu_n\}_{n=1}^{\infty}$  with $\sup_{n\in\mathbb{N}}\mu_n<\infty$ is an interpolating variety as well.

$(\bf Example\,\, iv):$ Let a sequence $\{\lambda_n\}_{n=1}^{\infty}$ satisfy the condition $(\ref{LKcondition})$.
Then $\Lambda=\{\lambda_n,\mu_n\}_{n=1}^{\infty}$ with $\sup_{n\in\mathbb{N}}\mu_n<\infty$ is an interpolating variety.\\

$\bf Case \,\, 3$, $\sup_{n\in\mathbb{N}}\mu_n=\infty$:

$(\bf Example\,\, v):$  Let $\Lambda=\{\lambda_n,\mu_n\}_{n=1}^{\infty}$ be so that
\[
\sup_{n\in\mathbb{N}}|\arg\lambda_n|\le\eta<\pi/2,\qquad\inf_{n\in\mathbb{N}}\frac{|\lambda_{n+1}|}{|\lambda_n|}\ge q>1\qquad
\text{and}\qquad \mu_n=O(|\lambda_n|^{\alpha})\quad 0\le \alpha<1.
\]
Then $\Lambda$ is an interpolating variety. For example, let
$\Lambda=\{(k+1)^n, k^n\}_{n=1}^{\infty}$ where $k\ge 2$ is a positive integer, such as
$\Lambda=\{3^n,2^n\}_{n=1}^{\infty}$, $\Lambda=\{4^n,3^n\}_{n=1}^{\infty}$ e.t.c.

In order to justify this, we will show that Conditions $(1)$, $(2)$ and $(3)$ of Lemma $\bf A$ hold.

First observe that Condition $(2)$ is obvious.

Next we show that Condition $(1)$ holds, that is $\Lambda$ has density zero.
From above we get $|\lambda_n|\ge q^{n-1}|\lambda_1|$ and there is $M>0$ so that $\mu_n\le M |\lambda_n|^{\alpha}$
for all $n\in\mathbb{N}$ . Thus
\[
\sum_{n=1}^{\infty}\frac{\mu_n}{|\lambda_n|}\le \sum_{n=1}^{\infty}\frac{M}{|\lambda_n|^{1-\alpha}}\le
\sum_{n=1}^{\infty}\frac{M}{(q^{n-1})^{1-\alpha}|\lambda_1|^{1-\alpha}}=\frac{M}{|\lambda_1|^{1-\alpha}}\sum_{n=1}^{\infty}\frac{1}{(q^{1-\alpha})^{n-1}}<\infty
\]
since $q>1$ and $1-\alpha>0$. The convergence implies that $\Lambda$ has density zero.

Finally we have to show that Condition $(3)$ is true also.  For every fixed $k\in\mathbb{N}$ we will obtain a lower bound
for $|\lambda_{n}-\lambda_k|$ when $n\not= k$. We keep in mind the lacunary relation $|\lambda_{n+1}|\ge q|\lambda_n|$ and
we consider two cases:

$(a)$ $|\lambda_{k+j}-\lambda_k|$ for all $j\in\mathbb{N}$

$(b)$ $|\lambda_{k-j}-\lambda_k|$ for all $j\in\{1,2,\dots,k-1\}$.

For $(a)$ we get
\[
|\lambda_{k+j}-\lambda_k|\ge |\lambda_{k+j}|-|\lambda_k|\ge (q^j-1)|\lambda_k|\ge (q-1)|\lambda_k|
\]
and for $(b)$ we get
\[
|\lambda_{k-j}-\lambda_k|\ge |\lambda_k|-|\lambda_{k-j}|\ge |\lambda_k|-\frac{|\lambda_k|}{q^j}= \left(1-\frac{1}{q^j}\right)|\lambda_k|\ge
\frac{q-1}{q}|\lambda_k|.
\]
Using the above lower bounds, we see that if we choose $0<\delta<\min\left\{\frac{q-1}{q}, \frac{1}{10}\right\}$, then
for every fixed $k\in \mathbb{N}$ the inequality $|\lambda_n-\lambda_k|\le \delta |\lambda_k|$ is true only for $n=k$.
Thus Condition (3) of the Lemma $\bf A$ is satisfied.

$(\bf Example\,\, vi):$ Let
$\Lambda=\{\lambda_n,\mu_n\}_{n=1}^{\infty}$ be so that
\[
\lambda_n=n^2\cdot 10^n\quad\text{and}\quad \mu_n=10^n.
\]
and let $\Lambda^{\prime}=\{\lambda^{\prime}_n,\mu^{\prime}_n\}_{n=1}^{\infty}$ be so that
\[
\lambda^{\prime}_n=n^2\cdot 10^{n^2}\quad\text{and}\quad \mu^{\prime}_n=10^{n^2}.
\]
It is interesting to note that $\sum_{n=1}^{\infty}\mu_n/\lambda_n=\sum_{n=1}^{\infty}\mu^{\prime}_n/\lambda^{\prime}_n.$

Now, $\Lambda$ is an interpolating variety (see \cite[Example 3.2]{Z2015JMAA}).
Observe that in contrast to  $(\bf Example\,\, v)$, this time
\[
\mu_n\not=O(\lambda_n^{\alpha})\quad \text{for\,\, any}\quad 0\le \alpha <1.
\]
On the other hand, $\Lambda^{\prime}$ is not interpolating variety since $(\ref{conditionn})$  does not hold.

\subsection{A necessary gap condition for interpolation}

In $\bf{Examples}$ $\bf ii$ and $\bf iii$, we saw that if $\liminf_{n\in\mathbb{N}} |\lambda_{n+1}-\lambda_n|=0$, then
$\Lambda$ might or might not be an interpolating variety. Now, if $\Lambda$ is such a variety,
how close can these frequencies be to each other?
In the following result we provide a lower bound for the distance between them.

\begin{lemma}\label{zeroandgapcondition}
Let the multiplicity sequence $\Lambda=\{\lambda_n,\mu_n\}_{n=1}^{\infty}$ be an interpolating variety for the space $A^0_{|z|}$.
Then, for every $\epsilon>0$  there is a positive constant $m_{\epsilon}$, independent of $n\in\mathbb{N}$, so that
\begin{equation}\label{separationcondition}
\forall\,\, n\in\mathbb{N},\qquad \inf_{k\not= n}|\lambda_n-\lambda_k|\ge m_{\epsilon}\exp\left\{{-\frac{\epsilon |\lambda_n|}{\mu_n}}\right\}.
\end{equation}
\end{lemma}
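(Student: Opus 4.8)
The natural strategy is to exploit the Analytic Condition of Berenstein–Bao–Vidras together with a local estimate of an interpolating entire function near a multiple zero. Suppose $\Lambda$ is an interpolating variety for $A^0_{|z|}$. By the Analytic Condition there is an entire function $f\in A^0_{|z|}$ whose zero set contains $\Lambda$ and which satisfies, for every $\epsilon>0$, a lower bound $|f^{(\mu_n)}(\lambda_n)|/\mu_n!\ge u_\epsilon e^{-\epsilon|\lambda_n|}$ for all $n$. Fix $n$ and fix some index $k\neq n$, and set $d:=|\lambda_n-\lambda_k|$; I want to show $d$ cannot be too small relative to $e^{-\epsilon|\lambda_n|/\mu_n}$. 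First I would localize: write $f(z)=(z-\lambda_n)^{\mu_n}g_n(z)$ where $g_n$ is entire (the factor corresponding to the zero at $\lambda_n$ of order exactly $\mu_n$), so that $f^{(\mu_n)}(\lambda_n)/\mu_n!=g_n(\lambda_n)$. Since $\lambda_k$ is also a zero of $f$ of order $\mu_k\ge 1$ and $\lambda_k\neq\lambda_n$, the point $\lambda_k$ is a zero of $g_n$.

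The key analytic input is then a Schwarz-lemma/Carathéodory-type comparison on a disk centered at $\lambda_n$. On the circle $|z-\lambda_n|=R$ for a suitably chosen radius $R$ (say $R$ comparable to $|\lambda_n|$, or $R=|\lambda_n|$ itself, or a fixed small multiple thereof so the disk still captures $\lambda_k$ when $d$ is small), the exponential-type-zero bound gives $|g_n(z)|\le |f(z)|/R^{\mu_n}\le M_\epsilon e^{\epsilon R}/R^{\mu_n}$. Now apply the standard estimate for a holomorphic function on a disk with a prescribed interior zero: if $g_n$ is holomorphic on $|z-\lambda_n|\le R$, vanishes at $\lambda_k$ with $|\lambda_k-\lambda_n|=d<R$, and $|g_n|\le \Vert g_n\Vert_{R}$ on the boundary, then by dividing out the Blaschke factor for the disk,
\[
|g_n(\lambda_n)| \le \Vert g_n\Vert_{R}\cdot\frac{d}{R}.
\]
Combining this with the lower bound $|g_n(\lambda_n)|\ge u_\epsilon e^{-\epsilon|\lambda_n|}$ and the upper bound $\Vert g_n\Vert_R\le M_\epsilon e^{\epsilon R}R^{-\mu_n}$ yields
\[
\frac{d}{R} \ge \frac{u_\epsilon e^{-\epsilon|\lambda_n|}}{M_\epsilon e^{\epsilon R}R^{-\mu_n}},
\qquad\text{i.e.}\qquad
d \ge \frac{u_\epsilon}{M_\epsilon}\,R^{\,\mu_n+1}\,e^{-\epsilon(|\lambda_n|+R)}.
\]
The $R^{\mu_n+1}$ factor is the place to be careful: if $R\ge 1$ it only helps, but to get a clean bound of the advertised shape $m_\epsilon\exp\{-\epsilon|\lambda_n|/\mu_n\}$ I would instead pick $R$ to be a small fixed constant $\rho\in(0,1)$ (large enough that $\lambda_k$ lies in the disk once $d<\rho$, and if $d\ge\rho$ the conclusion is trivial since $\rho\ge m_\epsilon\exp\{-\epsilon|\lambda_n|/\mu_n\}$ for suitable $m_\epsilon$). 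Then $R^{\mu_n+1}=\rho^{\mu_n+1}=\exp\{(\mu_n+1)\log\rho\}$, and since $\log\rho<0$ this is $\exp\{-c\mu_n\}$ for some $c>0$. This is exponentially small in $\mu_n$, not in $|\lambda_n|/\mu_n$, so a direct application does not quite give the stated form — the exponent on the right must be reconciled.

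The resolution, and what I expect to be the main obstacle, is to run the argument not with a fixed $\epsilon$ but to absorb the $\mu_n$-dependence into the choice of $\epsilon$ in the Analytic Condition and the choice of $R$ simultaneously, using the necessary condition $(\ref{conditionn})$, namely $\mu_n\log|\lambda_n|/|\lambda_n|\to 0$. Concretely: given the target $\epsilon>0$, apply the Analytic Condition with parameter $\epsilon/2$ to get $u_{\epsilon/2}$, then choose the radius $R=R_n$ depending on $n$ so that both $R_n^{\mu_n+1}\ge \exp\{-(\epsilon/2)|\lambda_n|/\mu_n\cdot\mu_n\}$-type bookkeeping works and $R_n\le |\lambda_n|$; a good choice is $R_n$ a fixed fraction of $\mathrm{dist}$ considerations, or more cleverly $R_n = \exp\{-\theta|\lambda_n|/\mu_n^2\}$ type scales — the exact scaling should be reverse-engineered so that $(\mu_n+1)\log R_n$ and $\epsilon R_n$ are each $o(|\lambda_n|)$ and in fact bounded by $\epsilon|\lambda_n|/\mu_n$ up to constants, which is exactly where $(\ref{conditionn})$ and condition $(A)$ are used. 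I would also double-check the edge case $\mu_n=1$, where the claim reduces to $\inf_{k\neq n}|\lambda_n-\lambda_k|\ge m_\epsilon e^{-\epsilon|\lambda_n|}$, a known separation estimate that should fall out of the same computation with $R$ a fixed constant, serving as a sanity check on the constants. Once the scaling of $R_n$ is fixed correctly, the inequality $(\ref{separationcondition})$ follows by collecting the constants into a single $m_\epsilon$ independent of $n$.
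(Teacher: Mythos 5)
Your setup contains a genuine gap that the proposed repair cannot close. After writing $f(z)=(z-\lambda_n)^{\mu_n}g_n(z)$ you divide out only a \emph{simple} Blaschke factor for the zero of $g_n$ at $\lambda_k$, so $d:=|\lambda_n-\lambda_k|$ enters your final inequality to the first power, giving $d\gtrsim R^{\mu_n+1}e^{-2\epsilon|\lambda_n|}$. The exponent is $\epsilon|\lambda_n|$, not $\epsilon|\lambda_n|/\mu_n$: whenever $\mu_n\to\infty$ (e.g.\ $\lambda_n=3^n$, $\mu_n=2^n$) this is exponentially weaker than $(\ref{separationcondition})$. Tuning $R=R_n$ cannot help: the quantity $(\mu_n+1)\log R-\epsilon R$ is maximized near $R\sim\mu_n/\epsilon$, where it is $O(\mu_n\log\mu_n)=o(|\lambda_n|)$ by the necessary condition $(\ref{conditionn})$, so no choice of radius bridges the missing factor $e^{\epsilon|\lambda_n|(1-1/\mu_n)}$. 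The deficiency sits in the power of $d$, not in the powers of $R$, and the cure is to swap the roles of the two points: set $g(z)=f(z)/(z-\lambda_k)^{\mu_k}$, so that $|g(\lambda_k)|=|f^{(\mu_k)}(\lambda_k)|/\mu_k!\ge u_\epsilon e^{-\epsilon|\lambda_k|}$ while $g$ vanishes at $\lambda_n$ to order $\mu_n$. Assuming $d<1/2$ (otherwise the claim is trivial), the maximum principle on the disk $|z-\lambda_k|\le 1$ gives $|g(\lambda_k)|\le \Vert g\Vert\cdot(2d)^{\mu_n}$ with $\Vert g\Vert\le M_\epsilon e^{\epsilon(|\lambda_k|+1)}$; taking $\mu_n$-th roots and using $|\lambda_k|\le|\lambda_n|+1$ yields $d\ge m_\epsilon e^{-2\epsilon|\lambda_n|/\mu_n}$ with a fixed radius and no delicate scaling. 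As written, your argument only establishes the much weaker separation $d\ge m_\epsilon e^{-2\epsilon|\lambda_n|}$.

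For comparison, the paper does not use the Analytic Condition at all. It argues by contradiction from Geometric Condition $(II)$: if $(\ref{separationcondition})$ fails along a subsequence $\lambda_{n_j}$ with companions $\lambda_{m(j)}$, then the single term $\mu_{n_j}\log\bigl|\lambda_{m(j)}/(\lambda_{m(j)}-\lambda_{n_j})\bigr|$ of the Jensen-type sum $(\ref{boundmun})$ centered at $\lambda_{m(j)}$ already exceeds $\rho|\lambda_{n_j}|$, contradicting $N(|\lambda_{m(j)}|,\lambda_{m(j)},\Lambda)=o(|\lambda_{m(j)}|)$. The multiplicity $\mu_{n_j}$ multiplying that logarithm is exactly the source of the $\mu_n$ in the denominator of the exponent --- the same $\mu_n$-fold amplification that your single Blaschke factor fails to capture.
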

\begin{proof}
Suppose that the gap relation $(\ref{separationcondition})$ is false.
Then there is some positive constant $\rho$ and a subsequence
$\{\lambda_{n_j}\}_{j=1}^{\infty}$  such that for each $\lambda_{n_j}$ there is some $\lambda_{m(j)}$ so that
\[
|\lambda_{n_j}-\lambda_{m(j)}|\le \exp\left\{{-\frac{\rho |\lambda_{n_j}|}{\mu_{n_j}}}\right\}<1.
\]
Equivalently,
\begin{equation}\label{separation}
-\mu_{n_j}\log |\lambda_{n_j}-\lambda_{m(j)}|\ge \rho |\lambda_{n_j}|.
\end{equation}
On the other hand, from $(\ref{boundmun})$ and $Geometric\,\, Condition\,\, (II)$,
we have
\[
\frac{1}{|\lambda_{m(j)}|}
\sum_{0<|\lambda_{m(j)}-\lambda_k|\le|\lambda_{m(j)}|}\mu_k\log \left|\frac{\lambda_{m(j)}}{\lambda_{m(j)}-\lambda_k}\right|\to 0,\qquad   j\to\infty.
\]
Hence for the same $\rho$ as above, there is $j(\rho)\in\mathbb{N}$ so that
\[
\sum_{0<|\lambda_{m(j)}-\lambda_k|\le|\lambda_{m(j)}|}\mu_k\log \left|\frac{\lambda_{m(j)}}{\lambda_{m(j)}-\lambda_k}\right|<
\frac{\rho}{2}|\lambda_{m(j)}|,\qquad \forall\,\, j\ge j(\rho).
\]
Since $0<|\lambda_{m(j)}-\lambda_{n_j}|<1<|\lambda_{m(j)}|$, it follows from the above relation that
\[
\mu_{n_j}\log \left|\frac{\lambda_{m(j)}}{\lambda_{m(j)}-\lambda_{n_j}}\right|<\frac{\rho}{2}|\lambda_{m(j)}|\quad \forall\,\, j\ge j(\rho).
\]
We rewrite this as
\[
-\mu_{n_j}\log |\lambda_{m(j)}-\lambda_{n_j}|+\mu_{n_j}\log |\lambda_{m(j)}|<\frac{\rho}{2}|\lambda_{m(j)}|\quad \forall\,\, j\ge j(\rho).
\]
Combining with $(\ref{separation})$  gives
\[
\rho |\lambda_{n_j}|\le -\mu_{n_j}\log |\lambda_{n_j}-\lambda_{m(j)}|< \frac{\rho}{2}|\lambda_{m(j)}|<\frac{\rho}{2}(|\lambda_{n_j}|+1).
\]
But then one has
\[
\frac{\rho}{2} |\lambda_{n_j}|\le \frac{\rho}{2},
\]
which is false since $|\lambda_{n_j}|\to \infty$ as $j\to\infty$.
Hence the gap relation $(\ref{separationcondition})$ is indeed true.
\end{proof}

Therefore we can state the following.
\begin{remark}\label{disjointdisks}
Let the multiplicity sequence $\Lambda=\{\lambda_n,\mu_n\}_{n=1}^{\infty}$ be an interpolating variety for the space $A_{|z|}^0$.
Then for every $\epsilon>0$
there exists a positive constant $m_{\epsilon}$, independent of $n\in\mathbb{N}$, so that each one of the three sets
\[
\bigcup_{n=1}^{\infty}D_{n,\epsilon},\qquad \bigcup_{n=1}^{\infty}C_{n,\epsilon},\qquad\text{and}\qquad \bigcup_{n=1}^{\infty}P_{n,\epsilon},
\]
is a union of disjoint open disks, where  for all $n\in\mathbb{N}$ we have
\[
D_{n,\epsilon}:=\left\{z:\,\,|z-\lambda_n|< \frac{m_{\epsilon}}{2} \exp\left\{{-\frac{\epsilon |\lambda_n|}{\mu_n}}\right\}\right\},
\]
\[
C_{n,\epsilon}:=\left\{z:\,\,|z-\lambda_n|< \frac{m_{\epsilon}}{6} \exp\left\{{-\frac{\epsilon |\lambda_n|}{\mu_n}}\right\}\right\},
\]
\[
P_{n,\epsilon}:=\left\{z:\,\,|z-i\lambda_n|< \frac{m_{\epsilon}}{6} \exp\left\{{-\frac{\epsilon |\lambda_n|}{\mu_n}}\right\}\right\}.
\]
Observe that $C_{n,\epsilon}\subset D_{n,\epsilon}$ and the ratio of their radii is 1:3.
\end{remark}

\subsection{Infinite products vanishing on $\pm\Lambda$ or on $\pm i\Lambda$}

For every fixed $\epsilon>0$, let $\partial D_{n,\epsilon}$, $\partial C_{n,\epsilon}$, and $\partial P_{n,\epsilon}$
be their respective circles of the disks $D_{n,\epsilon}$, $C_{n,\epsilon}$, and $P_{n,\epsilon}$ for $n=1,2,\dots$.
In Lemma $\ref{anevenfunction}$ we will derive sharp lower bounds on
the circles $\partial C_{n,\epsilon}$ and $\partial P_{n,\epsilon}$, for the modulus of infinite products
vanishing on $\pm\Lambda$ or on $\pm i\Lambda$.
To do that, we will need the following Carath\'{e}odory inequality
(see \cite[page 19 Theorem 9]{Levin} and  \cite[page 3 Theorem 1.3.2]{Boas}).
\begin{thmd}
If the function $f$ is holomorphic in the disk $|z|\le R$ and
has no zeros in this disk, and if $f(0)=1$, then its modulus in the disk $|z|\le r<R$
satisfies the inequality
\[
\log |f(z)|\ge -\frac{2r}{R-r}\log \max_{|z|=R}|f(z)|.
\]
\end{thmd}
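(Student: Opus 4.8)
The plan is to prove this multiplicative estimate by passing to the additive statement for the harmonic function $u(z)=\log|f(z)|$. Since $f$ is holomorphic and zero-free on the simply connected disk $\{|z|\le R\}$, there is a single-valued holomorphic branch $g$ of $\log f$ there, and we normalise it by $g(0)=\log f(0)=\log 1=0$; then $u=\Re g$ and we must show $\Re g(z)\ge -\tfrac{2r}{R-r}A$ for $|z|\le r<R$, where $A:=\log\max_{|z|=R}|f(z)|$.

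First I would record the trivial one-sided bound. As $f$ has no zeros, $u=\Re g$ is harmonic on $|z|\le R$, so by the maximum principle $u(z)\le\max_{|z|=R}u=A$ for all $|z|\le R$, and $A\ge\log|f(0)|=0$. If $A=0$, then $u\le 0=u(0)$ forces $u\equiv 0$, hence $g\equiv 0$ and $f\equiv 1$, and the desired inequality reads $0\ge 0$; so from now on assume $A>0$.

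The heart of the argument is a Schwarz-lemma estimate in the style of Borel--Carath\'eodory. Set
\[
\psi(z):=\frac{g(z)}{2A-g(z)}.
\]
Because $\Re(2A-g(z))=2A-u(z)\ge A>0$ on $|z|\le R$, the denominator never vanishes, so $\psi$ is holomorphic there and $\psi(0)=0$. Writing $g=u+iv$, one computes $|g|^2-|2A-g|^2=u^2-(2A-u)^2=-4A(A-u)\le 0$, hence $|\psi(z)|\le 1$ on all of $|z|\le R$. Applying the Schwarz lemma to $z\mapsto\psi(Rz)$ gives $|\psi(z)|\le |z|/R\le r/R$ for $|z|\le r$.

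Finally I would invert the M\"obius relation, $g=\dfrac{2A\psi}{1+\psi}$, so that for $|z|\le r<R$,
\[
|g(z)|\le\frac{2A\,|\psi(z)|}{1-|\psi(z)|}\le\frac{2A\,(r/R)}{1-r/R}=\frac{2Ar}{R-r},
\]
using that $t\mapsto t/(1-t)$ is increasing on $[0,1)$. Therefore $\log|f(z)|=\Re g(z)\ge -|g(z)|\ge -\dfrac{2r}{R-r}A$, which is exactly the claimed bound. There is no serious obstacle here; the only points demanding care are the existence of the single-valued logarithm (guaranteed by zero-freeness on a simply connected domain) and the verification that $\psi$ maps into the closed unit disk, which is precisely the inequality $A-u(z)\ge 0$ furnished by the maximum principle. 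Alternatively one could simply quote the additive Borel--Carath\'eodory theorem applied to $g$ with $g(0)=0$ and $\sup_{|z|=R}\Re g\le A$.
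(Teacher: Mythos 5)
Your proof is correct and complete: it is the standard Borel--Carath\'eodory argument, passing to $g=\log f$ with $g(0)=0$, bounding the M\"obius transform $\psi=g/(2A-g)$ by $1$ via the inequality $u\le A$, applying the Schwarz lemma, and inverting; the degenerate case $A=0$ and the strict positivity of $\Re(2A-g)$ are both handled properly. Note that the paper does not prove Theorem D at all --- it is quoted as a classical result with references to Levin and Boas --- so your argument supplies a self-contained proof where the paper offers only a citation.
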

In particular, if $r=R/3$  then
\[
\forall\,\, z:|z|\le R/3,\quad \log |f(z)|\ge -\log \max_{|z|=R}|f(z)|.
\]

\begin{lemma}\label{anevenfunction}
Let the multiplicity sequence $\Lambda=\{\lambda_n,\mu_n\}_{n=1}^{\infty}$ be an interpolating variety for the space $A^0_{|z|}$.
Suppose that $\sup_{n\in\mathbb{N}}|\arg\lambda_n|<\pi/2$.
Consider the entire functions of exponential type zero
\[
F(z)=\prod_{n=1}^{\infty}\left(1-\frac{z^2}{\lambda_n^2}\right)^{\mu_n}\quad\text{and}\quad
L(z)=\prod_{n=1}^{\infty}\left(1+\frac{z^2}{\lambda_n^2}\right)^{\mu_n}.
\]
For fixed $\epsilon>0$, consider the disks and circles of Remark $\ref{disjointdisks}$.
Then there are positive constants $m_{\epsilon ,1}$ and $m_{\epsilon ,2}$, independent of $n\in\mathbb{N}$, so that

\begin{equation}\label{lowerboundforFderivative}
\frac{|F^{(\mu_n)}(\lambda_n)|}{\mu_n !}\ge m_{\epsilon ,1}e^{-\epsilon |\lambda_n|}\qquad \forall\,\, n\in\mathbb{N},
\end{equation}

\begin{equation}\label{lowerboundforF}
|F(z)|\ge m_{\epsilon ,2}e^{-\epsilon |\lambda_n|}\qquad \forall\,\, z\in \partial C_{n,\epsilon},\quad n=1,2,\dots,
\end{equation}

\begin{equation}\label{lowerboundforL}
|L(z)|\ge m_{\epsilon ,2}e^{-\epsilon |\lambda_n|}\qquad \forall\,\, z\in \partial P_{n,\epsilon},\quad n=1,2,\dots.
\end{equation}
\end{lemma}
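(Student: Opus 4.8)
The plan is to reduce all three inequalities to the single estimate $(\ref{lowerboundforFderivative})$, and to prove that one by a direct computation with the infinite product. First note that $L(z)=F(iz)$ and that $F$ is an even function; hence $|L(z)|=|F(iz)|=|F(-iz)|$, and $z\in\partial P_{n,\epsilon}$ (the circle about $i\lambda_n$) if and only if $-iz\in\partial C_{n,\epsilon}$ (the circle about $\lambda_n$). So $(\ref{lowerboundforL})$ is an immediate consequence of $(\ref{lowerboundforF})$. Next I would deduce $(\ref{lowerboundforF})$ from $(\ref{lowerboundforFderivative})$ together with the Carath\'eodory inequality (Theorem~D). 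Thus the heart of the matter is $(\ref{lowerboundforFderivative})$.

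To prove $(\ref{lowerboundforFderivative})$, write $F(z)=\bigl(1-z^2/\lambda_n^2\bigr)^{\mu_n}H_n(z)$ with $H_n(z)=\prod_{k\ne n}\bigl(1-z^2/\lambda_k^2\bigr)^{\mu_k}$, which is holomorphic and zero-free near $\lambda_n$ (the half-plane hypothesis $\sup_n|\arg\lambda_n|<\pi/2$ keeps the mirror zeros $-\lambda_k$ away from $\lambda_n$). A short computation gives $\dfrac{|F^{(\mu_n)}(\lambda_n)|}{\mu_n!}=\dfrac{2^{\mu_n}}{|\lambda_n|^{\mu_n}}\,|H_n(\lambda_n)|$, so, since $2^{\mu_n}\ge 1$ and $\mu_n\log|\lambda_n|=o(|\lambda_n|)$ by the necessary condition $(\ref{conditionn})$, it suffices to show $\log|H_n(\lambda_n)|=\sum_{k\ne n}\mu_k\log\bigl|1-\lambda_n^2/\lambda_k^2\bigr|\ge -o(|\lambda_n|)$. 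I would split the sum at $|\lambda_k|=2|\lambda_n|$. For the far part $|\lambda_k|>2|\lambda_n|$, use $\log|1-w|\ge-2|w|$ for $|w|\le\tfrac12$ and the fact that Geometric Condition~(I) forces $n_\Lambda(t)=o(t)$, which yields $|\lambda_n|^2\sum_{|\lambda_k|>2|\lambda_n|}\mu_k/|\lambda_k|^2=o(|\lambda_n|)$. For the near part $0<|\lambda_k|\le 2|\lambda_n|$, write $1-\lambda_n^2/\lambda_k^2=(\lambda_k-\lambda_n)(\lambda_k+\lambda_n)/\lambda_k^2$: on this range $|(\lambda_k+\lambda_n)/\lambda_k|$ is bounded below by a positive constant (again by $\sup_n|\arg\lambda_n|<\pi/2$), so those logarithms sum to $\ge -O(n_\Lambda(2|\lambda_n|))=-o(|\lambda_n|)$; for the factor $|(\lambda_k-\lambda_n)/\lambda_k|$ one writes $\log\frac{|\lambda_k-\lambda_n|}{|\lambda_k|}=\log\frac{|\lambda_k-\lambda_n|}{|\lambda_n|}+\log\frac{|\lambda_n|}{|\lambda_k|}$ and splits once more according as $|\lambda_k-\lambda_n|\le|\lambda_n|$ or not — in the first case $(\ref{boundmun})$ and Geometric Condition~(II) give $\sum_{0<|\lambda_k-\lambda_n|\le|\lambda_n|}\mu_k\log\frac{|\lambda_n|}{|\lambda_k-\lambda_n|}=o(|\lambda_n|)$, in the second the logarithm is nonnegative and bounded, and $\sum_{|\lambda_k|\le 2|\lambda_n|}\mu_k\log\frac{|\lambda_n|}{|\lambda_k|}\ge -\log 2\cdot n_\Lambda(2|\lambda_n|)=-o(|\lambda_n|)$. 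Collecting these, $\log|H_n(\lambda_n)|\ge-o(|\lambda_n|)$; the finitely many small $n$ are absorbed into $m_{\epsilon,1}$.

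For $(\ref{lowerboundforF})$, fix $z\in\partial C_{n,\epsilon}$ and use the same factorisation. There $|\lambda_n-z|$ equals the radius $\rho_n=\tfrac{m_\epsilon}{6}\exp(-\epsilon|\lambda_n|/\mu_n)$, while $|\lambda_n+z|\ge 2|\lambda_n|-\rho_n\ge|\lambda_n|$ for all large $n$ (note $\rho_n\to0$ since $|\lambda_n|/\mu_n\to\infty$ by $(\ref{conditionn})$), hence $\bigl|1-z^2/\lambda_n^2\bigr|^{\mu_n}\ge(\rho_n/|\lambda_n|)^{\mu_n}=\bigl(\tfrac{m_\epsilon}{6|\lambda_n|}\bigr)^{\mu_n}e^{-\epsilon|\lambda_n|}$, with $\bigl(\tfrac{m_\epsilon}{6|\lambda_n|}\bigr)^{\mu_n}=e^{-o(|\lambda_n|)}$ by $(\ref{conditionn})$. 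For the zero-free factor $H_n$, Remark~$\ref{disjointdisks}$ and Lemma~$\ref{zeroandgapcondition}$ guarantee that the closed disk $\overline{D_{n,\epsilon}}$ of radius $3\rho_n$ about $\lambda_n$ contains no zero of $H_n$; applying Theorem~D to $H_n(z)/H_n(\lambda_n)$ with $R=3\rho_n$ and $r=R/3$ gives $|H_n(z)|\ge|H_n(\lambda_n)|^2/\max_{\partial D_{n,\epsilon}}|H_n|$ on $\partial C_{n,\epsilon}$. Bound $|H_n(\lambda_n)|$ from below via $(\ref{lowerboundforFderivative})$ and the identity above, and $\max_{\partial D_{n,\epsilon}}|H_n|$ from above by $\prod_k\bigl(1+|w|^2/|\lambda_k|^2\bigr)^{\mu_k}$ at $|w|=2|\lambda_n|$, which is an entire function of exponential type zero (its zero counting function coincides with $n_\Lambda$), hence $\le M_\delta e^{\delta|\lambda_n|}$. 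Multiplying the bounds for the two factors of $F$ yields $(\ref{lowerboundforF})$, the $o(|\lambda_n|)$ and $\delta|\lambda_n|$ terms being absorbed by a harmless readjustment of $\epsilon$.

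I expect the near part of $(\ref{lowerboundforFderivative})$ to be the main obstacle: Geometric Condition~(II) controls only the frequencies with $|\lambda_k-\lambda_n|\le|\lambda_n|$, whereas the product defining $H_n$ runs over all $k$, so the sum must be repackaged — using $1-\lambda_n^2/\lambda_k^2=(\lambda_k-\lambda_n)(\lambda_k+\lambda_n)/\lambda_k^2$ and the half-plane hypothesis to tame the factor $\lambda_k+\lambda_n$ — until what remains is precisely the quantity estimated by Condition~(II) together with manifestly bounded pieces. Keeping the various $\epsilon$'s consistent between the radii of $C_{n,\epsilon}$, $D_{n,\epsilon}$, $P_{n,\epsilon}$ and the target exponents is a secondary bookkeeping matter, harmless because every error term is $o(|\lambda_n|)$.
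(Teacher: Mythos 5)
Your proposal is correct, and for the central estimate $(\ref{lowerboundforFderivative})$ it takes a genuinely different route from the paper. The paper's proof is "soft": it observes that $\Lambda'=\Lambda\cup(-\Lambda)$ is again an interpolating variety, invokes the \emph{Analytic Condition} to produce some $G\in A^0_{|z|}$ vanishing on $\Lambda'$ with $|G^{(\mu_n)}(\lambda_n)|/\mu_n!\ge u_\epsilon e^{-\epsilon|\lambda_n|}$, and then uses the Hadamard factorization $G=cFW$ with $W\in A^0_{|z|}$ to divide out the extra factor and transfer the lower bound to $F$. You instead work "hard" and directly from the \emph{Geometric Conditions}: the identity $|F^{(\mu_n)}(\lambda_n)|/\mu_n!=(2/|\lambda_n|)^{\mu_n}|H_n(\lambda_n)|$ reduces everything to $\log|H_n(\lambda_n)|\ge -o(|\lambda_n|)$, and your near/far splitting is sound — the far tail is handled by $n_\Lambda(t)=o(t)$ (which does follow from Condition (I)), the factor $(\lambda_k+\lambda_n)/\lambda_k$ by the sector hypothesis, and the remaining sum $\sum_{0<|\lambda_k-\lambda_n|\le|\lambda_n|}\mu_k\log\frac{|\lambda_n|}{|\lambda_k-\lambda_n|}$ is exactly a sub-sum (with nonnegative terms) of the quantity controlled by $(\ref{boundmun})$ and Condition (II). What the paper's route buys is brevity and no product manipulation; what yours buys is an explicit, self-contained estimate that makes visible \emph{why} Condition (II) is the right hypothesis. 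For $(\ref{lowerboundforF})$ and $(\ref{lowerboundforL})$ you essentially rejoin the paper: both arguments rest on the Carath\'eodory inequality (Theorem $\bf D$) applied on the concentric disks $C_{n,\epsilon}\subset D_{n,\epsilon}$ of radius ratio $1{:}3$, the only difference being that the paper normalizes $F_n(z)=\frac{F(z)}{(z-\lambda_n)^{\mu_n}}\cdot\frac{\mu_n!}{F^{(\mu_n)}(\lambda_n)}$ while you apply it to $H_n/H_n(\lambda_n)$ and estimate the excluded factor $(1-z^2/\lambda_n^2)^{\mu_n}$ by hand on $\partial C_{n,\epsilon}$; and $(\ref{lowerboundforL})$ is obtained by the same rotation $z\mapsto -iz$ in both. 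Your closing remark about absorbing the accumulated $o(|\lambda_n|)$ and $C\epsilon|\lambda_n|$ losses into a readjusted $\epsilon$ is the same implicit renaming the paper performs (it too ends with $e^{-4\epsilon|\lambda_n|}$ before stating $e^{-\epsilon|\lambda_n|}$), so it is indeed harmless bookkeeping rather than a gap.
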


\begin{proof}

First we prove $(\ref{lowerboundforFderivative})$: since $\Lambda$ is such a variety then it satisfies
$The\,\, Geometric\,\, Conditions$ $(I)$ and $(II)$. If we let
$\Lambda^{\prime}:=\{\lambda_n,\mu_n\}_{n=1}^{\infty}\cup\{-\lambda_n,\mu_n\}_{n=1}^{\infty}$ and since
$\sup_{n\in\mathbb{N}}|\arg\lambda_n|<\pi/2$, then $\Lambda^{\prime}$ is also an interpolating
variety for the space $A^0_{|z|}$. Hence by $The\,\, Analytic\,\, Condition$ there exists an entire function $G\in A_{|z|}^0$ vanishing on $\Lambda^{\prime}$
so that for every $\epsilon>0$ there is a positive constant $u_{\epsilon}$, independent of $n\in\mathbb{N}$, such that
\begin{equation}\label{lowerforG}
\frac{|G^{(\mu_n)}(\lambda_n)|}{\mu_n !}\ge u_{\epsilon}e^{-\epsilon |\lambda_n|}\qquad \forall\,\, n\in\mathbb{N}.
\end{equation}

Let $\Lambda^{\prime\prime}=\{\lambda_n ^{\prime\prime},\mu_n ^{\prime\prime}\}_{n=1}^{\infty}$ be the zero set of $G$ and write
$\Lambda^{\prime\prime}=\Lambda^{\prime}\cup \Omega$
where $\Omega:=\{w_n,k_n\}_{n=1}^{\infty}$ is the set of the rest of the non-zero zeros of $G$, if there are any.
Suppose also that $G$ vanishes at $z=0$ exactly $m$ times, for some $m\ge 0$.
Now, since $G\in A_{|z|}^0$  then its zero set has density zero, that is
$\lim_{t\to\infty}\frac{\sum_{|\lambda_n^{\prime\prime}|\le t}\mu_n^{\prime\prime}}{t}=0$.
Moreover, by the properties of entire functions of exponential type zero
(see \cite[Definition 2.5.4 and Theorem 2.10.3]{Boas}, either

$(I)$  $G$ has order less than 1, which means that

\[
\sum_{n=1}^{\infty}\frac{\mu_n ^{\prime\prime}}{|\lambda_n ^{\prime\prime}|}<\infty,\quad\text{hence}\quad
\sum_{n=1}^{\infty}\frac{k_n}{|w_n|}<\infty\quad\text{also},
\]

or

$(II)$ $G$ has order 1 and type 0, which means that

\[
(a)\quad \sum_{n=1}^{\infty}\frac{\mu_n ^{\prime\prime}}{|\lambda_n ^{\prime\prime}|}=\infty,
\]
\[
(b)\quad \sum_{n=1}^{\infty}\left(\frac{\mu_n ^{\prime\prime}}{|\lambda_n ^{\prime\prime}|}\right)^{1+\epsilon}<\infty\quad  \forall\,\, \epsilon>0,\quad\text{and}
\]
\[
(c)\quad \sum_{n=1}^{\infty}\frac{\mu_n ^{\prime\prime}}{\lambda_n ^{\prime\prime}}=\alpha\in\mathbb{C}\quad\text{hence}\quad
\sum_{n=1}^{\infty}\frac{k_n}{w_n}=\alpha\quad\text{since}\quad \Lambda^{\prime}\quad \text{is\,\, even}.
\]

By the Hadamard Factorization Theorem it then follows that
\begin{equation}\label{HadFac}
G(z)=cF(z)W(z),\qquad c\in \mathbb{C}\setminus\{0\}
\end{equation}
where if $(I)$ is true then
\[
W(z)=z^m\prod_{n=1}^{\infty}\left(1-\frac{z}{w_n}\right)^{k_n},
\]
and if $(II)$ holds then
\[
W(z)=z^me^{-\alpha z}\prod_{n=1}^{\infty}\left(1-\frac{z}{w_n}\right)^{k_n}e^{zk_n/w_n}.
\]
In both cases, $W\in A_{|z|}^0$ hence for every $\epsilon>0$ there is some $U_{\epsilon}>0$ so that
\begin{equation}\label{upperforW}
|W(z)|\le U_{\epsilon}e^{\epsilon |z|}\qquad \forall\,\, z\in\mathbb{C},\qquad \text{thus}\qquad
|W(\lambda_n)|\le U_{\epsilon}e^{\epsilon |\lambda_n|}\qquad \forall\,\, n\in\mathbb{N}.
\end{equation}
Now, from $(\ref{HadFac})$ we get
\[
\frac{|F^{(\mu_n)}(\lambda_n)|}{\mu_n !}=\frac{|G^{(\mu_n)}(\lambda_n)|}{\mu_n !}\cdot\frac{1}{|cW(\lambda_n)|}.
\]
Then combining $(\ref{upperforW})$ with $(\ref{lowerforG})$ yields  $(\ref{lowerboundforFderivative})$.\\

\smallskip

Next we prove $(\ref{lowerboundforF})$: for each $n\in\mathbb{N}$ define
\begin{equation}\label{Fn}
F_n(z):=\frac{F(z)}{(z-\lambda_n)^{\mu_n}}\cdot\frac{\mu_n!}{F^{(\mu_n)}(\lambda_n)}.
\end{equation}
We then get
\[
F_n(\lambda_n)=1.
\]
For the fixed $\epsilon>0$, consider the two disks $D_{n,\epsilon}$ and  $C_{n,\epsilon}$ as in Remark $\ref{disjointdisks}$
and observe that $F_n$ has no zeros on these two disks and neither on their respective circles
$\partial D_{n,\epsilon},\,\, \partial C_{n,\epsilon}$.
Since the radius of $C_{n,\epsilon}$ is $1/3$ of the one of $D_{n,\epsilon}$
and $F_n(\lambda_n)=1$, from Theorem $\bf D$ we get
\begin{equation}\label{twodisks}
\forall\,\, z\in \overline{C_{n,\epsilon}},\quad \log |F_n(z)|\ge -\log \max_{z\in \partial D_{n,\epsilon}}|F_n(z)|.
\end{equation}
So let us now estimate $F_n$ from above on the circle $\partial D_{n,\epsilon}$.
Since $F\in A_{|z|}^0$ then for the above fixed $\epsilon>0$ there is some $t_{\epsilon}>0$ so that
$|F(z)|\le t_{\epsilon}e^{\epsilon |z|}$ for all $z\in\mathbb{C}$.
Then, based on the size of the radius of $D_{n,\epsilon}$ and relations $(\ref{lowerboundforFderivative})$ and $(\ref{Fn})$,
we get
\[
\forall\,\, z\in \partial D_{n,\epsilon},\quad
|F_n(z)|\le  t_{\epsilon}e^{\epsilon|\lambda_n|}\cdot\left(\frac{2}{m_{\epsilon}}e^{\epsilon |\lambda_n|/\mu_n}\right)^{\mu_n}\cdot\frac{e^{\epsilon |\lambda_n|}}{m_{\epsilon ,1}}
= t_{\epsilon}e^{\epsilon|\lambda_n|}\cdot\left(\frac{2}{m_{\epsilon}}\right)^{\mu_n}e^{\epsilon |\lambda_n|}\cdot \frac{e^{\epsilon |\lambda_n|}}{m_{\epsilon ,1}}.
\]
Combining this with the relation $\mu_n/\lambda_n\to 0$ as $n\to\infty$, shows that
there is some $r_{\epsilon}>0$, independent of $n\in\mathbb{N}$,  so that
\[
\max_{z\in \partial D_{n,\epsilon}}|F_n(z)|\le r_{\epsilon}e^{4\epsilon |\lambda_n|}.
\]
Therefore from $(\ref{twodisks})$ we get
\begin{equation}\label{newneweq}
\min_{z\in \partial C_{n,\epsilon}}|F_n(z)|\ge \frac{e^{-4\epsilon |\lambda_n|}}{r_{\epsilon}}.
\end{equation}
Then rewrite $(\ref{Fn})$ as
\[
F(z)=F_n(z)\cdot (z-\lambda_n)^{\mu_n}\cdot \frac{F^{(\mu_n)}(\lambda_n)}{\mu_n!}.
\]
Combining relations $(\ref{newneweq})$ and $(\ref{lowerboundforFderivative})$ with
the length of the radius  of $C_{n,\epsilon}$, yield relation $(\ref{lowerboundforF})$.

Finally, by rotation we get $(\ref{lowerboundforL})$ for the function $L(z)$ and our proof is now complete.
\end{proof}

\subsection{On the condensation index of a sequence $\Lambda$}

We now want to add a small remark which connects the topic of $interpolating\,\, varieties$
for the space $A^0 _{|z|}$, with the condensation index $c(\Lambda)$ of
a sequence $\Lambda=\{\lambda_n\}_{n=1}^{\infty}$.

Suppose that  $\Lambda=\{\lambda_n\}_{n=1}^{\infty}$ has distinct non-zero complex numbers that satisfies
conditions $A$ $(\ref{convergencecondition})$ and $B$ $(\ref{lessthan})$.
The condensation index $c(\Lambda)$ is defined as

\begin{equation}\label{cond}
c(\Lambda):=\limsup_{n\to\infty}\frac{-\log |F^{'}(\lambda_n)|}{|\lambda_n|},\qquad F(z)=
\prod_{n=1}^{\infty}\left(1-\frac{z^2}{\lambda_n^2}\right).
\end{equation}

We prove below the following.
\begin{lemma}\label{condensation}
Consider a sequence $\Lambda=\{\lambda_n\}_{n=1}^{\infty}$  of distinct non-zero complex numbers such that
$\sum_{n=1}^{\infty}1/|\lambda_n|<\infty$ and $\sup_{n\in\mathbb{N}}|\arg\lambda_n|<\pi/2$.
Then its condensation index $c(\Lambda)$ is equal to zero if and only if $\Lambda$ is an interpolating variety for the space $A^0 _{|z|}$.
\end{lemma}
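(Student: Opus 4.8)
The plan is to observe that both implications, once unwound, concern the \emph{same} canonical product, namely the function $F(z)=\prod_{n=1}^{\infty}(1-z^2/\lambda_n^2)$ appearing in $(\ref{cond})$. Two preliminary facts are used throughout. First, since $\sum_{n=1}^{\infty}1/|\lambda_n|<\infty$, the product $F$ converges to an entire function of genus zero; a canonical product of genus zero has order at most $1$ and, when of order $1$, minimal type, so $F\in A^0_{|z|}$ (this is already recorded for $F$ in Lemma~$\ref{anevenfunction}$, and follows from \cite[Definition 2.5.4 and Theorem 2.10.3]{Boas}). Second, the hypothesis $\sup_{n\in\mathbb{N}}|\arg\lambda_n|<\pi/2$ forces $\lambda_n\ne-\lambda_k$ for all $n,k$ (otherwise some $\lambda_n$ would have argument of absolute value $>\pi/2$), so together with the distinctness of the $\lambda_n$ every point of $\pm\Lambda$ is a \emph{simple} zero of $F$; in particular $F'(\lambda_n)\ne 0$ for each $n$, and $\Lambda$ is contained in the zero set of $F$ with the correct multiplicities $\mu_n=1$. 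Because of these two facts, the \emph{Analytic Condition} \cite[Theorem 4.1]{BerBaoVidras} for $\Lambda$ to be an interpolating variety for $A^0_{|z|}$ reduces, taking the witnessing function to be $F$ itself, to the single requirement
\begin{equation*}
\forall\,\epsilon>0\ \ \exists\,u_{\epsilon}>0:\quad |F'(\lambda_n)|\ge u_{\epsilon}e^{-\epsilon|\lambda_n|}\qquad\forall\,n\in\mathbb{N}.
\end{equation*}

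Assume first that $\Lambda$ is an interpolating variety. Applying Lemma~$\ref{anevenfunction}$ with all multiplicities equal to $1$ (its hypotheses, $\Lambda$ an interpolating variety and $\sup|\arg\lambda_n|<\pi/2$, are precisely ours), relation $(\ref{lowerboundforFderivative})$ yields, for each $\epsilon>0$, a constant $m_{\epsilon,1}>0$ with $|F'(\lambda_n)|\ge m_{\epsilon,1}e^{-\epsilon|\lambda_n|}$ for all $n$. Taking logarithms and dividing by $|\lambda_n|$ gives $-\log|F'(\lambda_n)|/|\lambda_n|\le \epsilon-(\log m_{\epsilon,1})/|\lambda_n|$, whence $c(\Lambda)=\limsup_{n\to\infty}\big(-\log|F'(\lambda_n)|/|\lambda_n|\big)\le\epsilon$; letting $\epsilon\downarrow 0$ gives $c(\Lambda)\le 0$. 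For the reverse inequality I use $F\in A^0_{|z|}$: Cauchy's estimate on the circle $|z-\lambda_n|=1$ gives $|F'(\lambda_n)|\le\max_{|z-\lambda_n|=1}|F(z)|\le M_{\epsilon}e^{\epsilon(|\lambda_n|+1)}$, so $-\log|F'(\lambda_n)|/|\lambda_n|\ge -\epsilon-(\epsilon+\log M_{\epsilon})/|\lambda_n|$ and hence $c(\Lambda)\ge-\epsilon$ for every $\epsilon>0$, i.e. $c(\Lambda)\ge 0$. Therefore $c(\Lambda)=0$.

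Conversely, assume $c(\Lambda)=0$ and fix $\epsilon>0$. Since $\limsup_{n\to\infty}\big(-\log|F'(\lambda_n)|/|\lambda_n|\big)=0<\epsilon$, there is $N$ with $|F'(\lambda_n)|>e^{-\epsilon|\lambda_n|}$ for all $n\ge N$; for the finitely many indices $n<N$ the numbers $|F'(\lambda_n)|$ are strictly positive (simple zeros, as noted above), so setting $u_{\epsilon}:=\min\{1,\ \min_{1\le n<N}|F'(\lambda_n)|e^{\epsilon|\lambda_n|}\}>0$ we obtain $|F'(\lambda_n)|\ge u_{\epsilon}e^{-\epsilon|\lambda_n|}$ for every $n$. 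This is exactly the displayed requirement, so by the Analytic Condition (with witnessing function $F$) $\Lambda$ is an interpolating variety for $A^0_{|z|}$.

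I expect no genuine obstacle here: the substantial work — extracting the quantitative lower bound $|F'(\lambda_n)|\ge m_{\epsilon,1}e^{-\epsilon|\lambda_n|}$ from the Geometric/Analytic Conditions via the Carath\'{e}odory inequality (Theorem~$\mathbf{D}$) on the disjoint disks of Remark~$\ref{disjointdisks}$ — has already been done in Lemma~$\ref{anevenfunction}$. What remains is bookkeeping, the only points needing care being the membership $F\in A^0_{|z|}$, the simplicity of the zeros of $F$ (which is where $\sup|\arg\lambda_n|<\pi/2$ enters a second time, guaranteeing $F'(\lambda_n)\neq0$), the trivial Cauchy upper bound giving $c(\Lambda)\ge0$, and absorbing the finitely many small indices into the constant $u_{\epsilon}$.
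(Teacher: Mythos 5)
Your proof is correct and follows essentially the same route as the paper: the forward direction invokes the lower bound $(\ref{lowerboundforFderivative})$ of Lemma $\ref{anevenfunction}$ together with an upper bound coming from $F$ being of exponential type zero, and the converse feeds the bound $|F'(\lambda_n)|\ge u_\epsilon e^{-\epsilon|\lambda_n|}$ back into the Analytic Condition with $F$ as the witnessing function. Your extra bookkeeping (simplicity of the zeros of $F$, the Cauchy estimate for the upper bound, absorbing finitely many indices into $u_\epsilon$) only makes explicit what the paper leaves implicit.
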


\begin{proof}
Clearly the function $F(z)$ in $(\ref{cond})$ as well as its derivative function $F^{'}(z)$ are entire functions of exponential type zero.
Therefore for every $\epsilon>0$ there is a positive constant
$m_{\epsilon}$ so that
\begin{equation}\label{condensationabove}
|F^{'}(\lambda_n)|<m_{\epsilon}e^{\epsilon |\lambda_n|}, \qquad \forall\,\, n\in\mathbb{N}.
\end{equation}

Now, if such a sequence $\Lambda$ is an interpolating variety for the space $A^0 _{|z|}$, it follows from Lemma $\ref{anevenfunction}$ that
for every $\epsilon>0$ there is a positive constant $u_{\epsilon}$ so that
\begin{equation}\label{condensationbelow}
|F^{'}(\lambda_n)|>u_{\epsilon}e^{-\epsilon |\lambda_n|}, \qquad \forall\,\, n\in\mathbb{N}.
\end{equation}
Combining $(\ref{condensationabove})$ with $(\ref{condensationbelow})$ shows that $c(\Lambda)=0$.

On the other hand, if $c(\Lambda)=0$ then $(\ref{condensationbelow})$ holds, hence by
$The\,\, Analytic\,\, Condition$, $\Lambda$ is an interpolating variety for the space $A^0 _{|z|}$.
\end{proof}

\section{A lower bound for the Luxemburg-Korevaar function $G$}
\setcounter{equation}{0}

We are now ready to extend Theorem $\bf C$ by deriving the lower bound $(\ref{lowerboundforthefunctionGz})$.

\begin{theorem}\label{Zikkostheorem}
Let the multiplicity sequence $\Lambda=\{\lambda_n,\mu_n\}_{n=1}^{\infty}$ belong to the $ABC$ class and
let $-\infty <\gamma<\beta<\infty$. Let also
\[
\sigma=\frac{(\beta+\gamma)}{2},\qquad \tau=\frac{(\beta-\gamma)}{2},\qquad \text{hence}\qquad \beta=\tau+\sigma.
\]
Then, by properly choosing a decreasing sequence $\{\epsilon_n>0\}_{n=1}^{\infty}$
so that $\sum_{n=1}^{\infty}\epsilon_n=\tau$, the entire function
\[
G(z)=e^{-i\sigma z}\prod_{n=1}^{\infty}\left(1+\frac{z^2}{\lambda_n^2}\right)^{\mu_n}\prod_{n=1}^{\infty}
\cos(\epsilon_n z)
\]
belongs to $L^2(\mathbb{R})\cap L^1(\mathbb{R})$ such that
\begin{equation}\label{GzZikkos}
G(z)=\frac{1}{\sqrt{2\pi}}\int_{\gamma}^{\beta}e^{-izt}g(t)\, dt
\end{equation}
for some $g\in C[\gamma,\beta]$ with $g$ vanishing outside the interval $[\gamma,\beta]$.
Moreover, for every fixed $\epsilon>0$, let $P_{n,\epsilon}$ for $n=1,2,\dots$
be the disks as in  Remark $\ref{disjointdisks}$ and let $\partial P_{n,\epsilon}$ be their respective circles.
Then there is a positive constant $M_{\epsilon}$ independent of $n$ but depending on $\tau$, such that
relation $(\ref{lowerboundforthefunctionGz})$ is true.
\end{theorem}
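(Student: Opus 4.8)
The opening assertions of Theorem~\ref{Zikkostheorem} --- the existence of a suitable decreasing sequence $\{\epsilon_n\}_{n=1}^{\infty}$ with $\sum\epsilon_n=\tau$, the membership $G\in L^1(\mathbb{R})\cap L^2(\mathbb{R})$, and the representation $(\ref{GzZikkos})$ with $g\in C[\gamma,\beta]$ supported in $[\gamma,\beta]$ --- are exactly Theorem~$\bf C$, which needs only condition~$A$, so the plan is to quote that result and devote all the work to the new bound $(\ref{lowerboundforthefunctionGz})$. For that I would factor $G(z)=e^{-i\sigma z}L(z)C(z)$, where $L(z)=\prod_{m}(1+z^{2}/\lambda_{m}^{2})^{\mu_{m}}$ is precisely the function of Lemma~\ref{anevenfunction} and $C(z)=\prod_{m}\cos(\epsilon_{m}z)$, and fix $z=i\lambda_{n}+\zeta$ on $\partial P_{n,\epsilon}$, where $|\zeta|=\tfrac{m_{\epsilon}}{6}\exp\{-\epsilon|\lambda_{n}|/\mu_{n}\}\to 0$ because condition~$A$ forces $\mu_{n}/|\lambda_{n}|\to 0$. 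Writing $v=\Im z=\Re\lambda_{n}+\Im\zeta$ and $u=\Re z=-\Im\lambda_{n}+\Re\zeta$, condition~$B$ $(\ref{lessthan})$ gives $\Re\lambda_{n}\ge|\lambda_{n}|\cos\eta>0$ and $|\Im\lambda_{n}|\le(\tan\eta)\,\Re\lambda_{n}$, whence $v\ge\tfrac12\Re\lambda_{n}$ for large $n$. Two of the three factors are then immediate: $|e^{-i\sigma z}|=e^{\sigma v}\ge\tfrac12 e^{\sigma\Re\lambda_{n}}$ for large $n$, and $|L(z)|\ge m_{\epsilon,2}e^{-\epsilon|\lambda_{n}|}\ge m_{\epsilon,2}e^{-(\epsilon/\cos\eta)\Re\lambda_{n}}$ directly from $(\ref{lowerboundforL})$ of Lemma~\ref{anevenfunction}.

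The heart of the proof is a lower bound $|C(z)|\ge e^{(\tau-\delta)\Re\lambda_{n}}$, valid for every preassigned $\delta>0$ and all large $n$. From $\cos(a+bi)=\cos a\cosh b-i\sin a\sinh b$ one obtains the identity $|\cos(\epsilon_{m}z)|^{2}=\cosh^{2}(\epsilon_{m}v)-\sin^{2}(\epsilon_{m}u)\ge\sinh^{2}(\epsilon_{m}v)$, so $|\cos(\epsilon_{m}z)|\ge\sinh(\epsilon_{m}v)$ for every $m$, while $|\cos w|\ge 1-|w|^{2}\ge e^{-2|w|^{2}}$ whenever $|w|\le\tfrac12$. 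Split $\mathbb{N}=H_{n}\cup T_{n}$ with $H_{n}=\{m:\epsilon_{m}|\lambda_{n}|>\tfrac14\}$. The decisive observation is that a positive, decreasing, summable sequence satisfies $m\epsilon_{m}\to 0$; hence $|H_{n}|=o(|\lambda_{n}|)$ and the tail $\rho_{n}:=\sum_{m\in T_{n}}\epsilon_{m}\to 0$. For $m\in H_{n}$ one has $\epsilon_{m}v>\tfrac{\cos\eta}{8}$, so $|\cos(\epsilon_{m}z)|\ge\sinh(\epsilon_{m}v)\ge c\,e^{\epsilon_{m}v}$ with $c=\tfrac12(1-e^{-\cos\eta/4})>0$, and therefore
\[
\prod_{m\in H_{n}}|\cos(\epsilon_{m}z)|\ \ge\ c^{\,|H_{n}|}\,e^{\,v\sum_{m\in H_{n}}\epsilon_{m}}\ \ge\ e^{-\delta|\lambda_{n}|}\,e^{(\tau-\delta)\Re\lambda_{n}}
\]
for large $n$, because $|H_{n}|\log(1/c)=o(|\lambda_{n}|)$, $\sum_{m\in H_{n}}\epsilon_{m}=\tau-\rho_{n}$, and $v\ge\Re\lambda_{n}-|\zeta|$; while for $m\in T_{n}$, where $|\epsilon_{m}z|\le 2\epsilon_{m}|\lambda_{n}|\le\tfrac12$ and $|z|\le 2|\lambda_{n}|$, using $\sum_{m\in T_{n}}\epsilon_{m}^{2}\le\rho_{n}/(4|\lambda_{n}|)$,
\[
\prod_{m\in T_{n}}|\cos(\epsilon_{m}z)|\ \ge\ e^{-2|z|^{2}\sum_{m\in T_{n}}\epsilon_{m}^{2}}\ \ge\ e^{-2\rho_{n}|\lambda_{n}|}\ \ge\ e^{-\delta|\lambda_{n}|}
\]
for large $n$. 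Multiplying the two and using $|\lambda_{n}|\le\Re\lambda_{n}/\cos\eta$ gives $|C(z)|\ge e^{(\tau-\delta(1+2/\cos\eta))\Re\lambda_{n}}$.

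Combining the three estimates and $\beta=\sigma+\tau$ gives $|G(z)|\ge\tfrac12 m_{\epsilon,2}\exp\{(\beta-\epsilon/\cos\eta-\delta(1+2/\cos\eta))\Re\lambda_{n}\}$ for $n\ge n_{0}$. Since $\delta>0$ is arbitrary, and since the whole construction applies verbatim with the disks $P_{n,\epsilon'}$, $\epsilon'=\tfrac12\epsilon\cos\eta$ (again of the form in Remark~\ref{disjointdisks}), in place of $P_{n,\epsilon}$, the exponent deficit can be brought below $\epsilon$; the finitely many $n<n_{0}$ are absorbed by shrinking the constant, since $G$ has no zeros on any $\partial P_{n,\epsilon}$. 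This proves $(\ref{lowerboundforthefunctionGz})$. I expect the main obstacle to be the bound for $|C(z)|$ near $z=i\lambda_{n}$: a priori the ``head'' product loses a bounded factor over a set $H_{n}$ that could have size $\asymp|\lambda_{n}|$, and what rescues it is precisely the elementary fact $m\epsilon_{m}\to 0$ for a decreasing summable $\{\epsilon_{m}\}$, which forces $|H_{n}|=o(|\lambda_{n}|)$ and keeps the loss subexponential in $|\lambda_{n}|$; condition~$B$ is used throughout to pass between $|\lambda_{n}|$ and $\Re\lambda_{n}$ and to keep $u=\Re z$ under control.
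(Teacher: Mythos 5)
Your proposal is correct, and its skeleton is the same as the paper's: factor $G(z)=e^{-i\sigma z}\cdot H(z)\cdot L(z)$ with $H(z)=\prod_m\cos(\epsilon_m z)$, take the lower bound for $L$ on $\partial P_{n,\epsilon}$ from $(\ref{lowerboundforL})$ of Lemma $\ref{anevenfunction}$, and control the remaining two factors near $z=i\lambda_n$ using condition $B$ to trade $|\lambda_n|$ for $\Re\lambda_n$. The one genuine difference is where the paper outsources the decisive estimate: for the cosine product the paper simply cites \cite[Lemma 4.1]{Z2011JAT}, which gives $|H(z)|\ge M_{\epsilon}e^{-\epsilon|z|}e^{\tau\Im z}$ on the whole sector $\Omega_{\eta}$ around the positive imaginary axis, whereas you prove the needed bound from scratch, and only at the points $z\approx i\lambda_n$ where it is actually used. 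Your head/tail splitting is sound: the identity $|\cos(a+ib)|^2=\cosh^2 b-\sin^2 a$ gives $|\cos(\epsilon_m z)|\ge\sinh(\epsilon_m\Im z)$ for the "large" indices, the quadratic bound $|\cos w|\ge e^{-2|w|^2}$ handles the tail, and the elementary fact that a decreasing summable sequence satisfies $m\epsilon_m\to 0$ is exactly what keeps both the cardinality loss $c^{|H_n|}$ and the tail loss $e^{-2\rho_n|\lambda_n|}$ subexponential in $|\lambda_n|$; note that your argument needs nothing from the "proper choice" of $\{\epsilon_n\}$ beyond positivity, monotonicity and $\sum\epsilon_n=\tau$ (the proper choice is only needed for the Fourier-transform part, which you correctly quote from Theorem $\bf C$). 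What the citation buys the paper is a bound valid on all of $\Omega_{\eta}$ with no exceptional indices; what your version buys is self-containment at the cost of an $n\ge n_0$ threshold, which you correctly absorb into the constant since $G$ does not vanish on any $\partial P_{n,\epsilon}$. The final $\epsilon$-bookkeeping (your deficit $\epsilon/\cos\eta+\delta(1+2/\cos\eta)$ versus the target $\epsilon$) is handled by you at the same level of rigor as the paper itself, which likewise ends with a constant multiple $A\epsilon$ in the exponent and renames $\epsilon$.
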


\begin{proof}

First write
\[
G(z)=e^{-i\sigma z}\cdot H(z)\cdot L(z)
\]
where
\[
H(z):=\prod_{n=1}^{\infty}\cos (\epsilon_n z)\qquad\text{and}\qquad
L(z):=\prod_{n=1}^{\infty}\left(1+\frac{z^2}{\lambda_n^2}\right)^{\mu_n}.
\]

For $0\le \eta<\pi/2$ and $\chi >0$, consider the region
\begin{equation}\label{omegaeta}
\Omega_{\eta}:=\left\{z:\frac{\pi}{2}-\eta\le\arg z\le \frac{\pi}{2}+\eta,\,\, |z|\ge \chi\right\}.
\end{equation}

In \cite [Lemma 4.1]{Z2011JAT}, we proved
that for every $\epsilon>0$ there is a positive constant $M_{\epsilon}$, depending on $\eta$  and $\tau$, so that
\begin{equation}\label{lowerboundforproductcos}
|H(z)|\ge M_{\epsilon}e^{-\epsilon |z|}e^{\tau\cdot \Im z},\quad \forall\,\,z\in\Omega_{\eta}.
\end{equation}

Since $\sigma+\tau=\beta$, then
for every $\epsilon>0$ there is a positive constant $M_{\epsilon}$, depending on $\eta$  and $\tau$, so that
\begin{equation}\label{upperomega}
|H(z)|\cdot|e^{-i\sigma z}|\ge M_{\epsilon}e^{-\epsilon |z|}e^{\tau\cdot \Im z}\cdot e^{\sigma\cdot \Im z}=
M_{\epsilon}e^{-\epsilon |z|}e^{\beta\cdot \Im z},\quad \forall\,\,z\in\Omega_{\eta}.
\end{equation}

Fix such a positive $\epsilon$ and consider the $P_{n,\epsilon}$ disks and their circles $\partial P_{n,\epsilon}$.
Clearly they are all subsets of the region $\Omega_{\eta}$. For any $z\in \partial P_{n,\epsilon}$, $z\approx i\lambda_n$ and
$|z|\approx |\lambda_n|<A\Re\lambda_n$ for some $A>0$ since $\sup_{n\in\mathbb{N}}|\arg\lambda_n|<\pi/2$. Replacing in
$(\ref{upperomega})$ gives
\[
|H(z)|\cdot|e^{-i\sigma z}|\ge M_{\epsilon}e^{(\beta-A\epsilon)\cdot \Re\lambda_n},
\qquad \forall\,\, z\in \partial P_{n,\epsilon},\quad n=1,2,\dots.
\]
Combining the above with the lower bound of $L(z)$ in $(\ref{lowerboundforL})$, shows that  $(\ref{lowerboundforthefunctionGz})$ is true.
\end{proof}

\section{The Distance between $x^ke^{\lambda_n x}$ and the closed span of $E_{\Lambda}\setminus x^ke^{\lambda_n x}$
in $L^p(\gamma,\beta)$: Proof of Theorem $\ref{Distances}$}
\setcounter{equation}{0}

This section is devoted to the proof of our Fundamental result.
In what follows, we suppose that $\Lambda\in ABC$, $(\gamma,\beta)$ is a fixed bounded interval,
and $G$ is the entire function of Theorem $\ref{Zikkostheorem}$.
We also note that since $\Lambda\in ABC$, then $\mu_n/\Re\lambda_n\to 0$, thus
\begin{equation}\label{munlambdan}
\forall\,\,\epsilon>0,\quad \exists\quad \text{a\,\, positive\,\, constant}\,\, m_{\epsilon}:\quad
\mu_n\le \epsilon\Re\lambda_n\le m_{\epsilon}e^{\epsilon\Re\lambda_n},
\end{equation}
a relation to be used several times in this paper. In addition, for a function $f$, by $f^{(m)}$ we mean the $m^{th}$ derivative function of $f$.

\subsection{Auxiliary results}

\begin{lemma}\label{FirstLemma}
There exist entire functions $\{G_{n,k}(z):\,\, n\in\mathbb{N},\,\,  k=0,1,\dots,\mu_n-1\}$ so that
\begin{equation}\label{phi1a}
G_{n,k}^{(l)}(i\lambda_j)
=\begin{cases} 1, & j=n,\,\,  l=k, \\ 0,  & j=n,\,\,
l\in\{0,1,\dots,\mu_n-1\}\setminus\{k\}, \\ 0, &
j\not=n,\,\, l\in\{0,1,\dots,\mu_j-1\}.\end{cases}
\end{equation}
Moreover, from Remark $\ref{disjointdisks}$ consider for $\bf fixed$ $\epsilon>0$ and all $n\in\mathbb{N}$
the disks $P_{n,\epsilon}$ and their respective circles $\partial P_{n,\epsilon}$.
Then there is a constant $M_{\epsilon ,2}>0$, independent of $n$ and $k$ but depending on $\Lambda$ and $(\beta-\gamma)$,
so that  for every fixed $n\in\mathbb{N}$ and $k\in\{0,1,\dots,\mu_n-1\}$ we have
\begin{equation}\label{upperboundforGnkz}
|G_{n,k}(z)|\le |G(z)| M_{\epsilon ,2} e^{(-\beta+\epsilon)\Re\lambda_n},\quad \forall\,\, z\in\mathbb{C}\setminus P_{n,\epsilon},
\end{equation}
and
\begin{equation}\label{upperboundsrealline}
|G_{n,k}(x)|\le |G(x)| M_{\epsilon ,2} e^{(-\beta+\epsilon)\Re\lambda_n},
\quad \forall\,\, x\in\mathbb{R}.
\end{equation}
\end{lemma}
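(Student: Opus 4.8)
The plan is to construct the functions $G_{n,k}$ explicitly from the Luxemburg--Korevaar function $G$ of Theorem~\ref{Zikkostheorem}, exploiting the fact that $G$ vanishes on $\pm i\Lambda$ with the correct multiplicities. Recall that $G(z)=e^{-i\sigma z}L(z)H(z)$ where $L(z)=\prod_{n=1}^\infty(1+z^2/\lambda_n^2)^{\mu_n}$ vanishes at $i\lambda_n$ to order $\mu_n$, while $e^{-i\sigma z}$ and $H(z)=\prod\cos(\epsilon_n z)$ are non-vanishing there. So near $z=i\lambda_n$, $G$ has a zero of order exactly $\mu_n$. The natural candidate is therefore, for fixed $n$ and $k\in\{0,1,\dots,\mu_n-1\}$,
\[
G_{n,k}(z):=\frac{G(z)}{(z-i\lambda_n)^{\mu_n}}\cdot q_{n,k}(z),
\]
where $q_{n,k}$ is a polynomial of degree $\le \mu_n-1$ chosen so that the Hermite-type interpolation conditions $(\ref{phi1a})$ hold. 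Writing $G(z)=(z-i\lambda_n)^{\mu_n}\Phi_n(z)$ with $\Phi_n$ entire and $\Phi_n(i\lambda_n)\neq0$, one has $G_{n,k}(z)=\Phi_n(z)q_{n,k}(z)$, which is entire; the conditions at $i\lambda_j$ for $j\neq n$ are automatic because $\Phi_n$ still carries the zero of $G$ there (order $\mu_j$), and the conditions at $i\lambda_n$ reduce to prescribing the first $\mu_n$ Taylor coefficients of the product $\Phi_n(z)q_{n,k}(z)$ at $i\lambda_n$, which is solvable for $q_{n,k}$ by triangularity since $\Phi_n(i\lambda_n)\neq 0$. This establishes the existence part.

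For the quantitative bounds, first I would make the dependence of $q_{n,k}$ on $n$ explicit. The coefficients of $q_{n,k}$ are determined by the Taylor coefficients of $1/\Phi_n$ at $i\lambda_n$, equivalently by the derivatives $G^{(\mu_n+j)}(i\lambda_n)$ for $j=0,\dots,\mu_n-1$, divided by appropriate factorials. Since $G\in A^0_{|z|}$, Cauchy estimates on a circle of radius comparable to $\tfrac{m_\epsilon}{6}\exp\{-\epsilon|\lambda_n|/\mu_n\}$ (the radius of $P_{n,\epsilon}$) bound the high derivatives of $G$ from above, while the lower bound $(\ref{lowerboundforthefunctionGz})$, namely $|G(z)|\ge M_\epsilon e^{(-\epsilon+\beta)\Re\lambda_n}$ on $\partial P_{n,\epsilon}$, together with Lemma~\ref{anevenfunction} applied to $L$, gives a lower bound on $|\Phi_n(i\lambda_n)|=|G^{(\mu_n)}(i\lambda_n)|/\mu_n!$. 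Combining these and using $(\ref{munlambdan})$ to absorb the factors $(\tfrac{2}{m_\epsilon})^{\mu_n}$, $\mu_n!$, and $e^{\pm\epsilon|\lambda_n|/\mu_n}$-type terms into $e^{(\text{small})\Re\lambda_n}$, one obtains $|q_{n,k}(z)|\le M_{\epsilon,2}\,e^{(-\beta+\epsilon)\Re\lambda_n}$ uniformly for $z$ in a suitable region, in particular on $\mathbb{C}\setminus P_{n,\epsilon}$ after adjusting $\epsilon$.

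The bound $(\ref{upperboundforGnkz})$ then follows by writing $|G_{n,k}(z)|=|G(z)|\cdot\frac{|q_{n,k}(z)|}{|z-i\lambda_n|^{\mu_n}}$ and noting that for $z\notin P_{n,\epsilon}$ we have $|z-i\lambda_n|\ge \tfrac{m_\epsilon}{6}\exp\{-\epsilon|\lambda_n|/\mu_n\}$, so $|z-i\lambda_n|^{-\mu_n}\le (\tfrac{6}{m_\epsilon})^{\mu_n}\exp\{\epsilon|\lambda_n|\}$, which is again $\le m_\epsilon' e^{\epsilon'\Re\lambda_n}$ by $(\ref{munlambdan})$; rename constants to get the stated inequality. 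Finally $(\ref{upperboundsrealline})$ is the special case $z=x\in\mathbb{R}$ once we observe $\mathbb{R}\cap P_{n,\epsilon}=\emptyset$ (the disks $P_{n,\epsilon}$ are centered at $i\lambda_n$, which lies off the real axis since $\sup|\arg\lambda_n|<\pi/2$ forces $\Re\lambda_n>0$, hence $\Im(i\lambda_n)=\Re\lambda_n>0$, and the radii shrink), so the estimate on $\mathbb{C}\setminus P_{n,\epsilon}$ applies verbatim on the real line.

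The main obstacle I anticipate is the careful bookkeeping of how $q_{n,k}$ (equivalently, the polynomial part) grows with $n$ and with the \emph{unbounded} multiplicities $\mu_n$: one must verify that all the exponentially-in-$\mu_n$ factors that appear—$\mu_n!$, $(2/m_\epsilon)^{\mu_n}$, $\exp\{\epsilon|\lambda_n|/\mu_n\cdot\mu_n\}=\exp\{\epsilon|\lambda_n|\}$—are dominated, after a harmless enlargement of $\epsilon$, by $e^{\epsilon\Re\lambda_n}$ via the condition $\mu_n=o(\Re\lambda_n)$ encoded in $(\ref{munlambdan})$ and the relation $|\lambda_n|\le A\,\Re\lambda_n$. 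Getting the constant $M_{\epsilon,2}$ genuinely independent of both $n$ and $k$ requires that the degree-$(\le\mu_n-1)$ Hermite interpolation problem be solved with norm bounds uniform in $k$, which follows from the triangular structure of the system but should be stated cleanly.
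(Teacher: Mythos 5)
Your construction is in substance the same as the paper's: the paper sets $A_{n,j}=\frac{1}{2\pi i}\int_{\partial P_{n,\epsilon}}(z-i\lambda_n)^{j-1}G(z)^{-1}\,dz$ (the Laurent coefficients of $1/G$ at the pole $i\lambda_n$) and defines $G_{n,k}(z)=\frac{G(z)}{k!}\sum_{l=1}^{\mu_n-k}A_{n,k+l}(z-i\lambda_n)^{-l}$, which is exactly your $G(z)(z-i\lambda_n)^{-\mu_n}q_{n,k}(z)$ with the polynomial written out. The interpolation conditions and the entirety of $G_{n,k}$ are verified the same way. However, there are two points where your quantitative argument, as written, does not go through.

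First, the claim that $|q_{n,k}(z)|\le M_{\epsilon,2}e^{(-\beta+\epsilon)\Re\lambda_n}$ holds \emph{uniformly on} $\mathbb{C}\setminus P_{n,\epsilon}$ is false whenever $\mu_n\ge 2$: $q_{n,k}$ is a nonconstant polynomial of degree up to $\mu_n-1$ and is unbounded on any unbounded set. Consequently you cannot bound $|q_{n,k}(z)|$ and $|z-i\lambda_n|^{-\mu_n}$ separately and multiply. The correct object to estimate is the quotient itself, i.e.\ the finite sum $\sum_{l=1}^{\mu_n-k}A_{n,k+l}(z-i\lambda_n)^{-l}$: each term is a \emph{negative} power, so for $z\notin P_{n,\epsilon}$ it is maximized when $|z-i\lambda_n|$ equals the radius of $P_{n,\epsilon}$, and the sum is at most $\mu_n\cdot\max_j|A_{n,j}|\cdot(6/m_\epsilon)^{\mu_n}e^{\epsilon|\lambda_n|}$, which $(\ref{munlambdan})$ converts into the stated bound. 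This is the paper's route and it is the fix your argument needs.

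Second, your description of the coefficients of $q_{n,k}$ as ``the derivatives $G^{(\mu_n+j)}(i\lambda_n)$ divided by appropriate factorials'' is not accurate: they are the Taylor coefficients of the \emph{reciprocal} $1/\Phi_n$, which are determined by the derivatives of $G$ only through a recursive inversion whose combinatorial growth is hard to control when $\mu_n$ is unbounded. Bounding them via upper bounds on $G^{(\mu_n+j)}(i\lambda_n)$ plus a lower bound on the single value $\Phi_n(i\lambda_n)$ is not enough. The clean way — which you partly gesture at — is to represent these coefficients directly as the contour integrals $A_{n,j}$ over $\partial P_{n,\epsilon}$ and use the lower bound $(\ref{lowerboundforthefunctionGz})$ for $|G|$ on that entire circle, together with the smallness of its radius, to get $|A_{n,j}|\le M_{\epsilon,1}e^{(-\beta+\epsilon)\Re\lambda_n}$ in one step. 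With these two repairs your proof coincides with the paper's.
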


\begin{proof}
We note that the idea of constructing the family $\{G_{n,k}\}$ comes from \cite[page 4312]{Sed1}.

Obviously
\[
\frac{1}{G(z)}
\]
is a meromorphic function and at each point $z=i\lambda_n$ it has a pole of order $\mu_n$. This pole is the center of the
disk $P_{n,\epsilon}$. Now, let
\[
A_{n,j}:=\frac{1}{2\pi i}\int_{\partial P_{n,\epsilon}} \frac{(z-i\lambda_n)^{j-1}}{G(z)}\, dz,
\qquad j=1,\dots,\mu_n.
\]
From $(\ref{lowerboundforthefunctionGz})$ and the small radius of the disk $P_{n,\epsilon}$, one deduces
that for the $\bf fixed$ $\epsilon>0$
there is $M_{\epsilon ,1}>0$, independent of $n$ and $j$ but depending on $(\beta-\gamma)$, so that

\begin{equation}\label{upperboundforAnj}
|A_{n,j}|\le M_{\epsilon ,1}e^{(-\beta+\epsilon)\Re\lambda_n},\qquad \forall\,\, n\in\mathbb{N},\quad j=1,\dots,\mu_n.
\end{equation}

Let us also consider the punctured disk $P^*_{n,\epsilon}$: it is the disk $P_{n,\epsilon}$ excluding the point $i\lambda_n$, that is
\[
P^*_{n,\epsilon}:=\left\{z:\,\, 0<|z-i\lambda_n|< \frac{m_{\epsilon}}{6} \exp\left\{{-\frac{\epsilon |\lambda_n|}{\mu_n}}\right\}\right\}.
\]

With $A_{n,j}$ as above, we write down the Laurent series representation of $1/G$ in $P^*_{n,\epsilon}$,
\begin{equation}\label{oneover}
\frac{1}{G(z)}=\sum_{j=1}^{\mu_n}\frac{A_{n,j}}{(z-i\lambda_n)^j}+p_n(z),\qquad \text{for\,\, all}\,\, z\in P^*_{n,\epsilon}
\end{equation}
such that $p_n(z)$ is the regular part.

Next, for every positive integer $n$ and every $k\in\{0,1,2,\dots,\mu_n-1\}$ let
\begin{equation}\label{firstformulaforGnk}
G_{n,k}(z): = \frac{G(z)}{k!}\sum_{l=1}^{\mu_n-k}\frac{A_{n,k+l}}{(z-i\lambda_n)^l}.
\end{equation}
Obviously each $G_{n,k}$ is an entire function. We show below that it satisfies
$(\ref{phi1a})$ and then $(\ref{upperboundforGnkz})-(\ref{upperboundsrealline})$.\\

\smallskip

First suppose that $k=0$, thus
\[
G_{n,0}(z) = G(z)\sum_{l=1}^{\mu_n}\frac{A_{n,l}}{(z-i\lambda_n)^l}.
\]
Then we get $G_{n,0} ^{(l)}(i\lambda_j)=0$ for $j\not= n$ and $l=0,1,\dots, \mu_j -1$.
Also, from $(\ref{oneover})$ and since $G_{n,0}(z)$ is continuous at $z=i\lambda_n$, then
\[
G_{n,0}(z)=G(z)\left[ \frac{1}{G(z)}-p_{n}(z)\right]=1-G(z)p_{n}(z)\qquad\forall\,\, z\in P_{n,\epsilon}.
\]
Hence, $G_{n,0}(i\lambda_n)=1$ and  $G_{n,0}^{(l)}(i\lambda_n)=0$ for $l\in\{1,\dots, \mu_n -1\}$.
Thus, $G_{n,0}(z)$ satisfies $(\ref{phi1a})$.

Next, suppose that $k\in\{1,2,\dots ,\mu_n-1\}$. Clearly from $(\ref{firstformulaforGnk})$ we get $G_{n,k} ^{(l)}(i\lambda_j)=0$ for $j\not= n$ and $l=0,1,\dots, \mu_j -1$.
Then, from $(\ref{oneover})$ and since $G_{n,k}(z)$ is continuous at $z=i\lambda_n$,
we rewrite $G_{n,k}(z)$ for all $z$ in $P_{n,\epsilon}$ as
\begin{eqnarray}
G_{n,k}(z) & = & \frac{G(z) (z-i\lambda_n)^k}{k!}\sum_{l=k+1}^{\mu_n}\frac{A_{n,l}}{(z-i\lambda_n)^l}\nonumber\\
& = & \frac{G(z) (z-i\lambda_n)^k}{k!}\left[\frac{1}{G(z)}-p_n(z)- \sum_{j=1}^{k}\frac{A_{n,j}}{(z-i\lambda_n)^j}\right]\nonumber\\
& = & \frac{(z-i\lambda_n)^k}{k!}-\frac{G(z) (z-i\lambda_n)^k p_n(z)}{k!}-\frac{G(z)}{k!}\sum_{j=1}^{k}A_{n,j}(z-i\lambda_n)^{k-j}.
\label{secondformulaforGnk}
\end{eqnarray}
From this relation we get  $G_{n,k} ^{(k)}(i\lambda_n)=1$ and $G_{n,k} ^{(l)}(i\lambda_n)=0$ for $l\in\{0,1,\dots, \mu_n -1\}\setminus\{k\}$.
Thus, $G_{n,k}(z)$ satisfies $(\ref{phi1a})$ for $k\not=0$ as well.\\

\smallskip

Next, by combining relations $(\ref{munlambdan})$, $(\ref{upperboundforAnj})$, $(\ref{firstformulaforGnk})$,
and the small radius of $P_{n,\epsilon}$,
yields the upper bound $(\ref{upperboundforGnkz})$ for all $z$ which lie outside the disk $P_{n,\epsilon}$.
Finally, the upper bound $(\ref{upperboundsrealline})$ on $\mathbb{R}$ follows from $(\ref{upperboundforGnkz})$ and the fact that the points
$i\lambda_n$ do not lie on $\mathbb{R}$ since $\sup_{n\in\mathbb{N}}|\arg\lambda_n|<\pi/2$.
\end{proof}

\begin{lemma}\label{SecondLemma}
There exist continuous functions $\{g_{n,k}(t):\,\, n\in\mathbb{N},\,\, k=0,1,\dots,\mu_n-1\}$
on the interval $[\gamma,\beta]$, with $g_{n,k}(t)=0$ outside $[\gamma,\beta]$
so that
\begin{equation}\label{orthogonal}
\frac{1}{\sqrt{2\pi}}\int_{\gamma}^{\beta}g_{n,k}(t)(-it)^{l}e^{\lambda_j t}\,
dt=\begin{cases} 1, & j=n,\,\,  l=k, \\ 0,  & j=n,\,\,
l\in\{0,1,\dots,\mu_n-1\}\setminus\{k\}, \\ 0, &
j\not=n,\,\, l\in\{0,1,\dots,\mu_j-1\}.\end{cases}
\end{equation}
Furthermore, for every $\epsilon>0$ there is a constant $M_{\epsilon ,3}>0$ independent of $n$ and $k$
but depending on $\Lambda$ and $(\beta-\gamma)$, so that
\begin{equation}\label{upperboundforHnk}
|g_{n,k}(t)|\le M_{\epsilon ,3} e^{(-\beta+\epsilon)\Re\lambda_n}\qquad \forall\,\, t\in[\gamma,\beta],\quad  n\in\mathbb{N}, \quad k\in\{0,1,\dots,\mu_n-1\}.
\end{equation}
\end{lemma}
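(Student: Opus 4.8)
The plan is to obtain the functions $g_{n,k}$ as the inverse Fourier transforms of the entire functions $G_{n,k}$ constructed in Lemma~\ref{FirstLemma}, exploiting the integral representation $(\ref{GzZikkos})$ of $G$ together with the quotient structure $(\ref{firstformulaforGnk})$. First I would observe that by $(\ref{upperboundsrealline})$ we have $|G_{n,k}(x)|\le |G(x)|\,M_{\epsilon,2}e^{(-\beta+\epsilon)\Re\lambda_n}$ for all real $x$, and since $G\in L^1(\mathbb{R})\cap L^2(\mathbb{R})$ by Theorem~\ref{Zikkostheorem}, each $G_{n,k}$ restricted to $\mathbb{R}$ lies in $L^1(\mathbb{R})\cap L^2(\mathbb{R})$ with $\|G_{n,k}\|_{L^1(\mathbb{R})}\le M_{\epsilon,2}\,e^{(-\beta+\epsilon)\Re\lambda_n}\,\|G\|_{L^1(\mathbb{R})}$. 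Moreover each $G_{n,k}$ is an entire function of exponential type: indeed $G$ has exponential type (with indicator diagram essentially the segment $[\gamma,\beta]$ on the imaginary axis after the $e^{-i\sigma z}$ twist), and multiplying by the rational factor $\sum_{l}A_{n,k+l}/(z-i\lambda_n)^l$ does not change the exponential type. Hence, by the Paley--Wiener theorem, $G_{n,k}$ is the Fourier transform of a function $g_{n,k}$ supported in $[\gamma,\beta]$; that is, there is $g_{n,k}\in L^2(\gamma,\beta)$ with
\[
G_{n,k}(z)=\frac{1}{\sqrt{2\pi}}\int_{\gamma}^{\beta}e^{-izt}g_{n,k}(t)\,dt ,\qquad z\in\mathbb{C},
\]
and I would then upgrade $g_{n,k}$ to a \emph{continuous} function vanishing outside $[\gamma,\beta]$: since $G_{n,k}\in L^1(\mathbb{R})$, Fourier inversion gives $g_{n,k}(t)=\frac{1}{\sqrt{2\pi}}\int_{\mathbb{R}}e^{izt}G_{n,k}(z)\,dz$, a uniformly convergent integral of a continuous integrand, hence $g_{n,k}\in C(\mathbb{R})$, and combined with the Paley--Wiener support statement this forces $g_{n,k}$ continuous on $[\gamma,\beta]$ and zero outside (continuity at the endpoints follows because the support is contained in the closed interval). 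The bound $(\ref{upperboundforHnk})$ then drops out immediately from inversion:
\[
|g_{n,k}(t)|\le \frac{1}{\sqrt{2\pi}}\int_{\mathbb{R}}|G_{n,k}(x)|\,dx
\le \frac{M_{\epsilon,2}\,\|G\|_{L^1(\mathbb{R})}}{\sqrt{2\pi}}\,e^{(-\beta+\epsilon)\Re\lambda_n},
\]
so one sets $M_{\epsilon,3}:=M_{\epsilon,2}\,\|G\|_{L^1(\mathbb{R})}/\sqrt{2\pi}$, which is independent of $n$ and $k$ and depends only on $\Lambda$ and $\beta-\gamma$ (through $M_{\epsilon,2}$ and $\|G\|_{L^1}$).

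It remains to verify the biorthogonality relations $(\ref{orthogonal})$, and this is where one cashes in $(\ref{phi1a})$. Differentiating the integral representation of $G_{n,k}$ under the integral sign (legitimate since $g_{n,k}$ is bounded with compact support) gives, for each nonnegative integer $l$,
\[
G_{n,k}^{(l)}(z)=\frac{1}{\sqrt{2\pi}}\int_{\gamma}^{\beta}(-it)^{l}e^{-izt}g_{n,k}(t)\,dt .
\]
Evaluating at $z=i\lambda_j$ yields
\[
G_{n,k}^{(l)}(i\lambda_j)=\frac{1}{\sqrt{2\pi}}\int_{\gamma}^{\beta}(-it)^{l}e^{\lambda_j t}g_{n,k}(t)\,dt ,
\]
which is exactly the left-hand side of $(\ref{orthogonal})$; so $(\ref{orthogonal})$ is precisely the statement $(\ref{phi1a})$ for $G_{n,k}$, already established in Lemma~\ref{FirstLemma}. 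This closes the proof.

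The only genuinely delicate point, and the one I would treat most carefully, is the justification that each $G_{n,k}$ is of exponential type with indicator supported by the segment joining $i\gamma$ and $i\beta$, so that Paley--Wiener delivers support exactly in $[\gamma,\beta]$ rather than in some larger interval. Here one argues that from $(\ref{GzZikkos})$ the function $G$ satisfies $|G(z)|\le C(1+|z|)^{?}\exp(h(\arg z)|z|)$ with supporting function $h(\theta)=\max(-\gamma\sin\theta,-\beta\sin\theta)=\beta\,\sin^{-}\theta+(-\gamma)\sin^{+}\theta$ appropriate to the segment $[i\gamma,i\beta]$; multiplying by the rational factor in $(\ref{firstformulaforGnk})$ leaves the type and the indicator unchanged (a rational factor contributes only a polynomial correction), and on the real axis we already have the $L^1\cap L^2$ bound from $(\ref{upperboundsrealline})$. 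An equally clean alternative, avoiding indicator bookkeeping, is to note directly that $g_{n,k}$ can be \emph{defined} by $g_{n,k}(t):=\frac{1}{\sqrt{2\pi}}\int_{\mathbb{R}}e^{izt}G_{n,k}(z)\,dz$ and then to prove $g_{n,k}(t)=0$ for $t\notin[\gamma,\beta]$ by shifting the contour of integration into the appropriate half-plane, using $(\ref{upperboundforGnkz})$ on the vertical line $\Re z=\pm R$ (which lies outside all the disks $P_{n,\epsilon}$ once $R$ is large, since the disks cluster along the imaginary axis) together with the decay $|G(z)|\to 0$ off the imaginary axis; letting $R\to\infty$ kills the integral. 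Either route works; I would present the second, since it is self-contained given the estimates already in hand.
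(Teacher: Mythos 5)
Your proposal is correct and follows essentially the same route as the paper: deduce from $(\ref{GzZikkos})$, $(\ref{upperboundforGnkz})$ and $(\ref{upperboundsrealline})$ that $G_{n,k}$ is an entire function of exponential type in $L^1(\mathbb{R})\cap L^2(\mathbb{R})$, apply Paley--Wiener to write it as the Fourier transform of a continuous $g_{n,k}$ supported in $[\gamma,\beta]$, get $(\ref{orthogonal})$ by differentiating under the integral and invoking $(\ref{phi1a})$, and get $(\ref{upperboundforHnk})$ from Fourier inversion together with $G\in L^1(\mathbb{R})$. The only difference is that you spell out the support/indicator justification that the paper leaves implicit.
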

\begin{proof}
From $(\ref{GzZikkos})$, $(\ref{upperboundforGnkz})$, and $(\ref{upperboundsrealline})$,
it follows that $G_{n,k}(z)$ is an entire function of exponential type, $G_{n,k}\in L^2(\mathbb{R})\cap L^1(\mathbb{R})$,
and it is the Fourier Transform
of a continuous function compactly  supported on the interval $[\gamma,\beta]$.
Thus $G_{n,k} (z)$ admits the representation
\[
G_{n,k}(z)=\frac{1}{\sqrt{2\pi}}\int_{\gamma}^{\beta}e^{-izt}g_{n,k}(t)\, dt,
\]
for some $g_{n,k}\in C[\gamma,\beta]$ with $g_{n,k}(t)=0$ outside $[\gamma,\beta]$.
By differentiation one also has
\[
G_{n,k} ^{(l)} (z)=\frac{1}{\sqrt{2\pi}}\int_{\gamma}^{\beta}(-it)^{l}e^{-izt}g_{n,k}(t)\, dt.
\]
It now follows from $(\ref{phi1a})$ that $(\ref{orthogonal})$ is valid.

Moreover, since $G_{n,k}\in L^1(\mathbb{R})$, by Fourier Inversion we have
\[
g_{n,k}(t)=\frac{1}{\sqrt{2\pi}}\int_{-\infty}^{\infty}e^{ixt}G_{n,k}(x)\, dx\quad \forall\,\, t\in [\gamma,\beta].
\]
Finally,  since $G\in L^1 (\mathbb{R})$ then from  $(\ref{upperboundsrealline})$ we get $(\ref{upperboundforHnk}$).
\end{proof}

\subsection{Proof of the Fundamental Result (Theorem $\ref{Distances}$)}

Fix some $n\in\mathbb{N}$ and $k\in\{0,1,\dots, \mu_n-1\}$ and
let $g_{n,k}$ be the function as in Lemma $\ref{SecondLemma}$.
Consider also the exponential system $E_{\Lambda_{n,k}}=E_{\Lambda}\setminus x^ke^{\lambda_n x}$.

Since $g_{n,k}\in C[\gamma,\beta]$
then $g_{n,k}\in L^q (\gamma,\beta)$ for all $q\ge 1$ and
it follows from $(\ref{upperboundforHnk})$ that for every $\epsilon>0$ there is some $M_{\epsilon,3}>0$,
independent of $n\in\mathbb{N}$, $k=0,1,\dots,\mu_n-1$, so that

\begin{equation}\label{Lqgnk}
||g_{n,k}(t)||_{L^q (\gamma,\beta)}=\left(\int_{\gamma}^{\beta}|g_{n,k}(t)|^q\, dt\right)^{1/q}\le
M_{\epsilon,3}\cdot e^{(-\beta+\epsilon)\Re\lambda_n}\cdot \max\{(\beta-\gamma),1\}.
\end{equation}

\begin{remark}\label{ind}
This bound is independent of $q\in [1,\infty)$.
\end{remark}

Suppose now that $f\in\overline{\text{span}}(E_{\Lambda_{n,k}})$ in the space $L^p(\gamma,\beta)$ for some $p>1$.
Hence for every $\epsilon>0$ there is an exponential polynomial
$P_{\epsilon}\in \text{span}(E_{\Lambda_{n,k}})$ such that
\[
||f-P_{\epsilon}||_{L^p(\gamma,\beta)}<\epsilon.
\]

From $(\ref{orthogonal})$ we have
\[
\int_{\gamma}^{\beta}g_{n,k}(t)P_{\epsilon}(t)\, dt=0,\qquad\text{thus}\qquad
\int_{\gamma}^{\beta}g_{n,k}(t)f(t)\, dt=\int_{\gamma}^{\beta}g_{n,k}(t)\cdot \left(f(t)-P_{\epsilon}(t)\right)\, dt.
\]
Let $q$ be the conjugate of $p$, that is, $1/p +1/q=1$.
Then from the H\"{o}lder inequality we get
\[
\left|\int_{\gamma}^{\beta}g_{n,k}(t)\cdot\left(f(t)-P_{\epsilon}(t)\right)\, dt\right|
\le ||g_{n,k}||_{L^q(\gamma,\beta)}\cdot\epsilon.
\]

Since $\epsilon$ is arbitrary we conclude that
\[
\int_{\gamma}^{\beta}g_{n,k}(t)\cdot f(t)\, dt=0.
\]

Together with $(\ref{orthogonal})$ gives
\[
\frac{\sqrt{2\pi}}{(-i)^k}=\int_{\gamma}^{\beta}g_{n,k}(t)\cdot t^k e^{\lambda_n t}\, dt=
\int_{\gamma}^{\beta}g_{n,k}(t)\left(t^k e^{\lambda_n t}-f(t)\right)\, dt.
\]

Letting  $e_{n,k}(t)=t^k e^{\lambda_n t}$, then from $(\ref{Lqgnk})$ and the H\"{o}lder inequality we get
\begin{eqnarray}
\sqrt{2\pi} & \le & ||g_{n,k}(t)||_{L^q (\gamma,\beta)} \cdot ||e_{n,k}-f||_{L^p (\gamma,\beta)}
\nonumber\\& \le &  M_{\epsilon,3}\cdot e^{(-\beta+\epsilon)\Re\lambda_n}\cdot \max\{(\beta-\gamma),1\}\cdot
||e_{n,k}-f||_{L^p (\gamma,\beta)}.\nonumber
\end{eqnarray}

Thus
\[
||e_{n,k}-f||_{L^p (\gamma,\beta)}\ge \frac{\sqrt{2\pi}}{M_{\epsilon,3}\cdot \max\{(\beta-\gamma),1\}}\cdot e^{(\beta-\epsilon)\Re\lambda_n}.
\]

Since this is true for all $f\in\overline{\text{span}} (E_{\Lambda_{n,k}})$ in $L^p (\gamma,\beta)$, and letting
\[
u_{\epsilon}=\frac{\sqrt{2\pi}}{M_{\epsilon,3}\cdot \max\{(\beta-\gamma),1\}}
\]
we get the Distance lower bound
\[
D_{\gamma,\beta,p,n,k}\ge u_{\epsilon}e^{(\beta-\epsilon)\Re\lambda_n}
\]
with $u_{\epsilon}$ clearly independent of $n,\, k,$ and also independent of $p>1$ due to Remark $\ref{ind}$.

Similarly one gets $D_{\gamma,\beta,1,n,k}\ge u_{\epsilon}e^{(\beta-\epsilon)\Re\lambda_n}$.
The proof of Theorem $\ref{Distances}$ is now complete.

\subsection{An important corollary}

The following result is crucial for proving Theorem $\ref{theorem1}$.

\begin{corollary}\label{corbound}
Let the multiplicity sequence $\Lambda=\{\lambda_n,\mu_n\}_{n=1}^{\infty}$ belong to the class $ABC$
and let $(\gamma, \beta)$ be a bounded interval. Consider two exponential polynomials
\[
P_1(x)=\sum_{n=1}^{M_1}\left(\sum_{k=0}^{\mu_n-1} c_{n,k,1}x^{k}\right) e^{\lambda_n x},\qquad c_{n,k,1}\in\mathbb{C}
\]
and
\[
P_2(x)=\sum_{n=1}^{M_2}\left(\sum_{k=0}^{\mu_n-1} c_{n,k,2}x^{k}\right) e^{\lambda_n x},\qquad c_{n,k,2}\in\mathbb{C},
\]
such that $M_2\ge M_1$. Then, for every $\epsilon>0$ there is a constant $m_{\epsilon}>0$
which depends only on $\Lambda$ and $(\beta-\gamma)$, and not on $P_1(x)$, $P_2(x)$, $p\ge 1$,
$n\in\mathbb{N}$ and $k=0,1,\dots,\mu_n-1$, so that
\[
(I)\qquad |c_{n,k,1}|\le m_{\epsilon}e^{(-\beta+\epsilon)\Re\lambda_n}||P_1||_{L^p (\gamma, \beta)}
\qquad \forall\,\, n=1,2,\dots, M_1\quad \text{and}\quad k=0,1,\dots,\mu_{n}-1
\]
and
\[
(II)\qquad |c_{n,k,1}-c_{n,k,2}|\le m_{\epsilon} e^{(-\beta+\epsilon)\Re\lambda_n}||P_1-P_2||_{L^p (\gamma,\beta)}\quad
\forall\,\, n=1,\dots ,M_1 \quad\text{and}\quad k=0,1,\dots,\mu_n-1.
\]
\end{corollary}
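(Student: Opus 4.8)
The plan is to reduce both estimates to the Distance lower bound of Theorem~\ref{Distances}. The core observation is a single-coefficient estimate valid for an \emph{arbitrary} exponential polynomial: if $P(x)=\sum_{n=1}^{M}\left(\sum_{k=0}^{\mu_n-1}c_{n,k}x^k\right)e^{\lambda_n x}$ lies in $\text{span}(E_{\Lambda})$, then for every $\epsilon>0$ one has $|c_{n,k}|\le u_{\epsilon}^{-1}\,e^{(-\beta+\epsilon)\Re\lambda_n}\,\|P\|_{L^p(\gamma,\beta)}$ for all $n\le M$ and $k=0,1,\dots,\mu_n-1$, where $u_{\epsilon}$ is the constant from Theorem~\ref{Distances}. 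Part $(I)$ is then the special case $P=P_1$, and part $(II)$ follows by applying this estimate to $P_1-P_2$.

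To prove the single-coefficient estimate, fix $n_0\le M$ and $k_0\in\{0,1,\dots,\mu_{n_0}-1\}$. If $c_{n_0,k_0}=0$ there is nothing to prove, so assume $c_{n_0,k_0}\ne 0$ and write
\[
\frac{1}{c_{n_0,k_0}}P(x)=e_{n_0,k_0}(x)+g(x),\qquad g:=\frac{1}{c_{n_0,k_0}}\bigl(P-c_{n_0,k_0}e_{n_0,k_0}\bigr).
\]
By construction $g\in\text{span}(E_{\Lambda_{n_0,k_0}})\subset\overline{\text{span}}(E_{\Lambda_{n_0,k_0}})$, so $-g$ is an admissible competitor in the infimum defining $D_{\gamma,\beta,p,n_0,k_0}$, whence
\[
\frac{\|P\|_{L^p(\gamma,\beta)}}{|c_{n_0,k_0}|}=\bigl\|e_{n_0,k_0}-(-g)\bigr\|_{L^p(\gamma,\beta)}\ge D_{\gamma,\beta,p,n_0,k_0}\ge u_{\epsilon}\,e^{(\beta-\epsilon)\Re\lambda_{n_0}},
\]
the last inequality being exactly $(\ref{distancelowerbounds})$. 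Rearranging gives $|c_{n_0,k_0}|\le u_{\epsilon}^{-1}e^{(-\beta+\epsilon)\Re\lambda_{n_0}}\|P\|_{L^p(\gamma,\beta)}$, and the constant $m_{\epsilon}:=u_{\epsilon}^{-1}$ depends only on $\Lambda$ and $(\beta-\gamma)$; in particular it is independent of $P$, of $n_0,k_0$, and of $p\ge 1$, since $u_{\epsilon}$ enjoys the same independence by Theorem~\ref{Distances}.

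For $(II)$, since $M_2\ge M_1$ the difference $P_1-P_2$ is again an exponential polynomial in $\text{span}(E_{\Lambda})$, namely $P_1-P_2=\sum_{n=1}^{M_2}\left(\sum_{k=0}^{\mu_n-1}(c_{n,k,1}-c_{n,k,2})x^k\right)e^{\lambda_n x}$ with the convention $c_{n,k,1}=0$ for $M_1<n\le M_2$. Applying the single-coefficient estimate to $P_1-P_2$ and reading off the coefficients with index $n\le M_1$ yields $|c_{n,k,1}-c_{n,k,2}|\le m_{\epsilon}e^{(-\beta+\epsilon)\Re\lambda_n}\|P_1-P_2\|_{L^p(\gamma,\beta)}$ for $n=1,\dots,M_1$ and $k=0,\dots,\mu_n-1$, which is $(II)$. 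There is no genuine analytic obstacle here: all the difficulty has already been absorbed into Theorem~\ref{Distances}; the only points needing care are the trivial case $c_{n_0,k_0}=0$ and the bookkeeping ensuring $m_{\epsilon}$ does not secretly depend on $M_1,M_2$ or on $p$, both of which are settled by the uniformity built into $(\ref{distancelowerbounds})$.
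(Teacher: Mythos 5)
Your proposal is correct and follows essentially the same route as the paper: in both cases one factors out the coefficient $c_{n,k}$, recognizes the remaining terms as an element of $\text{span}(E_{\Lambda_{n,k}})$, and invokes the Distance lower bound $(\ref{distancelowerbounds})$ of Theorem $\ref{Distances}$, with $(II)$ obtained by running the same argument on $P_1-P_2$. Your formulation as a single reusable coefficient estimate (plus the explicit treatment of the case $c_{n_0,k_0}=0$) is a minor tidying of the paper's presentation, not a different proof.
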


\begin{proof}
Fix $m\in \{1,\dots , M_1\}$ and $l\in \{0,1,\dots, \mu_m-1\}$.
Let $e_{m,l}(x)=x^l e^{\lambda_m x}$, $E_{\Lambda_{m,l}}=E_{\Lambda}\setminus e_{m,l}$, and write
\[
||P_1||_{L^p (\gamma, \beta)}=|c_{m,l,1}|\cdot ||e_{m,l}+Q_{m,l,1}||_{L^p (\gamma, \beta)},
\]
where
\[
Q_{m,l,1}(x):=\left(\sum_{k=0,k\not=l}^{\mu_m-1} \frac{c_{m,k,1}}{c_{m,l,1}}x^{k}\right) e^{\lambda_m x}+\sum_{n=1,n\not=m}^{M_1}
\left(\sum_{k=0}^{\mu_n-1} \frac{c_{n,k,1}}{c_{m,l,1}}x^{k}\right) e^{\lambda_n x}.
\]
Clearly $Q_{m,l,1}$ belongs to the span of $E_{\Lambda_{m,l}}$, thus $(I)$ follows from Theorem $\ref{Distances}$.\\

Next, $(II)$ obviously holds if $c_{n,k,1}=c_{n,k,2}$ for some $n\in\{1,2,\dots , M_1\}$ and $k\in\{0,1,\dots ,\mu_n-1\}$.
Suppose now that $c_{m,l,1}\not=c_{m,l,2}$
for $m\in \{1,\dots ,M_1\}$ and $l\in \{0,1,\dots, \mu_m-1\}$. We then write
\[
|P_1(x)-P_2(x)| = |c_{m,l,1}-c_{m,l,2}|\cdot \left|x^{l}e^{\lambda_m x}+Q_{m,l,1,2}(x)\right|
\]
where
\begin{eqnarray*}
Q_{m,l,1,2}(x): & = & \left(\sum_{k=0,k\not=l}^{\mu_m-1} \frac{c_{m,k,1}-c_{m,k,2}}{c_{m,l,1}-c_{m,l,2}}x^k\right)
e^{\lambda_m x}+ \sum_{n=1,n\not=m}^{M_1}
\left(\sum_{k=0}^{\mu_n-1} \frac{c_{n,k,1}-c_{n,k,2}}{c_{m,l,1}-c_{m,l,2}}x^k\right) e^{\lambda_n x}
\\ & - &  \sum_{n=M_1+1}^{M_2}\left(\sum_{k=0}^{\mu_n-1} \frac{c_{n,k,2}}{c_{m,l,1}-c_{m,l,2}}x^k\right) e^{\lambda_n x}.
\end{eqnarray*}
Hence
\[
||P_1-P_2||_{L^p (\gamma, \beta)} = |c_{m,l,1}-c_{m,l,2}|\cdot ||e_{m,l}+Q_{m,l,1,2}||_{L^p (\gamma, \beta)}.
\]
Obviously $Q_{m,l,1,2}$ belongs to the span of $E_{\Lambda_{m,l}}$, hence
$(II)$ follows from Theorem $\ref{Distances}$.

\end{proof}

\section{A result on the region of holomorphy of  Taylor-Dirichlet series}
\setcounter{equation}{0}

In this section we state and prove a result on Taylor-Dirichlet series which as we have seen
appear in the statement of our results in Introduction.

Suppose that a  multiplicity sequence $\Lambda=\{\lambda_n,\mu_n\}_{n=1}^{\infty}$,
not necessarily in the $ABC$ class, satisfies the following two conditions:
\begin{equation}\label{ValironConditionslimit}
\lim_{n\to\infty}\frac{\log n}{\lambda_n}=0 \qquad \text{and} \qquad \lim_{n\to\infty}\frac{\mu_n}{\lambda_n}=0.
\end{equation}
Associate to $\Lambda$ the class of Taylor-Dirichlet series
\[
h(z)=\sum_{n=1}^{\infty} \left(\sum_{k=0}^{\mu_n-1}a_{n,k}
z^k\right) e^{\lambda_n z},\quad a_{n,k}\in \mathbb{C},
\]
and to each $h(z)$ consider the Dirichlet series
\[
h^*(z)=\sum_{n=1}^{\infty} A_n e^{\lambda_n z}\qquad
\text{where}\qquad
A_n=\max\{|a_{n,k}|:\, k=0,1,2,\dots,\mu_n-1\}.
\]
From the first condition in $(\ref{ValironConditionslimit})$, it follows by the results of
E. Hille \cite[Theorems 1 and 3]{Hille} that the open region of absolute convergence of $h^*$
is convex and it coincides with its open region of pointwise convergence.
Assuming both conditions in $(\ref{ValironConditionslimit})$,
it then follows from the results  of G. Valiron \cite[p. 29]{Valiron}, that
the open regions of pointwise and absolute convergence of $h(z)$ coincide with the open region of convergence
of $h^*(z)$, that is, all open regions are identical and convex.
Both series $h$ and $h^*$, converge uniformly on every compact subset of the open region, call it $D$,
thus defining analytic functions in $D$.

\begin{remark}
We note however that it is possible for Taylor-Dirichlet series to converge pointwise on a $\bf set$ of points,
call it $\Delta$, which lies outside the region $D$. We will discuss more on this phenomenon in subsection 10.2.
\end{remark}

And now our own contribution to the topic of Taylor-Dirichlet series.
\begin{lemma}\label{TDSeries}
Let the multiplicity sequence $\Lambda=\{\lambda_n,\mu_n\}_{n=1}^{\infty}$ be an interpolating variety for the space $A^0_{|z|}$
and satisfying the condition  $(\ref{lessthan})$. Then a Taylor-Dirichlet series
\[
g(z)=\sum_{n=1}^{\infty}\left(\sum_{k=0}^{\mu_n-1} c_{n,k}z^k\right)e^{\lambda_n z},\quad c_{n,k}\in\mathbb{C},
\]
defines an analytic function  in the open sector $\Theta_{\eta ,\beta}$ $(\ref{opensector})$,
converging uniformly on its compact subsets,
\[
\text{if\quad and\quad only\quad if}
\]
the coefficients $c_{n,k}$ satisfy the upper bound $(\ref{cnkbound})$.

Such a series is either an entire function or an analytic function in a convex region $D\supset\Theta_{\eta ,\beta}$.
In the latter case the boundary of $D$ is a natural boundary for $g$.
\end{lemma}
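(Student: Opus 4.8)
The plan is to run both implications through the Hille--Valiron theory of Dirichlet series, which applies here: since $\Lambda$ is an interpolating variety, the necessary condition $(\ref{conditionn})$ holds, so $\mu_n/|\lambda_n|\to 0$; and Geometric Condition $(I)$, $N(r,\Lambda)=o(r)$, gives $n_{\Lambda}(t)=o(t)$, whence $n\le n_{\Lambda}(|\lambda_n|)=o(|\lambda_n|)$ and so $\log n/|\lambda_n|\to 0$; because $\sup_n|\arg\lambda_n|\le\eta<\pi/2$ makes $\Re\lambda_n$ comparable to $|\lambda_n|$, both conditions in $(\ref{ValironConditionslimit})$ hold. Consequently, for any such series $g$ with nonempty region of convergence, the open regions of pointwise and of absolute convergence of $g$ coincide with the open region $D$ of convergence of the majorant $h^*(z)=\sum_n A_n e^{\lambda_n z}$, $A_n=\max_k|c_{n,k}|$; moreover $D$ is convex, $g$ and $h^*$ converge absolutely and uniformly on compacta of $D$, and both diverge at every point of $\mathbb{C}\setminus\overline{D}$ (Hille \cite{Hille}, Valiron \cite{Valiron}). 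I would also record the geometric fact that drives everything: if $z\in\Theta_{\eta,\beta}$ then $\arg\big(\lambda_n(\beta-z)\big)\in[-\pi/2,\pi/2]$ (the two arguments add to at most $\eta+(\pi/2-\eta)$), so $\Re(\lambda_n z)\le\beta\,\Re\lambda_n$ for all $n$; and on a compact subset $K$ of the \emph{open} sector the two arguments are bounded away from $\pm\pi/2$, giving a uniform $\delta=\delta(K)>0$ with $\Re(\lambda_n z)\le\beta\,\Re\lambda_n-\delta|\lambda_n|$ for all $z\in K$, $n\in\mathbb{N}$.

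For the direction ``$(\ref{cnkbound})\Rightarrow$ convergence on $\Theta_{\eta,\beta}$'', I would estimate directly: for $z\in K$ with $R=\max_K|z|$ and $\delta=\delta(K)$, the $n$-th grouped term is at most
\[
A_n\Big(\sum_{k=0}^{\mu_n-1}|z|^k\Big)|e^{\lambda_n z}|\ \le\ m_{\epsilon}\,e^{(-\beta+\epsilon)\Re\lambda_n}\cdot \mu_n\max(1,R)^{\mu_n}\cdot e^{\beta\,\Re\lambda_n-\delta|\lambda_n|},
\]
and since $\mu_n=o(|\lambda_n|)$ one has $\mu_n\max(1,R)^{\mu_n}\le e^{\epsilon|\lambda_n|}$ and $\epsilon\Re\lambda_n\le\epsilon|\lambda_n|$ for $n$ large; choosing $\epsilon<\delta/3$ bounds the term by $m_{\epsilon}e^{-(\delta/3)|\lambda_n|}$, and $\sum_n\mu_n e^{-c|\lambda_n|}=c\int_0^{\infty}e^{-ct}n_{\Lambda}(t)\,dt<\infty$ because $n_{\Lambda}(t)=o(t)$. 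Thus $g$ converges absolutely and uniformly on $K$, hence is analytic on $\Theta_{\eta,\beta}$.

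For the converse, if $g$ converges uniformly on compacta of $\Theta_{\eta,\beta}$ then $g$ has nonempty region of convergence; since $g$ diverges off $\overline{D}$, the open set $\Theta_{\eta,\beta}$ lies in $\overline{D}$ and therefore in $D$, so in particular $(-\infty,\beta)\subset D$. By Valiron's theorem the majorant $h^*$ then converges at every real point $\beta-\epsilon$, $\epsilon>0$, and convergence of $\sum_n A_n e^{\lambda_n(\beta-\epsilon)}$ forces its terms to $0$, hence $A_n e^{(\beta-\epsilon)\Re\lambda_n}\le m_{\epsilon}$ for a suitable $m_{\epsilon}>0$; thus $|c_{n,k}|\le A_n\le m_{\epsilon}e^{(-\beta+\epsilon)\Re\lambda_n}$, which is $(\ref{cnkbound})$.

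Finally, $D$ is convex; if $D=\mathbb{C}$ then $g$ is entire, and otherwise $D$ is a proper convex domain containing $\Theta_{\eta,\beta}$ (by the two directions just proved), and $\partial D$ is a natural boundary for $g$ by the classical Pólya-type theorem on Dirichlet and Taylor--Dirichlet series whose exponent set has density zero, which applies here because $n/|\lambda_n|\to 0$. The step I expect to be most delicate is \emph{not} either estimate but the two ``soft'' points flanking them: showing that the convex domain of convergence swallows the whole sector and not merely the real segment (this is exactly where the aperture $\pi/2-\eta$ of $\Theta_{\eta,\beta}$ enters, through $\arg(\lambda_n(\beta-z))\in[-\pi/2,\pi/2]$), and the natural-boundary assertion, which rests on the density-zero continuation theorem rather than on anything established earlier in the paper.
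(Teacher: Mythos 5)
Your proposal is correct and follows essentially the same route as the paper: a direct estimate on compact subsets of the sector for the sufficiency (the paper obtains the uniform margin by shifting the sector left by $\alpha$ units, you obtain it via $|\arg(\lambda_n(\beta-z))|\le\pi/2$ plus a compactness margin $\delta(K)$ --- the same computation), and the Hille--Valiron identification of the convex domain of convergence $D\supset\Theta_{\eta,\beta}$ for the necessity, with the coefficient bound read off from convergence of the majorant $g^*$ at real points below $\beta$ (the paper runs the equivalent argument by contradiction at $x_0=(\beta-a)/2$). The final natural-boundary assertion is handled in the paper exactly as you indicate, by citing the density-zero continuation theorem from the author's earlier work rather than anything proved in this article.
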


\begin{proof}

Firstly, suppose that the coefficients $c_{n,k}$ satisfy the upper bound $(\ref{cnkbound})$.
We will show that $g$ is analytic in $\Theta_{\eta,\beta}$ by proving that $g$ converges uniformly on its compact subsets.

Consider such a compact set $K$ so that $\sup_{z\in K}|z|=M>1$.
Clearly we can shift $\Theta_{\eta,\beta}$ to the left by
$\alpha$ units for some $\alpha>0$, so that $K$ remains in the interior of this shifted sector, call it
$\Theta_{\eta,\beta,\alpha}$:
\[
\Theta_{\eta,\beta,\alpha}:=\left\{z:\, |\Im z| \le |\Re (z-\beta+\alpha)|\left( \frac{1}{\tan\eta}\right),
\quad \Re z\le \beta-\alpha\right\}.
\]

Since $\Lambda$ satisfies condition  $(\ref{lessthan})$, then $|\Im \lambda_n|\le (\Re\lambda_n)\cdot \tan\eta$.
Also, since $z\in \Theta_{\eta,\beta,\alpha}$ then $|\Re (z-\beta+\alpha)|= -\Re z+\beta-\alpha$. Then for all $z\in K$ one has
\begin{eqnarray*}
|\exp \{\lambda_n z\}| & = & \exp\left\{\Re\lambda_n\cdot \Re z-\Im\lambda_n\cdot \Im z\right\}\\
& \le & \exp\left\{\Re\lambda_n\cdot \Re z+|\Im\lambda_n|\cdot |\Im z|\right\}\\
& \le & \exp\left\{\Re\lambda_n\cdot \Re z+(\Re\lambda_n\cdot \tan\eta)\cdot |\Re (z-\beta+\alpha)|\cdot \left(\frac{1}{\tan\eta}\right)\right\}\\
& = &
\exp\{\Re\lambda_n\cdot [\Re z+|\Re (z-\beta+\alpha)|]\}\\
& = &
\exp\{\Re\lambda_n\cdot (\beta-\alpha)\}.
\end{eqnarray*}

Since the coefficients satisfy $(\ref{cnkbound})$, combined with the above upper bound
and $(\ref{munlambdan})$, shows that for every $\epsilon>0$  there are $m^*_{\epsilon}>0$ and $m_{\epsilon}>0$,
so that for all $z\in K$ we have
\begin{eqnarray*}
\sum_{k=0}^{\mu_n-1}|c_{n,k}|\cdot |z|^k\cdot |e^{\lambda_n z}| & \le &
\mu_n\cdot m^*_{\epsilon}\cdot \exp\{(-\beta+\epsilon)\cdot \Re\lambda_n\}\cdot M^{\mu_n-1}\cdot \exp\{(\beta-\alpha)\cdot \Re\lambda_n\}\\
& \le & m_{\epsilon}\cdot
\exp\{\epsilon\Re\lambda_n\}\cdot \exp\{(-\beta+\epsilon)\cdot\Re\lambda_n\}\cdot\exp\{\epsilon\Re\lambda_n\}
\cdot\exp\{(\beta-\alpha)\cdot\Re\lambda_n\}\\
& = & m_{\epsilon}\cdot \exp\{(3\epsilon - \alpha)\cdot\Re\lambda_n\}.
\end{eqnarray*}

Let us now choose  $\epsilon$ to be equal to $\alpha/4$. Then one gets
\[
\sum_{n=1}^{\infty}\left(\sum_{k=0}^{\mu_n-1}|c_{n,k}|\cdot |z|^k\right)\cdot |e^{\lambda_n z}|
\le\sum_{n=1}^{\infty}m_{\alpha/4}\cdot e^{(-\alpha/4)\cdot\Re\lambda_n}<\infty.
\]
This means that $g$ converges uniformly on the set $K$. The arbitrary choice of $K$ shows that
$g$ is an analytic function in the sector $\Theta_{\eta,\beta}$.\\

Secondly, suppose that $g$ is an analytic function in the sector $\Theta_{\eta,\beta}$,
thus all real points $\rho<\beta$ belong to the sector.
We will show that the coefficients $c_{n,k}$ satisfy $(\ref{cnkbound})$ by
utilizing the results obtained by G. Valiron and E. Hille.

Since in our case $\Lambda$ is an interpolating variety for the space $A^0_{|z|}$, then it has zero $Density$, in other words
\[
\frac{\sum_{|\lambda_n|\le t}\mu_n}{t}\to 0,\qquad t\to\infty.
\]
Therefore,
\[
\frac{\mu_1+\mu_2+\mu_3+\dots+\mu_n}{|\lambda_n|}\to 0,\qquad n\to\infty.
\]
Then it easily follows that both relations in $(\ref{ValironConditionslimit})$ hold.
Therefore if we compare the series $g(z)$ with the series
\[
g^*(z)=\sum_{n=1}^{\infty} C_n e^{\lambda_n z}\qquad
\text{where}\qquad
C_n=\max\{|c_{n,k}|:\, k=0,1,2,\dots,\mu_n-1\},
\]
the open regions of pointwise and absolute convergence of the series $g(z)$ coincide with the open region of
convergence of the series $g^* (z)$ which is convex. If we denote this open region by $D$ and since in $D$ the series $g$ is analytic,
then obviously we get $D\supset \Theta_{\eta,\beta}$.

Now, if the coefficients $c_{n,k}$ of the series $g$ do not satisfy the upper bound $(\ref{cnkbound})$, then we will have
\[
\limsup_{n\to\infty}\frac{\log C_n}{\Re\lambda_n}=a>-\beta.
\]
If $a\in\mathbb{R}$, and similarly if $a=\infty$, it is then easy to see that the series
\[
\sum_{n=1}^{\infty}C_n\cdot e^{x\Re\lambda_n}
\]
does not converge for $x_o=(\beta-a)/2$. However, $x_o<\beta$ hence $x_o\in \Theta_{\eta,\beta}$, a contradiction.
Therefore the coefficients $c_{n,k}$ must satisfy the upper bound $(\ref{cnkbound})$.\\

And finally, we note that if the open region $D$ is not the complex plane $\mathbb{C}$,
it follows from our work \cite[Theorem 4.1]{Z2015JMAA}
that its boundary $\partial D$  is a Natural Boundary,
in other words, the series cannot be analytically continued across any part of $\partial D$.
The proof of our lemma is now complete.
\end{proof}

\section{The Closed Span of $E_{\Lambda}$ in $L^p (\gamma,\beta)$: Proof of Theorems $\ref{theorem1}$ and $\ref{converse}$}
\setcounter{equation}{0}

In this section we characterize the closed span of the system $E_{\Lambda}$ in $L^p (\gamma,\beta)$
by proving Theorem $\ref{theorem1}$ as well as its converse result Theorem $\ref{converse}$.

\subsection{Proof of Theorem $\ref{theorem1}$}

Suppose that $f\in\overline{\text{span}} (E_{\Lambda})$  in $L^p (\gamma,\beta)$ for some $p\ge 1$.
Then there exists a sequence $\{P_j(x)\}_{j=1}^{\infty}$ in  $\text{span} (E_{\Lambda})$, where
\[
P_j(x)=\sum_{n=1}^{r(j)}\left(\sum_{k=0}^{\mu_n-1} c_{n,k,j}x^{k}\right) e^{\lambda_n x}
\]
such that $||f-P_j||_{L^p (\gamma,\beta)}\to 0$ as $j\to\infty$. Without loss of generality,
suppose that $r(j)$ is unbounded, thus we may assume that $r(j)$ is strictly increasing.

It follows from Corollary $\ref{corbound}$ that for every $\epsilon>0$ there is a positive
constant $m_{\epsilon}$ independent of $\{P_j\}_{j=1}^{\infty}$, $n\in\mathbb{N}$ and $k=0,1,\dots,\mu_n-1$, so that
\begin{equation}\label{Cauchy}
|c_{n,k,j}|\le m_{\epsilon}e^{(-\beta+\epsilon)\Re\lambda_n}||P_j||_{L^p (\gamma,\beta)},\qquad n=1,\dots ,r(j)\quad
and\quad k=0,1,\dots,\mu_n-1,
\end{equation}
and for $i\ge j$, one has
\begin{equation}\label{nkjnki}
|c_{n,k,j}-c_{n,k,i}|\le m_{\epsilon} e^{(-\beta+\epsilon)\Re\lambda_n}||P_j-P_{i}||_{L^p (\gamma,\beta)},\qquad
n=1,\dots ,r(j) \quad and \quad k=0,1,\dots,\mu_n-1.
\end{equation}

Fixing $n,k$ in $(\ref{nkjnki})$ and
since $||P_j||_{L^p (\gamma,\beta)}\to ||f||_{L^p (\gamma,\beta)}$ as $j\to\infty$,
shows that $\{c_{n,k,j}\}_{j=1}^{\infty}$ is a Cauchy sequence,
hence converging to some complex number, call it $c_{n,k}$. Furthermore, from $(\ref{Cauchy})$ we get
\begin{equation}\label{Ankbound}
|c_{n,k}|\le  m_{\epsilon} e^{(-\beta+\epsilon)\Re\lambda_n}||f||_{L^p (\gamma,\beta)},\qquad\forall\,\,n\in\mathbb{N},\quad \forall\,\, k=0,1,\dots,\mu_n-1.
\end{equation}
It now follows from Lemma $\ref{TDSeries}$ that
\begin{equation}\label{TDg}
g(z):=\sum_{n=1}^{\infty}\left(\sum_{k=0}^{\mu_n-1} c_{n,k}z^{k}\right) e^{\lambda_n z}
\end{equation}
converges uniformly on compact subsets of the sector $\Theta_{\eta,\beta}$, therefore $g$ is analytic in $\Theta_{\eta,\beta}$.
Thus $g$ converges uniformly  on intervals of the form $[\gamma,\beta-\rho]$ for any small $\rho>0$.\\

We now claim that $\{P_j\}_{j=1}^{\infty}$ also converges to $g$ uniformly on such subintervals.
Fix some small $\rho>0$ and consider the interval $[\gamma,\beta-\rho]$. Choose also
\begin{equation}\label{epsilonrho}
\epsilon=\frac{\rho}{6},\qquad\text{and}\quad T=\max\{1, |\gamma|,|\beta-\rho|\}.
\end{equation}
For this $\epsilon>0$ it follows from $(\ref{munlambdan})$ that there is a positive constant $m_{\epsilon}$,
independent of $n\in\mathbb{N}$, so that
\begin{equation}\label{various}
|x|^{\mu_n}\le T^{\mu_n}\le m_{\epsilon}e^{\epsilon\Re\lambda_n}\qquad \forall\,\, x\in [\gamma,\beta-\rho].
\end{equation}

Then, for all $x\in [\gamma,\beta-\rho]$ we write
\begin{equation}\label{Pjg}
|P_j(x)-g(x)|\le |I(x)|+|II(x)|
\end{equation}
where
\[
I(x):=\sum_{n=1}^{r(j)}\left(\sum_{k=0}^{\mu_n-1} (c_{n,k,j}-c_{n,k})x^{k}\right) e^{\lambda_n x}
\]
and
\[
II(x):=\sum_{n=r(j)+1}^{\infty}\left(\sum_{k=0}^{\mu_n-1} c_{n,k}x^{k}\right) e^{\lambda_n x}.
\]

Now, in relation $(\ref{nkjnki})$ if we keep $j$ fixed and let $i\to\infty$, shows that
\begin{equation}\label{Ankbound1}
|c_{n,k,j}-c_{n,k}|\le  m_{\epsilon} e^{(-\beta+\epsilon)\Re\lambda_n}||P_j-f||_{L^p (\gamma,\beta)},\quad \forall\,\, n\in\mathbb{N},\quad\forall\,\, k=0,1,\dots,\mu_n-1.
\end{equation}
Thus from relations $(\ref{Ankbound})$ and $(\ref{Ankbound1})$, we get
\begin{equation}\label{firstinequality}
|I(x)|\le
||P_j-f||_{L^p (\gamma, \beta)} \cdot \sum_{n=1}^{r(j)}\left(\sum_{k=0}^{\mu_n-1}  m_{\epsilon} e^{(-\beta+\epsilon)\Re\lambda_n}|x|^{k}\right) e^{x\Re\lambda_n}
\end{equation}
and
\begin{equation}\label{secondinequality}
|II(x)|\le
||f||_{L^p (\gamma, \beta)}\cdot\sum_{n=r(j)+1}^{\infty}\left(\sum_{k=0}^{\mu_n-1} m_{\epsilon} e^{(-\beta+\epsilon)\Re\lambda_n} |x|^{k}\right) e^{x\Re\lambda_n}.
\end{equation}

From $(\ref{munlambdan})$, $(\ref {various})$, and since $\epsilon=\rho/6$, then for all $x\in [\gamma,\beta-\rho]$
there exists an $m^*_{\epsilon}>0$, so that
\begin{eqnarray*}
\sum_{n=1}^{\infty}\left(\sum_{k=0}^{\mu_n-1}e^{(-\beta+\epsilon)\Re\lambda_n} |x|^{k}\right) e^{x\Re\lambda_n}
& \le &
\sum_{n=1}^{\infty}\mu_n \cdot e^{(-\beta+\epsilon)\Re\lambda_n}\cdot T^{\mu_n}\cdot e^{x\Re\lambda_n}\\
& \le &
\sum_{n=1}^{\infty} m^*_{\epsilon}\cdot e^{\epsilon\Re\lambda_n}\cdot e^{(-\beta+\epsilon)\Re\lambda_n}
\cdot e^{\epsilon\Re\lambda_n}\cdot e^{(\beta-\rho)\Re\lambda_n}\\
& = &
\sum_{n=1}^{\infty}m^*_{\epsilon}\cdot e^{(3\epsilon-\rho)\Re\lambda_n}\\
& = &
\sum_{n=1}^{\infty}m^*_{\epsilon}\cdot e^{(-\frac{\rho}{2})\Re\lambda_n}\\
& < & \infty.
\end{eqnarray*}
This result together with the fact that $||P_j-f||_{L^p (\gamma,\beta)}\to 0$ as $j\to\infty$ shows
that the right hand-sides of the inequalities $(\ref{firstinequality})$
and $(\ref{secondinequality})$ converge to zero uniformly on $[\gamma,\beta-\rho]$ as $j\to\infty$. Finally,
by substituting in $(\ref{Pjg})$ we see that $\{P_j\}_{j=1}^{\infty}$ converges to $g$ uniformly on $[\gamma,\beta-\rho]$
as well, thus our claim is verified.\\

Therefore $||P_j-g||_{L^p (\gamma,\beta-\rho)}\to 0$ as $j\to\infty$ for any small $\rho>0$.
But $||P_j-f||_{L^p (\gamma,\beta-\rho)}\to 0$  as $j\to \infty$ as well for any small $\rho>0$
since $||P_j-f||_{L^p (\gamma,\beta)}\to 0$  as $j\to \infty$.
These facts show that $f(x)=g(x)$ almost everywhere on $(\gamma,\beta-\rho)$.
The arbitrary choice of $\rho>0$,
means that $f(x)=g(x)$ almost everywhere on $(\gamma,\beta)$.\\

And finally, the uniqueness of the Taylor-Dirichlet series
follows from Lemma $\ref{uniquenessDirichlet}$ (see the Appendix).
The proof of Theorem $\ref{theorem1}$ is now complete.

\subsection{The converse result: Proof of  Theorem $\ref{converse}$}

The proof is a refinement of \cite[Theorem 8.1]{Z2011JAT} and it is inspired by the work of Korevaar \cite{Korevaar1947}.

First note that since $f\in L^p(\gamma,\beta)$ and $f$ is continuous on $(-\infty, \beta)$,
then $f\in L^p(c,\beta)$ for any $c\in (-\infty, \beta)$. Second,
let $\{\delta_n\}_{n=1}^{\infty}$ be an arbitrary sequence of positive real numbers so that $\delta_n\to 0$ as $n\to\infty$.
The continuity of $f$ implies that for all $x\in [\gamma-1,\beta)$ we have $f(x-\delta_n)\to f(x)$ as $n\to \infty$.
By applying Fatou's Lemma and changing variables we get
\begin{eqnarray*}
\int_{\gamma}^{\beta}|f(x)|^p\,dx\le \liminf_{\delta_n\to 0}\int_{\gamma}^{\beta}|f(x-\delta_n)|^p\,dx\le
\limsup_{\delta_n\to 0}\int_{\gamma}^{\beta}|f(x-\delta_n)|^p\,dx & = &  \limsup_{\delta_n\to 0}\int_{\gamma-\delta_n}^{\beta-\delta_n}|f(x)|^p\,dx\\
& \le &\limsup_{\delta_n\to 0}\int_{\gamma-\delta_n}^{\beta}|f(x)|^p\,dx.
\end{eqnarray*}
Then we write
\begin{eqnarray*}
\limsup_{\delta_n\to 0}\int_{\gamma-\delta_n}^{\beta}|f(x)|^p\,dx  & = &
\limsup_{\delta_n\to 0}\left(\int_{\gamma-\delta_n}^{\gamma}|f(x)|^p\,dx + \int_{\gamma}^{\beta}|f(x)|^p\,dx \right)\\ & = &
\int_{\gamma}^{\beta}|f(x)|^p\,dx + \limsup_{\delta_n\to 0}\int_{\gamma-\delta_n}^{\gamma}|f(x)|^p\,dx\\ & = &
\int_{\gamma}^{\beta}|f(x)|^p\,dx,
\end{eqnarray*}
with the last step valid since $f$ is uniformly continuous on intervals $[\rho,\gamma]$ for every $\rho\in (-\infty,\gamma)$.

Combining the above, we have
\[
\int_{\gamma}^{\beta}|f(x)|^p\,dx\le \liminf_{\delta_n\to 0}\int_{\gamma}^{\beta}|f(x-\delta_n)|^p\,dx\le
\limsup_{\delta_n\to 0}\int_{\gamma}^{\beta}|f(x-\delta_n)|^p\,dx\le\int_{\gamma}^{\beta}|f(x)|^p\,dx.
\]

Clearly this means that  $\lim_{\delta_n\to 0}\int_{\gamma}^{\beta}|f(x-\delta_n)|^p\,dx$ exists and it is equal to $\int_{\gamma}^{\beta}|f(x)|^p\,dx$.
The arbitrary choice of $\{\delta_n\}_{n=1}^{\infty}$ implies that
$\lim_{\delta\to 0^+}\int_{\gamma}^{\beta}|f(x-\delta)|^p\,dx=\int_{\gamma}^{\beta}|f(x)|^p\,dx$.
Therefore
\[
\lim_{\delta\to 0^+}\int_{\gamma}^{\beta}|f(x)-f(x-\delta)|^p\, dx=0.
\]
Thus, for every $\epsilon>0$ there is $\delta>0$ such that
\begin{equation}\label{uniformgammabeta}
\left(\int_{\gamma}^{\beta}|f(x)-f(x-\delta)|^p\, dx\right)^{1/p}\le\epsilon.
\end{equation}

Fix such an $\epsilon$ and its $\delta$. We then have
\begin{eqnarray*}
f(x-\delta) & = & \sum_{n=1}^{\infty}\left(\sum_{k=0}^{\mu_n-1} c_{n,k} (x-\delta)^k\right) e^{\lambda_n (x-\delta)},\qquad (x-\delta)<\beta\\
& = & \sum_{n=1}^{\infty}\left[e^{-\lambda_n \delta}\sum_{k=0}^{\mu_n-1}c_{n,k}\left(\sum_{j=k}^{\mu_n-1}\frac{j!}{k!(j-k)!}(-\delta)^{j-k}\right) x^k\right] e^{\lambda_n x},
\qquad x<\beta+\delta.
\end{eqnarray*}
Since the series of $f(z)$ converges uniformly on compact subsets of the sector $\Theta_{\eta,\beta}$, then the series of $f(z-\delta)$ converges
uniformly on compact subsets of the sector
\[
\Theta_{\eta,\beta+\delta}:=
\left\{z:\left|\frac{\Im z}{\Re (z-\beta-\delta)}\right|\le \frac{1}{\tan\eta},\,\,\Re z<\beta+\delta\right\},
\]
hence on the interval $[\gamma,\beta]$ as well. Therefore,
for the fixed $\epsilon,\, \delta$ there is a positive integer $m_{\epsilon}$ so that
\[
\left|f(x-\delta)- \sum_{n=1}^{m_{\epsilon}}\left[e^{-\lambda_n \delta}\sum_{k=0}^{\mu_n-1}c_{n,k}\left(\sum_{j=k}^{\mu_n-1}\frac{j!}{k!(j-k)!}(-\delta)^{j-k}\right)
x^k\right] e^{\lambda_n x}\right|<\epsilon,\quad \forall\,\, x\in [\gamma,\beta].
\]

This relation together with $(\ref{uniformgammabeta})$ and the Minkowksi inequality,
yield that $f\in\overline{\text{span}}(E_{\Lambda})$ in the space $L^p(\gamma,\beta)$.

\section{The Biorthogonal Family $r_{\Lambda}$ to $E_{\Lambda}$: Proof of Theorem $\ref{biorthogonalsystem}$}
\setcounter{equation}{0}

In this section we prove Theorem $\ref{biorthogonalsystem}$ which is on the properties of a biorthogonal family $r_{\Lambda}$
to the system $E_{\Lambda}$ in $L^2 (\gamma, \beta)$.

\subsection{Constructing the Biorthogonal family and deriving the upper bound $(\ref{rnkbound})$}

Consider a multiplicity sequence $\Lambda$ in the $ABC$ class.
As before, let $e_{n,k}(x)=x^ke^{\lambda_n x}$ and $E_{\Lambda_{n,k}}=E_{\Lambda}\setminus e_{n,k}$.
From Theorem $\ref{Distances}$ we know that
for every $\epsilon>0$ there is a positive constant $u_{\epsilon}$ which depends only on $\Lambda$ and $(\beta-\gamma)$,
but not on $n\in\mathbb{N}$ and neither on $k=0,\dots,\mu_n-1$, such that the distance $D_{\gamma,\beta,2,n,k}$
in $L^2(\gamma,\beta)$, satisfies
\begin{equation}\label{L2casebound}
D_{\gamma,\beta,2,n,k}\ge u_{\epsilon}e^{(\beta-\epsilon)\Re\lambda_n}.
\end{equation}
Since $L^2(\gamma,\beta)$ is a Hilbert space,
it then follows that there exists a unique element in $\overline{\text{span}}(E_{\Lambda_{n,k}})$ in $L^2(\gamma,\beta)$,
that we denote by $\phi_{n,k}$,
so that
\[
||e_{n,k}-\phi_{n,k}||_{L^2 (\gamma,\beta)}=
\inf_{g\in \overline{\text{span}}(E_{\Lambda_{n,k}})}||e_{n,k}-g||_{L^2 (\gamma,\beta)}=D_{\gamma,\beta,2,n,k}.
\]
The function $e_{n,k}-\phi_{n,k}$ is orthogonal to all the elements of the closed span of $E_{\Lambda_{n,k}}$
in $L^2 (\gamma,\beta)$, hence to $\phi_{n,k}$ itself.
Therefore
\[
\langle e_{n,k}-\phi_{n,k}, e_{n,k}-\phi_{n,k} \rangle=\langle e_{n,k}-\phi_{n,k}, e_{n,k}\rangle.
\]
Hence
\[
(D_{\gamma,\beta,2,n,k})^2=\langle e_{n,k}-\phi_{n,k}, e_{n,k}\rangle.
\]
Next, we define
\[
r_{n,k}(x):=\frac{e_{n,k}(x)-\phi_{n,k}(x)}{(D_{\gamma,\beta,2,n,k}) ^2}.
\]
It then follows that $\langle r_{n,k}, e_{n,k}\rangle=1$ and $r_{n,k}$ is orthogonal to all the
elements of the system $E_{\Lambda_{n,k}}$.
Thus $\{r_{n,k}:\,\, n\in\mathbb{N},\,\, k=0,1,\dots,\mu_n-1\}$ is biorthogonal  to the system $E_{\Lambda}$.
Since $\phi_{n,k}\in\overline{\text{span}}(E_{\Lambda_{n,k}})$ in $L^2 (\gamma,\beta)$ then
$r_{n,k}\in\overline{\text{span}}(E_{\Lambda})$ in $L^2 (\gamma,\beta)$.

\begin{remark}
Clearly $||r_{n,k}||_{L^2 (\gamma,\beta)}=\frac{1}{D_{\gamma,\beta,2,n,k}}$,
hence from $(\ref{L2casebound})$ we obtain $(\ref{rnkbound})$.
\end{remark}

\subsection{Uniqueness and Optimality}

Next we show that $\{r_{n,k}\}$ is the unique biorthogonal sequence to the system $E_{\Lambda}$,
which belongs to its closed span in $L^2 (\gamma,\beta)$. Indeed, if there is another such biorthogonal sequence,
call it $\{q_{n,k}\}$, then for all $n\in\mathbb{N}$ and $k\in\{0,1,\dots,\mu_n-1\}$ we have
\[
\langle r_{n,k}-q_{n,k}, e_{m,l}\rangle=0, \qquad \forall\,\, m\in\mathbb{N}\quad\text{and}\quad l=0,1,\dots ,\mu_m-1.
\]
But this in turn implies that $r_{n,k}-q_{n,k}=0$ almost everywhere on $(\gamma,\beta)$ since the system $E_{\Lambda}$
is complete in its closed span in $L^2 (\gamma, \beta)$.

We also claim that if $\{v_{n,k}\}$ is any other sequence biorthogonal to the system $E_{\Lambda}$, then
\[
||r_{n,k}||_{L^2 (\gamma,\beta)}\le ||v_{n,k}||_{L^2 (\gamma,\beta)}.
\]
In other words, $r_{\Lambda}$ is optimal.

To justify this, choose an element $v_{n,k}$ and write $v_{n,k}=r_{n,k}+(v_{n,k}-r_{n,k})$.
Then $\langle v_{n,k}-r_{n,k}, e_{m,l}\rangle=0$ for all $e_{m,l}\in E_{\Lambda}$, thus
$\langle v_{n,k}-r_{n,k}, f\rangle=0$ for every $f$ which belongs to the closed span of $E_{\Lambda}$.
Hence $v_{n,k}-r_{n,k}$ belongs to the orthogonal complement of the closed span of $E_{\Lambda}$ in $L^2 (\gamma,\beta)$.
Therefore
\[
||v_{n,k}||^2_{L^2 (\gamma,\beta)}=||r_{n,k}||^2_{L^2 (\gamma,\beta)}+||v_{n,k}-r_{n,k}||^2_{L^2 (\gamma,\beta)}
\ge ||r_{n,k}||^2_{L^2 (\gamma,\beta)}.
\]

\subsection{The Fourier-type representation $(\ref{representationf})$}

Relations $(\ref{representationf})$ and $(\ref{representation})$ follow directly from Lemma $\ref{important}$
since by Theorem $\ref{theorem1}$ every function in the closed span of the system
$E_{\Lambda}$ in $L^2(\gamma, \beta)$ extends analytically as a Taylor-Dirichlet series.

\begin{lemma}\label{important}
Let the multiplicity sequence $\Lambda=\{\lambda_n,\mu_n\}_{n=1}^{\infty}$ belong to the $ABC$ class.
Suppose that a Taylor-Dirichlet series
\[
f(z)=\sum_{m=1}^{\infty}\left(\sum_{j=0}^{\mu_m-1} c_{m,j} z^j\right) e^{\lambda_m z}
\]
is analytic in the sector $\Theta_{\eta,\beta}$ $(\ref{opensector})$ and $f\in L^2 (\gamma, \beta)$. Then
\begin{equation}\label{coefcnk}
c_{n,k}=\langle f, r_{n,k} \rangle \qquad \forall\,\, n\in\mathbb{N}\quad \text{and}
\quad k=0,1,2,\dots ,\mu_n-1.
\end{equation}
\end{lemma}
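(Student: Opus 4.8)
The plan is to evaluate $\langle f, r_{n,k}\rangle$ not against the Taylor-Dirichlet series of $f$ directly — whose partial sums converge only locally uniformly on $(\gamma,\beta)$ and need not converge in the norm of $L^2(\gamma,\beta)$ — but against a sequence of exponential polynomials approximating $f$ in $L^2(\gamma,\beta)$, and then to reconcile the two descriptions of the coefficients. First I would invoke Theorem~$\ref{converse}$: since $f$ is analytic in $\Theta_{\eta,\beta}$ and $f\in L^2(\gamma,\beta)$, we have $f\in\overline{\text{span}}(E_{\Lambda})$ in $L^2(\gamma,\beta)$, so there is a sequence of exponential polynomials
\[
P_j(x)=\sum_{m=1}^{r(j)}\left(\sum_{l=0}^{\mu_m-1}c_{m,l,j}\,x^{l}\right)e^{\lambda_m x}
\]
with $\|f-P_j\|_{L^2(\gamma,\beta)}\to 0$, and, exactly as at the start of the proof of Theorem~$\ref{theorem1}$, we may assume $r(j)$ is strictly increasing.

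Next, fix $n$ and $k$. For every $j$ with $r(j)\ge n$, writing $P_j=\sum_{m,l}c_{m,l,j}\,e_{m,l}$ as a finite linear combination of the $e_{m,l}$ and using that $r_{\Lambda}$ is biorthogonal to $E_{\Lambda}$ in $L^2(\gamma,\beta)$, one gets $\langle P_j, r_{n,k}\rangle=c_{n,k,j}$. Since $r_{n,k}\in L^2(\gamma,\beta)$ and $P_j\to f$ in $L^2(\gamma,\beta)$, it follows that $c_{n,k,j}\to\langle f, r_{n,k}\rangle$ as $j\to\infty$. It remains to show this limit equals the given coefficient $c_{n,k}$. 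By Corollary~$\ref{corbound}(II)$ applied to the pairs $P_j,P_i$, the scalar sequence $\{c_{n,k,j}\}_j$ is Cauchy; call its limit $\hat c_{n,k}$. Repeating the argument of the proof of Theorem~$\ref{theorem1}$ verbatim — the $\hat c_{n,k}$ satisfy the bound $(\ref{cnkbound})$, so by Lemma~$\ref{TDSeries}$ the series $\hat g(z)=\sum_m\big(\sum_l\hat c_{m,l}z^{l}\big)e^{\lambda_m z}$ is analytic in $\Theta_{\eta,\beta}$, the $P_j$ converge to $\hat g$ uniformly on each $[\gamma,\beta-\rho]$, and hence $f(x)=\hat g(x)$ almost everywhere on $(\gamma,\beta)$. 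But $f$ also equals its given Taylor-Dirichlet series with coefficients $c_{m,l}$; by the uniqueness of the Taylor-Dirichlet representation of an element of $L^2(\gamma,\beta)$ (Lemma~$\ref{uniquenessDirichlet}$, already used for Theorem~$\ref{theorem1}$), we conclude $\hat c_{n,k}=c_{n,k}$ for all $n,k$. Combining the two displays then gives $\langle f, r_{n,k}\rangle=\lim_{j}c_{n,k,j}=\hat c_{n,k}=c_{n,k}$, which is $(\ref{coefcnk})$.

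The only delicate point is the one flagged at the outset: because the Taylor-Dirichlet partial sums of $f$ are a priori not norm-convergent in $L^2(\gamma,\beta)$, the inner product $\langle f, r_{n,k}\rangle$ cannot be computed termwise against the series, and one genuinely needs the $L^2$-approximating exponential polynomials from Theorem~$\ref{converse}$, the coefficient estimate of Corollary~$\ref{corbound}$ to make $\{c_{n,k,j}\}_j$ converge, and Lemma~$\ref{uniquenessDirichlet}$ to identify its limit. Everything else is bookkeeping already carried out in Section~6.
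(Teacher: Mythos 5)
Your proposal is correct, but it follows a genuinely different route from the paper. The paper substitutes the Taylor--Dirichlet series of $f$ directly into $\langle f, r_{n,k}\rangle$ and splits it into the finite head $\sum_{m\le n}$ plus the tail $Q_n(z)=\sum_{m\ge n+1}\bigl(\sum_j c_{m,j}z^j\bigr)e^{\lambda_m z}$; the head yields $c_{n,k}$ by biorthogonality, and the tail is killed by observing that $Q_n=f-(\text{finite exponential polynomial})\in L^2(\gamma,\beta)$, applying Theorem~\ref{converse} to place $Q_n$ in the closed span of $E_{\Lambda,n+1}=\{x^k e^{\lambda_m x}: m\ge n+1\}$, and then using biorthogonality together with Cauchy--Schwarz to get $\int_\gamma^\beta \overline{r_{n,k}}\,Q_n=0$. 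You instead apply Theorem~\ref{converse} to $f$ itself, compute $\langle P_j,r_{n,k}\rangle=c_{n,k,j}$ for the approximating polynomials, and identify $\lim_j c_{n,k,j}$ with the series coefficient $c_{n,k}$ by re-running the coefficient-stability argument of Theorem~\ref{theorem1} (via Corollary~\ref{corbound} and Lemma~\ref{TDSeries}) and then invoking Lemma~\ref{uniquenessDirichlet}. Both arguments hinge on Theorem~\ref{converse} and biorthogonality; the paper's version is shorter because it never needs the Cauchy/uniqueness machinery, only the single observation that the tail lies in the closed span of the later exponentials, whereas yours avoids any manipulation of the infinite series inside the integral at the cost of repeating the Section~6 bookkeeping. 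Your flagged "delicate point" (that one cannot integrate termwise against the series) is exactly the issue the paper's head-plus-tail decomposition is designed to circumvent, so both treatments are sound.
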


\begin{proof}

We have
\begin{eqnarray}
\langle f, r_{n,k} \rangle =\int_{\gamma}^{\beta} f(x)\cdot \overline{r_{n,k}(x)}\, dx & = & \int_{\gamma}^{\beta} \overline{r_{n,k}(x)}\cdot
\sum_{m=1}^{\infty}\left(\sum_{j=0}^{\mu_m-1} c_{m,j}x^j\right)e^{\lambda_m x}
\, dx\nonumber\\
& = &
\int_{\gamma}^{\beta}\overline{r_{n,k}(x)} \cdot \sum_{m=1}^{n}\left(\sum_{j=0}^{\mu_m-1} c_{m,j}x^j\right)e^{\lambda_m x} \, dx\nonumber\\ & + &
\int_{\gamma}^{\beta}\overline{r_{n,k}(x)} \cdot \sum_{m=n+1}^{\infty}\left(\sum_{j=0}^{\mu_m-1} c_{m,j}x^j\right)e^{\lambda_m x}\, dx\nonumber\\
& = & c_{n,k}+
\int_{\gamma}^{\beta} \overline{r_{n,k}(x)}\cdot \,\sum_{m=n+1}^{\infty}\left(\sum_{j=0}^{\mu_m-1} c_{m,j}x^j\right)e^{\lambda_m x} dx,\label{cnk}
\end{eqnarray}
with the last step valid due to the biorthogonality.

We show below that
\begin{equation}\label{zerozero1}
\int_{\gamma}^{\beta} \overline{r_{n,k}(x)}\cdot \sum_{m=n+1}^{\infty}\left(\sum_{j=0}^{\mu_m-1} c_{m,j}x^j\right)e^{\lambda_m x}\, dx=0.
\end{equation}

Since $f\in L^2(\gamma,\beta)$, then the Taylor-Dirichlet series
\[
Q_n(z):=\sum_{m=n+1}^{\infty}\left(\sum_{j=0}^{\mu_m-1} c_{m,j}z^j\right)e^{\lambda_m z}
\]
also belongs to $L^2(\gamma,\beta)$. It then follows from Theorem $\ref{converse}$ that $Q_n$ belongs
to the closed span of the exponential system
\[
E_{\Lambda, n+1}:=\{x^ke^{\lambda_m x}:\, m\ge n+1,\,\, k=0,\dots,\mu_m-1\}
\]
in $L^2(\gamma,\beta)$. Hence, for every $\epsilon>0$, there is a function $f_{\epsilon}$ in the span of $E_{\Lambda, n+1}$
so that $||Q_n-f_{\epsilon}||_{L^2(\gamma,\beta)}<\epsilon$. Due to the biorthogonality we have
\[
\int_{\gamma}^{\beta} \overline{r_{n,k}(x)}\cdot f_{\epsilon}(x)\, dx=0.
\]
Combining with the Cauchy-Schwartz inequality we get
\begin{eqnarray*}
\left|\int_{\gamma}^{\beta} \overline{r_{n,k}(x)}\cdot Q_n(x)\, dx\right| & = & \left|\int_{\gamma}^{\beta} \overline{r_{n,k}(x)}\cdot (Q_n (x)-f_{\epsilon}(x))\, dx\right|\\
& \le & \epsilon\cdot ||r_{n,k}||_{L^2(\gamma,\beta)}.
\end{eqnarray*}
The arbitrary choice of $\epsilon$ implies that $(\ref{zerozero1})$ is true. Together with $(\ref{cnk})$
shows that $(\ref{coefcnk})$ holds.

\end{proof}

\subsection{Equal closures: $\overline{\text{span}}(r_{\Lambda})=\overline{\text{span}}(E_{\Lambda})\quad \text{in}\quad L^2(\gamma, \beta)$}

Clearly $\overline{\text{span}}(E_{\Lambda})$ in $L^2 (\gamma, \beta)$ is a separable Hilbert space.
So let us denote this space by $H_{\Lambda}$ and let
$S_{\Lambda}$ be the closed span of $r_{\Lambda}$ in $L^2 (\gamma, \beta)$.
Obviously $S_{\Lambda}$ is a subspace of $H_{\Lambda}$.
Let $S^{\perp}_{\Lambda}$ be the orthogonal complement  of $S_{\Lambda}$ in $H_{\Lambda}$, that is
\[
S^{\perp}_{\Lambda}=\{f\in H_{\Lambda}:\,\, \langle f, g \rangle =0\quad for\,\, all\,\, g\in S_{\Lambda}\}.
\]
Now, if $f\in H_{\Lambda}$ then as shown above
\[
f(t)=\sum_{n=1}^{\infty}\left(\sum_{k=0}^{\mu_n-1} \langle f, r_{n,k} \rangle \cdot t^k\right)e^{\lambda_n t},
\quad \text{almost everywhere on}\,\, (\gamma,\beta).
\]
But if $f\in S^{\perp}_{\Lambda}$ then $\langle f, r_{n,k} \rangle =0$ for all $r_{n,k}\in r_{\Lambda}$.
Thus, $f=0$ almost everywhere on $(\gamma, \beta)$, hence $S^{\perp}_{\Lambda}$ contains just the zero function.
Therefore $S_{\Lambda}=H_{\Lambda}$.

\subsection{The exponential system $E_{\Lambda}$ is hereditarily complete in its closure in $L^2(\gamma, \beta)$}

As above, let $H_{\Lambda}$ be $\overline{\text{span}}(E_{\Lambda})$ in $L^2 (\gamma, \beta)$.
Write the set
\[
\{(n,k):\,\, n\in\mathbb{N},\,\, k=0,1,\dots,\mu_n-1\}
\]
as an $\bf arbitrary$ disjoint union of two sets $N_1$ and $N_2$, that is,
\[
\{(n,k):\,\, n\in\mathbb{N},\,\, k=0,1,\dots,\mu_n-1\}=N_1\cup N_2, \qquad N_1\cap N_2=\emptyset.
\]
We will show that the closed span of the mixed system
\[
E_{1,2}:=\{e_{n,k}:\,\, (n,k)\in N_1\}\cup\{r_{n,k}:\,\, (n,k)\in N_2\},\qquad (e_{n,k}=x^ke^{\lambda_n x})
\]
is equal to $H_{\Lambda}$.

Denote by $W_{\Lambda_{1,2}}$ the closed span of $E_{1,2}$ in $L^2 (\gamma, \beta)$.
Obviously $W_{\Lambda_{1,2}}$ is a subspace of $H_{\Lambda}$.
Let $W^{\perp}_{\Lambda_{1,2}}$ be the orthogonal complement of $W_{\Lambda_{1,2}}$ in $H_{\Lambda}$, that is
\[
W^{\perp}_{\Lambda_{1,2}}=\{f\in H_{\Lambda}:\,\, \langle f, g \rangle =0\quad for\,\, all\,\, g\in W_{\Lambda_{1,2}}\}.
\]
Now, if $f\in H_{\Lambda}$ then by $(\ref{representationf})$ we have
\[
f(t)=\sum_{n=1}^{\infty}\left(\sum_{k=0}^{\mu_n-1} \langle f, r_{n,k} \rangle \cdot t^k\right)e^{\lambda_n t},
\quad \text{almost everywhere on}\,\, (\gamma,\beta).
\]
But if $f\in W^{\perp}_{\Lambda_{1,2}}$ then $\langle f, r_{n,k} \rangle =0$ for all $(n,k)\in N_2$.
Thus,
\[
f(t)=\sum_{(n,k)\in N_1}\left(\sum \langle f, r_{n,k} \rangle \cdot t^k\right)e^{\lambda_n t},
\quad \text{almost everywhere on}\,\, (\gamma,\beta).
\]
Since this Taylor-Dirichlet series $f$ is in $L^2 (\gamma, \beta)$, it then follows from Theorem $\ref{converse}$
that $f$ belongs to the closed span of the exponential system
\[
E_1:=\{e_{n,k}:\,\, (n,k)\in N_1\}
\]
in $L^2 (\gamma, \beta)$. Thus, for every $\epsilon>0$ there is a function $g_{\epsilon}$ in $\text{span}(E_1)$ so that
$||f-g_{\epsilon}||_{L^2(\gamma,\beta)}<\epsilon$. Then write
\[
\langle f, f \rangle = \langle f, f-g_{\epsilon} \rangle + \langle f, g_{\epsilon} \rangle.
\]
Since $f\in W^{\perp}_{\Lambda_{1,2}}$ then $\langle f, e_{n,k} \rangle =0$ for all $(n,k)\in N_1$, thus
$\langle f, g_{\epsilon} \rangle=0$. Therefore,
\[
\langle f, f \rangle = \langle f, f-g_{\epsilon}\rangle \le ||f||_{L^2(\gamma,\beta)}\cdot ||f-g_{\epsilon}||_{L^2(\gamma,\beta)}
\le ||f||_{L^2(\gamma,\beta)}\cdot \epsilon,
\qquad \text{hence}\qquad ||f||_{L^2(\gamma,\beta)}<\epsilon.
\]
Clearly this means that $f=0$ almost everywhere on $(\gamma, \beta)$, hence $W^{\perp}_{\Lambda_{1,2}}=\{0\}$.
Thus $W_{\Lambda_{1,2}}=H_{\Lambda}$, meaning that the exponential system $E_{\Lambda}$
is hereditarily complete in its closed span in $L^2(\gamma,\beta)$.

The proof of Theorem $\ref{biorthogonalsystem}$ is now complete.

\section{The Moment Problem: Proof of Theorem $\ref{MomentProblem}$}
\setcounter{equation}{0}

In this  section we prove Theorem $\ref{MomentProblem}$:
there exists a unique function $f\in\overline{\text{span}}(E_{\Lambda})$ in $L^2 (\gamma, \beta)$,
so that $f$ is a solution of the moment problem $(\ref{momentproblemquestion})$,
where the set $\{d_{n,k}:\, n\in\mathbb{N},\, k=0,1,\dots,\mu_n-1\}$ is
under the growth condition $(\ref{interpolationrnkcomplex})$.

First we deal with Uniqueness. Suppose that $f,\,\, g\in\overline{\text{span}}(E_{\Lambda})$ in $L^2 (\gamma, \beta)$
are both solutions. Then
\[
\int_{\gamma}^{\beta}(f(t)-g(t))\cdot t^k e^{\overline{\lambda_n} t}\, dt=0,\qquad \forall\,\, n\in\mathbb{N}\quad \text{and}
\quad k=0,1,2,\dots ,\mu_n-1.
\]
Completeness of $E_{\Lambda}$ in its closed span implies that $f=g$ a.e on $(\gamma, \beta)$.

Moreover, by $(\ref{representationf})$ in Theorem $(\ref{biorthogonalsystem})$,
this unique $f$ extends analytically in the sector $\Theta_{\eta, \beta}$ as a Taylor-Dirichlet series.
\[
\sum_{n=1}^{\infty}\left(\sum_{k=0}^{\mu_n-1} \langle f, r_{n,k} \rangle \cdot z^k\right)e^{\lambda_n z}.
\]

In addition, if $h\in L^2(\gamma,\beta)$ is a Taylor-Dirichlet series analytic in the same sector
and $h$ is a solution of the moment problem as well, it then follows from Theorem $\ref{converse}$
that $h\in\overline{\text{span}}(E_{\Lambda})$ in $L^2 (\gamma, \beta)$.
By the preceding remark, $h=f$ a.e on $(\gamma, \beta)$.
In other words, there can exist just one Taylor-Dirichlet series $f$ analytic in $\Theta_{\eta, \beta}$
with $f\in L^2 (\gamma, \beta)$, solving the moment problem.\\

\smallskip

Next we deal with the Existence of the solution $f$. We give two proofs
and both of them depend on the $r_{\Lambda}$ family of functions from Theorem $\ref{biorthogonalsystem}$.
We note that each $r_{n,k}\in r_{\Lambda}$ is defined almost everywhere on $(\gamma, \beta)$
on some subset call it $B_{n,k}$. Let
\[
(\gamma, \beta)^*:=\bigcap_{n\in\mathbb{N},\, k=0,1,\dots,\mu_n-1}B_{n,k}=\left(\bigcup_{n\in\mathbb{N},\,
k=0,1,\dots,\mu_n-1}B_{n,k}^c\right)^c
\]
where $B_{n,k}^c$ is the complement of $B_{n,k}$ in $(\gamma, \beta)$. Since the measure of $B_{n,k}^c$ is zero,
so is the measure of the set
\[
\bigcup_{n\in\mathbb{N},\, k=0,1,\dots,\mu_n-1}B_{n,k}^c
\]
since it a countable union of such sets. Therefore the measure of $(\gamma, \beta)^*$ is equal to the measure of $(\gamma, \beta)$.\\

In the proofs below we assume that $a$ in $(\ref{interpolationrnkcomplex})$ is a real number less than $\beta$.
The case $a=-\infty$ is treated in a similar way.

\subsection{Method I: A first proof to the Moment Problem}

Choose $\epsilon>0$ such that $\epsilon=(\beta-a)/6$. Then one has
\begin{equation}\label{boundforseriescomplex}
\sum_{n=1}^{\infty}e^{(-\beta+a+3\epsilon)\Re\lambda_n}<\infty.
\end{equation}
Next, for every $n\in\mathbb{N}$ let
\begin{equation}\label{Unt}
U_n(t):=\sum_{k=0}^{\mu_n-1}d_{n,k} r_{n,k}(t),\qquad t\in (\gamma,\beta)^*,
\end{equation}
It then follows from  relations $(\ref{rnkbound})$, $(\ref{interpolationrnkcomplex})$, $(\ref{munlambdan})$,
and $(\ref{boundforseriescomplex})$, that for every $\epsilon>0$, there are positive constants $M_{\epsilon}$ and $M^*_{\epsilon}$,
so that
\begin{eqnarray*}
\sum_{n=1}^{\infty}\int_{\gamma}^{\beta} |U_n(t)|\, dt &\le &
\sum_{n=1}^{\infty}\sum_{k=0}^{\mu_n-1}|d_{n,k}|\int_{\gamma}^{\beta} |r_{n,k}(t)|\, dt\\
& \le &\sum_{n=1}^{\infty}\sum_{k=0}^{\mu_n-1}|d_{n,k}|\cdot ||r_{n,k}(t)||_{L^2 (\gamma,\beta)}\cdot \sqrt{(\beta-\gamma)}\\
& \le & \sum_{n=1}^{\infty}\mu_n\cdot M_{\epsilon}e^{(a+\epsilon)\Re\lambda_n}\cdot e^{(-\beta+\epsilon)\Re\lambda_n}\cdot \sqrt{(\beta-\gamma)}\\
& \le & \sum_{n=1}^{\infty}M^*_{\epsilon}e^{(-\beta+a+3\epsilon)\Re\lambda_n}\\
& < & \infty.
\end{eqnarray*}
By \cite[Theorem 1.38]{Rudin} this implies that the infinite series
\[
U(t):=\sum_{n=1}^{\infty}U_n(t)=\sum_{n=1}^{\infty}\left(\sum_{k=0}^{\mu_n-1}d_{n,k} r_{n,k}(t)\right),\qquad t\in (\gamma, \beta)^*
\]
converges pointwise almost everywhere on $(\gamma,\beta)^*$ and
belongs to $L^1(\gamma,\beta)$.

In fact, $U\in L^2 (\gamma, \beta)$ as well: by the Minkowski inequality, and once again using
relations $(\ref{rnkbound})$, $(\ref{interpolationrnkcomplex})$, $(\ref{munlambdan})$, and $(\ref{boundforseriescomplex})$, we get
\[
\int_{\gamma}^{\beta} |U(t)| ^2\, dt  \le
\left(\sum_{n=1}^{\infty}\sum_{k=0}^{\mu_n-1}|d_{n,k}|\cdot ||r_{n,k}||_{L^2 (\gamma,\beta)}\right)^2
<  \infty.
\]
Needless to say that the function
\[
h(t):=\sum_{n=1}^{\infty}\left(\sum_{k=0}^{\mu_n-1}|d_{n,k}|\cdot |r_{n,k}(t)|\right)\qquad \forall\,\, t\in (\gamma, \beta)^*
\]
also converges pointwise almost every everywhere on $(\gamma, \beta)$ and $h\in L^1 (\gamma, \beta)\cap L^2 (\gamma, \beta)$.

We also claim that $U$ converges in $L^2 (\gamma, \beta)$. Indeed, we have
$|U(t)-\sum_{n=1}^{M} U_n(t)|^2\le h^2(t)$ and $|U(t)-\sum_{n=1}^{M} U_n(t)|^2\to 0$ as $M\to\infty$
for almost all $t\in (\gamma, \beta)^*$.
Then by the Lebesgue Convergence Theorem we have
\begin{equation}\label{tozeroU}
||U-\sum_{n=1}^{M}U_n||_{L^2 (\gamma,\beta)}\to 0\qquad\text{as}\qquad M\to\infty.
\end{equation}

Next, since each $U_n$ is a finite sum of $r_{n,k}$'s, then each $U_n$ belongs to the closed span of $E_{\Lambda}$ in $L^2 (\gamma,\beta)$.
Together with  $(\ref{tozeroU})$ yields that
$U$ also belongs to the closed span of $E_{\Lambda}$ in $L^2 (\gamma,\beta)$.

It remains to show that $U$ is a solution of the moment problem. If we fix some $m\in\mathbb{N}$ and some
$l=0,1,\dots,\mu_m-1$, then
$\sum_{n=1}^{M}U_n(t)\cdot t^l e^{\overline{\lambda_m} t}\to U(t)\cdot t^l e^{\overline{\lambda_m} t}$ almost everywhere on $(\gamma,\beta)$ as $M\to\infty$.
Also there is a positive constant $T_{m,l}$ which depends on $m$ and $l$, so that
\[
|t^{l}e^{\overline{\lambda_m} t}\cdot \sum_{n=1}^{M} U_n(t)|\le T_{m,l}\cdot h(t),\quad \forall\,\, t\in (\gamma,\beta),\quad \forall\,\, M\in\mathbb{N}.
\]
Using again the Lebesgue Convergence Theorem gives
\[
\lim_{M\to\infty}\int_{\gamma}^{\beta}t^{l}e^{\overline{\lambda_m} t}\cdot \sum_{n=1}^{M} U_n (t)\, dt=
\int_{\gamma}^{\beta}U(t)\cdot t^l e^{\overline{\lambda_m} t}\, dt.
\]
By the biorthogonality of the $\{r_{n,k}\}$ family to the system $E_{\Lambda}$ and $(\ref{Unt})$, we finally get
\[
\int_{\gamma}^{\beta} U(t)\cdot t^{l}e^{\overline{\lambda_m} t}\, dt=d_{m,l}.
\]
Thus $U$ is a solution to the Moment Problem $(\ref{momentproblemquestion})$.
The first proof is now complete.

\subsection{Method II: A second proof to the Moment Problem via Nonharmonic Fourier Series}

We shall make use of the following notions from Non-Harmonic Fourier Series.

Let $\cal{H}$ be a separable Hilbert space endowed with an inner product $\langle \,\cdot \, \rangle$, and consider a sequence $\{f_n\}_{n=1}^{\infty}\subset \cal{H}$. We say that (see \cite[p. 128 Definition]{Young}):

(i) $\{f_n\}_{n=1}^{\infty}$ is a Bessel sequence if there exists a constant $B>0$ such that
$\sum_{n=1}^{\infty}|\langle f,f_n\rangle|^2<B||f||^2$ for all $f\in \cal{H}$.

(ii) $\{f_n\}_{n=1}^{\infty}$ is a Riesz-Fischer sequence
if the moment problem $\langle f, f_n\rangle =c_n$ has at least one solution in $\cal{H}$
for every sequence $\{c_n\}$ in the space $l^2(\mathbb{N})$.

The following result stated by Casazza et al.
is an interesting connection between Bessel and Riesz-Fischer sequences.

\begin{propa}\label{casazza}\cite[Proposition 2.3, (ii)]{Casazza}

The Riesz-Fischer sequences in $\cal{H}$ are precisely the families for which a biorthogonal Bessel sequence exists.
In other words

(a) Suppose that two sequences $\{f_n\}_{n=1}^{\infty}$ and $\{g_n\}_{n=1}^{\infty}$ in $\cal{H}$
are biorthogonal. Suppose also that $\{f_n\}_{n=1}^{\infty}$ is a Bessel sequence.
Then $\{g_n\}_{n=1}^{\infty}$ is a Riesz-Fischer sequence.

(b) If $\{f_n\}_{n=1}^{\infty}$ in $\cal{H}$ is a Riesz-Fischer sequence,
then there exists a biorthogonal Bessel sequence $\{g_n\}_{n=1}^{\infty}$.

\end{propa}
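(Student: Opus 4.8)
The plan is to establish the two halves (a) and (b) separately; together they give the stated equivalence, namely that a family in $\mathcal{H}$ is Riesz-Fischer precisely when it admits a biorthogonal Bessel sequence.

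For (a), I would first record the standard fact that a Bessel sequence $\{f_n\}_{n=1}^{\infty}$ with bound $B$ has a bounded \emph{synthesis operator} $T\colon \ell^2(\mathbb{N})\to\mathcal{H}$, $T(\{a_n\})=\sum_{n=1}^{\infty}a_nf_n$, with $\|T\|\le\sqrt{B}$: the series converges for every $\{a_n\}\in\ell^2$ by a Cauchy estimate bounding $\|\sum_{n=N+1}^{M}a_nf_n\|$ by $\sqrt{B}\,(\sum_{n=N+1}^{M}|a_n|^2)^{1/2}$ through Cauchy--Schwarz and the Bessel inequality, and the resulting operator $T$ coincides with the Hilbert-space adjoint of the analysis operator $f\mapsto\{\langle f,f_n\rangle\}$, whose norm is at most $\sqrt{B}$. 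Then, given $\{c_n\}\in\ell^2$, I set $h:=T(\{c_n\})=\sum_{n=1}^{\infty}c_nf_n\in\mathcal{H}$ and use biorthogonality together with continuity of the inner product to get $\langle h,g_m\rangle=\lim_{N}\sum_{n=1}^{N}c_n\langle f_n,g_m\rangle=c_m$ for every $m$. Hence $h$ solves the moment problem $\langle h,g_n\rangle=c_n$, so $\{g_n\}$ is Riesz-Fischer.

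For (b), put $M:=\overline{\text{span}}\{f_n\}$, a closed (hence Hilbert) subspace of $\mathcal{H}$. Since $\{f_n\}$ is Riesz-Fischer, for each $\{c_n\}\in\ell^2$ there is a \emph{unique} element $\Psi(\{c_n\})\in M$ with $\langle\Psi(\{c_n\}),f_n\rangle=c_n$ for all $n$ (existence: project any solution onto $M$; uniqueness: $\{f_n\}$ is complete in $M$). The induced linear map $\Psi\colon\ell^2\to M$ is everywhere defined and has closed graph — if $\{c^{(k)}\}\to\{c\}$ in $\ell^2$ and $\Psi(\{c^{(k)}\})\to w$ in $M$, then $\langle w,f_n\rangle=\lim_k c_n^{(k)}=c_n$, so $w=\Psi(\{c\})$ — hence by the Closed Graph Theorem it is bounded, say $\|\Psi\|\le C$. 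Setting $g_n:=\Psi(e_n)\in M$ (with $\{e_n\}$ the canonical basis of $\ell^2$) gives $\langle g_n,f_m\rangle=\delta_{nm}$, and for $f\in M$ one computes $(\Psi^{*}f)_n=\langle f,\Psi e_n\rangle=\langle f,g_n\rangle$, so $\sum_n|\langle f,g_n\rangle|^2=\|\Psi^{*}f\|_{\ell^2}^2\le C^2\|f\|^2$. Replacing $f$ by $P_Mf$ and using $g_n\in M$ extends this to all $f\in\mathcal{H}$, so $\{g_n\}$ is a Bessel sequence biorthogonal to $\{f_n\}$, as required.

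The only non-elementary ingredient, and the step I expect to be the crux, is the appeal to the Closed Graph Theorem in (b), which upgrades the merely everywhere-defined coefficient map $\Psi$ to a bounded operator; the remainder is bookkeeping with adjoints, biorthogonality, and a density/closedness argument. The one point warranting care is the choice of biorthogonal sequence: it must be the canonical one lying in $M=\overline{\text{span}}\{f_n\}$ (equivalently $g_n=\Psi(e_n)$), since an arbitrary biorthogonal family need not be Bessel.
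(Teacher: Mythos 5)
Your proof is correct. Note, however, that the paper itself gives no proof of this statement: it is imported verbatim as Proposition A with a citation to Casazza--Christensen--Li--Lindner, so there is no "paper's proof" to compare against. Your argument is the standard one for this equivalence: in (a) the boundedness of the synthesis operator $T(\{a_n\})=\sum a_nf_n$ of the Bessel sequence, plus biorthogonality and continuity of the inner product, immediately produces the solution $h=T(\{c_n\})$ of the moment problem; in (b) the Closed Graph Theorem upgrades the everywhere-defined solution map $\Psi\colon\ell^2\to M$ to a bounded operator, and the Bessel bound for $g_n=\Psi(e_n)$ falls out of $\|\Psi^*\|=\|\Psi\|$. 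All the delicate points are handled correctly: existence of the solution in $M$ via orthogonal projection (which preserves the moments because each $f_n\in M$), uniqueness via completeness of $\{f_n\}$ in $M$, closedness of the graph via weak identification of the limit, and the final observation that the canonical biorthogonal family must be taken inside $\overline{\mathrm{span}}\{f_n\}$, since an arbitrary biorthogonal family need not be Bessel. I see no gap.
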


And now a sufficient condition so that two biorthogonal families in $\cal{H}$ are Bessel and Riesz-Fischer sequences.
The result follows from \cite[Proposition 3.5.4]{Christensen} and Proposition $\bf A$.
\begin{lemma}\label{besselriesz}
Consider two biorthogonal sequences $\{u_n\}_{n=1}^{\infty}$ and $\{v_n\}_{n=1}^{\infty}$ in $\cal{H}$ and
suppose there is some $M>0$ so that
\[
\sum_{n=1}^{\infty} |\langle v_n, v_m\rangle  |<M\qquad \text{for\,\, all}\quad m=1,2,3,\dots.
\]
Then $\{v_n\}_{n=1}^{\infty}$ is a Bessel sequence in $\cal{H}$ and
$\{u_n\}_{n=1}^{\infty}$ is a Riesz-Fischer sequence in $\cal{H}$.
\end{lemma}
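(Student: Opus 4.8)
The plan is to derive both assertions directly from results already in hand, so that no genuinely new argument is required; the lemma is essentially a convenient repackaging of \cite[Proposition 3.5.4]{Christensen} together with Proposition $\bf A$.

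\textbf{Step 1: $\{v_n\}_{n=1}^{\infty}$ is a Bessel sequence.} This is exactly the diagonal-dominance (Schur-type) test \cite[Proposition 3.5.4]{Christensen}: if a sequence $\{v_n\}_{n=1}^{\infty}$ in a Hilbert space satisfies $\sup_{m\in\mathbb{N}}\sum_{n=1}^{\infty}|\langle v_n,v_m\rangle|\le B<\infty$, then $\{v_n\}_{n=1}^{\infty}$ is a Bessel sequence with bound $B$. The hypothesis of the lemma supplies precisely this with $B=M$, and the fact that the displayed inequality is strict is immaterial, since $\sum_{n=1}^{\infty}|\langle v_n,v_m\rangle|\le M$ for every $m$ already suffices. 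If one prefers a self-contained argument, the short proof runs as follows: for any finitely supported scalar sequence $\{c_m\}$, using $|c_m||c_n|\le\frac{1}{2}(|c_m|^2+|c_n|^2)$ and $|\langle v_m,v_n\rangle|=|\langle v_n,v_m\rangle|$,
\[
\Big\|\sum_m c_m v_m\Big\|^2=\sum_{m,n}c_m\overline{c_n}\langle v_m,v_n\rangle\le\sum_{m,n}|c_m|\,|c_n|\,|\langle v_m,v_n\rangle|\le\sum_m|c_m|^2\sum_n|\langle v_m,v_n\rangle|\le M\sum_m|c_m|^2 ;
\]
hence the synthesis map $\{c_m\}\mapsto\sum_m c_m v_m$ extends to a bounded operator $l^2(\mathbb{N})\to\mathcal{H}$, and its adjoint $f\mapsto(\langle f,v_m\rangle)_m$ is the bounded analysis operator, which is exactly the Bessel inequality $\sum_m|\langle f,v_m\rangle|^2\le M\|f\|^2$.

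\textbf{Step 2: $\{u_n\}_{n=1}^{\infty}$ is a Riesz-Fischer sequence.} By hypothesis $\{u_n\}_{n=1}^{\infty}$ and $\{v_n\}_{n=1}^{\infty}$ are biorthogonal, and by Step 1 the sequence $\{v_n\}_{n=1}^{\infty}$ is Bessel. Part (a) of Proposition $\bf A$ then yields at once that $\{u_n\}_{n=1}^{\infty}$ is a Riesz-Fischer sequence in $\mathcal{H}$, which completes the proof.

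I do not anticipate any real obstacle here: the entire substance is packaged in \cite[Proposition 3.5.4]{Christensen} and in Proposition $\bf A$. The only two points needing a moment's care are (i) recognizing that the uniform row-sum bound $\sum_{n}|\langle v_n,v_m\rangle|<M$ is precisely the hypothesis of the cited Bessel criterion, and (ii) checking that the biorthogonality of $\{u_n\}_{n=1}^{\infty}$ and $\{v_n\}_{n=1}^{\infty}$ is exactly the hypothesis under which part (a) of Proposition $\bf A$ applies.
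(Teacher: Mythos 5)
Your proposal is correct and follows exactly the route the paper itself takes: the Bessel property of $\{v_n\}_{n=1}^{\infty}$ comes from the row-sum (Schur-type) criterion of \cite[Proposition 3.5.4]{Christensen}, and the Riesz-Fischer property of $\{u_n\}_{n=1}^{\infty}$ then follows from part (a) of Proposition $\bf A$ applied to the biorthogonal pair. The self-contained Schur-test computation you include is a welcome bonus, but otherwise there is nothing to add.
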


Let us now proceed with the second proof of Theorem $\ref{MomentProblem}$.
As before, let $H_{\Lambda}$ be $\overline{\text{span}}(E_{\Lambda})$ in $L^2 (\gamma, \beta)$,
hence $H_{\Lambda}$ is a separable Hilbert space.

Let $\{d_{n,k}:\,\, n\in\mathbb{N},\,\, k=0,1,\dots,\mu_n-1\}$ be the sequence of non-zero complex numbers that satisfies
$(\ref{interpolationrnkcomplex})$. For every $n\in\mathbb{N}$ and $k=0,1,\dots,\mu_n-1$, define
\[
U_{n,k}(t):=\lambda_nd_{n,k}r_{n,k}(t)\qquad\text{and}\qquad V_{n,k}(t):=\frac{t^ke^{\lambda_n t}}{\overline{\lambda_n}\cdot\overline{d_{n,k}}}.
\]
It then easily follows that the sets
\[
\{U_{n,k}:\,\, n\in\mathbb{N},\,\, k=0,1,\dots,\mu_n-1\}\quad\text{and}\quad\{V_{n,k}:\,\, n\in\mathbb{N},\,\, k=0,1,\dots,\mu_n-1\}
\]
are biorthogonal in $H_{\Lambda}$.\\

We claim that $\{U_{n,k}\}$ is a Bessel sequence and $\{V_{n,k}\}$ is a Riesz-Fischer sequence in $H_{\Lambda}$.
Indeed, by $(\ref{interpolationrnkcomplex})$ and $(\ref{rnkbound})$ it follows that for $\epsilon =(\beta -a)/6$
there is a positive constant $m_{\epsilon}$ so that
\[
||U_{n,k}||_{L^2(\gamma,\beta)}\le m_{\epsilon}e^{(-\beta+a+3\epsilon)\Re\lambda_n}.
\]
By the Cauchy-Schwartz inequality we get
\begin{equation}\label{operatorbound}
|\langle U_{n,k},U_{m,j}\rangle|\le m_{\epsilon}e^{(-\beta+a+3\epsilon)\Re\lambda_n}\cdot e^{(-\beta+a+3\epsilon)\Re\lambda_m}.
\end{equation}
If we denote by $C_{n,k,m,j}$ the values of $\langle U_{n,k},U_{m,j}\rangle$ and by $C$ the infinite dimensional hermitian matrix with entries the $C_{n,k,m,j}$,
then $C$ is the Gram matrix associated with $\{U_{n,k}\}$. From the choice of $\epsilon$ and relations $(\ref{munlambdan})$ and $(\ref{operatorbound})$, we get
\[
\sum_{n=1}^{\infty}\sum_{k=0}^{\mu_n-1}\sum_{m=1}^{\infty}\sum_{j=0}^{\mu_m-1}|C_{n,k,m,j}|<\infty.
\]
It then follows from Lemma $\ref{besselriesz}$ that $\{U_{n,k}\}$  is a Bessel sequence in $H_{\Lambda}$ and
its biorthogonal sequence $\{V_{n,k}\}$ is a Riesz-Fischer sequence in $H_{\Lambda}$.\\

Therefore, the moment problem
\[
\int_{\gamma}^{\beta}f(t)\cdot \overline{V_{n,k}(t)}\, dt=c_{n,k} \qquad \forall\,\, n\in\mathbb{N}\quad \text{and}\quad k=0,1,2,\dots ,\mu_n-1,
\]
has a solution $f$ in $H_{\Lambda}$ whenever $\sum_{n=1}^{\infty}\sum_{k=0}^{\mu_n-1}|c_{n,k}|^2<\infty$.
Since $\Lambda$ satisfies the condition $(\ref{convergencecondition})$, then we can take $c_{n,k}=1/\lambda_n$ for all $n\in\mathbb{N}$ and $k=0,1,\dots,\mu_n-1$.
Hence, recalling the definition of $V_{n,k}$, there is some function $f\in H_{\Lambda}$ so that
\[
\int_{\gamma}^{\beta}f(t)\cdot \left( \frac{t^ke^{\overline{\lambda_n} t}}{d_{n,k}\lambda_n}\right) \, dt=\frac{1}{\lambda_n}\qquad \forall\,\, n\in\mathbb{N}\quad \text{and}\quad k=0,1,2,\dots ,\mu_n-1.
\]
Thus
\[
\int_{\gamma}^{\beta}f(t)\cdot  t^ke^{\overline{\lambda_n} t} \, dt=d_{n,k}\qquad \forall\,\, n\in\mathbb{N}\quad
\text{and}\quad k=0,1,2,\dots ,\mu_n-1,
\]
hence obtaining a solution $f\in H_{\Lambda}$ to the moment problem. The second proof is now complete.

\section{The solution space of the Carleson differential equation: Proof of Theorem $\ref{CarlesonTheorem}$ and a counterexample}
\setcounter{equation}{0}

In this section we prove Theorem $\ref{CarlesonTheorem}$ which is on the solution space
of the differential equation $(\ref{CarlesonLeontevEquation})$.
We first prove Part $(A)$ and then Part $(B)$.
We also present an example where the groupings in $(\ref{solutionTDS})$ cannot be dropped
in case $\Lambda$ is not an interpolating variety for the space $A^0_{|z|}$.

\subsection{Proof of Theorem $\ref{CarlesonTheorem}$}

\subsubsection{Proof of Part A}

Let $f$ be a function in the class $C(\gamma,\beta,\{G_n\})$ (Definition $\ref{The Class}$)
such that it is a solution of the equation $(\ref{CarlesonLeontevEquation})$.
Carleson  and  Leont' ev  proved that $f$
extends analytically in the sector $\Theta_{\eta, \beta}$ $(\ref{opensector})$ as a series with grouping of terms,
(see $(\ref{solutionTDS})$), converging uniformly on compact subsets of the sector $\Theta_{\eta, \beta}$, thus
on closed bounded intervals $[\gamma+\epsilon, \beta-\epsilon]$
for every small $\epsilon>0$. Clearly this means that  for every $j\in \mathbb{N}$,
$f$ belongs to the closed span of the system $E_{\Lambda}$ in the space
$L^2 (\gamma+\epsilon_j, \beta-\epsilon_j)$ where $\epsilon_j=(\beta-\gamma)/4j$.
From Theorem $\ref{theorem1}$ we see that for every fixed $j\in\mathbb{N}$,
$f$ extends as an analytic function in the open sector
\[
\Theta_{\eta ,\beta-\epsilon_j}:=\left\{z:\left|\frac{\Im z}{\Re (z-\beta+\epsilon_j)}\right|\le \frac{1}{\tan\eta},\,\,\Re z<\beta-\epsilon_j\right\}
\]
and admits a unique Taylor-Dirichlet series representation of the form
\[
f_j(z)=\sum_{n=1}^{\infty}\left(\sum_{k=0}^{\mu_n-1}c_{j,n,k}
z^k\right) e^{\lambda_n z},\qquad \forall\,\, z\in \Theta_{\eta,\beta-\epsilon_j}
\]
converging uniformly on every compact set of $\Theta_{\eta,\beta-\epsilon_j}$.

Now, observe that $\Theta_{\eta,\beta-\epsilon_1}\subset\Theta_{\eta,\beta-\epsilon_{j}}$ for all $j\in\mathbb{N}$.
This means that for all $j\in\mathbb{N}$, $f_1(z)=f_j(z)$ for all $z$ in the sector $\Theta_{\eta,\beta-\epsilon_1}$.
Hence, by the Uniqueness result of Lemma $\ref{uniquenessDirichlet}$ (see the Appendix) the respective coefficients of the
$f_j$ series are identical. In other words $c_{1,n,k,}=c_{j,n,k}$ for all $j\in\mathbb{N}$.
Clearly this means that $f$ extends as an analytic function in the open sector $\Theta_{\eta,\beta}$
and admits in the sector the Taylor-Dirichlet series representation
\[
\sum_{n=1}^{\infty}\left(\sum_{k=0}^{\mu_n-1}c_{1,n,k} z^k\right) e^{\lambda_n z}.
\]

Alternatively, since $f(z)=f_1(z)$ on the sector $\Theta_{\eta,\beta-\epsilon_1}$ and $f$ is analytic
in the larger sector $\Theta_{\eta,\beta}$, by \cite[Theorem 4.1]{Z2015JMAA} we know that
$f$ is either an entire function or it has a Natural Boundary. This implies that
$f$ admits the $f_1$ series representation not only in $\Theta_{\eta,\beta-\epsilon_1}$ but in $\Theta_{\eta,\beta}$ as well.

\subsubsection{Proof of Part B}

The proof is essentially the one given in \cite[Theorem 4.1]{Z2012CMFT}
but we will rewrite it in a more clear and coherent way.
First we prove two results, Lemmas $\ref{classC}$ and $\ref{equivalentsystems1}$.
\begin{lemma}\label{classC}
Let $\Omega$ be an open region in the complex plane such that $(\gamma, \beta)\subset\Omega$.
Let a function $p$ be analytic on $\Omega$ and let $F, \,\, G$ be the infinite products as in $(\ref{F})$.
Then $p$ belongs to the class $C(\gamma, \beta, \{G_n\})$ and both $F(D)f(z)$ and $G(D)f(z)$ define analytic fucntions on $\Omega$.
\end{lemma}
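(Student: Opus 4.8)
The plan is to play two elementary facts against each other: the Taylor coefficients of an entire function of exponential type zero decay faster than any geometric sequence, whereas the derivatives of a function analytic on $\Omega$ grow at most geometrically on compact subsets of $\Omega$. Everything then reduces to comparing a super-geometrically small sequence against a geometrically large one.

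First I would record the coefficient estimates. Since the $\lambda_n$ satisfy condition $A$ $(\ref{convergencecondition})$, the products $F$ and $G$ in $(\ref{F})$ are entire of exponential type zero, so for every $\epsilon>0$ there is $C_\epsilon>0$ with $|F(z)|,|G(z)|\le C_\epsilon e^{\epsilon|z|}$ on $\mathbb{C}$. A Cauchy estimate on a circle of radius $n/\epsilon$, together with $n!\le n^n$, gives $|F^{(n)}(0)|\le C_\epsilon (e\epsilon)^n$ and $|G^{(n)}(0)|\le C_\epsilon (e\epsilon)^n$ for all $n\ge 0$; hence, after replacing $\epsilon$ by $\epsilon/e$, for every $\epsilon>0$ there is $M_\epsilon>0$ with
\[
|F^{(n)}(0)|+|G^{(n)}(0)|\le M_\epsilon\,\epsilon^{n},\qquad n=0,1,2,\dots.
\]
Next I would record the derivative estimates for $p$. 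Fix a compact set $K\subset\Omega$, put $\delta=\mathrm{dist}(K,\partial\Omega)>0$, let $K_\delta=\{z:\mathrm{dist}(z,K)\le\delta/2\}$ (a compact subset of $\Omega$) and $A_K=\sup_{K_\delta}|p|<\infty$. Cauchy's estimate on disks of radius $\delta/2$ yields $|p^{(n)}(z)|\le n!\,A_K\,(2/\delta)^n$ for all $z\in K$ and all $n\ge 0$.

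Combining the two: for $z\in K$ and any $\epsilon\in(0,\delta/2)$,
\[
\sum_{n=0}^{\infty}\frac{|G^{(n)}(0)|}{n!}\,|p^{(n)}(z)|
\le \sum_{n=0}^{\infty}\frac{M_\epsilon\,\epsilon^{n}}{n!}\cdot n!\,A_K\Bigl(\frac{2}{\delta}\Bigr)^{n}
= M_\epsilon A_K\sum_{n=0}^{\infty}\Bigl(\frac{2\epsilon}{\delta}\Bigr)^{n}<\infty,
\]
with a bound uniform over $K$. Taking $K=[\gamma+\rho,\beta-\rho]$ for an arbitrary $\rho>0$ (permissible since $(\gamma,\beta)\subset\Omega$) and recalling that $G^{(n)}(0)\ge 0$ (Remark $\ref{allpositive}$), this shows $\sum_{n\ge0}\frac{G^{(n)}(0)}{n!}|p^{(n)}(x)|$ converges uniformly on $(\gamma+\rho,\beta-\rho)$; since $p$ is moreover $C^\infty$ on $(\gamma,\beta)$ (being analytic there), we conclude $p\in C(\gamma,\beta,\{G_n\})$. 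The identical estimate, applied on an arbitrary compact $K\subset\Omega$ and with $F$ in place of $G$, shows that the partial sums of $G(D)p(z)=\sum_{n\ge0}\frac{G^{(n)}(0)}{n!}p^{(n)}(z)$ and of $F(D)p(z)=\sum_{n\ge0}\frac{F^{(n)}(0)}{n!}p^{(n)}(z)$ form sequences of functions analytic on $\Omega$ that converge uniformly on compact subsets of $\Omega$; by Weierstrass's theorem the limits $F(D)p$ and $G(D)p$ are analytic on $\Omega$.

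The one step needing a little care — hence the \emph{main obstacle}, although it is not a serious one — is to carry out the derivative estimate for $p$ on a compact neighborhood lying strictly inside $\Omega$ rather than merely on $(\gamma,\beta)$ itself; this is precisely what makes the convergence uniform on compacta of all of $\Omega$, and hence upgrades $F(D)p$ and $G(D)p$ from functions defined a.e.\ on $(\gamma,\beta)$ to functions analytic on the whole region $\Omega$.
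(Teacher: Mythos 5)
Your proof is correct and follows essentially the same route as the paper: Cauchy estimates give geometric growth $|p^{(n)}|\le n!\,A_K(2/\delta)^n$ on compacta of $\Omega$, while the exponential-type-zero hypothesis gives coefficient bounds for $F,G$ that beat any geometric sequence, and the two are played off against each other. The only difference is that where the paper simply cites \cite[Proposition 6.4.2]{Berenstein} for the analyticity of $F(D)p$ and $G(D)p$ on $\Omega$, you prove it directly via uniform convergence on compacta and Weierstrass's theorem, which makes that part self-contained.
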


\begin{proof}
By \cite[Proposition 6.4.2]{Berenstein}, $F(D)p(z)$ and $G(D)p(z)$ define analytic fucntions on $\Omega$.
In order to show that $p\in C(\gamma, \beta, \{G_n\})$ as well, we have to prove that
\[
\sum_{n=1}^{\infty}\frac{G^{(n)}(0)}{n!}\cdot |p^{(n)}(x)|
\]
converges uniformly on the interval $[\gamma+\delta, \beta-\delta]$ for every small $\delta>0$.

Let us first consider the Taylor series expansion of $G$ about zero
\[
G(z)=\sum_{n=0}^{\infty}\frac{G^{(n)}(0)}{n!}z^n.
\]
We recall from Remark $\ref{allpositive}$ that $G^{(n)}(0)$ is positive for all $n\ge 0$.

Now, since $G$ is an entire function of exponential type $0$,
the relation between the type and the coefficients of its power series is given by
\[
\limsup_{n\to\infty}n\left(\frac{G^{(n)}(0)}{n!}\right)^{1/n}=0.
\]
Thus for every $\epsilon>0$, there is a $m\in\mathbb{N}$ so that
\begin{equation}\label{FandG}
G^{(n)}(0)\le\frac{\epsilon^n\cdot n!}{n^n}\qquad \forall\,\, n\ge m.
\end{equation}

Next, since $\Omega$ contains the interval $(\gamma, \beta)$, then it contains the intervals
$[\gamma+\delta, \beta-\delta]$ for every small $\delta>0$. Fix such an interval.
Clearly it is properly contained in some compact set $K\subset \Omega$. In fact there is some $R>0$, so that
for any point $\zeta\in [\gamma+\delta, \beta-\delta]$, the disk $D_{j,R}:=\{z:\,\, |z-\zeta|\le R$ is a subset of
$K\subset \Omega$. We let $\partial D_{j,R}$ to be the boundary of the disk $D_{j,R}$.

Now, since $p$ is analytic on $\Omega$, then $p$ is bounded on $K$,
thus there is some $M_K>0$, so that $|p(z)|\le M_K$ for all $z\in K$.
Then by the Cauchy Integral formula
\[
\frac{p^{(n)}(\zeta)}{n!}=\frac{1}{2\pi i}\int_{\partial D_{j,R}}\frac{p(z)}{(z-\zeta)^{n+1}}\, dz,\qquad n=1,2,3,\cdots.
\]
Thus for $n=1,2,3,\dots$, one gets
\[
\frac{|p^{(n)}(\zeta)|}{n!}\le \frac{M_K}{R^n}\qquad \text{for all}\qquad \zeta\in [\gamma+\delta, \beta-\delta].
\]
Combining with $(\ref{FandG})$ and taking $\epsilon<R$, gives for $n=1,2,3,\dots$,
\[
\frac{G^{(n)}(0)}{n!}\cdot |p^{(n)}(\zeta)|\le \frac{M_K\cdot n!}{n^n}\qquad \text{for all}
\qquad \zeta\in [\gamma+\delta, \beta-\delta].
\]
Obviously the series $\sum_{n=1}^{\infty}\frac{n!}{n^n}$ converges, hence the series
\[
\sum_{n=1}^{\infty}\frac{G^{(n)}(0)}{n!}\cdot |p^{(n)}(\zeta)|
\]
converges uniformly on the interval $[\gamma+\delta, \beta-\delta]$.
The arbitrary choice of $\delta$ means that $p$ belongs to the class $C(\gamma,\beta,\{G_n\})$.
\end{proof}
The following result is known. A short proof follows.
\begin{lemma}\label{equivalentsystems1}
Let $f$ be a function analytic on $\Omega$. Let $F(z)$ be the infinite product as in $(\ref{F})$
and let $\gamma(z)$ be its Borel Transform. Let $\zeta\in \Omega$ be an arbitrary point. Then there is a
circle $\partial B_{\rho_{\zeta}/2}$ so that
\[
\frac{1}{2\pi i}\int_{\partial B_{\rho_{\zeta}/2}}\gamma(z)f(z+\zeta)\,
dz=F(D)f(\zeta)\quad \text{where}\quad D=\frac{d}{dz}.
\]
\end{lemma}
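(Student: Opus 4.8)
The plan is to exploit the standard duality between the infinite-order differential operator $F(D)$ and a contour integral against the Borel transform of $F$. Recall that since $F\in A^0_{|z|}$ (it is an entire function of exponential type zero by $(\ref{F})$), its Borel transform
\[
\gamma(z)=\sum_{n=0}^{\infty}\frac{F^{(n)}(0)}{z^{n+1}}
\]
is analytic outside the origin, and in fact in our case, because $F$ has exponential type zero, $\gamma$ is analytic on $\mathbb{C}\setminus\{0\}$, so any circle $\partial B_{r}=\{|z|=r\}$ with $r>0$ encircling the origin will serve as a contour on which $\gamma$ is holomorphic. The key analytic fact is the residue-type identity
\[
\frac{1}{2\pi i}\int_{\partial B_{r}}\gamma(z)\,z^{m}\,dz=F^{(m)}(0),\qquad m=0,1,2,\dots,
\]
which follows by substituting the Laurent series of $\gamma$ and integrating term by term (only the residue at $z=0$ survives).

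First I would fix the arbitrary point $\zeta\in\Omega$ and choose $\rho_{\zeta}>0$ small enough that the closed disk $\overline{B_{\rho_{\zeta}}(\zeta)}=\{w:\,|w-\zeta|\le\rho_{\zeta}\}\subset\Omega$; this is possible because $\Omega$ is open. Then on the circle $\partial B_{\rho_{\zeta}/2}$ (i.e.\ $|z|=\rho_{\zeta}/2$) the function $z\mapsto f(z+\zeta)$ is analytic, being a composition of the translation $z\mapsto z+\zeta$, which maps $\overline{B_{\rho_{\zeta}/2}}$ into $\overline{B_{\rho_{\zeta}}(\zeta)}\subset\Omega$, with $f$. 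Expand $f(z+\zeta)$ in its Taylor series about $z=0$,
\[
f(z+\zeta)=\sum_{m=0}^{\infty}\frac{f^{(m)}(\zeta)}{m!}\,z^{m},
\]
which converges uniformly for $|z|=\rho_{\zeta}/2$ since $\rho_{\zeta}/2$ is strictly less than the radius of convergence. Multiplying by $\gamma(z)$ — which is continuous, hence bounded, on that circle — preserves uniform convergence, so I may integrate term by term:
\[
\frac{1}{2\pi i}\int_{\partial B_{\rho_{\zeta}/2}}\gamma(z)f(z+\zeta)\,dz
=\sum_{m=0}^{\infty}\frac{f^{(m)}(\zeta)}{m!}\cdot\frac{1}{2\pi i}\int_{\partial B_{\rho_{\zeta}/2}}\gamma(z)\,z^{m}\,dz
=\sum_{m=0}^{\infty}\frac{f^{(m)}(\zeta)}{m!}\,F^{(m)}(0),
\]
using the residue identity above. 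By the very definition $F(D)f(\zeta)=\sum_{m=0}^{\infty}\frac{F^{(m)}(0)}{m!}f^{(m)}(\zeta)$ (see the displayed formula following $(\ref{CarlesonLeontevEquation})$), this is precisely $F(D)f(\zeta)$, which completes the argument.

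The only point requiring genuine care — the main obstacle, modest as it is — is justifying the termwise integration, i.e.\ verifying that the Taylor series of $f(z+\zeta)$ really does converge uniformly on the chosen circle and that multiplication by the (bounded) Borel kernel does not destroy this; this is where the choice of radius $\rho_{\zeta}/2$ strictly inside the disk of analyticity is used. Everything else is a direct computation with the Laurent expansion of $\gamma$. One should also remark that the value of the integral is independent of the particular radius used, since $\gamma(z)f(z+\zeta)$ is analytic in the punctured neighbourhood of the origin between any two such circles (as long as $z+\zeta$ stays in $\Omega$), by Cauchy's theorem; this makes the statement ``there is a circle $\partial B_{\rho_{\zeta}/2}$'' natural and shows the formula is robust.
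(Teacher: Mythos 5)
Your proof is correct and follows essentially the same route as the paper: Taylor-expand $f(z+\zeta)$ about $z=0$, use uniform convergence on the circle of radius $\rho_{\zeta}/2$ to integrate term by term against the bounded kernel $\gamma$, and identify each integral $\frac{1}{2\pi i}\int\gamma(z)z^{m}\,dz$ with $F^{(m)}(0)$. The only cosmetic difference is that the paper obtains this last identity by differentiating the P\'{o}lya representation $F(z)=\frac{1}{2\pi i}\int_{l}\gamma(w)e^{zw}\,dw$ at $z=0$, whereas you read it off directly from the Laurent expansion of $\gamma$; these are equivalent.
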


\begin{proof}
First we note that since the entire function $F$ is of exponential type zero,
then by the P\'{o}lya representation theorem for entire functions of exponential type \cite[Theorem 5.3.5]{Boas} we have
\begin{equation}\label{Polyarepresentationtheorem}
F(z)=\frac{1}{2\pi i}\int_{l} \gamma(w) e^{zw} dw
\end{equation}
where $\gamma$ is the Borel Transform of $F$ and $l$ is any simple closed rectifiable curve enclosing the Origin.

By differentiation one gets
\begin{equation}\label{Polyarepresentationtheorem1}
F^{(n)}(0)=\frac{1}{2\pi i}\int_{l} \gamma(w) w^n dw,
\end{equation}
and since $F$ vanishes on $\Lambda=\{\lambda_n,\mu_n\}_{n=1}^{\infty}$ we get
\begin{equation}\label{Polyarepresentationtheorem2}
\int_{l} \gamma(w) w^k e^{\lambda_n w} dw=0,\qquad n\in\mathbb{N},\qquad k=0,1,2,\dots,\mu_n-1.
\end{equation}
Consider now a function $f$ analytic on $\Omega$ and a point $\zeta\in \Omega$.
The Taylor series expansion of $f$ about $\zeta$ has radius of convergence equal to some positive number,
call it $\rho_{\zeta}>0$. Therefore the series
\[
\sum_{n=1}^{\infty}\frac{f^{(n)}(\zeta)}{n!}\cdot (z-\zeta)^n
\]
converges uniformly with respect to $z$ to $f(z)$ on the closed disk $\{z:\,\, |z-\zeta|\le \rho_{\zeta}/2\}$.
Combined with the continuity of $\gamma (z)$ on curves enclosing the origin, yields that the series
\[
\gamma(z)\cdot f(z+\zeta)=\sum_{n=1}^{\infty}\gamma(z)\cdot \frac{f^{(n)}(\zeta)}{n!}\cdot z^n
\]
converges uniformly with respect to $z$ on the closed disk
\[
B_{\rho_{\zeta}/2}:=\{z:\,\, |z|\le \rho_{\zeta}/2\}.
\]
Applying $(\ref{Polyarepresentationtheorem1})$ as well, one has
\begin{eqnarray*}
\frac{1}{2\pi i}\int_{\partial B_{\rho_{\zeta}/2}} \gamma(z)f(z+\zeta)\, dz
& = &
\frac{1}{2\pi i}\int_{\partial B_{\rho_{\zeta}/2}} \gamma(z)\cdot \sum_{n=0}^{\infty}\frac{f^{(n)}(\zeta)}{n!} z^{n}\, dz\\
& = &
\sum_{n=0}^{\infty}\frac{f^{(n)}(\zeta)}{n!}\left(\frac{1}{2\pi i}\int_{\partial B_{\rho_{\zeta}/2}} \gamma(z) z^{n}\,
dz\right)\\
& = &
\sum_{n=0}^{\infty}\frac{F^{(n)}(0)}{n!}f^{(n)}(\zeta)\\
& = & F(D)f(\zeta).
\end{eqnarray*}
\end{proof}

Finally, consider a Taylor-Dirichlet series
\[
f(z)=\sum_{n=1}^{\infty}\left(\sum_{k=0}^{\mu_n-1}c_{n,k} z^k\right) e^{\lambda_n z},
\]
which is analytic in the sector $\Theta_{\eta,\beta}$, thus converging uniformly on its compact subsets.
Let $\zeta$ be an arbitrary point in $(\gamma,\beta)$ and as in the previous lemma, consider the
disk $\{z:\,\, |z-\zeta|< \rho_{\zeta}\}$ where $\rho_{\zeta}$ is the radius of convergence of the Taylor series of $f$
about the point $\zeta$. Clearly the Taylor-Dirichlet series converges uniformly on the closed disk
$\{z:\,\, |z-\zeta|\le \rho_{\zeta}/2\}$. Thus, the series
\[
\gamma(z)\cdot f(z+\zeta)= \sum_{n=1}^{\infty}\left(\sum_{k=0}^{\mu_n-1}\gamma(z)\cdot c_{n,k}
(z+\zeta)^k\right) e^{\lambda_n (z+\zeta)}
\]
converges uniformly on the circle $\partial B_{\rho_{\zeta}/2}=\{z:\,\, |z|= \rho_{\zeta}/2\}$.
Together with Lemma $\ref{equivalentsystems1}$, we get
\begin{eqnarray*}
2\pi i \cdot F(D)f(\zeta) & = & \int_{\partial B_{\rho_{\zeta}/2}}\gamma(z)\cdot f(z+\zeta)\, dz\\
& = & \int_{\partial B_{\rho_{\zeta}/2}}
\gamma(z) \cdot \sum_{n=1}^{\infty}\left(\sum_{k=0}^{\mu_n-1}c_{n,k}
(z+\zeta)^k\right) e^{\lambda_n (z+\zeta)}\, dz\\
& = & \int_{\partial B_{\rho_{\zeta}/2}}\gamma(z)\cdot \sum_{n=1}^{\infty}\left(\sum_{k=0}^{\mu_n-1}c_{n,k}
\sum_{j=0}^{k}\frac{k!}{j!(k-j)!}\zeta^{k-j}z^j\right) e^{\lambda_n (z+\zeta)}\, dz\\\
& = & \sum_{n=1}^{\infty}\left(\sum_{k=0}^{\mu_n-1}
\left(\sum_{j=0}^{k}\frac{k!}{j!(k-j)!}\zeta^{k-j}
\cdot \int_{\partial B_{\rho_{\zeta}/2}}\gamma(z)z^j e^{\lambda_n z}\, dz\right)
c_{n,k}\right) e^{\lambda_n \zeta}
\end{eqnarray*}
It then follows from the integrals in $(\ref{Polyarepresentationtheorem2})$ that
\[
F(D)f(\zeta)=0.
\]
The arbitrary choice of $\zeta\in (\gamma, \beta)$ means that equation $(\ref{CarlesonLeontevEquation})$ is true.
The second part of the proof of Theorem $\ref{CarlesonTheorem}$ is now complete.

\subsection{A Counterexample where the groupings in $(\ref{solutionTDS})$ cannot be dropped}

Consider the sequence $\Lambda=\{\lambda_n\}_{n=1}^{\infty}$ where
\begin{equation}\label{speciall}
\lambda_{2n-1}=n^2\quad \text{and}\quad \lambda_{2n}=n^2+e^{-n^4}.
\end{equation}
Clearly the conditions $A$ $(\ref{convergencecondition})$ and $B$ $(\ref{lessthan})$ hold. However,
it follows from the Geometric Conditions $(II)$ in subsection 2.1 and  relation $(\ref{mun=1})$
that $\Lambda=\{\lambda_n\}_{n=1}^{\infty}$ is not an interpolating variety for the space of entire functions of exponential type zero.
Hence this $\Lambda$ is not in the $ABC$ class.

Consider then the entire functions of exponential type zero,
\[
F(z)=\prod_{n=1}^{\infty}\left(1-\frac{z}{\lambda_n}\right)\quad
\text{and}
\quad
G(z)=\prod_{n=1}^{\infty}\left(1+\frac{z}{|\lambda_n|}\right).
\]
We have
\[
F(z)=\frac{1}{2\pi i}\int_{l} \gamma(w) e^{zw} dw
\]
where $\gamma$ is the Borel Transform of $F$ and $l$ is any simple closed rectifiable curve enclosing the Origin.

Since $F$ vanishes on $\Lambda=\{\lambda_n\}_{n=1}^{\infty}$ we get
\begin{equation}\label{Polyarepresentationtheorem2alone}
\int_{l} \gamma(w) e^{\lambda_n w} dw=0,\qquad n\in\mathbb{N}.
\end{equation}
We will now give an example of a series
$\sum_{n=1}^{\infty}\left( a_{2n-1}\cdot e^{\lambda_{2n-1} z} - a_{2n}\cdot e^{\lambda_{2n} z}\right)$
which is analytic in the half-plane $\Re z<0$ and
satisfies  the Carleson equation $(\ref{CarlesonLeontevEquation})$ on any interval $(\gamma, \beta)$ on the half-line $\Re x<0$,
whereas the series $\sum_{n=1}^{\infty} a_{n}\cdot e^{\lambda_{n} z}$ itself diverges everywhere in $\mathbb{C}$.

Clearly the following two series diverge everywhere in the complex plane
\[
\sum_{n=1}^{\infty} e^{n ^3}\cdot e^{z n^2}\qquad \text{and}\qquad
\sum_{n=1}^{\infty} e^{n ^3}\cdot e^{z\cdot (n^2+e^{-n^4})}.
\]
The same is true for the series $\sum_{n=1}^{\infty}a_n\cdot e^{\lambda_n z}$ where $\lambda_n$ are as in $(\ref{speciall})$
and
\[
a_{2n-1}= e^{n ^3}\quad \text{and}\quad a_{2n}= -e^{n ^3}.
\]
However, we claim that if we regroup the $a_n\cdot e^{\lambda_n z}$ terms, as
$(a_{2n-1}\cdot e^{z\lambda_{2n-1}}-a_{2n}\cdot e^{z\lambda_{2n}})$, hence having
\begin{equation}\label{functioncounter}
f(z):=\sum_{n=1}^{\infty}\left( e^{n ^3}\cdot e^{zn^2} - e^{n ^3}\cdot e^{z\cdot (n^2+e^{-n^4})}\right)=
\sum_{n=1}^{\infty} e^{n ^3}\cdot\left(e^{zn^2}-e^{z\cdot (n^2+e^{-n^4}) }\right),
\end{equation}
then this series is analytic in the left half-plane.
Indeed, first we write
\[
e^{n ^3}\cdot\left(e^{zn^2}-e^{z\cdot (n^2+e^{-n^4}) }\right) =
e^{n ^3}\cdot e^{z n^2}\cdot \left(1-e^{z\cdot e^{-n^4}}\right).
\]

Then, clearly for all $z$ in the closed left half-plane $\Re z\le 0$, we have $|e^{z\cdot n^2}|\le 1$.
Using the Maclaurin series of the exponential function $e^{z\cdot e^{-n^4}}$
gives
\begin{eqnarray}
|e^{z n^2}|\cdot e^{n ^3}\cdot
|1-e^{z\cdot e^{-n^4}}| & = & |e^{z n^2}|\cdot e^{n ^3}\cdot |z\cdot e^{-n^4} + \frac{z^2\cdot e^{-2n^4}}{2} + \frac{z^3\cdot e^{-3n^4}}{3!} +
\frac{z^4\cdot e^{-4n^4}}{4!} + \dots | \nonumber\\
& < & e^{n ^3}\cdot e^{-n^4} \left(|z| + \frac{|z|^2}{2} + \frac{|z|^3\cdot }{3!} + \frac{|z|^4}{4!} + \dots |\right) \nonumber \\
& < & e^{n ^3-n^4}\cdot e^{|z|}.\nonumber
\end{eqnarray}

Replacing in $(\ref{functioncounter})$ shows that there is some positive constant $A$, so that $|f(z)|<Ae^{|z|}$ for all $z$
in the closed half-plane $\Re z\le 0$, hence
the series $f$ converges uniformly on compact subsets of $\Re z\le 0$,
thus it is analytic in $\Re z< 0$.\\

Now, by Lemma $\ref{classC}$, $f$ belongs to the class $C(\gamma, \beta, \{G_n\})$
for any interval $(\gamma, \beta)$ which is on the semi-axis $x<0$.
Choose an arbitrary $\zeta\in (\gamma, \beta)$ and consider the circle $\partial B_{\rho_{\zeta}/2}$
as in the previous subsection. Then, like before, we get

\begin{eqnarray*}
2\pi i \cdot F(D)f(\zeta) & = & \int_{\partial B_{\rho_{\zeta}/2}}\gamma(z)\cdot f(z+\zeta)\, dz\\
& = & \int_{\partial B_{\rho_{\zeta}/2}}
\gamma(z) \cdot \sum_{n=1}^{\infty} e^{n ^3}\cdot\left(e^{(z+\zeta)\lambda_{2n-1}}-e^{(z+\zeta)\lambda_{2n}}\right)\, dz\\
& = & \sum_{n=1}^{\infty} e^{n ^3}\cdot \int_{\partial B_{\rho_{\zeta}/2}}
\gamma(z) \cdot \left(e^{(z+\zeta)\lambda_{2n-1}}-e^{(z+\zeta)\lambda_{2n}}\right)\, dz\\
& = & \sum_{n=1}^{\infty} e^{n ^3}\cdot
\left(
e^{\zeta\lambda_{2n-1}}\cdot \int_{\partial B_{\rho_{\zeta}/2}} \gamma(z) \cdot e^{z\lambda_{2n-1}}\, dz
-
e^{\zeta\lambda_{2n}}\cdot \int_{\partial B_{\rho_{\zeta}/2}} \gamma(z) \cdot e^{z\lambda_{2n}}\, dz
\right)
\end{eqnarray*}
It follows from $(\ref{Polyarepresentationtheorem2alone})$ that $F(D)f(\zeta)=0$.
Since $\zeta\in (\gamma, \beta)$ was chosen arbitrarily, then $(\ref{CarlesonLeontevEquation})$ is true.

\section{Revisiting Theorems $\ref{converse}$ and $\ref{CarlesonTheorem}$}
\setcounter{equation}{0}

Given $\Lambda\in ABC$, by Lemma $\ref{TDSeries}$  a Taylor-Dirichlet series
\[
g(z)=\sum_{n=1}^{\infty}\left(\sum_{k=0}^{\mu_n-1} c_{n,k}z^k\right)e^{\lambda_n z},\quad c_{n,k}\in\mathbb{C},
\]
defines an analytic function  in the open sector $\Theta_{\eta ,\beta}$ $(\ref{opensector})$,
converging uniformly on its compact subsets, if and only if
the coefficients $c_{n,k}$ satisfy the upper bound $(\ref{cnkbound})$.

For $\Lambda\in ABC$, suppose now that a Taylor-Dirichlet series converges $\bf pointwise$ on an interval $(\gamma, \beta)$.
Can we draw any conclusions from this? Does pointwise convergence on $(\gamma, \beta)$ imply uniform convergence
on subintervals of $(\gamma, \beta)$? Does it also imply analytic continuation to the sector $\Theta_{\eta ,\beta}$ $(\ref{opensector})$?
The answer is affirmative (see Lemma $\ref{pointwiselemma}$) assuming that $\Lambda$ satisfies the additional condition

$(D)$\footnote{The author was not able to prove whether this condition is redundant or not assuming that $\Lambda\in ABC$}
\begin{equation}\label{extracondition}
\qquad \frac{n_{\Lambda}(t)\log t}{t}=O(1),\qquad\text{where}\qquad
n_{\Lambda}(t):=\sum_{|\lambda_n|\le t}\mu_n.
\end{equation}
We shall then say that $\Lambda$ belongs to the $ABCD$ class of multiplicity sequences.

\begin{lemma}\label{pointwiselemma}
Let the multiplicity sequence $\Lambda=\{\lambda_n,\mu_n\}_{n=1}^{\infty}$ belong to the $ABCD$ class.
Suppose that the series
\begin{equation}\label{taylor-dirichlet-representationpointlemma}
f(x)=\sum_{n=1}^{\infty}\left(\sum_{k=0}^{\mu_n-1} c_{n,k} x^k\right) e^{\lambda_n x}
\end{equation}
converges $\bf pointwise$ on a bounded interval $(\gamma,\beta)$. Then the series extends analytically to
the sector $\Theta_{\eta,\beta}$ $(\ref{opensector})$ with the same series representation, $x$ replaced by $z$,
converging uniformly on compact subsets of $\Theta_{\eta,\beta}$.
\end{lemma}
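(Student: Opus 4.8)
The plan is to show that pointwise convergence on $(\gamma,\beta)$ forces the coefficient bound $(\ref{cnkbound})$, and then invoke Lemma $\ref{TDSeries}$ to conclude analytic continuation and uniform convergence. So the heart of the matter is deriving a growth estimate on the $c_{n,k}$ from the mere pointwise convergence of the series on the interval. First I would fix a point $x_0\in(\gamma,\beta)$ where the series converges; then the partial sums $\sum_{n\le N}(\sum_k c_{n,k}x_0^k)e^{\lambda_n x_0}$ are bounded, so in particular the general term tends to $0$, giving
\[
\left|\sum_{k=0}^{\mu_n-1}c_{n,k}x_0^k\right|e^{x_0\Re\lambda_n}\to 0,\qquad n\to\infty.
\]
However, this single relation is not enough to control each individual $c_{n,k}$ separately, nor to get the sharp exponential rate. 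To extract individual-coefficient information I would use several points: since the series converges pointwise on the whole interval, pick $\mu_n$ distinct points in $(\gamma,\beta)$ (or work near a single point and use divided differences / a Vandermonde argument on the polynomial $\sum_k c_{n,k}x^k$) to invert and bound $|c_{n,k}|$ in terms of $\max_j\bigl|\sum_k c_{n,k}x_j^k\bigr|$, at the cost of a factor depending on $\mu_n$ and on the interval length. Here is precisely where condition $(D)$, i.e. $(\ref{extracondition})$, enters: the Vandermonde/interpolation factors grow like a power of the number of interpolation nodes, hence like $C^{\mu_n}$ or worse, and $(D)$ together with the density-zero property (already available since $\Lambda$ is an interpolating variety) is what guarantees $\mu_n\log\mu_n = o(|\lambda_n|)$ and, more delicately, that these combinatorial factors are absorbed into $e^{\epsilon\Re\lambda_n}$ using $(\ref{munlambdan})$.

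The cleaner route, which I would actually pursue, is via an Abel-summation / Valiron–Hille argument rather than brute interpolation. Since $\Lambda$ is an interpolating variety, both conditions in $(\ref{ValironConditionslimit})$ hold (this is exactly the computation in the proof of Lemma $\ref{TDSeries}$). Introduce the majorant Dirichlet series $f^*(x)=\sum_n C_n e^{\lambda_n x}$ with $C_n=\max_k|c_{n,k}|$. By the Hille–Valiron theory the region of pointwise convergence of the Taylor–Dirichlet series $f$ coincides with that of $f^*$ — but this equivalence is stated in the excerpt only for a region, and here I have convergence merely on a real \emph{interval}, not an open region; closing that gap is the main obstacle. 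To handle it I would argue that pointwise convergence at a single real point $x_0$ already implies, by the standard Dirichlet-series half-plane lemma (Hille's Theorem 1, quoted after $(\ref{ValironConditionslimit})$), that $f^*$ converges on the half-line $(-\infty,x_0)$ provided $\log n/\lambda_n\to0$; and one needs the polynomial coefficients $c_{n,k}$ controlled by $C_n$, which again is where condition $(D)$ is used — it ensures $\sum_k|c_{n,k}||x|^k \le \mu_n \cdot T^{\mu_n}\cdot C_n$ is a harmless perturbation. Pushing this through: pointwise convergence at $x_0<\beta$ yields $\limsup_{n\to\infty}\frac{\log C_n}{\Re\lambda_n}\le -x_0$, and since $x_0$ can be taken arbitrarily close to $\beta$, we obtain $\limsup_{n\to\infty}\frac{\log C_n}{\Re\lambda_n}\le -\beta$, which is precisely the bound $(\ref{cnkbound})$ (after using $(\ref{munlambdan})$ to pass from $C_n$ back to the individual $|c_{n,k}|$ with the extra $e^{\epsilon\Re\lambda_n}$ slack).

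Once $(\ref{cnkbound})$ is established, the conclusion is immediate: Lemma $\ref{TDSeries}$ gives that $g(z)=\sum_n(\sum_k c_{n,k}z^k)e^{\lambda_n z}$ is analytic in $\Theta_{\eta,\beta}$ and converges uniformly on its compact subsets, and since $(\gamma,\beta)\subset\Theta_{\eta,\beta}$ and the pointwise limit agrees with $g$ on $(\gamma,\beta)$, we have $f(x)=g(x)$ there. I expect the main obstacle to be exactly the passage from \emph{pointwise convergence on an interval} to the half-plane convergence statement for the majorant series $f^*$ — i.e. justifying that a single real convergence point, rather than an open complex region, suffices to trigger Hille's half-plane theorem — and cleanly isolating the role of condition $(D)$ in absorbing the polynomial factors $z^k$ and the multiplicities $\mu_n$. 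Everything after that is a direct citation of Lemma $\ref{TDSeries}$.
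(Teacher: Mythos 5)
Your overall architecture (derive the coefficient bound, then quote Lemma \ref{TDSeries}) matches the paper's, and you correctly identify the crux as passing from pointwise convergence on an interval to a bound on the $c_{n,k}$. But both of your routes to that bound have genuine gaps, and the second one is the decisive failure.

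In your ``cleaner route'' you assert that convergence at $x_0$ yields $\limsup_n \log C_n/\Re\lambda_n\le -x_0$. Convergence at $x_0$ only controls the grouped quantity $\bigl|\sum_k c_{n,k}x_0^k\bigr|e^{x_0\Re\lambda_n}$, and your appeal to condition $(D)$ via $\sum_k|c_{n,k}||x|^k\le\mu_n T^{\mu_n}C_n$ bounds the polynomial \emph{from above} by $C_n$ --- the wrong direction. To bound $C_n$ from above by the values of the polynomial you need a reverse (Remez/Bernstein-type) inequality, and supplying it is exactly the content the proposal is missing. Your Vandermonde alternative is closer in spirit but founders on uniformity: pointwise convergence gives $\sup_M|f_M(x)|\le B(x)$ with $B(x)$ depending on the point, while the $\mu_n$ interpolation nodes must vary with $n$ (since $\mu_n$ is unbounded), so you have no uniform constant to feed into the inversion. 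The paper repairs precisely this with Egoroff's theorem: the partial sums converge uniformly on a compact $K\subset(\gamma,\beta)$ with $|K|>\beta-\gamma-2\epsilon$, giving $\|f_M\|_{C[K]}\le T$; then Brudnyi's Bernstein--Remez inequality (Theorem E) propagates this to $\|f_M\|_{C[\gamma,\beta]}\le Te^{7d\Re\lambda_M}$ --- this is where condition $(D)$ actually enters, to bound the Remez exponent $l$ by $d\Re\lambda_M$ --- and the sup-norm distance theorem (Theorem \ref{ContinuousDistance}) then extracts the individual $c_{n,k}$.

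A second, smaller gap follows from the first: the honestly obtainable bound carries the loss $e^{7d\Re\lambda_n}$, so Lemma \ref{TDSeries} only yields analyticity in the smaller sector $\Theta_{\eta,\beta-7d}$. The paper then needs a further argument --- Valiron's theorem that the set of pointwise-convergence points at positive distance from the convex region of convergence has zero linear measure, combined with convexity of that region --- to force $(\gamma,\beta)\subset\mathcal D$ and hence $\Theta_{\eta,\beta}\subset\mathcal D$. Your proposal reaches the full exponent $-\beta$ only via the unjustified step above, so this entire second stage is absent.
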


The proof of this lemma depends on an old result of G. Valiron \cite{Valiron}, on
a Brudnyi inequality recalled below (Theorem $\bf E$), and the following $\bf{Distance\,\, result}$,
whose proof follows along the lines of proving Theorem $\ref{Distances}$.

\begin{theorem}\label{ContinuousDistance}
Denote by $C[\gamma, \beta]$ the space of complex-valued continuous functions on the interval
$[\gamma, \beta]$ equipped with the supremum norm $||f||_{C[\gamma,\beta]}$.
Let the multiplicity sequence $\Lambda=\{\lambda_n,\mu_n\}_{n=1}^{\infty}$ belong to the $ABC$ class.
Let $D_{\gamma,\beta,n,k}$ be the $\bf Distance$ between the function $e_{n,k}=x^k e^{\lambda_n x}$
and the closed span of the system $E_{\Lambda_{n,k}}=E_{\Lambda}\setminus e_{n,k}$ in $C[\gamma, \beta]$, that is
\[
D_{\gamma,\beta,n,k}:=\inf_{g\in \overline{\text{span}} (E_{\Lambda_{n,k}})} ||e_{n,k}-g||_{C[\gamma,\beta]}.
\]
Then, for every $\epsilon>0$ there is a constant $u_{\epsilon}>0$, independent of $n\in\mathbb{N}$ and $k=0,1,\dots,\mu_n-1$,
but depending on $\Lambda$ and $(\beta-\gamma)$, so that
\begin{equation}\label{distancelowerboundsc}
D_{\gamma,\beta,n,k}\ge u_{\epsilon}e^{(\beta-\epsilon)\Re\lambda_n}.
\end{equation}
\end{theorem}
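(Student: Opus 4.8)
The plan is to mimic the proof of Theorem \ref{Distances} almost verbatim, replacing the $L^p$-duality argument by the duality between $C[\gamma,\beta]$ and the space of finite complex Borel measures on $[\gamma,\beta]$, but — crucially — observing that the auxiliary function $g_{n,k}$ produced in Lemma \ref{SecondLemma} is \emph{continuous} on $[\gamma,\beta]$ and vanishes outside it, so it already \emph{is} (the density of) such a measure. First I would recall from Lemma \ref{SecondLemma} the continuous functions $g_{n,k}$ supported on $[\gamma,\beta]$ satisfying the biorthogonality relation $(\ref{orthogonal})$ together with the pointwise bound $(\ref{upperboundforHnk})$, namely $|g_{n,k}(t)|\le M_{\epsilon,3}\,e^{(-\beta+\epsilon)\Re\lambda_n}$ for all $t\in[\gamma,\beta]$. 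From this bound the $L^1$-norm is controlled: $\|g_{n,k}\|_{L^1(\gamma,\beta)}\le M_{\epsilon,3}(\beta-\gamma)e^{(-\beta+\epsilon)\Re\lambda_n}$, a bound independent of $n$ and $k$ beyond the stated exponential factor.

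Next, fix $n\in\mathbb{N}$ and $k\in\{0,1,\dots,\mu_n-1\}$, and let $g\in\overline{\mathrm{span}}(E_{\Lambda_{n,k}})$ in $C[\gamma,\beta]$, so that for every $\epsilon>0$ there is an exponential polynomial $P_\epsilon\in\mathrm{span}(E_{\Lambda_{n,k}})$ with $\|g-P_\epsilon\|_{C[\gamma,\beta]}<\epsilon$. Since $P_\epsilon$ is a finite linear combination of the functions $x^l e^{\lambda_j x}$ with $(j,l)\neq(n,k)$, relation $(\ref{orthogonal})$ gives $\int_\gamma^\beta g_{n,k}(t)P_\epsilon(t)\,dt=0$; letting $\epsilon\to 0$ and using $|\int_\gamma^\beta g_{n,k}(t)(g(t)-P_\epsilon(t))\,dt|\le \|g_{n,k}\|_{L^1}\cdot\|g-P_\epsilon\|_{C[\gamma,\beta]}$ yields $\int_\gamma^\beta g_{n,k}(t)g(t)\,dt=0$. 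Combining this with the normalization $\int_\gamma^\beta g_{n,k}(t)\,t^k e^{\lambda_n t}\,dt=\sqrt{2\pi}/(-i)^k$ from $(\ref{orthogonal})$, we obtain
\[
\sqrt{2\pi}=\left|\int_\gamma^\beta g_{n,k}(t)\bigl(e_{n,k}(t)-g(t)\bigr)\,dt\right|\le \|g_{n,k}\|_{L^1(\gamma,\beta)}\cdot\|e_{n,k}-g\|_{C[\gamma,\beta]},
\]
where $e_{n,k}(t)=t^k e^{\lambda_n t}$. Inserting the $L^1$-bound for $g_{n,k}$ and rearranging gives
\[
\|e_{n,k}-g\|_{C[\gamma,\beta]}\ge \frac{\sqrt{2\pi}}{M_{\epsilon,3}(\beta-\gamma)}\,e^{(\beta-\epsilon)\Re\lambda_n}.
\]
Taking the infimum over all $g\in\overline{\mathrm{span}}(E_{\Lambda_{n,k}})$ in $C[\gamma,\beta]$ and setting $u_\epsilon=\sqrt{2\pi}/(M_{\epsilon,3}\max\{(\beta-\gamma),1\})$ yields $(\ref{distancelowerboundsc})$, with $u_\epsilon$ depending only on $\Lambda$ and $(\beta-\gamma)$ and not on $n$ or $k$, exactly as in Theorem \ref{Distances}.

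There is essentially no serious obstacle here: all the analytic work — constructing the Luxemburg–Korevaar function $G$ (Theorem \ref{Zikkostheorem}), the lower bound $(\ref{lowerboundforthefunctionGz})$ on the circles $\partial P_{n,\epsilon}$, the construction of the $G_{n,k}$ and hence of the continuous compactly supported functions $g_{n,k}$ with bound $(\ref{upperboundforHnk})$ — was already carried out in Sections 3 and 4 and may be invoked wholesale. The only point deserving a word of care is the pairing: in the $L^p$ case one used Hölder's inequality with the conjugate exponent, whereas here one uses the trivial estimate $|\int fg\,dt|\le\|f\|_{L^1}\|g\|_{C[\gamma,\beta]}$, which is precisely the $p=\infty$ endpoint; since $g_{n,k}$ is genuinely continuous (not merely $L^1$) this pairing is unambiguous and the argument of Section 4.2 goes through unchanged. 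One should also note that $C[\gamma,\beta]\hookrightarrow L^p(\gamma,\beta)$ for every $p$, so $(\ref{distancelowerboundsc})$ is consistent with — and in fact could also be deduced from — the estimate $D_{\gamma,\beta,\infty,n,k}\ge D_{\gamma,\beta,p,n,k}$ being false in general; the direct argument above is the clean route.
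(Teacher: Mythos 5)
Your proof is correct and is precisely the argument the paper intends: the paper gives no separate proof of Theorem \ref{ContinuousDistance}, remarking only that its proof ``follows along the lines of proving Theorem \ref{Distances}'', and your replacement of the H\"older pairing by the $L^1$--$C[\gamma,\beta]$ pairing, applied to the same functions $g_{n,k}$ from Lemma \ref{SecondLemma}, is exactly that adaptation. (Your final sentence is garbled --- the elementary comparison $\|e_{n,k}-g\|_{C[\gamma,\beta]}\ge(\beta-\gamma)^{-1/p}\|e_{n,k}-g\|_{L^p(\gamma,\beta)}$ does in fact hold and would let one deduce the theorem directly from Theorem \ref{Distances} --- but this aside does not affect the validity of your direct argument.)
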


As a result of Lemma $\ref{pointwiselemma}$, we revisit Theorems $\ref{converse}$ and $\ref{CarlesonTheorem}$
and we obtain the following.

\begin{theorem}\label{CarlesonTheorem2}
Let the multiplicity sequence $\Lambda$ belongs to the $ABCD$ class.
Let $W(\Lambda , \gamma, \beta)$ be the space of functions defined on the interval $(\gamma, \beta)$
that admit a Taylor-Dirichlet series representation
\[
\sum_{n=1}^{\infty}\left(\sum_{k=0}^{\mu_n-1} c_{n,k} x^k\right) e^{\lambda_n x},
\]
converging $\bf pointwise$ on $(\gamma, \beta)$.

(i) Let $V(\Lambda,\gamma,\beta)$ be the solution space of the differential equation $(\ref{CarlesonLeontevEquation})$. Then
\[
V(\Lambda,\gamma,\beta)=W(\Lambda,\beta).
\]

(ii) Let $f\in W(\Lambda , \beta)$ and suppose that $f\in L^p (\gamma, \beta)$. Then $f$ belongs to the closed span
of the exponential system $E_{\Lambda}$ in $L^p (\gamma, \beta)$.
\end{theorem}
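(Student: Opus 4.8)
The plan is to obtain Theorem \ref{CarlesonTheorem2} by combining Lemma \ref{pointwiselemma} with Theorems \ref{converse} and \ref{CarlesonTheorem}: Lemma \ref{pointwiselemma} upgrades mere pointwise convergence of a Taylor-Dirichlet series on $(\gamma,\beta)$ to an analytic continuation to the sector $\Theta_{\eta,\beta}$ that converges uniformly on compact subsets, and once that upgrade is available the function in question lands exactly within the scope of the two earlier theorems. So the proof of Theorem \ref{CarlesonTheorem2} should be little more than an assembly of results already at hand.

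For part (i) I would prove both inclusions. If $f\in V(\Lambda,\gamma,\beta)$, that is $f\in C(\gamma,\beta,\{G_n\})$ and $F(D)f\equiv 0$ on $(\gamma,\beta)$, then by Theorem \ref{CarlesonTheorem} (A) the function $f$ extends analytically in $\Theta_{\eta,\beta}$ as a Taylor-Dirichlet series converging uniformly on compact subsets, hence in particular pointwise on $(\gamma,\beta)$; therefore $f\in W(\Lambda,\gamma,\beta)$. Conversely, if $f\in W(\Lambda,\gamma,\beta)$, its Taylor-Dirichlet series converges pointwise on $(\gamma,\beta)$, and Lemma \ref{pointwiselemma} shows that the same series defines an analytic function on $\Theta_{\eta,\beta}$ converging uniformly on compacta. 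Applying Theorem \ref{CarlesonTheorem} (B) to this series yields $f\in C(\gamma,\beta,\{G_n\})$ together with $F(D)f\equiv 0$ on $(\gamma,\beta)$, i.e. $f\in V(\Lambda,\gamma,\beta)$. The two inclusions give $V(\Lambda,\gamma,\beta)=W(\Lambda,\gamma,\beta)$.

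For part (ii), given $f\in W(\Lambda,\gamma,\beta)$ with $f\in L^p(\gamma,\beta)$, Lemma \ref{pointwiselemma} again produces an analytic extension of the Taylor-Dirichlet series of $f$ to $\Theta_{\eta,\beta}$ with uniform convergence on compacta; combined with the hypothesis $f\in L^p(\gamma,\beta)$ this is precisely the situation of Theorem \ref{converse}, which then gives $f\in\overline{\text{span}}(E_{\Lambda})$ in $L^p(\gamma,\beta)$. I would also note in passing that $W(\Lambda,\gamma,\beta)$ is well defined as a set of functions: if two Taylor-Dirichlet series converge pointwise on $(\gamma,\beta)$ to the same function, then by Lemma \ref{pointwiselemma} both extend analytically to $\Theta_{\eta,\beta}$, agree there by the identity theorem, and by the uniqueness Lemma \ref{uniquenessDirichlet} have identical coefficients.

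The main difficulty does not lie in Theorem \ref{CarlesonTheorem2} itself, which is bookkeeping once all the pieces are assembled, but rather inside Lemma \ref{pointwiselemma} — specifically in the passage from pointwise to locally uniform convergence. That step rests on the $C[\gamma,\beta]$ Distance estimate of Theorem \ref{ContinuousDistance} (itself proved along the lines of our Fundamental Result, Theorem \ref{Distances}), on the classical convergence results of Valiron, and on a Brudnyi-type Bernstein inequality. So the only care needed here is to check that the hypotheses of Theorems \ref{converse} and \ref{CarlesonTheorem} are met verbatim after invoking Lemma \ref{pointwiselemma}, which they are.
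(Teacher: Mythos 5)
Your proposal is correct and matches the paper's intended argument exactly: the paper presents Theorem \ref{CarlesonTheorem2} as an immediate consequence of Lemma \ref{pointwiselemma} combined with Theorems \ref{converse} and \ref{CarlesonTheorem}, which is precisely the assembly you carry out (with the real work living in Lemma \ref{pointwiselemma}, as you note). Your extra remark on the well-definedness of $W(\Lambda,\gamma,\beta)$ via Lemma \ref{uniquenessDirichlet} is a harmless and welcome addition.
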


\subsection{A Bernstein-type inequality for exponential polynomials by Alexander Brudnyi}

Following A. Brudnyi \cite{Brudnyi}, for $n=1,2,\dots, M$, let $\lambda_n$ be complex numbers and consider the exponential polynomial
\[
q(x):=\sum_{n=1}^{M}\left(\sum_{k=0}^{\mu_n-1} c_{n,k} x^k\right) e^{\lambda_n x},\quad c_{n,k}\in\mathbb{C}.
\]
The degree of the exponential polynomial $q$, denoted by $m(q)$, is defined to be
\[
m(q):=\mu_1+\mu_2+\mu_3+\dots+\mu_M.
\]
The exponential type of $q$, denoted by $\epsilon (q)$, is defined to be
\[
\epsilon (q):=\max_{1\le n\le M}\max_{\{z:\,\, |z|\le 1\}}|\lambda_n z|.
\]

Brudnyi proved the following Bernstein-type inequality.

\begin{thme}\cite[Theorem 1.5]{Brudnyi}
Let $q$ be an exponential polynomial of degree m(q). Let $I$ be an interval on the real line, $I=\{x:\,\, |x-x_0|\le r\}$ for some $x_0\in\mathbb{R}$ and some $r>0$.
Let $w$ be a measurable subset of $I$. Then there exist positive constants $c_1,\, c_2,\, c_3$, which do not depend on $q,I,w$,
\[
c_1<15e^3,\qquad c_2<4e+1,\qquad c_3<4e+1
\]
and another positive constant $C$ which depends on the exponential polynomial $q$, satisfying the upper bound
\[
C<\left(M\cdot m(q)\right)^{m(q)}\cdot e^{2M},
\]
such that
\[
\sup_{x\in I} |q(x)|\le \left(\frac{c_1\cdot |I|}{|w|}\right)^l\cdot \sup_{x\in w}|q(x)|
\]
where
\[
l=\log C+(m(q)-1)\cdot\log (c_2\max\{1,\epsilon (q)\})+c_3\epsilon(q)\cdot r.
\]
\end{thme}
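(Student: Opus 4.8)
Since both sides are unchanged under $q\mapsto cq$ (for $c\in\mathbb C\setminus\{0\}$) and under real translation, we may take $x_0=0$ and normalise $\sup_{x\in I}|q(x)|=1$. The plan is to obtain the estimate from two ingredients: a \emph{bound on the number of zeros} of the entire extension of $q$ in a complex disk attached to $I$, and a \emph{Cartan-type local estimate} that converts such a zero bound into a Remez inequality. Two classical facts about exponential polynomials are used throughout. First, the functions $x^{k}e^{\lambda_n x}$ form an extended Chebyshev system on the real line, so $q$ has at most $m(q)-1$ real zeros counted with multiplicity; in particular the order of any zero of $q$ is $\le m(q)-1$. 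Second, writing $q(z)=\sum_{n=1}^{M}p_n(z)e^{\lambda_n z}$ with $\deg p_n\le\mu_n-1$, a Jensen-formula argument on $\{|z|\le 2\rho\}$ --- comparing $\frac1{2\pi}\int_0^{2\pi}\log|q(2\rho e^{i\theta})|\,d\theta$ with the lowest non-vanishing Taylor coefficient of $q$ at $0$, and using the growth $|q(z)|\le\big(\sum_n\max_{|z|=2\rho}|p_n(z)|\big)e^{2\epsilon(q)\rho}$ --- bounds the number $N(\rho)$ of zeros of $q$ in $\{|z|\le\rho\}$ by $m(q)-1$ plus a multiple of $\epsilon(q)\rho$, plus a term measuring how small $q$ is on $I$ relative to its modulus on $\{|z|\le 2\rho\}$.

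Controlling that last term is the step I expect to be the main obstacle: it amounts to the complementary \emph{doubling / reverse--Bernstein inequality}
\[
\max_{|z|\le 2r}|q(z)|\;\le\;e^{\,l'}\,\sup_{x\in I}|q(x)|,
\]
and it is precisely here that the $q$-dependent constant $C<(M\,m(q))^{m(q)}e^{2M}$ and the term $(m(q)-1)\log\big(c_2\max\{1,\epsilon(q)\}\big)$ in the exponent arise. I would prove it by induction on the number $M$ of distinct frequencies: at each step remove the frequency $\lambda_M$ by passing to $\tilde q:=\bigl(e^{-\lambda_M z}q\bigr)^{(\mu_M)}$ --- again an exponential polynomial, but with $M-1$ frequencies and degree $\le m(q)-\mu_M$ --- apply the inductive hypothesis to $\tilde q$, and reconstruct $q$ by $\mu_M$ successive integrations (the integration constants being pinned down by values of $q$ and its derivatives at a point of $I$) followed by multiplication by $e^{\lambda_M z}$; the cost of the $\mu_M$ differentiations is controlled by the Markov and Bernstein inequalities on $I$. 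Over the $M$ steps --- $m(q)$ differentiations in total and $M$ exponential shifts, each estimated on a disk of radius $2r$ --- the accumulated loss is of the form $C\,e^{\,c_3\epsilon(q)r}$ with $C$ as stated, and tracking the numerical constants through this induction, through the Jensen step, and through the Cartan step below produces the bounds $c_1<15e^{3}$, $c_2<4e+1$, $c_3<4e+1$; this bookkeeping is lengthy but mechanical.

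Granting that the valence of $q$ on $\{|z|\le 2r\}$ together with the loss $l'$ in the doubling inequality sum to at most the quantity $l$ of the statement, the inequality follows by the standard Cartan argument. On $\{|z|\le 2r\}$ factor the entire extension of $q$ as $q=B\cdot g$, where $B$ is the Blaschke product of that disk built from the (at most $l$) zeros of $q$ inside it, and $g$ is zero-free there; then $|B|\le1$ on $I$, while $\log|g|$ is harmonic, so once $\sup_{\{|z|\le 2r\}}|g|$ is bounded using the doubling inequality, a Harnack / Borel--Carath\'eodory estimate gives $\sup_I|g|\le A\inf_I|g|$ for an absolute $A$. For measurable $w\subset I$ the elementary bound $\frac1{|w|}\int_w\log|x-a|\,dx\ge\log\frac{|w|}{2e}$ --- the integral over a set of fixed measure being minimised when the set is an interval centred at $a$ --- applied to each of the at most $l$ zeros $a$, yields $\sup_{x\in w}|B(x)|\ge\big(|w|/(c_1|I|)\big)^{l}$. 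Combining these estimates gives
\[
\sup_{x\in I}|q(x)|\;\le\;\Big(\frac{c_1\,|I|}{|w|}\Big)^{l}\,\sup_{x\in w}|q(x)|,
\]
which is the assertion of Theorem~$\mathbf E$.
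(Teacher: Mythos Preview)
The paper does not prove Theorem~E at all: it is quoted verbatim from \cite[Theorem 1.5]{Brudnyi} as an external tool and used as a black box in Section~10. So there is no ``paper's own proof'' against which to compare your sketch.

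Your outline is a plausible reconstruction of the sort of argument Brudnyi uses --- valence bounds via Jensen, a doubling inequality obtained by induction on the number of frequencies, and a Cartan/Remez step --- and the overall architecture is correct for results of this type. That said, the precise numerical constants $c_1<15e^3$, $c_2<4e+1$, $c_3<4e+1$ and the specific form $C<(M\cdot m(q))^{m(q)}e^{2M}$ are the output of a careful bookkeeping in Brudnyi's paper, and your sketch does not actually carry that bookkeeping through; phrases like ``tracking the numerical constants \ldots\ is lengthy but mechanical'' hide exactly the content that distinguishes this theorem from a qualitative Remez-type statement. If you intend this as a genuine proof rather than a heuristic, that tracking must be done explicitly --- but for the purposes of the present paper none of this is needed, since Theorem~E is simply cited.
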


\subsection{Proof of Lemma $\ref{pointwiselemma}$}

Consider the Taylor-Dirichlet series $(\ref{taylor-dirichlet-representationpointlemma})$ converging pointwise on $(\gamma, \beta)$.
We will first prove that there exist positive constants $T$ and $d$, so that for every $\epsilon>0$, a positive constant
$m_{\epsilon}$ exists, independent of $n$ and $k$, with the coefficients $c_{n,k}$ having the upper bound
\begin{equation}\label{upperboundoncoefficients}
|c_{n,k}|\le m_{\epsilon}\cdot T\cdot e^{(-\beta+\epsilon+7d)\Re\lambda_n},\quad \forall\,\, n\in\mathbb{N},\quad\forall\,\, k=0,1,\dots,\mu_m-1.
\end{equation}
Then by utilizing an old result of G. Valiron, we will reach our conclusion.

\subsubsection{Obtaining the upper bound $(\ref{upperboundoncoefficients})$}

Consider the sequence of continuous functions $\{f_M\}_{M=1}^{\infty}$ on $(\gamma, \beta)$,
\[
f_M(z):=\sum_{n=1}^{M}\left(\sum_{k=0}^{\mu_n-1} c_{n,k} x^k\right) e^{\lambda_n x},\qquad M=1,2,\dots,
\]
which converges pointwise to $f$ on $(\gamma, \beta)$.

Let us choose some small $\epsilon>0$: by Egoroff's theorem there exists a measurable subset of $(\gamma,\beta)$, call it $E$,
such that $\beta-\gamma>\mu (E)>\beta-\gamma-\epsilon$ and
such that $\{f_M\}_{M=1}^{\infty}$ converges uniformly to $f$ on $E$.

Since $E\subset (\gamma, \beta)$ is measurable there is a closed and hence compact set $K\subset E$
such that $\beta-\gamma>\mu (K)>\beta-\gamma-2\epsilon$.
The sequence $\{f_M\}_{M=1}^{\infty}$ converges uniformly to $f$ on $K$ and this implies that $f$ is a continuous function on $K$.
Clearly now there is a positive constant $T$ such that

\begin{equation}\label{boundcompact}
||f_M||_{C[K]}\le T \qquad \forall\,\, M\in\mathbb{N}\qquad\text{where}\quad  ||f_M||_{C[K]}=\sup_{x\in K}|f_M (x)|.
\end{equation}
We now use the result of Brudnyi. First observe that the degree $m(f_M)$ of  the exponential polynomial $f_M$ is
\[
m(f_M)=\mu_1+\mu_2+\mu_3+\dots+\mu_M\qquad\text{that\,\, is}\qquad m(f_M)=n_{\Lambda}(|\lambda_M|)
\]
where $n_{\Lambda}(t)=\sum_{|\lambda_n|\le t}\mu_n$ is the counting function of $\Lambda$.
The exponential type of $f_M$ satisfies
\[
\epsilon (f_M)\le 2\Re\lambda_M.
\]
Consider the interval $I$, $I=\{x:\, x\in [\gamma,\beta]\}$ with radius $r=(\beta-\gamma)/2$ and the compact subset $K\subset I$.
By Theorem $\bf E$, there exist positive constants $c_1,\, c_2,\, c_3$,
\[
c_1<15e^3,\qquad c_2<4e+1,\qquad c_3<4e+1
\]
and another positive constant $C$ satisfying the upper bound
\begin{equation}\label{ConstantC}
C<\left(M\cdot n_{\Lambda}(|\lambda_M|)\right)^{n_{\Lambda}(|\lambda_M|)}\cdot e^{2M},
\end{equation}
such that
\begin{equation}\label{remez}
||f_M||_{C[\gamma,\beta]}\le \left(\frac{c_1\cdot (\beta-\gamma)}{|K|}\right)^l\cdot ||f_M||_{C[K]}
\end{equation}
where
\[
l=\log C+(n_{\Lambda}(|\lambda_M|)-1)\cdot\log (2c_2\Re\lambda_M)+c_3\Re\lambda_M\cdot (\beta-\gamma).
\]
We will show below that $l$ satisfies the upper bound $(\ref{upperboundforl})$ for some $d>0$. Observe that
from $(\ref{ConstantC})$ we get
\[
l<  2M+n_{\Lambda}(|\lambda_M|)\cdot\log(M\cdot n_{\Lambda}(|\lambda_M|))+(n_{\Lambda}(|\lambda_M|)-1)\cdot\log (2c_2\Re\lambda_M)+c_3\Re\lambda_M\cdot (\beta-\gamma).
\]
Since $\sum_{n=1}^{\infty}\mu_n/|\lambda_n|<\infty$ and $\sup |\arg\lambda_n|<\pi/2$,  then
\[
\frac{M}{\Re\lambda_M}\to 0,\,\, M\to\infty,\qquad\text{and}\qquad \frac{n_{\Lambda}(|\lambda_M|)}{\Re\lambda_M}\to 0,\,\, n\to\infty.
\]
Thus $M<\Re\lambda_M$ and  $n_{\Lambda}(|\lambda_M|)< \Re\lambda_M$ as well. Substituting above gives
\begin{eqnarray*}
l & < &  2\Re\lambda_M+2n_{\Lambda}(|\lambda_M|)\cdot\log(\Re\lambda_M)+(n_{\Lambda}(|\lambda_M|)-1)\cdot\log (2c_2\Re\lambda_M)+c_3\Re\lambda_M\cdot (\beta-\gamma)\nonumber\\
& < & A\cdot \Re\lambda_M +B\cdot n_{\Lambda}(|\lambda_M|)\cdot\log(\Re\lambda_M)
\end{eqnarray*}
for some positive constants $A$ and $B$.
But $\Lambda$ satisfies the condition $(\ref{extracondition})$ as well which implies that
\[
n_{\Lambda}(|\lambda_M|)\cdot \log(\Re\lambda_M)\le D\Re\lambda_M
\]
for some $D>0$. All combined shows that there is some positive constant $d$ so that
\begin{equation}\label{upperboundforl}
l\le d\Re\lambda_M.
\end{equation}

Let us go back to relation $(\ref{remez})$.
From $(\ref{boundcompact})$, $(\ref{upperboundforl})$, the upper bound on $c_1$,
and the fact that $\beta-\gamma>\mu (K)>\beta-\gamma-2\epsilon$, we get
\begin{equation}\label{boundforfM}
||f_M||_{C[\gamma,\beta]}\le e^{7l}\cdot T\le Te^{7d\Re\lambda_M}.
\end{equation}
Then write $f_M(x)$ as
\[
f_M(x)=c_{m,k}\cdot [e_{m,k}(x) + Q_{M,m,k}(x)], \qquad e_{m,k}(x)=x^ke^{\lambda_m x},
\]
where for fixed $M\in\mathbb{N}$, $m=1,\dots, M$ and $k=0,1,\dots,\mu_M-1$,
\[
Q_{M,m,k}(x):=\left(\sum_{j=0,j\not=k}^{\mu_m-1} \frac{c_{m,j}}{c_{m,k}}x^{j}\right) e^{\lambda_m x}+\sum_{n=1, n\not=m}^{M}\left(\sum_{j=0}^{\mu_n-1} \frac{c_{n,j}}{c_{m,k}}x^{j}\right)
e^{\lambda_n x}.
\]
Then from  $(\ref{boundforfM})$ we get
\[
T\cdot e^{7d\Re\lambda_M}\ge |c_{m,k}|\cdot ||e_{m,k}+Q_{M,m,k}||_{C[\gamma,\beta]},\quad \forall\,\,
m=1,\dots, M,\quad\forall\,\, k=0,1,\dots,\mu_M-1.
\]
Clearly $Q_{M,m,k}$ belongs to the span of the exponential system $E_{\Lambda}\setminus e_{m,k}$.
It then follows from the distance result $(\ref{distancelowerboundsc})$, that for every $\epsilon>0$ there is a positive constant
$m_{\epsilon}$ which depends only on $\Lambda$ and $(\beta-\gamma)$, such that
$(\ref{upperboundoncoefficients})$ is true.

\subsubsection{Ending the proof with the aid of Valiron}

With $(\ref{upperboundoncoefficients})$ verified, Lemma $\ref{TDSeries}$ says that the Taylor-Dirichlet series
\[
g(z)=\sum_{n=1}^{\infty}\left(\sum_{k=0}^{\mu_n-1} c_{n,k}z^k\right) e^{\lambda_n z}
\]
defines an analytic function in the sector
\begin{equation}\label{opensectorD}
\Theta_{\eta ,\beta-7d}:=\left\{z:\left|\frac{\Im z}{\Re (z-\beta+7d)}\right|\le \frac{1}{\tan\eta},\,\,\Re z<\beta-7d\right\},
\end{equation}
converging uniformly on its compact subsets. By the Valiron-Hille results, we know that the open regions
of pointwise convergence and absolute convergence of $g(z)$, coincide with the open region $\cal D$ of convergence of the series
\[
g^* (z)=\sum_{n=1}^{\infty}C_n e^{\lambda_n z},\qquad C_n=\max\{|c_{n,k}|:\,\, k=0,1,\dots,\mu_n-1\}
\]
and this open region $\cal D$ is convex with $\Theta_{\eta,\beta-7d}\subset \cal D$.

In fact we will show below that the interval $(\gamma,\beta)$ is a subset of $\cal D$. So
suppose otherwise: then there is a point $x_0\in (\gamma,\beta)$ such that $x_0$ is not in $\cal D$, thus $x_o\notin\Theta_{\eta ,\beta-7d}$ either, which means that $x_0>\beta-7d$.
If this is true, we then claim that $(x_0,\beta)\cap \cal{D}=\emptyset$.
Indeed, suppose that there is a point $x_1 \in (x_0,\beta)$ such that $x_1\in \cal D$ and choose the point $x_2=\beta-7d-1$ which clearly is in the sector $\Theta_{\eta ,\beta-7d}$
hence in $\cal D$ as well. Then the convexity of the region $\cal D$ implies that the whole segment $[x_2,x_1]$ lies in $\cal D$ as well. But this is a contradiction
since $x_0 \in (x_2,x_1)$. Thus if some point $x_0\in (\gamma,\beta)$ is not in $\cal D$, then $[x_0,\beta)\cap \cal{D}=\emptyset$. Then let $x_3=(x_0+\beta)/2$ thus
$(x_3,\beta)\subset (x_0,\beta)$. Due to the convexity of the region $\cal D$, we can then show that there is $\delta>0$ so that the
rectangle
\[
K_{x_3,\beta,\delta}:=\{z:\,\, x_3\le\Re z\le\beta,\,\, |\Im z|\le\delta\}
\]
does not intersect the closure of $\cal D$.
This means that
\[
\inf\{|z_1-z_2|:\,\, z_1\in [x_3,\beta],\,\, z_2\in \overline{\cal{D}}\}>0,
\]
that is, the $\bf Distance$ of the segment $[x_3,\beta]$ from the closure of $\cal D$ is $\bf positive$.
However, we will see below, based on a Valiron result, that this cannot take place: thus $(\gamma,\beta)\subset\cal D$.

First we note that if a multiplicity sequence $\Lambda$ satisfies  $(\ref{ValironConditionslimit})$ as well as the condition
\begin{equation}\label{extraValiron}
\lim_{n\to\infty}\frac{\mu_n\log n}{\lambda_n}=0,
\end{equation}
then Valiron (\cite[page 30]{Valiron} as well as  Lepson \cite[pages 42-43]{Lepson} proved that the $\bf set$ of points at which a
Taylor-Dirichlet series converges pointwise and which lies at a $\bf positive$ distance from the convex region of convergence must have
Carath\'{e}odory Linear Measure \footnote{ see \cite[Chapter 6]{Pommerenke} for Linear Measure} equal to zero.
We recall now that since $\Lambda\in ABC$, then condition $(\ref{conditionn})$ is a necessary one. If we write
\[
\frac{\mu_n\log n}{\lambda_n}=\frac{\mu_n\log |\lambda_n|}{|\lambda_n|}\cdot\frac{\log n}{\log |\lambda_n|}\cdot \frac{|\lambda_n|}{\lambda_n}
\]
and use the fact that the convergence of the series $\sum_{n=1}^{\infty}\mu_n/|\lambda_n|$ implies that $n/\lambda_n\to 0$ as $n\to \infty$,
we then see that conditions $(\ref{ValironConditionslimit})$ and $(\ref{extraValiron})$ definitely hold when $\Lambda\in ABC$.
Thus for a Taylor-Dirichlet series associated to such a multiplicity sequence, it is not possible to have pointwise convergence
on a set of points which has a positive linear measure and lies at a positive distance from the convex region of convergence.

As shown above, if the interval $(\gamma,\beta)$ is not a subset of the convex region $\cal D$, this implies that there is a segment $[x_3,\beta)$ on which the series $f(z)$
converges pointwise and which is at a positive distance from the closure of $\cal D$. But the linear measure of the interval $[x_3,\beta)$ is equal to its length $(\beta-x_3)$ (see \cite[Proposition 6.2]{Pommerenke}), hence it is positive.
By Valiron's result we reach a contradiction.
In other words, we proved that the whole interval $(\gamma,\beta)$ is a subset of $\cal D$.
But then the convexity of the region $\cal D$ implies that the sector $\Theta_{\eta,\beta}$ $(\ref{opensector})$ is also a subset of $\cal D$.
The proof of this lemma is complete.

\section{Acknowledgements}
The author expresses his deep appreciation to Professor Alekos Vidras for his useful suggestions and remarks.

\appendix

\section{Matrix representation of relation $(\ref{representation}$)}

If we identify $r_{n,k}$ with $R_{n,k}$, let $\xi_n=\mu_n-1$ for all $n\in\mathbb{N}$, let $c_{n,k,j,l}=\langle r_{n,k}, r_{j,l}\rangle$,
and write the elements of the exponential system $E_{\Lambda}$ and its biorthogonal sequence $r_{\Lambda}$ in column matrix form,
then we have the following relationship:
\[
r_{\Lambda} = G_{r_{\Lambda}}\cdot E_{\Lambda}\quad \text{for\,\, all}\quad x<\beta
\]
where $G_{r_{\Lambda}}$ is the Gram matrix associated with $r_{\Lambda}$ with entries
the inner products $\langle r_{n,k}, r_{j,l}\rangle$.

 \[
  \begin{pmatrix}
  r_{1,0}(x)  \\
  r_{1,1}(x)  \\
  \vdots \\
  r_{1,\xi_1}(x) \\

 \vdots  \\

 r_{2,0}(x)  \\
  r_{2,1}(x)  \\
  \vdots \\
  r_{2,\xi_2}(x) \\

  \vdots \\

  r_{n,0}(x)  \\
  r_{n,1}(x)  \\
  \vdots \\
  r_{n,\xi_n}(x) \\

 \vdots
 \end{pmatrix}
 =
 \begin{pmatrix}
  c_{1,0,1,0} & c_{1,0,1,1} & \cdots & c_{1,0,1,\xi_1} & c_{1,0,2,0} & c_{1,0,2,1} & \cdots & c_{1,0,2,\xi_2} & \cdots \\
  c_{1,1,1,0} & c_{1,1,1,1} & \cdots & c_{1,1,1,\xi_1} & c_{1,1,2,0} & c_{1,1,2,1} & \cdots & c_{1,1,2,\xi_2} & \cdots \\
 \vdots  & \vdots  & \ddots & \vdots & \vdots & \vdots & \ddots & \vdots  & \cdots \\
  c_{1,\xi_1,1,0} & c_{1,\xi_1,1,1} & \cdots & c_{1,\xi_1,1,\xi_1} & c_{1,\xi_1,2,0} & c_{1,\xi_1,2,1} & \cdots & c_{1,\xi_1,2,\xi_2} & \cdots \\

  c_{2,0,1,0} & c_{2,0,1,1} & \cdots & c_{2,0,1,\xi_1} & c_{2,0,2,0} & c_{2,0,2,1} & \cdots & c_{2,0,2,\xi_2} & \cdots \\
  c_{2,1,1,0} & c_{2,1,1,1} & \cdots & c_{2,1,1,\xi_1} & c_{2,1,2,0} & c_{2,1,2,1} & \cdots & c_{2,1,2,\xi_2} & \cdots \\
 \vdots  & \vdots  & \ddots & \vdots & \vdots & \vdots & \ddots & \vdots  & \cdots \\
  c_{2,\xi_2,1,0} & c_{2,\xi_2,1,1} & \cdots & c_{2,\xi_2,1,\xi_1} & c_{2,\xi_2,2,0} & c_{2,\xi_2,2,1} & \cdots & c_{2,\xi_2,2,\xi_2} & \cdots \\
  \vdots  & \vdots  & \vdots & \vdots & \vdots & \vdots & \vdots & \vdots  & \cdots \\
  c_{n,0,1,0} & c_{n,0,1,1} & \cdots & c_{n,0,1,\xi_1} & c_{n,0,2,0} & c_{n,0,2,1} & \cdots & c_{n,0,2,\xi_2} & \cdots \\
  c_{n,1,1,0} & c_{n,1,1,1} & \cdots & c_{n,1,1,\xi_1} & c_{n,1,2,0} & c_{n,1,2,1} & \cdots & c_{n,1,2,\xi_2} & \cdots \\
 \vdots  & \vdots  & \ddots & \vdots & \vdots & \vdots & \ddots & \vdots  & \cdots \\
  c_{n,\xi_n,1,0} & c_{n,\xi_n,1,1} & \cdots & c_{n,\xi_n,1,\xi_1} & c_{n,\xi_n,2,0} & c_{n,\xi_n,2,1} & \cdots & c_{n,\xi_n,2,\xi_2} & \cdots \\
  \vdots  & \vdots  & \vdots & \vdots & \vdots & \vdots & \vdots & \vdots  & \cdots
 \end{pmatrix}
 \cdot
 \begin{pmatrix}
  e^{\lambda_1 x}  \\
  x e^{\lambda_1 x}  \\
  \vdots \\
  x^{\xi_1} e^{\lambda_1 x} \\

  e^{\lambda_2 x}  \\
  x e^{\lambda_2 x} \\
  \vdots \\
  x^{\xi_2} e^{\lambda_2 x} \\
  \vdots \\
  e^{\lambda_n x}  \\
  x e^{\lambda_n x} \\
  \vdots \\
  x^{\xi_n} e^{\lambda_n x}\\
  \vdots
 \end{pmatrix}
 \]

\section{A uniqueness result concerning the coefficients of Taylor-Dirichlet series}
\setcounter{equation}{0}

The following result is known, however for the sake of completeness we present below a proof.
\begin{lemma}\label{uniquenessDirichlet}
Suppose that $\Lambda=\{\lambda_n,\mu_n\}_{n=1}^{\infty}$ satisfies $\sum_{n=1}^{\infty}\mu_n/|\lambda_n|<\infty$.
Suppose that two Taylor-Dirichlet series
\[
f(z)=\sum_{n=1}^{\infty}\left(\sum_{k=0}^{\mu_n-1}c_{n,k} z^{k}\right) e^{\lambda_n z}\quad\text{and}\quad
g(z)=\sum_{n=1}^{\infty}\left(\sum_{k=0}^{\mu_n-1}d_{n,k} z^{k}\right) e^{\lambda_n z},
\]
are analytic in some region $\Omega$ and $g=f$ on some closed disk in $\Omega$.
Then $c_{n,k}=d_{n,k}$ for all $n\in\mathbb{N}$ and $k=0,1,\dots,\mu_n-1$.
\end{lemma}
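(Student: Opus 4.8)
The plan is to reduce the statement to a single claim about one Taylor--Dirichlet series, and then to invoke the description of the region of convergence recalled in Section~5 together with the classical ``dominant exponential'' uniqueness argument for Dirichlet series. First I would set $h:=f-g$. By hypothesis $h$ is analytic on the region $\Omega$ and vanishes on a closed disk contained in $\Omega$, so the identity principle gives $h\equiv 0$ on $\Omega$. Writing $h$ out, $h(z)=\sum_{n\ge 1}\bigl(\sum_{k=0}^{\mu_n-1}(c_{n,k}-d_{n,k})z^{k}\bigr)e^{\lambda_n z}$ is again a Taylor--Dirichlet series associated with $\Lambda$, so the whole matter reduces to the claim: \emph{a Taylor--Dirichlet series associated with a $\Lambda$ satisfying $\sum_n\mu_n/|\lambda_n|<\infty$ which converges on a region and sums to the zero function there has all coefficients equal to $0$.} I would record at the outset that $\sum_n\mu_n/|\lambda_n|<\infty$ forces both relations in $(\ref{ValironConditionslimit})$: $\mu_n/|\lambda_n|\to 0$ trivially, and since $|\lambda_n|$ is nondecreasing, convergence of $\sum_n 1/|\lambda_n|\le\sum_n\mu_n/|\lambda_n|$ yields $n/|\lambda_n|\to 0$ and hence $\log n/|\lambda_n|\to 0$.

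Next I would invoke the Valiron--Hille description from Section~5: the open region $D$ of (pointwise, equivalently absolute) convergence of the series is convex, contains the disk on which the sum vanishes, and the convergence is uniform on compact subsets of $D$; hence $h\equiv 0$ on all of $D$. By \cite[Theorem~4.1]{Z2015JMAA}, if $D\neq\mathbb{C}$ then $\partial D$ is a natural boundary for the sum, which is impossible since the zero function is entire; therefore $D=\mathbb{C}$, and the series converges absolutely and locally uniformly on the whole plane to $0$.

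It then remains to prove that an everywhere convergent Taylor--Dirichlet series $\sum_n P_n(z)e^{\lambda_n z}$, with $P_n(z)=\sum_{k=0}^{\mu_n-1}e_{n,k}z^{k}$ and $\sum_n\mu_n/|\lambda_n|<\infty$, that is identically $0$ must have $P_n\equiv 0$ for every $n$. I would argue by contradiction: if $S=\{n:P_n\not\equiv 0\}\neq\emptyset$, pick an index $m\in S$ and a unit vector $e^{i\theta}$ with $\Re(\lambda_m e^{i\theta})>\Re(\lambda_n e^{i\theta})$ for all $n\in S$ with $\lambda_n\neq\lambda_m$; multiply the identity $\sum_n P_n(z)e^{\lambda_n z}=0$ by $e^{-\lambda_m z}$, put $z=re^{i\theta}$, and let $r\to+\infty$. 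The $n=m$ term is $P_m(re^{i\theta})$, of polynomial growth, while every other term is $P_n(re^{i\theta})e^{(\lambda_n-\lambda_m)re^{i\theta}}$ with $\Re\bigl((\lambda_n-\lambda_m)e^{i\theta}\bigr)<0$; using $\sum_n\mu_n/|\lambda_n|<\infty$ to dominate the polynomial factors and an Abel/summation-by-parts estimate on the tail, the sum of all remaining terms is $o\bigl(|P_m(re^{i\theta})|\bigr)$, forcing $P_m(re^{i\theta})\to 0$ and hence $P_m\equiv 0$, a contradiction. Applying this to the series for $h$ gives $c_{n,k}=d_{n,k}$ for all $n$ and $k$.

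The step I expect to be the main obstacle is this last one, specifically the existence of a direction $e^{i\theta}$ in which a single exponent strictly dominates \emph{all} the others at once: with the $\lambda_n$ allowed arbitrary arguments this needs the geometry of the (necessarily proper, since $D$ is open) closed convex hull of $\{\lambda_n:n\in S\}$ and the fact that its extreme points lie among the $\lambda_n$, after which the tail must still be controlled against the polynomial factors $P_n$ using $\sum_n\mu_n/|\lambda_n|<\infty$. I note that in every application of this lemma in the paper one has $\Lambda\in ABC$, so $|\arg\lambda_n|\le\eta<\pi/2$ for all $n$; then one may simply take $\theta=0$ and let $z=x\to+\infty$ along the real axis, and the argument collapses to the textbook uniqueness proof for ordinary Dirichlet series.
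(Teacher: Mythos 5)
Your reduction to a single series ($h=f-g$, identity principle on the region $\Omega$) is fine, but the route you then take has two genuine gaps, and it is in any case quite different from the paper's. First, this lemma assumes only condition $A$, $\sum_n\mu_n/|\lambda_n|<\infty$ — neither the sector condition $B$ nor the interpolating-variety condition $C$. The natural-boundary theorem you invoke to force $D=\mathbb{C}$ (\cite[Theorem 4.1]{Z2015JMAA}) is used in this paper only under the interpolating-variety hypothesis (see Lemma $\ref{TDSeries}$), so you are importing a result whose hypotheses you do not have; without it you cannot rule out, say, $D$ being a half-plane, and then ``let $r\to+\infty$'' is not even available in most directions. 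Second, the coefficient-extraction step is exactly the part you admit is the obstacle, and it does fail as sketched: with no sector condition the exponents $\lambda_n$ may have arbitrary arguments, a generic $\lambda_m$ admits no direction $e^{i\theta}$ with $\Re(\lambda_m e^{i\theta})>\Re(\lambda_n e^{i\theta})$ for all other $n$ (only exposed points of the convex hull do, and peeling those off one at a time does not obviously terminate); even in the $ABC$ setting your fallback $\theta=0$, $x\to+\infty$ is the wrong direction, since $\Re\lambda_n\to+\infty$ means there is no dominant exponent that way, and going to $-\infty$ one must still separate exponents with equal real parts (Bohr averaging) and justify interchanging $r\to\infty$ with the infinite sum, which pointwise or even locally uniform convergence does not give for free.

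The paper avoids all of this with a purely local, ``dual functional'' argument that needs only condition $A$: it forms the canonical product $F(z)=\prod_n(1-z/\lambda_n)^{\mu_n}$ (an entire function of exponential type zero by condition $A$), builds from the Laurent expansions of $1/F$ at each $\lambda_n$ entire functions $F_{n,k}$ of exponential type zero with $F_{n,k}^{(l)}(\lambda_j)=\delta_{jn}\delta_{lk}$, and uses the P\'{o}lya representation to express these as contour integrals of their Borel transforms $\gamma_{n,k}$ over a small circle. Integrating the series for $f$ and $g$ term by term against $\gamma_{n,k}$ over a circle contained in the disk where $f=g$ (legitimate by uniform convergence there) recovers $c_{n,k}$ and $d_{n,k}$ respectively, whence $c_{n,k}=d_{n,k}$. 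If you want a self-contained proof under the stated hypotheses, I would follow that construction rather than trying to repair the asymptotic-dominance argument.
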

\begin{proof}

Consider the entire function $F$ of exponential type zero, vanishing exactly on $\Lambda$, with
\[
F(z)=\prod_{n=1}^{\infty}\left(1-\frac{z}{\lambda_n}\right)^{\mu_n}.
\]
Since $1/F$ has a pole of order $\mu_n$ at the point $\lambda_n$, we write down the Laurent series
\[
\frac{1}{F(z)}=\sum_{k=1}^{\mu_n}\frac{A_{n,k}}{(z-\lambda_n)^k}+f_n(z)
\]
which holds in some closed punctured disk $D_n$ so that $f_n$ is the regular part and
\[
A_{n,k}=\frac{1}{2\pi i}\int_{\partial D_n} \frac{(z-\lambda_n)^{k-1}}{F(z)}\, dz.
\]
As in Lemma $\ref{FirstLemma}$, we can construct entire functions
$\{F_{n,k}\}_{n=1, k=0,\dots,\mu_n-1}^{n=\infty}$, and in fact of exponential type zero, that satisfy
\[
F_{n,k}^{(l)}(\lambda_j)
=\begin{cases} 1, & j=n,\,\,  l=k, \\ 0,  & j=n,\,\,
l\in\{0,1,\dots,\mu_n-1\}\setminus\{k\}, \\ 0, &
j\not=n,\,\, l\in\{0,1,\dots,\mu_j-1\},\end{cases}
\]
by defining
\[
F_{n,k}(z) := \frac{F(z)}{k!}\sum_{l=1}^{\mu_n-k}\frac{A_{n,k+l}}{(z-\lambda_n)^l}.
\]
Therefore, if we denote by $\gamma_{n,k}(z)$ the Borel transform of $F_{n,k}(z)$,
by the P\'{o}lya representation theorem of entire functions of exponential type, we have
\[
F_{n,k}(z)=\frac{1}{2\pi i}\int_{\partial B_{\epsilon}} \gamma_{n,k}(w) e^{zw}\, dw,
\]
where $\partial B_{\epsilon}$ is the boundary of the closed disk $B_{\epsilon}=\{z:|z|\le\epsilon\}$ for $\epsilon>0$.
Differentiation yields
\begin{equation}\label{orthogonalcc}
\frac{1}{2\pi i}\int_{\partial B_{\epsilon}}\gamma_{n,k}(w)\cdot  w^l e^{\lambda_j w}\,
dw=\begin{cases} 1 & j=n,\,\,  l=k, \\ 0,  & j=n,\,\,
l\in\{0,1,\dots,\mu_n-1\}\setminus\{k\}, \\ 0, &
j\not=n,\,\, l\in\{0,1,\dots,\mu_j-1\}.\end{cases}
\end{equation}
Without loss of generality, suppose that the two Taylor-Dirichlet series $f,g$ are equal on the closed disk $B_{\epsilon}$
for some $\epsilon>0$. The continuity of the Borel Transforms on  $\partial B_{\epsilon}$, the uniform convergence
of the two series on $B_{\epsilon}$ and relation $(\ref{orthogonalcc})$, imply that
for every fixed $n\in\mathbb{N}$ and $k\in\{0,1,\dots,\mu_n-1\}$, one has
\[
\frac{1}{2\pi i}\int_{\partial B_{\epsilon}}\gamma_{n,k}(z)\cdot f(z)\, dz=c_{n,k}\quad\text{and}\quad
\frac{1}{2\pi i}\int_{\partial B_{\epsilon}}\gamma_{n,k}(z)\cdot g(z)\, dz=d_{n,k}.
\]
Since $f=g$ on $B_{\epsilon}$ then $c_{n,k}=d_{n,k}$.
\end{proof}

\section{Revisiting the Distance result $(\ref{distancehalflinezero})$}
\setcounter{equation}{0}

In this section we provide a second proof of the Distance result $(\ref{distancehalflinezero})$. The tool is the meromorphic function $f$ $(\ref{meromorphicfunctionfz})$
and we begin with a result regarding this function. We then prove two helpful lemmas.
As usual, we assume that the multiplicity sequence $\Lambda$ belongs to the class $ABC$.

\subsection{The Meromorphic function $f(z)$}

\begin{lemma}\label{meromorphic}
Consider the meromorphic function $f(z)$ $(\ref{meromorphicfunctionfz})$.
Then this function is analytic in the right half-plane $\Re z>-4$ and there is some $M>0$ so that
\begin{equation}\label{upperboundmer}
|f(z)|\le\frac{M}{1+y^2}\qquad \forall\,\, z:\,\, \Re z\ge -2.
\end{equation}
Moreover, for fixed $\epsilon>0$ let $C_{n,\epsilon}$ for $n=1,2,\dots$ be the disks as in  Remark $\ref{disjointdisks}$ and let
$\partial C_{n,\epsilon}$ be their respective circles. Then
there is a positive constant $m_{\epsilon ,2}$, independent of $n\in\mathbb{N}$, so that
\begin{equation}\label{lowerboundmer}
|f(z)|\ge m_{\epsilon ,2}e^{-\epsilon |\lambda_n|}\qquad \forall\,\, z\in \partial C_{n,\epsilon},\quad n=1,2,\dots.
\end{equation}
\end{lemma}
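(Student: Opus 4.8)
The plan is to write $f(z)=\Phi(z)\big/\big((4+z)^{2}\Psi(z)\big)$, where
\[
\Phi(z):=\prod_{n=1}^{\infty}\Big(1-\frac{z}{\lambda_n}\Big)^{\mu_n},\qquad
\Psi(z):=\prod_{n=1}^{\infty}\Big(1+\frac{z}{w_n}\Big)^{\mu_n},\qquad w_n:=\overline{\lambda_n}+4,
\]
and to use that $\Phi$ and $\Psi$ are entire functions of exponential type zero: for $\Phi$ this is condition $(A)$ (it is the function $F$ of $(\ref{F})$), and for $\Psi$ it is the same fact applied to the sequence $\{-w_n\}$, since $(B)$ gives $\sum_n\mu_n/|w_n|\le(\cos\eta)^{-1}\sum_n\mu_n/|\lambda_n|<\infty$. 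The zeros of $f$ are then exactly the $\lambda_n$ and its poles only $z=-4$ (order $2$) and the points $z=-w_n$ (order $\mu_n$); as $\Re(-w_n)=-\Re\lambda_n-4<-4$ and $\Re(-4)=-4$, none of these lies in the open half-plane $\Re z>-4$, which gives analyticity there.

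For the bound $(\ref{upperboundmer})$ I would use the elementary identity, with $\lambda_n=a+bi$ ($a>0$) and $z=x+iy$,
\[
|\,\overline{\lambda_n}+4+z\,|^{2}-|\,\lambda_n-z\,|^{2}=(a+4+x)^{2}-(a-x)^{2}=(4+2x)(2a+4)=4(a+2)(x+2),
\]
which is non-negative when $\Re z=x\ge-2$. Hence, for $\Re z\ge-2$,
\[
\Big|\frac{1-z/\lambda_n}{1+z/w_n}\Big|=\frac{|w_n|}{|\lambda_n|}\Big|\frac{\lambda_n-z}{\overline{\lambda_n}+4+z}\Big|\le\frac{|\overline{\lambda_n}+4|}{|\lambda_n|}\le 1+\frac{4}{|\lambda_n|},
\]
so the infinite product in the definition of $f$ is bounded in modulus by $\prod_n(1+4/|\lambda_n|)^{\mu_n}\le\exp\big(4\sum_n\mu_n/|\lambda_n|\big)=:M<\infty$. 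Since $|4+z|^{2}=(4+x)^{2}+y^{2}\ge 4+y^{2}\ge 1+y^{2}$ for $x\ge-2$, this yields $|f(z)|\le M/(1+y^{2})$.

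For the lower bound $(\ref{lowerboundmer})$ on $\partial C_{n,\epsilon}$ I would relate $\Phi$ to the even product $F_{0}(z):=\Phi(z)\Phi(-z)=\prod_{n=1}^{\infty}(1-z^{2}/\lambda_n^{2})^{\mu_n}$, which is precisely the function to which Lemma $\ref{anevenfunction}$ applies (conditions $(B)$ and $(C)$ being in force), so $(\ref{lowerboundforF})$ gives $|F_{0}(z)|\ge m_{\epsilon,2}e^{-\epsilon|\lambda_n|}$ for $z\in\partial C_{n,\epsilon}$. For $n$ large the radius of $C_{n,\epsilon}$ is $<1$, so on $\partial C_{n,\epsilon}$ one has $|z|\le|\lambda_n|+1$; since $z\mapsto\Phi(-z)$ and $\Psi$ are of exponential type zero, for the fixed $\epsilon$ there are $U_\epsilon,V_\epsilon>0$ with $|\Phi(-z)|\le U_\epsilon e^{\epsilon|\lambda_n|}$ and $|\Psi(z)|\le V_\epsilon e^{\epsilon|\lambda_n|}$ there, while $|4+z|^{2}\le(|\lambda_n|+5)^{2}\le e^{\epsilon|\lambda_n|}$ once $|\lambda_n|$ is large enough. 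Therefore, for all but finitely many $n$,
\[
|f(z)|=\frac{|\Phi(z)|}{|4+z|^{2}|\Psi(z)|}=\frac{|F_{0}(z)|}{|\Phi(-z)|\,|4+z|^{2}\,|\Psi(z)|}\ge\frac{m_{\epsilon,2}}{U_\epsilon V_\epsilon}\,e^{-4\epsilon|\lambda_n|},\qquad z\in\partial C_{n,\epsilon};
\]
for the finitely many remaining $n$, $f$ has no zero and no pole on $\partial C_{n,\epsilon}$ (its zeros are the centres $\lambda_n$, its poles lie in $\{\Re z\le-4\}$, and $\partial C_{n,\epsilon}\subset\{\Re z>\Re\lambda_n-1\}$), hence $\min_{\partial C_{n,\epsilon}}|f|>0$, and these $n$ are absorbed into the final constant. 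Exactly as in the proof of Theorem $\ref{Zikkostheorem}$, the constant $4$ in the exponent is inessential: carrying the estimate out with $\epsilon/4$ in place of $\epsilon$ (which only rescales the radii of the disjoint disks) produces $(\ref{lowerboundmer})$ with exponent $-\epsilon|\lambda_n|$.

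The one genuinely delicate point is this last step — transferring a lower bound from the even product $F_{0}$ first to the one-sided product $\Phi$ and then, after dividing by $(4+z)^{2}\Psi(z)$, to $f$ itself — and it works only because each of the extra factors $\Phi(-z)$, $\Psi(z)$, $(4+z)^{2}$ is at most $e^{o(|\lambda_n|)}$ on $\partial C_{n,\epsilon}$, which is forced by the zeros of these factors being bounded away from the right half-plane. Everything else is routine and closely parallels the passage in the paper from $(\ref{lowerboundforFderivative})$–$(\ref{lowerboundforF})$ to $(\ref{lowerboundforthefunctionGz})$.
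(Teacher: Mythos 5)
Your proof is correct, and for the two easy parts (analyticity in $\Re z>-4$ and the upper bound $(\ref{upperboundmer})$) it follows the paper's route, just with the details of the Blaschke-type inequality $|\lambda_n-z|\le|\overline{\lambda_n}+4+z|$ for $\Re z\ge -2$ written out explicitly where the paper says ``clearly''. The one substantive difference is in the lower bound $(\ref{lowerboundmer})$: the paper writes $(4+z)^2f(z)$ as $\Phi(z)/\Psi(z)$, bounds $\Psi$ above by $t_\epsilon e^{\epsilon|z|}$, and then simply asserts that the one-sided product $\Phi(z)=\prod(1-z/\lambda_n)^{\mu_n}$ satisfies the analogue of $(\ref{lowerboundforF})$ ``by a proof identical to'' that of Lemma $\ref{anevenfunction}$ --- i.e.\ it re-runs the derivative estimate and the Carath\'eodory argument for $\Phi$. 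You instead treat Lemma $\ref{anevenfunction}$ as a black box: you apply $(\ref{lowerboundforF})$ to the even product $F_0(z)=\Phi(z)\Phi(-z)$ and divide by $\Phi(-z)$, whose modulus on $\partial C_{n,\epsilon}$ is $O(e^{\epsilon|\lambda_n|})$ because it is of exponential type zero and its zeros sit in the left half-plane. This is a legitimate and arguably cleaner route, since it avoids repeating the Carath\'eodory argument; the small price is the accumulation of harmless factors $e^{\epsilon|\lambda_n|}$ giving $e^{-4\epsilon|\lambda_n|}$ in the end, which you dispose of by renaming $\epsilon$ --- exactly the convention the paper itself uses in the proofs of Lemma $\ref{anevenfunction}$ and Theorem $\ref{Zikkostheorem}$, so this is not a gap. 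Your handling of the finitely many indices with large radius, and the observation that $f$ has neither zeros nor poles on those circles, is also sound.
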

\begin{proof}
We can write
\[
|f(z)|=\frac{1}{|4+z|^2}\cdot \prod_{n=1}^{\infty}\left(\frac{|\lambda_n-z|}{|\overline{\lambda_n}+4+z|}\right)^{\mu_n}
\cdot\prod_{n=1}^{\infty}\left|\frac{\overline{\lambda_n}+4}{\lambda_n}\right|^{\mu_n}
\]
since the infinite product
\[
\prod_{n=1}^{\infty}\left|\frac{\overline{\lambda_n}+4}{\lambda_n}\right|^{\mu_n}
\]
converges due to condition $(\ref{convergencecondition})$. Then clearly
relation $(\ref{upperboundmer})$ holds for all $z$ such that $\Re z\ge -2$.

Next we prove $(\ref{lowerboundmer})$: we can write $(4+z)^2f(z)$ as
\[
\frac{\prod_{n=1}^{\infty}\left(1-z/\lambda_n\right)^{\mu_n}}
{\prod_{n=1}^{\infty}\left(1+z/\overline{\lambda_n+4}\right)^{\mu_n}}
\]
since both infinite products define entire functions. Both of them are of exponential type zero since $\Lambda\in ABC$,
thus for every $\epsilon>0$ there is a positive constant $t_{\epsilon}$, such that the modulus of the function in the denominator is bounded from above by
$t_{\epsilon}e^{\epsilon|z|}$. Regarding the numerator, we claim that it satisfies the lower bound $(\ref{lowerboundforF})$. The proof is identical as in the case
of the even function in Lemma $\ref{anevenfunction}$. Combining all together yields $(\ref{lowerboundmer})$.
\end{proof}

\subsection{Auxiliary results}
\begin{lemma}\label{meromorphicfunctions}
There exist analytic functions $\{f_{n,k}(z):\, n\in\mathbb{N},\, k=0,1,\dots,\mu_n-1\}$ in the half-plane $\Re z>-2$, so that
\begin{equation}\label{G1a}
f_{n,k}^{(l)}(\lambda_j)
=\begin{cases} 1 & j=n,\,\,  l=k, \\ 0,  & j=n,\,\,
l\in\{0,1,\dots,\mu_n-1\}\setminus\{k\}, \\ 0, &
j\not=n,\,\, l\in\{0,1,\dots,\mu_j-1\}.\end{cases}
\end{equation}
For fixed $\epsilon>0$ let $C_{n,\epsilon}$ for $n=1,2,\dots$ be the disks as in  Remark $\ref{disjointdisks}$ and let
$\partial C_{n,\epsilon}$ be their respective circles.
Then there is a positive constant $m_{\epsilon}$, independent of $n$ and $k$,
so that for every fixed $n\in\mathbb{N}$ and $k\in\{0,1,\dots,\mu_n-1\}$, one has
\begin{equation}\label{upperboundfnk}
|f_{n,k}(z)|\le \frac{m_{\epsilon} e^{\epsilon \Re \lambda_n}}{1+y^2},\quad \forall\,\, z\in\left\{z:\,\, \Re z>-2,\, |z-\lambda_n|\ge \frac{1}{|\lambda_n|}\right\},
\end{equation}
and
\begin{equation}\label{diskbound}
|f_{n,k}(z)|\le m_{\epsilon} e^{\epsilon \Re \lambda_n},\quad \forall\,\, z\in\left\{z:\,\, |z-\lambda_n|< \frac{1}{|\lambda_n|}\right\}.
\end{equation}
\end{lemma}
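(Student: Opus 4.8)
The plan is to imitate the construction of the family $\{G_{n,k}\}$ in Lemma \ref{FirstLemma}, replacing the Luxemburg--Korevaar entire function $G$ by the meromorphic function $f$ of $(\ref{meromorphicfunctionfz})$, and using its lower bound $(\ref{lowerboundmer})$ in place of $(\ref{lowerboundforthefunctionGz})$. By Lemma \ref{meromorphic}, $f$ is analytic in $\Re z>-4$ and vanishes there precisely on $\Lambda$, with $\lambda_n$ a zero of order $\mu_n$; hence $1/f$ is meromorphic in $\Re z>-4$ with a pole of order $\mu_n$ at each $\lambda_n$ and no other singularity. First I would fix the $\epsilon>0$, take the pairwise disjoint disks $C_{n,\epsilon}$ of Remark \ref{disjointdisks} (each enclosing only the pole $\lambda_n$), set
\[
A_{n,j}:=\frac{1}{2\pi i}\int_{\partial C_{n,\epsilon}}\frac{(z-\lambda_n)^{j-1}}{f(z)}\,dz,\qquad j=1,\dots,\mu_n,
\]
so that $1/f(z)=\sum_{j=1}^{\mu_n}A_{n,j}(z-\lambda_n)^{-j}+p_n(z)$ near $\lambda_n$ with $p_n$ regular, and then estimate $|A_{n,j}|$ exactly as in the derivation of $(\ref{upperboundforAnj})$: from $(\ref{lowerboundmer})$ on $\partial C_{n,\epsilon}$, the radius $\tfrac{m_{\epsilon}}{6}\exp(-\epsilon|\lambda_n|/\mu_n)$ of $C_{n,\epsilon}$, and $(\ref{munlambdan})$, one gets $|A_{n,j}|\le M_{\epsilon,1}e^{\epsilon\Re\lambda_n}$ uniformly in $n,j$. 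No factor $e^{-\beta\Re\lambda_n}$ appears here, because $f$ obeys the $\beta$-free lower bound $(\ref{lowerboundmer})$ rather than $(\ref{lowerboundforthefunctionGz})$.

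Next I would define $f_{n,k}(z):=\dfrac{f(z)}{k!}\sum_{l=1}^{\mu_n-k}\dfrac{A_{n,k+l}}{(z-\lambda_n)^l}$. Since $f$ vanishes to order $\mu_n$ at $\lambda_n$ while the pole part has order at most $\mu_n-k$, the singularity at $\lambda_n$ is removable, so $f_{n,k}$ is analytic on all of $\Re z>-4$, a fortiori on $\Re z>-2$. The interpolation identities $(\ref{G1a})$ then come out verbatim as in Lemma \ref{FirstLemma}: for $k=0$ one has $f_{n,0}=1-f\cdot p_n$ on $C_{n,\epsilon}$, and for $k\ge1$ the analogue of $(\ref{secondformulaforGnk})$, namely $f_{n,k}(z)=\tfrac{(z-\lambda_n)^k}{k!}-\tfrac{f(z)(z-\lambda_n)^kp_n(z)}{k!}-\tfrac{f(z)}{k!}\sum_{j=1}^{k}A_{n,j}(z-\lambda_n)^{k-j}$, gives $f_{n,k}^{(k)}(\lambda_n)=1$ and the vanishing of the remaining derivatives at $\lambda_n$; the vanishing at $\lambda_j$ for $j\ne n$ is immediate since the Laurent tail is analytic there while $f$ vanishes to order $\mu_j$.

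For the norm bound $(\ref{upperboundfnk})$ I would work in the region $\Re z>-2$, $|z-\lambda_n|\ge 1/|\lambda_n|$, so that $|z-\lambda_n|^{-l}\le|\lambda_n|^{l}\le|\lambda_n|^{\mu_n}$, and combine: the upper bound $|f(z)|\le M/(1+y^2)$ from $(\ref{upperboundmer})$; the estimate $|A_{n,k+l}|\le M_{\epsilon,1}e^{\epsilon\Re\lambda_n}$; the bound $\mu_n\le m_{\epsilon}e^{\epsilon\Re\lambda_n}$ from $(\ref{munlambdan})$; and $|\lambda_n|^{\mu_n}=e^{\mu_n\log|\lambda_n|}\le e^{\epsilon|\lambda_n|}\le e^{A\epsilon\Re\lambda_n}$, the last two steps using the necessary condition $(\ref{conditionn})$ for $\Lambda\in ABC$ together with condition $(B)$. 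After relabelling $\epsilon$ this yields $(\ref{upperboundfnk})$. Finally $(\ref{diskbound})$ follows from $(\ref{upperboundfnk})$ by the maximum modulus principle: $f_{n,k}$ is analytic on the closed disk $\{|z-\lambda_n|\le 1/|\lambda_n|\}$, which lies in $\{\Re z>-2\}$ for all but finitely many $n$ (the rest absorbed into $m_{\epsilon}$), so its modulus there is attained on the bounding circle $|z-\lambda_n|=1/|\lambda_n|$, where $(\ref{upperboundfnk})$ applies and $1/(1+y^2)\le1$.

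The hard part is not any single estimate but making the two regimes dovetail. Outside the tiny disk around $\lambda_n$ the bounds come from term-by-term estimation of the explicit Laurent expression, but inside that disk the pole part of the defining formula blows up, so there one must exploit that $f_{n,k}$ itself is honestly analytic (the removable singularity) and invoke the maximum principle off the circle $|z-\lambda_n|=1/|\lambda_n|$. This is the one place where the argument genuinely departs from the proof of Lemma \ref{FirstLemma}, in which the corresponding pointwise bound on $\mathbb{R}$ was merely a restriction of the global bound because $i\lambda_n$ never lies on $\mathbb{R}$; here $\lambda_n$ is in the region of interest, so a separate disk estimate is unavoidable.
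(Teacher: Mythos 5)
Your construction is exactly the paper's: the same Laurent coefficients $A_{n,j}$ of $1/f$ over $\partial C_{n,\epsilon}$ estimated via $(\ref{lowerboundmer})$, the same defining formula for $f_{n,k}$, the same verification of $(\ref{G1a})$ as in Lemma $\ref{FirstLemma}$, the bound $(\ref{upperboundfnk})$ off the disk $|z-\lambda_n|<1/|\lambda_n|$ by combining $(\ref{upperboundmer})$, $(\ref{upperboundforAnjmer})$ and $(\ref{munlambdan})$, and finally $(\ref{diskbound})$ by the Maximum Modulus Theorem. The proposal is correct and in fact supplies slightly more detail than the paper does on the last two steps.
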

\begin{proof}
Consider the meromorphic function $f$ of Lemma  $\ref{meromorphic}$. Since $1/f(z)$ has a pole of order $\mu_n$ at the point $\lambda_n$,
we write down its Laurent series representation
\[
\frac{1}{f(z)}=\sum_{j=1}^{\mu_n}\frac{A_{n,j}}{(z-\lambda_n)^j}+g_n(z)
\]
which is valid in the open punctured disk $C_{n,\epsilon}\setminus\lambda_n$ such that $g_n(z)$ is the regular part and
\[
A_{n,j}=\frac{1}{2\pi i}\int_{\partial C_{n,\epsilon}} \frac{(z-\lambda_n)^{j-1}}{f(z)}\, dz.
\]
Then from $(\ref{lowerboundmer})$ we see that for the fixed $\epsilon>0$ there is a positive constant $m^*_{\epsilon}$ so that
\begin{equation}\label{upperboundforAnjmer}
|A_{n,j}|\le m^*_{\epsilon}e^{\epsilon \Re\lambda_n}.
\end{equation}

We now construct the functions that satisfy $(\ref{G1a})$.
Fix some positive integer $n$ and some $k\in\{0,1,2,\dots,\mu_n-1\}$ and define
\begin{equation}\label{formulaforfnk}
f_{n,k}(z): = \frac{f(z)}{k!}\sum_{l=1}^{\mu_n-k}\frac{A_{n,k+l}}{(z-\lambda_n)^l}.
\end{equation}
First suppose that $k=0$, thus
\[
f_{n,0}(z) = f(z)\sum_{l=1}^{\mu_n}\frac{A_{n,l}}{(z-\lambda_n)^l}.
\]
Then we get $f_{n,0} ^{(l)}(\lambda_j)=0$ for $j\not= n$ and $l=0,1,\dots, \mu_j -1$.
Since $f_{n,0}(z)$ is continuous at $z=\lambda_n$, then
\[
f_{n,0}(z)=f(z)\left[ \frac{1}{f(z)}-g_{n}(z)\right]=1-f(z)g_{n}(z)\qquad\forall\,\, z\in C_{n,\epsilon}.
\]
Hence, $f_{n,0}(\lambda_n)=1$ and  $f_{n,0}^{(l)}(\lambda_n)=0$ for $l\in\{1,\dots, \mu_n -1\}$.
Thus, $f_{n,0}(z)$ satisfies $(\ref{G1a})$.

Next, suppose that $k\in\{1,2,\dots ,\mu_n-1\}$. Since $f_{n,k}(z)$ is continuous at $z=\lambda_n$, we rewrite $f_{n,k}(z)$ for all $z$
in $C_{n,\epsilon}$ as
\begin{align}
f_{n,k}(z) & = \frac{f(z) (z-\lambda_n)^k}{k!}\sum_{l=k+1}^{\mu_n}\frac{A_{n,l}}{(z-\lambda_n)^l}\nonumber\\
& = \frac{f(z) (z-\lambda_n)^k}{k!}\left[\frac{1}{f(z)}-g_n(z)- \sum_{j=1}^{k}\frac{A_{n,j}}{(z-\lambda_n)^j}\right]\nonumber\\
& = \frac{(z-\lambda_n)^k}{k!}-\frac{f(z) (z-\lambda_n)^k g_n(z)}{k!}-\frac{f(z)}{k!}\sum_{j=1}^{k}A_{n,j}(z-\lambda_n)^{k-j}.
\label{secondformulaforGnkmer}
\end{align}
From $(\ref{formulaforfnk})$ we get $f_{n,k} ^{(l)}(\lambda_j)=0$ for $j\not= n$ and $l=0,1,\dots, \mu_j -1$.
From $(\ref{secondformulaforGnkmer})$ we get  $f_{n,k} ^{(k)}(\lambda_n)=1$ and $f_{n,k} ^{(l)}(\lambda_n)=0$ for $l\in\{0,1,\dots, \mu_n -1\}\setminus
\{k\}$. Thus, $f_{n,k}(z)$ satisfies $(\ref{G1a})$ for $k\not= 0$.

Next, by combining $(\ref{munlambdan})$, $(\ref{upperboundmer})$, $(\ref{upperboundforAnjmer})$ and $(\ref{formulaforfnk})$,
shows that for the fixed $\epsilon>0$ there is a positive constant $m_{\epsilon}$ such that
the upper bound $(\ref{upperboundfnk})$ holds for all $z:\, \Re z>-2$ which are
outside the open disk $|z-\lambda_n|<1/|\lambda_n|$. Finally, $(\ref{diskbound})$ follows by the Maximum Modulus Theorem.
\end{proof}

\begin{lemma}\label{SecondLemmamer}
There exist continuous functions $\{h_{n,k}(t):\,\, n\in\mathbb{N},\,\, k=0,1,\dots,\mu_n-1\}$
on the interval $(-\infty,0]$, with $h_{n,k}\in L^2 (-\infty,0]$,
so that
\begin{equation}\label{orthogonalmer}
\frac{1}{\sqrt{2\pi}}\int_{-\infty}^{0}h_{n,k}(t)e^{2t}t^{l}e^{\lambda_j t}\,
dt=\begin{cases} 1, & j=n,\,\,  l=k, \\ 0,  & j=n,\,\,
l\in\{0,1,\dots,\mu_n-1\}\setminus\{k\}, \\ 0, &
j\not=n,\,\, l\in\{0,1,\dots,\mu_j-1\}.\end{cases}
\end{equation}
Furthermore, for every $\epsilon>0$ there is a constant $M_{\epsilon ,3}>0$
independent of $n$ and $k$ but depending on $\Lambda$, so that
\begin{equation}\label{upperboundforhnkmer}
|h_{n,k}(t)e^{t}|\le M_{\epsilon ,3} e^{\epsilon\Re\lambda_n}\qquad \forall\,\,
t\in (-\infty,0],\quad  n\in\mathbb{N}, \quad k\in\{0,1,\dots,\mu_n-1\}.
\end{equation}
\end{lemma}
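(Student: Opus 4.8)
The plan is to reproduce the proof of Lemma~\ref{SecondLemma}, replacing the Paley--Wiener theorem for entire functions of exponential type by the Paley--Wiener (Laplace transform) characterization of the Hardy space $H^2$ of a half-plane. Fix $\epsilon>0$ and let $\{f_{n,k}\}$ be the functions furnished by Lemma~\ref{meromorphicfunctions}: each $f_{n,k}$ is analytic on $\{\Re z>-2\}$, satisfies the normalization $(\ref{G1a})$, is bounded by $(\ref{upperboundfnk})$ outside the disk $\{|z-\lambda_n|<1/|\lambda_n|\}$, and by $(\ref{diskbound})$ on it. First I would record that these two bounds give, for every $\sigma>-2$,
\[
\int_{-\infty}^{\infty}|f_{n,k}(\sigma+iy)|^2\,dy\le C\,m_\epsilon^2\,e^{2\epsilon\Re\lambda_n},
\]
with $C$ an absolute constant: on the vertical line $\{\Re z=\sigma\}$ the part lying outside the disk is controlled by the square-integrable factor $(1+y^2)^{-1}$ in $(\ref{upperboundfnk})$, while the part inside the disk has linear measure at most $2/|\lambda_n|\le 2$ and is controlled by $(\ref{diskbound})$. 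Thus, after the translation $w\mapsto f_{n,k}(w-2)$, the function belongs to $H^2(\{\Re w>0\})$.

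By the classical Paley--Wiener theorem for $H^2$ of a half-plane, there is then $h_{n,k}\in L^2(-\infty,0]$ with
\[
f_{n,k}(z)=\frac{1}{\sqrt{2\pi}}\int_{-\infty}^{0}h_{n,k}(t)\,e^{2t}\,e^{zt}\,dt,\qquad \Re z>-2,
\]
the $\tfrac{1}{\sqrt{2\pi}}$ being inserted to match the normalization in $(\ref{orthogonalmer})$. On every compact $K\subset\{\Re z>-2\}$, writing $\sigma_K=\min_{z\in K}\Re z$ and using $h_{n,k}\in L^2$, the integrand $t^{l}e^{2t}e^{zt}h_{n,k}(t)$ is dominated by the $L^1$ majorant $|t|^{l}e^{(2+\sigma_K)t}|h_{n,k}(t)|$, so differentiation under the integral is legitimate and
\[
f_{n,k}^{(l)}(z)=\frac{1}{\sqrt{2\pi}}\int_{-\infty}^{0}h_{n,k}(t)\,e^{2t}\,t^{l}\,e^{zt}\,dt.
\]
Evaluating at $z=\lambda_j$ and invoking $(\ref{G1a})$ yields exactly the biorthogonality relations $(\ref{orthogonalmer})$, and since $e^{t}\le 1$ for $t\le 0$ the density $e^{2t}h_{n,k}(t)$ there decays at least like $e^{t}$, so all the integrals converge absolutely.

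It remains to replace $h_{n,k}$ by a continuous representative satisfying $(\ref{upperboundforhnkmer})$. Restricting the representation to the line $\{\Re z=-1\}$ gives $\sqrt{2\pi}\,f_{n,k}(-1+iy)=\int_{-\infty}^{0}e^{iyt}\,\bigl(e^{t}h_{n,k}(t)\bigr)\,dt$, i.e.\ $\sqrt{2\pi}\,f_{n,k}(-1+iy)$ is the Fourier transform of $e^{t}h_{n,k}(t)\mathbf 1_{(-\infty,0]}(t)\in L^2(\mathbb R)$. By $(\ref{upperboundfnk})$ and $(\ref{diskbound})$ one has $f_{n,k}(-1+i\cdot)\in L^1(\mathbb R)$ with $\int_{-\infty}^{\infty}|f_{n,k}(-1+iy)|\,dy\le C\,m_\epsilon e^{\epsilon\Re\lambda_n}$, so Fourier inversion exhibits $e^{t}h_{n,k}(t)$ (a.e.) as the value at $t$ of the continuous function $t\mapsto\frac{1}{\sqrt{2\pi}}\int_{-\infty}^{\infty}e^{-iyt}f_{n,k}(-1+iy)\,dy$. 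Choosing this representative, $h_{n,k}$ is continuous on $(-\infty,0]$ and
\[
|e^{t}h_{n,k}(t)|\le\frac{1}{\sqrt{2\pi}}\int_{-\infty}^{\infty}|f_{n,k}(-1+iy)|\,dy\le M_{\epsilon,3}\,e^{\epsilon\Re\lambda_n},
\]
which is $(\ref{upperboundforhnkmer})$.

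The step I expect to be the crux --- though it is more a matter of care than of genuine difficulty --- is establishing that the representing density is supported on the half-line $(-\infty,0]$: in the entire-function setting of Lemma~\ref{SecondLemma} this was provided by the (two-sided) exponential-type Paley--Wiener theorem, whereas here one must invoke its half-plane analogue, and the only input to be checked by hand is precisely the uniform-in-$\sigma$ $L^2$ bound on vertical lines displayed above. The remaining estimates are the same bookkeeping already performed in Lemmas~\ref{SecondLemma} and~\ref{meromorphicfunctions}.
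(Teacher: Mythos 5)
Your proposal is correct and follows essentially the same route as the paper: the uniform $L^2$ bound on vertical lines $\Re z=\sigma>-2$ (obtained from $(\ref{upperboundfnk})$ and $(\ref{diskbound})$ exactly as you split it), the half-plane Paley--Wiener theorem applied to $f_{n,k}(z-2)$ to produce $h_{n,k}\in L^2(-\infty,0]$ with the Laplace representation, differentiation under the integral to get $(\ref{orthogonalmer})$ from $(\ref{G1a})$, and Fourier inversion on the line $\Re z=-1$ to obtain the continuous representative and the bound $(\ref{upperboundforhnkmer})$. The only cosmetic difference is that you spell out the line-by-line $L^2$ estimate that the paper states without detail.
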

\begin{proof}
It follows from $(\ref{upperboundfnk})$ and $(\ref{diskbound})$ that
\[
\sup_{x>-2}\int_{-\infty}^{\infty} |f_{n,k}(x+iy)|^p\, dy<\infty.
\]
Thus
\[
\sup_{x>0}\int_{-\infty}^{\infty} |f_{n,k}(x-2+iy)|^p\, dy<\infty.
\]
Therefore, the function $f_{n,k}(z-2)$ belongs to all the $H^p (\mathbb{C}_+)$ spaces for $p\ge 1$,
where $\mathbb{C}_+$ is the right half-plane $\Re z>0$.
It follows from the Paley-Wiener theorem \cite[Theorems 11.9 and 11.10]{Duren}
that there exists a continuous and square-integrable function $h_{n,k}$ on $(-\infty,0]$ so that
\[
f_{n,k}(z-2)=\frac{1}{\sqrt{2\pi}}\int_{-\infty}^{0}h_{n,k}(t) e^{tz}\, dt,\qquad \forall\,\, z:\,\, \Re z>0.
\]
Hence
\begin{equation}\label{paley}
f_{n,k}(z)=\frac{1}{\sqrt{2\pi}}\int_{-\infty}^{0}h_{n,k}(t)e^{2t} e^{tz}\, dt,\qquad \forall\,\, z:\,\, \Re z>-2.
\end{equation}
Differentiating with respect to $z$ and applying $(\ref{G1a})$ gives $(\ref{orthogonalmer})$.

Next, letting $z=-1+iy$ in $(\ref{paley})$ gives
\[
f_{n,k}(-1+iy)=\frac{1}{\sqrt{2\pi}}\int_{-\infty}^{0}h_{n,k}(t)e^{t} e^{ity}\, dt.
\]
Since $h_{n,k}$ is continuous and square-integrable on $(-\infty,0]$,
then the function $h_{n,k}(t) e^t$ is also continuous on $(-\infty,0]$. Moreover, it belongs to the space
$L^1 (-\infty,0)$ by simply applying the Cauchy-Schwartz inequality.
But $f_{n,k}(-1+iy)$ is in $L^p (-\infty, \infty)$ for all $p\ge 1$. Then by Fourier Inversion we have
\[
h_{n,k}(t)e^t=\frac{1}{\sqrt{2\pi}}\int_{-\infty}^{\infty} f_{n,k}(-1+iy)e^{-iyt}\, dy,\qquad \forall\,\, t\in (-\infty,0].
\]
Finally, since $(\ref{upperboundfnk})$ holds for the function $f_{n,k}(-1+iy)$, then the upper bound $(\ref{upperboundforhnkmer})$ follows easily.
\end{proof}

\subsection{A second proof of the Distance result $(\ref{distancehalflinezero})$}

Suppose that $f\in\overline{\text{span}}(E_{\Lambda_{n,k}})$ in the space $L^p(-\infty,0)$ for some $p>1$.
Hence for every $\epsilon>0$ there is an exponential polynomial $P_{\epsilon}\in \text{span}(E_{\Lambda_{n,k}})$ such that  $||f-P_{\epsilon}||_{L^p(-\infty,0)}<\epsilon$.
Let $h_{n,k}$ be the function as in Lemma $\ref{SecondLemmamer}$. Due to the upper bound $(\ref{upperboundforhnkmer})$ regarding the function $h_{n,k}(t)e^t$, we see that for every
$\epsilon>0$ there is a positive constant $m_{\epsilon}$, independent of $n,k$, so that
\begin{equation}\label{last}
|h_{n,k}(t)e^{2t}|\le m_{\epsilon} e^{\epsilon\Re\lambda_n}\cdot e^t \qquad \forall\,\, t\in (-\infty,0],\quad  n\in\mathbb{N}, \quad k\in\{0,1,\dots,\mu_n-1\}.
\end{equation}
Clearly now $h_{n,k}e^{2t}\in L^q (-\infty,0)$ where $1/p +1/q =1$.

Next, let $g_{n,k}(t)=h_{n,k}(t)e^{2t}$. Then
from $(\ref{orthogonalmer})$ and the H\"{o}lder inequality we get
\begin{eqnarray*}
\left|\int_{-\infty}^{0}g_{n,k}(t) f(t)\, dt\right| & = & \left|\int_{-\infty}^{0} g_{n,k}(t) \left(f(t)-P_{\epsilon}(t)\right)\, dt\right|\\
& \le & ||g_{n,k}||_{L^q(-\infty,0)}\cdot\epsilon.
\end{eqnarray*}
Since $\epsilon$ is arbitrary we then have
\[
\int_{-\infty}^{0}g_{n,k}(t) f(t)\, dt=0.
\]
Together with $(\ref{orthogonalmer})$ gives
\[
\sqrt{2\pi}=\int_{-\infty}^{0}g_{n,k}(t)t^k e^{\lambda_n t}\, dt=\int_{-\infty}^{0}g_{n,k}(t)\left(t^k e^{\lambda_n t}-f(t)\right)\, dt.
\]
Hence
\begin{equation}\label{allmer}
\sqrt{2\pi}\le ||g_{n,k}(t)||_{L^q (-\infty,0)} \cdot ||e_{n,k}-f||_{L^p (-\infty,0)}, \qquad e_{n,k}(t)=t^k e^{\lambda_n t}.
\end{equation}
Now, it follows from $(\ref{last})$ that for every $\epsilon>0$ there is some $M_{\epsilon}>0$, independent of $q$, $n\in\mathbb{N}$ and $k=0,1,\dots,\mu_n-1$,
so that
\[
||g_{n,k}(t)||_{L^q (-\infty,0)}=\left(\int_{-\infty}^{0}|h_{n,k}(t) e^{2t}|^q\, dt\right)^{1/q}\le M_{\epsilon} e^{\epsilon \Re\lambda_n}.
\]
Then, since $(\ref{allmer})$ is true for all $f\in\overline{\text{span}} (E_{\Lambda_{n,k}})$ in $L^p (-\infty,0)$, we get
\[
\sqrt{2\pi}\le \inf_{f\in\overline{\text{span}}(E_{\Lambda_{n,k}})}||e_{n,k}-f||_{L^p (-\infty,0)} \cdot
M_{\epsilon} e^{\epsilon\Re\lambda_n}.
\]
Letting $u_{\epsilon}=\sqrt{2\pi}/M_{\epsilon}$, we get $D_{-\infty,0,p,n,k}\ge u_{\epsilon}e^{-\epsilon\Re\lambda_n}$.
Similarly we can prove that $D_{-\infty,0,1,n,k}\ge u_{\epsilon}e^{-\epsilon\Re\lambda_n}$.


\begin{thebibliography}{99}


\bibitem{2018Allonsius} D. Allonsius, F. Boyer, M. Morancey,
Spectral analysis of discrete elliptic operators and applications in control theory,
Numer. Math., $\bf 140$ (2018), 857-911.

\bibitem{2021Allonsius} D. Allonsius, F. Boyer, M. Morancey,
Analysis of the null controllability of degenerate parabolic systems
of Grushin type via the moments method,
J. Evol. Equ., $\bf 21$ (2021), 4799-4843.

\bibitem{Almira} J. M. Almira,
M\"{u}ntz Type Theorems I.
Surveys in Approximation Theory $\bf 3$ (2007), 152-194.

\bibitem{2011JPA} F. Ammar-Khodja, A. Benabdallah, M. Gonz\'{a}lez-Burgos, L. de Teresa,
The Kalman condition for the boundary controllability of coupled parabolic systems.
Bounds on biorthogonal families to complex matrix exponentials,
J. Math. Pures Appl. (9) $\bf 96$ no. 6 (2011), 555-590.

\bibitem{2014JFA} F. Ammar-Khodja, A. Benabdallah, M. Gonz\'{a}lez-Burgos, L. de Teresa,
Minimal time for the null controllability of parabolic systems: the effect of the
condensation index of complex sequences,
J. Funct. Anal. $\bf 267$ no. 7 (2014), 2077-2151.

\bibitem{2016JMAA} F. Ammar-Khodja, A. Benabdallah, M. Gonz\'{a}lez-Burgos, L. de Teresa,
New phenomena for the null controllability of parabolic systems:
Minimal time and geometrical dependence,
J. Math. Anal. Appl. $\bf 444$ (2016), 1071-1113.

\bibitem{2019JPA} F. Ammar-Khodja, A. Benabdallah, M. Gonz\'{a}lez-Burgos, M. Morancey,
Quantitative Fattorini-Hautus test and minimal null control time for parabolic problems,
J. Math. Pures Appl. (9) $\bf 122$ (2019), 198-234.

\bibitem{Baranov2013} A. Baranov, Y. Belov, A. Borichev,
Hereditary completeness for systems of exponentials and reproducing kernels,
Adv. Math. $\bf 235$ (2013), 525-554.

\bibitem{Baranov2015} A. Baranov, Y. Belov, A. Borichev,
Spectral Synthesis in de Branges Spaces,
Geom. Funct. Anal. $\bf 25$ (2015) 417-452.

\bibitem{Baranov2022}  A. Baranov, Y. Belov, A. Kulikov,
Spectral synthesis for exponentials and logarithmic length,
Isr. J. Math. (2022), 1-25, DOI: 10.1007/s11856-022-2341-3.

\bibitem{2014Control} A. Benabdallah, F. Boyer, M. Gonz\'{a}lez-Burgos, G. Olive,
Sharp estimates of the one-dimensional boundary control cost
for parabolic systems and application to the
N-dimensional boundary null controllability in cylindrical domains,
SIAM J. Control Optim. $\bf 52$ no. 5 (2014), 2970-3001.

\bibitem{2020AHL} A. Benabdallah, F. Boyer, M. Morancey,
A block moment method to handle spectral condensation phenomenon in parabolic control problems,
Annales Henri Lebesgue $\bf 3$ (2020), 717-793.

\bibitem{Berenstein} C.A. Berenstein, R. Gay,
Complex Analysis and Special Topics in Harmonic Analysis, Springer-Verlag, New York, ISBN 0-387-94411-7 (1995), x+482 pp.

\bibitem{BerBaoVidras}  C. A. Berenstein, B. Q. Li, A. Vidras,
Geometric characterization of interpolating varieties for the (FN)-space $A^0 _p$ of entire functions,
Canad. J. Math. $\bf 47$ no. 1 (1995), 28-43.

\bibitem{2021Bhandari} K. Bhandari, F. Boyer,
Boundary null-controllability of coupled parabolic systems with Robin conditions,
Evol. Equ. Control Theory, $\bf 10$ no. 1 (2021), 61-102.

\bibitem{2021Bhandarib} K. Bhandari, F. Boyer,  V. Hernández-Santamaría,
Boundary null-controllability of 1-D coupled parabolic systems with Kirchhoff-type conditions,
Math. Control. Signals, Syst., $\bf 33$ (2021), 413-471.

\bibitem{Boas} R. P. Boas. Jr,
Entire Functions, Academic Press, New York, 1954.

\bibitem{BE} P. Borwein, T. Erd\'{e}lyi,
Polynomials and polynomial inequalities, Springer-Verlag, New-York, 1995. x+480 pp. ISBN: 0-387-94509-1.

\bibitem{BE1} P. Borwein, T. Erd\'{e}lyi,
Generalizations of M\"{u}ntz Theorem via a Remez-type inequality for M\"{u}ntz spaces,
J. Amer. Math. Soc. $\bf 10$ (1997), 327-349.

\bibitem{Brudnyi} A. Brudnyi,
Bernstein type inequalities for quasipolynomials,
J. Approx. Theory $\bf 112$ no. 1 (2001), 28-43.

\bibitem{2014Bugariua} I. F. Bugariu, I. Roven\c{t}a,
Small Time Uniform Controllability of the Linear One-Dimensional Schrödinger Equation with Vanishing Viscosity,
J. Optim. Theory Appl., $\bf 160$ no. 3 (2014), 949-965.


\bibitem{2014Bugariu} I. F. Bugariu,
Uniform controllability for the beam equation with vanishing structural damping,
Czechoslov. Math. J., $\bf 64$ no. 4 (2014), 869-881.

\bibitem{2014Bugariub} I. F. Bugariu, S. Micu,
A singular controllability problem with vanishing viscosity,
ESAIM Control Optim. Calc. Var., $\bf 20$ no. 1 (2014), 116-140.

\bibitem{2014Bugariuc} I. F. Bugariu, S. Micu,
A numerical method for the controls of the heat equation,
Math. Model. Nat. Phenom., $\bf 9$ no. 4 (2014), 65-87.

\bibitem{2016Bugariu} I. F. Bugariu, S. Micu, I. Roven\c{t}a,
Approximation of the controls for the beam equation with vanishing viscosity,
Math. Comp. $\bf 85$ no. 301 (2016), 2259-2303.

\bibitem{Cannarsa2017} P. Cannarsa, P. Martinez, J. Vancostenoble,
The cost of controlling weakly degenerate parabolic equations by boundary controls,
Math. Control Relat. Fields $\bf 7$ no. 2 (2017), 171-211.

\bibitem{Cannarsa2020a} P. Cannarsa, P. Martinez, J. Vancostenoble,
Precise estimates for biorthogonal families under asymptotic gap conditions,
Discrete Contin. Dyn. Syst. - S, $\bf 13$ no. 5 (2020), 1441-1472.

\bibitem{Cannarsa2020b} P. Cannarsa, P. Martinez, J. Vancostenoble,
The cost of controlling strongly degenarate parabolic equations,
ESAIM Control Optim. Calc. Var. $\bf 26$ no. 2 (2020).

\bibitem{Carleson} L. Carleson,
On infinite differential equations with constant coefficients, I,
Math. Scand. $\bf 1$ (1953), 31-38.

\bibitem{Casazza}  P. Casazza, O. Christensen, S. Li, A. Lindner,
Riesz-Fischer sequences and lower frame bounds,
Z. Anal. Anwendungen $\bf 21$ no. 2 (2002), 305-314.

\bibitem{Christensen} O. Christensen,
An introduction to Frames and Riesz Bases,
Applied and Numerical Harmonic Analysis. Birkh\"{a}user Boston, Inc., Boston, MA, 2003. xxii+440 pp. ISBN: 0-8176-4295-1.

\bibitem{CE} J. A. Clarkson, P. Erd\H{o}s,
Approximation by polynomials,
Duke Math. J. $\bf 10$ (1943), 5-11.

\bibitem{Duren} P. L. Duren,
Theory of $H^p$ spaces,
Pure and Applied Mathematics, $\bf 38$ (Academic Press, New York, 1970), p. xii+258.

\bibitem{Dzh} M. M. Dzhrbashyan,
A characterization of closed linear spans of two families of incomplete systems of analytic functions,
Math. USSR Sbornik $\bf 42$ no. 1 (1982), 1-70.

\bibitem{EJ} T. Erd\'{e}lyi, W. B. Johnson,
The   Full M\"{u}ntz Theorem in $L^p(0,1)$  for $0<p<\infty$,
J. Anal. Math. $\bf 84$ (2001), 145-172.

\bibitem{E1} T. Erd\'{e}lyi,
The Full Clarkson-Erd\H{o}s-Schwartz Theorem on the closure of non-dense M\"{u}ntz spaces,
Studia Math. $\bf 155$ (2003), 145-152.

\bibitem{E2} T. Erd\'{e}lyi,
The Full M\"{u}ntz Theorem revisited,
Constr. Approx. $\bf 21$ (2005), 319-335.

\bibitem{1971FR} H.O. Fattorini, D.L. Russell,
Exact controllability theorems for linear parabolic equations in one space dimension,
Arch. Ration. Mech. Anal. $\bf 43$ (1971), 272-292.

\bibitem{2010JFA} E. Fern\'{a}ndez-Cara, M. Gonz\'{a}lez-Burgos, L. de Teresa,
Boundary controllability of parabolic coupled equations,
J. Funct. Anal. $\bf 259$ no. 7 (2010), 1720-1758.

\bibitem{Glass2010} O. Glass,
A complex-analytic approach to the problem of uniform controllability of a transport equation in the vanishing viscosity limit,
J. Funct. Anal., $\bf 258$ no. 3 (2010), 852-868.

\bibitem{2022Gonzalez-Burgos} M. González-Burgos, L. Ouaili,
Sharp estimates for biorthogonal families to exponential
functions associated to complex sequences without gap conditions,
https://hal.archives-ouvertes.fr/hal-03115544.

\bibitem{Hille}  E. Hille,
Note on Dirichlet's series with complex exponents,
Annals of Math. $\bf 25$ no. 3 (1924), 261-278.

\bibitem{Jaming} P. Jaming, I. Simon,
M\"{u}ntz-Sz\'{a}sz type theorems for the density of the span of powers of functions,
Bull. des Sci. Math. $\bf 166$ (2021), article number 102933.

\bibitem{Khabibullin} B. N. Khabibullin, G. R. Talipova, F. B. Khabibullin,
Zero subsequences for Bernstein spaces and completeness of exponential systems in spaces of functions on an interval,
(Russian) Algebra i Analiz $\bf 26$ no. 2 (2014),  185-215; translation in St. Petersburg Math. J. $\bf 26$ no. 2 (2014), 319-340.

\bibitem{Korevaar1947} J. Korevaar,
A characterization of the submanifold , of $C[a,b]$ spanned by the sequence $\{x^{n_k}\}$,
Nederl. Akad. Wetensch. Pro. Ser. A $\bf 50$ (1947), 750-758=Indag. Math. $\bf 9$ (1947), 360-368.

\bibitem{Kriv} A. S. Krivosheev,
A fundamental principle for invariant subspaces in convex domains,
Izv. Ross. Acad. Nauk Ser. Mat. $\bf 68$ no. 2 (2004), 71-136;
English transl., Izv. Math. $\bf 68$ no. 2 (2004), 291-353.

\bibitem{Lefevre} P. Lef\`{e}vre,
M\"{u}ntz spaces and special Bloch type inequalities,
Complex Var. Elliptic Equ. $\bf 63$ no. 7–8, (2018), 1082-1099.

\bibitem{Leontev} A. F. Leont'ev,
Equations of infinite order with analytic solutions,
Math. Notes $\bf 10$ (1971), 585-590.

\bibitem{Lepson} B. Lepson,
On Hyperdirichlet series and on related questions of the general theory of functions,
Trans. Amer. Math. Soc., $\bf 72$ (1952), 18-45.

\bibitem{Levin} B. Ya. Levin,
Distribution of zeros of entire functions,
Amer. Math. Soc., Providence, R. I., 1964 viii+493 pp.

\bibitem{Lissy2014} P. Lissy,
On the Cost of Fast Controls for Some Families of Dispersive or Parabolic Equations in One Space Dimension,
SIAM J. Control. Optim., $\bf 52$ no. 4 (2014), 2651-2676.

\bibitem{Lissy2017} P. Lissy,
The cost of the control in the case of a minimal time of control: The example of the one-dimensional heat equation,
J. Math. Anal. Appl., $\bf 451$ (2017), 497-507.

\bibitem{Lissy2017b} P. Lissy,
Construction of Gevrey functions with compact support using the Bray-Mandelbrojt iterative process
and applications to the moment method in control theory.
Math. Control Relat. Fields, $\bf 7$ no. 1 (2017), 21-40.

\bibitem{Lissy2019} P. Lissy, I. Roven\c{t}a,
Optimal filtration for the approximation of boundary controls for the one-dimensional wave equation
using a finite-difference method,
Math. Comp., $\bf 88$ no. 315 (2019),  273-291.

\bibitem{LK} W. A. J. Luxemburg, J. Korevaar,
Entire functions and M\"{u}ntz-Sz\'{a}sz type approximation,
Trans. Amer. Math. Soc. $\bf 157$ (1971), 23-37.

\bibitem{2011MicuZuazua} S. Micu, E. Zuazua,
Regularity issues for the null-controllability of the linear 1-d heat equation,
Systems Control Lett. $\bf 60$ no. 6 (2011),  406-413.

\bibitem{2011Micu} S. Micu, J. H. Ortega, A. F. Pazoto,
Null-controllability of a Hyperbolic Equation as Singular Limit of Parabolic Ones,
J. Fourier Anal. Appl. $\bf 17$ no. 5 (2011), 991-1007.

\bibitem{2012Micu} S. Micu, I. Roven\c{t}a,
Uniform controllability of the linear one dimensional Schrödinger equation with vanishing viscosity,
ESAIM Control Optim. Calc. Var., $\bf 18$ no.1 (2012), 277-293.

\bibitem{2016Micu} S. Micu, I. Roven\c{t}a, L. E. Temereanc\u{a},
Approximation of the controls for the linear beam equation,
Math. Control Signals Syst. $\bf 28$ no. 2 (2016), article number 12.

\bibitem{2018Micu} S. Micu, T. Takahashi,
Local controllability to stationary trajectories of a Burgers equation with nonlocal viscosity,
J. Differential Equations, $\bf 264$ no. 5 (2018), 3664-3703.

\bibitem{Pinkus} A. Pinkus,
Density in Approximation Theory,
Surveys in Approximation Theory $\bf 1$ (2005), 1-45.

\bibitem{Pommerenke} Ch. Pommerenke,
Boundary Behaviour of Conformal Maps,
Berlin/New York, Springer-Verlag, 1992.

\bibitem{Redheffer}  R. M. Redheffer,
Completeness of Sets of Complex Exponentials,
Advances in Math. $\bf 24$ (1977), 1-62.

\bibitem{Rudin} W. Rudin,
Real and complex analysis,
Third edition, McGraw-Hill Book Co., New York, (1987) xiv+416 pp. ISBN 0-07-054234-1.

\bibitem{Sed1} A. M. Sedletskii,
Analytic Fourier Transforms and exponential approximations I,
Journal of Mathematical Sciences $\bf 129$ no. 6 (2005), 4251-4408.


\bibitem{Valiron} M. G. Valiron,
Sur les solutions des \'{e}quations diff\'{e}rentielles lin\'{e}aires d'ordre infini et a coefficients constants,
Ann. Ecole Norm. (3) $\bf 46$ (1929),  25-53.

\bibitem{Vidras} A. Vidras,
On a theorem of P\'{o}lya and Levinson,
J. Math. Anal. Appl. $\bf 183$ no. 1 (1994), 216-232.

\bibitem{Young} R. M. Young,
An introduction to Nonharmonic Fourier Series,
Revised first edition. Academic Press, Inc., San Diego, CA, (2001) xiv+234 pp. ISBN: 0-12-772955-0.

\bibitem{Z2011JAT} E. Zikkos,
The closed span of an exponential system in the Banach spaces $L^p(\gamma,\beta)$ and $C[\gamma,\beta]$,
J. Approx. Theory $\bf 163$ no. 9 (2011), 1317-1347.

\bibitem{Z2012CMFT} E. Zikkos,
An addendum to theorems of A. F. Leont'ev and L. Carleson on an infinite order differential equation on a real interval,
Comput. Methods Funct. Theory $\bf 12$ no. 2 (2012), 403-417.

\bibitem{Z2015JMAA} E. Zikkos,
Solutions of infinite order differential equations without the grouping
phenomenon and a generalization of the Fabry-P\'{o}lya theorem,
J. Math. Anal. Appl. $\bf 423$ no. 2 (2015), 1825-1837.

\bibitem{Zuazua} E. Zuazua,
Controllability of Partial Differential Equations,
3rd cycle. Castro Urdiales (Espagne), (2006), pp.311. ffcel-00392196f.

\end{thebibliography}
\end{document}